\newtheorem{theorem}{Theorem}[section]
\newtheorem{lemma}[theorem]{Lemma}
\newtheorem{corollary}[theorem]{Corollary}
\newtheorem{proposition}[theorem]{Proposition}
\newtheorem{claim}[theorem]{Claim}
\theoremstyle{definition}
\newtheorem{definition}[theorem]{Definition}
\newcommand{\circlesign}[1]{ 
    \mathbin{
        \mathchoice
        {\buildcirclesign{\displaystyle}{#1}}
        {\buildcirclesign{\textstyle}{#1}}
        {\buildcirclesign{\scriptstyle}{#1}}
        {\buildcirclesign{\scriptscriptstyle}{#1}}
    } 
}
\newcommand\buildcirclesign[2]{%
    \begin{tikzpicture}[baseline=(X.base), inner sep=0, outer sep=0]
    \node[draw,circle] (X)  {\ensuremath{#1 #2}};
    \end{tikzpicture}%
}
\begin{document}

\providecommand{\Gen}{\mathop{\rm Gen}\nolimits}%
\providecommand{\cone}{\mathop{\rm cone}\nolimits}%
\providecommand{\Sub}{\mathop{\rm Sub}\nolimits}%
\providecommand{\coker}{\mathop{\rm coker}\nolimits}%

\def\A{\mathsf{A}}
\def\B{\mathsf{B}}
\def\C{\mathcal{C}}
\def\D{\mathcal{D}}
\def\E{\mathcal{E}}
\def\V{\mathcal{V}}
\def\W{\mathsf{W}}
\def\T{\mathcal{T}}
\def\P{\mathcal{P}}
\def\SS{\mathcal{S}}
\def\X{\mathcal{X}}
\def\Y{\mathcal{Y}}
\def\Z{\mathcal{Z}}
\def\K{\mathsf{K}}
\def\F{\mathcal{F}}

\def\U{\mathcal{U}}
\providecommand{\add}{\mathop{\rm add}\nolimits}%
\providecommand{\End}{\mathop{\rm End}\nolimits}%
\providecommand{\Ext}{\mathop{\rm Ext}\nolimits}%
\providecommand{\Hom}{\mathop{\rm Hom}\nolimits}%
\providecommand{\ind}{\mathop{\rm ind}\nolimits}%
\providecommand{\id}{\mathop{\rm id}\nolimits}%
\newcommand{\module}{\mathop{\rm mod}\nolimits}%

\newcommand{\RR}{\mathbb{R}}
\newcommand{\PP}{\mathbb{P}}

\newcommand{\YY}{B}
\newcommand{\Yi}{B_{X_i}}
\newcommand{\Ui}{U_{X_i}}
\newcommand{\UU}{U_{X_i}}

\title{A category of wide subcategories}

\author[Buan]{Aslak Bakke Buan}
\address{
Department of Mathematical Sciences, 
Norwegian University of Science and Technology,
7491 Trondheim,
NORWAY
}
\email{aslak.buan@ntnu.no}

\author[Marsh]{Robert J. Marsh}
\address{
School of Mathematics,
University of Leeds,
Leeds LS2 9JT UK
}
\email{marsh@maths.leeds.ac.uk}

\begin{abstract}
An algebra is said to be \emph{$\tau$-tilting finite} provided
it has only a finite number of $\tau$-rigid objects up to isomorphism.
We associate a category to each such algebra. The
objects are the wide subcategories of its category of finite dimensional
modules, and the morphisms are indexed by support $\tau$-tilting pairs.
\end{abstract}

\thanks{
This work was supported by FRINAT grant number 231000, from the
Norwegian Research Council. The work 
for this paper was done during several visits of A. B. Buan to Leeds in 2017-2018 and
he would like to thank R. J. Marsh and the School of Mathematics at the University of Leeds for their warm hospitality.}

\maketitle

\section*{Introduction and main result}

A full subcategory $\B$ of an abelian category $\A$ is called 
{\em wide} if it is an exact abelian subcategory, or equivalently it is closed under
kernels, cokernels and extensions.

Let $\Lambda$ be a finite dimensional algebra over a field $k$, and $\module \Lambda$ the category 
of finitely generated left $\Lambda$-modules. 
Let $\tau$ denote the Auslander-Reiten translate in $\module \Lambda$.
Following \cite{air}, we call a $\Lambda$-module $M$ with $\Hom(M, \tau M) = 0$ a \emph{$\tau$-rigid} module. The algebra $\Lambda$
is called \emph{$\tau$-tilting finite} \cite{dij} 
if there are only a finite number of isomorphism classes
of indecomposable $\tau$-rigid $\Lambda$-modules.
By \cite{air} this is equivalent to $\Lambda$ having finitely many isomorphism classes of basic $\tau$-tilting modules, as defined in \cite{air}. In particular, all 
algebras of finite representation type, as well as all preprojective algebras 
of Dynkin type are $\tau$-tilting finite~ \cite{miz}; see \cite{dij} for further examples.

For a module $U$, let $U^{\perp} = \{X \in \module \Lambda \mid  \Hom(U,X)= 0\}$, and define ${}^{\perp}U$ similarly.
Jasso~\cite{jasso} proved that, if $U$ is $\tau$-rigid, then
the subcategory $J(U) = U^{\perp} \cap {^{\perp}(\tau U)}$ is equivalent to
a module category, and by~\cite{dirrt} we have that $J(U)$ is a
wide subcategory of $\module \Lambda$.
For a wide subcategory $\W$ which is equivalent to a module category, and a
module $V$ which is $\tau$-rigid in $\W$, 
we let $J_{\W}(V) = V^{\perp} \cap {^{\perp}(\tau_{\W} V)} \cap \W$.
Note that the AR-translations $\tau$ in $\module \Lambda$ and $\tau_{\W}$
in $\W$ will usually be different.

Let $\C(\Lambda)=\C(\module \Lambda)$ be the full subcategory of 
the \sloppy  bounded derived category $D^b(\module \Lambda)$ with objects corresponding to $\module \Lambda \amalg (\module \Lambda)[1]$.
For a full subcategory $\Y$ of $\module\Lambda$, we shall denote by
$C(\Y)$ the full subcategory $\Y\amalg \Y[1]$ of $\C(\Lambda)$.
As in~\cite{bm}, we say $\U = U \amalg P[1]$ is \emph{support $\tau$-rigid} in $\C(\module\Lambda)$ if $U,P$ are modules, $P$ is projective, $U$ is $\tau$-rigid and $\Hom(P,U)= 0$.
Analogously, if $\W$ is a wide subcategory of $\module\Lambda$ equivalent to a module category, we will say that an object $\U = U \amalg P[1]$ in
$\C(\W)$, where $U,P\in \W$, the object $P$ is projective in $\W$, the object $U$ is $\tau$-rigid
in $\W$ and $\Hom(P,U)=0$, is support $\tau$-rigid in $\C(\W)$. We let $J(\U) = J(U) \cap P^{\perp}$. We then have the following.

\begin{theorem}\label{rig-fin}
Let $\Lambda$ be a finite dimensional algebra, then the following hold.
\begin{itemize}
\item[(a)] \cite[Thm. 3.28]{dirrt},~\cite[Thm. 3.8]{jasso} If $\U$ is support $\tau$-rigid in $\C(\module \Lambda)$, then the subcategory $J(\U)$ is wide, and it is equivalent to a module category of a finite dimensional algebra.
\item[(b)] \cite[Thm. 3.34]{dirrt} If $\Lambda$ is $\tau$-tilting finite, then any wide subcategory of $\module
 \Lambda$ is of the form $J(\U)$ for some support $\tau$-rigid object $\U$ in $\C(\Lambda)$.
\end{itemize}
\end{theorem}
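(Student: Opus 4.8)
I will treat the two parts separately; part~(a) is essentially a reduction to the cited module-category statement, while part~(b) is the real work.

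\textbf{Part (a).} The plan is to absorb the projective summand $P$ into a Serre quotient. Writing $P=\Lambda e$ for an idempotent $e$, the subcategory $P^{\perp}=\{X:eX=0\}=\module(\Lambda/\Lambda e\Lambda)$ is a \emph{Serre} subcategory of $\module\Lambda$ (closed under submodules, quotients and extensions), hence in particular wide, and is visibly the module category of a finite-dimensional algebra. Moreover an intersection of wide subcategories is again wide: if $\B_1,\B_2$ are each closed under kernels, cokernels and extensions formed in $\A$, then so is $\B_1\cap\B_2$. Since $J(U)$ is wide and equivalent to $\module C$ for a finite-dimensional algebra $C$ by the cited results of Jasso and~\cite{dirrt}, it follows that $J(\U)=J(U)\cap P^{\perp}$ is wide. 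For the module-category claim I would observe that, under $J(U)\simeq\module C$, the full subcategory $J(\U)=\{X\in J(U):\Hom_{\Lambda}(P,X)=0\}$ is closed under subobjects, quotients and extensions taken inside $J(U)$ — these coincide with the operations in $\module\Lambda$ because $J(U)$ is an exact abelian subcategory and $P^{\perp}$ is Serre — so it corresponds to a Serre subcategory $\SS$ of $\module C$. One then invokes the routine fact that a Serre subcategory of $\module C$ consists of the modules with all composition factors in a fixed set of simples and equals $\module(C/I)$ for a suitable ideal $I$ with $C/I$ finite-dimensional.

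\textbf{Part (b).} Here the plan is in three stages. (1) \emph{Reduce to a torsion class.} Given a wide $\W\subseteq\module\Lambda$, set $\T=\mathrm{Filt}(\Gen\W)$, the smallest torsion class containing $\W$. As $\Lambda$ is $\tau$-tilting finite there are only finitely many torsion classes, so $\T$ is functorially finite, and I would show $\W$ is \emph{left finite}, i.e.\ the Ingalls--Thomas/Marks--\v{S}\v{t}ov\'{\i}\v{c}ek wide subcategory $\W_L(\T)=\{X\in\T:\ker(g)\in\T\text{ for every }g\colon Y\to X\text{ with }Y\in\T\}$ recovers $\W$. This is where $\tau$-tilting finiteness is used essentially: in general there are wide subcategories which are not left finite, but once $\mathrm{tors}\,\Lambda$ is finite all torsion classes are functorially finite and this operation is well behaved. (2) By~\cite{air}, the functorially finite torsion class $\T$ equals $\Gen M$ for a basic support $\tau$-tilting pair $(M,Q)$.

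(3) \emph{Identify $\W=\W_L(\Gen M)$ with $J(\U)$} for some support $\tau$-rigid $\U$ in $\C(\Lambda)$; this is the technical heart. I would argue by induction on the number of simple $\Lambda$-modules, using Jasso's $\tau$-tilting reduction. If $\W=\module\Lambda$ take $\U=0$. Otherwise $\W\subsetneq\module\Lambda$, and from the Hasse quiver of $\mathrm{tors}\,\Lambda$ one extracts a nonzero support $\tau$-rigid object $\U_1$ with $\W\subseteq J(\U_1)\subsetneq\module\Lambda$. By part~(a), $J(\U_1)\simeq\module\Lambda_1$ with $\Lambda_1$ having strictly fewer simples, and $\Lambda_1$ is again $\tau$-tilting finite since Jasso's reduction puts the (support) $\tau$-tilting objects of $\Lambda_1$ in bijection with those of $\Lambda$ lying above $\U_1$. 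Now $\W$ is a wide subcategory of $J(\U_1)$, so by induction $\W=J_{J(\U_1)}(\V)$ for some support $\tau$-rigid $\V$; one then checks that $\tau$-tilting reductions compose, i.e.\ that $(\U_1,\V)$ assembles into a single support $\tau$-rigid $\U$ in $\C(\Lambda)$ with $J(\U)=\W$. I expect the main obstacles to be exactly the production of $\U_1$ — where the lattice structure of $\mathrm{tors}\,\Lambda$ for $\tau$-tilting finite $\Lambda$ must be exploited — and the composition step, both of which require a careful comparison of the AR-translate $\tau$ of $\module\Lambda$ with the translate $\tau_{J(\U_1)}$ of the reduced category and of their perpendicular-category descriptions; this is precisely the content of the $\tau$-tilting reduction machinery of~\cite{jasso} and~\cite{dirrt}.
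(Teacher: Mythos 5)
First, a point of comparison: the paper does not prove this theorem at all --- both parts are imported verbatim from the cited references ([dirrt, Thms.\ 3.28 and 3.34] and [jasso, Thm.\ 3.8]), and the only ``proof'' in the paper (of the restatement as Proposition~\ref{tau-finite}(a)) is the sentence ``This is contained in Theorem 3.34 in [dirrt].'' So there is no internal argument to measure yours against, and I can only assess your proposal on its own terms. Your part~(a) is a correct and essentially complete reduction of the general case $\U=U\amalg P[1]$ to the cited module case: $P^{\perp}=\{X: eX=0\}=\module(\Lambda/\Lambda e\Lambda)$ is Serre, an intersection of wide subcategories is wide, and $J(U)\cap P^{\perp}$ is Serre inside the module category $J(U)$ (subobjects, quotients and extensions in the exact abelian subcategory $J(U)$ agree with those in $\module\Lambda$), hence is itself a module category of a finite dimensional algebra.

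Part~(b) is a plausible roadmap but not yet a proof: the two steps you defer are exactly the content of the cited theorem. Concretely, (i) the identity $\W=\W_L(\T)$ for $\T$ the smallest torsion class containing $\Gen\W$ is a theorem of Marks and \v{S}\v{t}ov\'{\i}\v{c}ek valid for \emph{all} wide subcategories, so you should separate it from the use of $\tau$-tilting finiteness, which enters only to make $\T$ functorially finite; and (ii) your stage~(3) replaces the identification $\W_L(\Gen M)=J(M_{\mathrm{ns}})\cap Q^{\perp}$ by an induction whose two admitted ``obstacles'' --- producing an indecomposable $\U_1$ with $\W\subseteq J(\U_1)$, and showing that reductions compose --- are precisely where all the work lies, and you only name them. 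One concrete warning about the second obstacle: within this paper, the statement that reductions compose is Theorem~\ref{main-compo}, whose proof invokes Proposition~\ref{tau-finite}(a), i.e.\ the very statement~\ref{rig-fin}(b) you are trying to prove. Your induction must therefore take the composition step from the $\tau$-tilting reduction machinery of [jasso]/[dirrt] directly; leaning on the present paper's composition theorem would be circular.
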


%When $\Lambda$ is $\tau$-rigid finite, then by \cite{dirrt} all wide 
%subcategories are of the form $J(\U)$ for some $\tau$-rigid object $\U$ in $\C(\Lambda)$.
The aim of the paper is to prove the following result.

\begin{theorem}\label{main}
Assume $\Lambda$ is $\tau$-tilting finite.
Then there is a category $\mathfrak{W}_\Lambda$ whose objects are all wide subcategories of $\module \Lambda$ and such that the maps from $\W_1$ to $\W_2$ are indexed by
all basic $\tau$-rigid objects $T$ in $\C(\W_1)$ such that $\W_2 = J_{\W_1}(T)$. 
\end{theorem}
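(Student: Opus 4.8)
The plan is to upgrade the data in the statement to a genuine category by prescribing identity morphisms and a composition law, and then to verify the category axioms; essentially all of the content is a transitivity property of $\tau$-tilting reduction. First I would record two preliminary facts. If $\W$ is a wide subcategory of $\module\Lambda$, then by Theorem~\ref{rig-fin}(a) it is equivalent to $\module\Gamma$ for a finite-dimensional algebra $\Gamma$, and this $\Gamma$ is again $\tau$-tilting finite: every wide subcategory of $\W$ is in particular a wide subcategory of $\module\Lambda$, since kernels, cokernels and extensions are computed the same way inside the exact abelian subcategory $\W$; as $\Lambda$ is $\tau$-tilting finite it has only finitely many wide subcategories, hence so do $\W$ and $\Gamma$, so $\Gamma$ is $\tau$-tilting finite. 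In particular Theorem~\ref{rig-fin}(b) is available inside every $\C(\W)$. Secondly, I set $\mathrm{id}_\W = 0$, the zero object of $\C(\W)$, which is vacuously a basic support $\tau$-rigid object and has $J_\W(0) = \W$; conversely, if $T = U\amalg P[1]$ is support $\tau$-rigid in $\C(\W)$ with $J_\W(T) = \W$, then since $U,P\in\W\subseteq U^\perp\cap P^\perp$ we get $\Hom(U,U) = 0 = \Hom(P,P)$, whence $U = P = 0$. Thus $0$ is the unique endomorphism of $\W$ in $\mathfrak W_\Lambda$, and once composition is defined the identity axiom will reduce to the unit laws below.

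The heart of the proof is the following transitivity property of $\tau$-tilting reduction, which I would establish first. Let $T$ be a basic support $\tau$-rigid object of $\C(\W)$ and $\W' := J_\W(T)$, with Bongartz completion $\overline T$ of $T$ in $\C(\W)$. Jasso's reduction theorem~\cite{jasso,dirrt} provides a bijection $\beta_T\colon S\mapsto T\bullet S$ from the basic support $\tau$-rigid objects of $\C(\W')$ onto the basic support $\tau$-rigid objects of $\C(\W)$ having $T$ as a direct summand, and I would record that: (i) $\beta_T(0) = T$ and $J_\W(T\bullet S) = J_{\W'}(S)$ for every $S$, the latter by comparing the torsion classes that the two reductions attach to $\W'$ and to $S$ and appealing to the bijection of Theorem~\ref{rig-fin}(b) to identify the resulting wide subcategories; (ii) $\beta_T$ carries direct summands to direct summands, because it is an isomorphism of the underlying exchange graphs and so takes mutation to mutation; and (iii) the reductions compose, that is, $T\bullet(S\bullet R) = (T\bullet S)\bullet R$ whenever $\W\xrightarrow{T}\W'\xrightarrow{S}\W''\xrightarrow{R}\W'''$ are composable (meaning $\W' = J_\W(T)$, $\W'' = J_{\W'}(S)$, $\W''' = J_{\W''}(R)$). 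Granting this, I regard a morphism $\W_1\to\W_2$ of $\mathfrak W_\Lambda$ as a basic support $\tau$-rigid object $T_1\in\C(\W_1)$ with $\W_2 = J_{\W_1}(T_1)$, and set $T_2\circ T_1 := T_1\bullet T_2$; by (i) this is a morphism $\W_1\to\W_3$ with $\W_3 = J_{\W_2}(T_2)$, so composition is well defined; the unit laws $0\bullet S = S$ (here $\W' = \W$ and $\beta_0$ is the identity) and $T\bullet 0 = T$ hold; associativity is (iii); and basicness is preserved since Bongartz completion and $\tau$-tilting reduction preserve basic objects. Hence $\mathfrak W_\Lambda$ is a category whose objects are all the wide subcategories of $\module\Lambda$ and with $\Hom_{\mathfrak W_\Lambda}(\W_1,\W_2)$ equal to the set of basic support $\tau$-rigid objects $T$ in $\C(\W_1)$ with $\W_2 = J_{\W_1}(T)$, which is Theorem~\ref{main}.

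I expect (iii) to be the main obstacle. It asserts that iterating $\tau$-tilting reduction twice agrees with reducing once by the combined object $T\bullet S$, and the difficulty is genuine: the relative Auslander--Reiten translates $\tau_\W$, $\tau_{\W'}$, $\tau_{\W''}$ differ, and the shifted summands of $S$ (which are projective in $\W'$) are not projective in $\W$, so one must control how Bongartz completion interacts with these under iteration. I would prove (iii) by induction on the rank of $\W$ (the number of simples of the algebra $\Gamma$ with $\W\simeq\module\Gamma$), using (i) and (ii) to reduce the inductive step to the case in which $R$, and then $S$, is a single indecomposable support $\tau$-rigid object; there each reduction is a single mutation, and the two resulting objects of $\C(\W)$ can be compared directly from the corresponding exchange sequences, with the change of Auslander--Reiten translate and the non-projectivity of the lifted summands tracked explicitly through the completion.
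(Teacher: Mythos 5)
Your architecture is exactly the paper's: morphisms indexed by basic support $\tau$-rigid objects, identity given by $0$, composition $T_2\circ T_1 = T_1\bullet T_2$ defined through the reduction bijection (your $\beta_T$ is the inverse $\F^{\W}_{T}$ of the paper's map $\E^{\W}_{T}$, extended by $S\mapsto T\amalg \F^{\W}_T(S)$), well-definedness from $J_{\W}(T\bullet S)=J_{\W'}(S)$, and associativity from the cocycle identity $T\bullet(S\bullet R)=(T\bullet S)\bullet R$, which is equivalent to $\E^{J(\U)}_{\E_{\U}(\V)}\E_{\U}=\E_{\U\amalg\V}$. The identity-morphism argument and the reduction of the category axioms to (i)--(iii) are correct.

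The gap is that (i) and (iii) are precisely the theorems whose proofs constitute the body of the paper, and you do not prove them. For (i), ``comparing the torsion classes that the two reductions attach and appealing to the bijection of Theorem~\ref{rig-fin}(b)'' is not an argument: the paper first uses a rank count ($r(J_{J(\U)}(\E_{\U}(\V)))=r(J(\U\amalg\V))$) to reduce to a single inclusion, and then needs a five-case analysis (module/shifted-projective summands of $\U$ and $\V$, with $\V$ inside or outside $\Gen U$) plus an induction on rank to establish that inclusion; nothing in your sketch substitutes for this. For (iii) you candidly identify it as the main obstacle and propose induction on rank plus reduction to indecomposable $R$ and $S$, but the actual verification --- that the two constructions of the image of an indecomposable $X$ agree, via comparison of minimal approximation sequences, torsion functors $f_U$, $f^{J(U)}_{\overline V}$, and Bongartz-type complements in the ambient and reduced categories --- occupies Sections 5--8 of the paper and is not reproduced or replaced by your outline. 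Claim (ii) likewise needs the additivity statement of Theorem~\ref{main-biadd} rather than an appeal to ``isomorphism of exchange graphs.'' A smaller issue: your proof that wide subcategories of a $\tau$-tilting finite algebra are again $\tau$-tilting finite rests on the unproved implication ``finitely many wide subcategories $\Rightarrow$ $\tau$-tilting finite''; the paper instead deduces this directly from the bijection of Theorem~\ref{main-bi}, which gives an injection from indecomposable $\tau$-rigid objects of $\C(\W)$ into those of $\C(\Lambda)$. As it stands, your text is a correct reduction of Theorem~\ref{main} to the paper's Theorems~\ref{main-compo} and~\ref{main-as}, not a proof of it.
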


Our results are inspired by a recent paper of 
Igusa and Todorov \cite{it}, where they defined a similar category in the setting of hereditary finite dimensional algebras. 

In Section \ref{s:mainresult} we state the main results of the paper and explain
how they are used to prove Theorem \ref{main}.

%In the next section (Section~\ref{s:mainresult}) we give the proof this theorem assuming %some key properties of wide subcategories, $\tau$-rigid objects and their
%relationship to the map $U\mapsto J(U)$ which will be proved in the later
%sections.

\section{Key steps for the proof of the main result} \label{s:mainresult}
For a (skeletally small) Krull-Schmidt category $\mathsf{X}$, let $\ind \mathsf{X}$ denote the set of isomorphism classes of indecomposable objects
in $\mathsf{X}$ and for any basic object $X$ in $\mathsf{X}$ let $\delta(X)$ denote the number of indecomposable direct summands of $X$. We generally assume all objects are basic and
we always assume subcategories are full and closed under isomorphism.

Firstly, we need the following, which is a generalization of \cite[Propositions 5.6 and 5.10]{bm}, and can be seen as a refinement of \cite[Theorem 3.15]{jasso}.
This is crucial. 

\begin{theorem}[Theorem \ref{thm-bi}]\label{main-bi}
Let $\U$ be a support $\tau$-rigid object in $\C(\Lambda)$.
Then there are bijections 
$$\{\X \in \ind(\C(\Lambda)) \mid \X \amalg \U \text{ } \tau\text{-rigid}\}
\setminus \ind \U $$ 

$$ \E_{\U} \downarrow \text{  } \uparrow \E_{\U} $$

$$\{\X \in  \ind(\C(J(\U)) \mid \text{$\X$ is support $\tau$-rigid in $\C(J(\U))$} \}.$$
\end{theorem}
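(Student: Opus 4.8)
The plan is to construct the map $\E_{\U}$ explicitly in both directions and check that the two constructions are mutually inverse. Recall that $J(\U) = J(U) \cap P^{\perp}$ is a wide subcategory equivalent to a module category by Theorem \ref{rig-fin}(a). The key observation is that completing $\U = U \amalg P[1]$ by a compatible indecomposable $\X$ — i.e. an $\X$ with $\X \amalg \U$ being $\tau$-rigid in $\C(\Lambda)$ and $\X \notin \ind \U$ — should correspond, via an operation that "reduces modulo $\U$", to producing an indecomposable support $\tau$-rigid object in the smaller category $\C(J(\U))$. In the module-category language of \cite{bm}, \cite{jasso}, this reduction sends a $\tau$-rigid completion $X$ of $U$ to (roughly) the "$J(U)$-component" of $X$, obtained by taking an approximation sequence with respect to $\add U$ (or the corresponding minimal left/right $\add \U$-approximation triangle in $\C(\Lambda)$). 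Concretely, for $\X \in \ind \C(\Lambda)$ with $\X \amalg \U$ $\tau$-rigid, I would form the triangle $\U' \to \X \to \E_{\U}(\X) \to \U'[1]$ (or its dual) with $\U' \in \add \U$ a minimal approximation, and show $\E_{\U}(\X)$ has a unique indecomposable summand lying in $\C(J(\U))$, which is support $\tau$-rigid there; the inverse map lifts an indecomposable support $\tau$-rigid $\Y$ of $\C(J(\U))$ back to the essentially unique $\X$ completing $\U$ whose reduction is $\Y$.

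First I would set up the module-theoretic case carefully, i.e. when $P = 0$ and $\X$ is an honest module, reproving \cite[Prop. 5.6, 5.10]{bm} in this generality: given a $\tau$-rigid module $U$ and an indecomposable $\tau$-rigid module $X$ with $X \amalg U$ $\tau$-rigid, Jasso's reduction \cite[Thm. 3.15]{jasso} already tells us that $\tau$-rigid pairs for $\Lambda$ containing $U$ biject with $\tau$-rigid pairs for $J(U)$; the refinement needed here is that this bijection respects indecomposability of the "new" summand and sends support $\tau$-rigid to support $\tau$-rigid, which follows by tracking summands through Jasso's equivalence and using that $J(U)$ is itself a module category. Then I would handle the shifted summands: an indecomposable $\X = X[1]$ completing $\U$ corresponds to $X$ projective with $\Hom(X, U) = 0$ and $X \notin \add P$, and such $X$ should map to (a shift of) the corresponding indecomposable projective in $J(\U)$; the bookkeeping here is parallel to the module case but uses that projectives of $J(\U)$ are explicitly described in \cite{jasso}, \cite{dirrt}. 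Finally, assembling the four sub-cases ($\X$ a module vs. $\X$ shifted, on each side) into the single stated bijection $\E_{\U}$ requires checking compatibility on the boundary, i.e. that the reduction of a module can land in the shifted part $J(\U)[1]$ exactly when $X$ becomes projective-and-orthogonal in $J(\U)$, which is precisely the mixed case already analyzed for $\C(\Lambda)$ in \cite{bm}.

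The main obstacle I expect is verifying \emph{well-definedness and injectivity simultaneously}: namely that $\E_{\U}(\X)$ is indecomposable (not merely a nonzero object in $\C(J(\U))$) and that distinct $\X$, $\X'$ give non-isomorphic reductions. Jasso's reduction is stated at the level of the whole poset/set of support $\tau$-tilting objects rather than summand-by-summand, so the delicate point is to extract from it that the correspondence on \emph{individual} indecomposable summands is a bijection onto indecomposables — equivalently, that $\delta$ is additive under the reduction, with $\delta(J(\U)) = \delta(\C(\Lambda)) - \delta(\U)$ in the appropriate counting. This should follow from the fact that both sides are "completions to maximal objects" of the same cardinality (support $\tau$-tilting objects of $\C(\Lambda)$ containing $\U$, versus support $\tau$-tilting objects of $\C(J(\U))$, both having $\delta = n$ where $n = \delta(\C(\Lambda))$ minus the rank of $\U$), combined with the fact that in a $\tau$-tilting finite setting mutation is always possible and the relevant "exchange graphs" are connected; but making the summand-wise statement airtight — rather than just the equinumerosity of maximal objects — is where the real work lies, and I would isolate it as the technical heart of Section \ref{s:mainresult}.
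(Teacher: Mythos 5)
Your plan diverges from the paper's proof in a way that leaves real gaps. The paper does \emph{not} reprove the pure cases or build a single approximation triangle with respect to $\add\U$ for a mixed $\U=U\amalg P[1]$; instead it takes the two pure-case bijections $\E_U$ and $\E_{Q[1]}^{J(U)}$ as given from \cite[Prop.~5.6, 5.10]{bm} and \emph{defines} $\E_{\U}$ as the composition $\E_{\E_U(P[1])}^{J(U)}\circ\E_U$. The whole content of the proof is then to check that this composition makes sense: Lemma~\ref{rest-Lemma} shows $\E_U$ restricts to the modules in $P^{\perp}$ (using $\Hom(P,\Gen U)=0$), and Lemma~\ref{comp} shows that the target of this restriction is exactly the domain of $\E_{\E_U(P[1])}^{J(U)}$, via the identity $J(U)\cap P^{\perp}=J(U)\cap Q^{\perp}$ where $Q[1]=\E_U(P[1])$. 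That identity is the technical heart of the theorem, and it is entirely absent from your proposal. Your alternative of forming one minimal $\add\U$-approximation triangle for the mixed object is not set up anywhere (the constructions of Definition~\ref{d:EUindecomposable} treat the module part and the shifted projective part by different recipes), so well-definedness of your map is not established.

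Second, the obstacle you isolate as the "real work" --- extracting a summand-wise bijection on indecomposables from Jasso's poset-level reduction --- is precisely what \cite{bm} already proves, so re-deriving it is unnecessary; but more importantly, your proposed route to it would fail. You argue via equinumerosity of maximal completions together with connectivity of exchange graphs "in a $\tau$-tilting finite setting." Theorem~\ref{thm-bi} is stated for an arbitrary finite dimensional algebra $\Lambda$; the $\tau$-tilting finiteness hypothesis only enters from Section~\ref{compo} onward. A counting argument also cannot show that the \emph{specific} map $\E_{\U}$ is injective or lands on indecomposables --- at best it compares cardinalities of finite sets, which you do not have here. To repair the proposal you should drop the counting/connectivity argument, cite the pure cases from \cite{bm}, and supply the missing comparison of domains, i.e.\ prove $J(U)\cap P^{\perp}=J(U)\cap Q^{\perp}$ (the paper does this by applying $\Hom(-,M)$ to the exact sequence $P\to Y_P\to U_P\to 0$ and to the canonical sequence of $Y_P$, together with the triangle $\PP_{U_P}[-1]\to P\to\PP_{Y_P}\to\PP_{U_P}$).
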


The map $\E_{\U}$ can be extended additively, giving the following:

\begin{theorem}[Theorem \ref{rigid-sums}]\label{main-biadd}
Let $\U$ be a support $\tau$-rigid object in $\C(\Lambda)$ with $\delta(\U) = t'$.
For any positive integer $t \leq n- t'$, 
the map $\E_{\U}$ induces a bijection between:
\begin{itemize}
\item[(a)] The set of support $\tau$-rigid objects $\X$ in $\C(\Lambda)$ such that
$\delta(\X) = t$, the object $\X \amalg \U$ is support $\tau$-rigid and $\add \X \cap \add \U=0$, and
\item[(b)] The set of support $\tau$-rigid objects $\X$ in $\C(J(\U))$ such that $\delta(\X)=t$.
\end{itemize}
%The map $\E_{\U}$ induces a bijection between the set of support $\tau$-rigid objects
%$X$ in $\C(\Lambda)$ such that $X\amalg \U$ is support $\tau$-rigid and $\add X\cap \add %\U=0$, and the set of support $\tau$-rigid objects in $\C(J(\U))$.
\end{theorem}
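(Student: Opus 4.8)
The plan is to bootstrap from the single-object bijection of Theorem \ref{main-bi} to the additive statement by induction on $t$, the base case $t=1$ being precisely Theorem \ref{main-bi} restricted to those indecomposables $\X$ with $\X\amalg\U$ support $\tau$-rigid and $\X\notin\ind\U$. First I would record the basic compatibility one needs: if $\X = X_1\amalg\cdots\amalg X_t$ is support $\tau$-rigid in $\C(\Lambda)$ with $\X\amalg\U$ support $\tau$-rigid and $\add\X\cap\add\U = 0$, then for each $i$ the object $\U_i := \U\amalg X_i$ is again support $\tau$-rigid (this is immediate, being a summand of $\X\amalg\U$), so Theorem \ref{main-bi} applies to $\U_i$ and $\E_{\U_i}$ makes sense. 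The key structural claim I would isolate is a \emph{transitivity/compatibility} statement: $J(\U_i) = J(\E_\U(X_i))$ as wide subcategories — i.e. passing to the Jasso perpendicular category of $\U$ and then cutting out $\E_\U(X_i)$ gives the same wide subcategory as cutting $X_i$ out of the original together with $\U$. Granting this, one gets a canonical identification $\C(J(\U_i)) = \C(J(\E_\U(X_i)))$, which is what allows the maps $\E_\U$ and $\E_{\E_\U(X_i)}$ to be composed coherently on the remaining summands $X_1\amalg\cdots\widehat{X_i}\cdots\amalg X_t$.

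The induction step then runs as follows. Given $\X$ as in (a) with $\delta(\X)=t\geq 2$, split off one summand, say $X_t$, and set $\X' = X_1\amalg\cdots\amalg X_{t-1}$. By the compatibility claim and the inductive hypothesis applied inside $\C(J(\U\amalg X_t))$ — noting that $J(\U\amalg X_t) = J(\E_\U(X_t))$ is a wide subcategory equivalent to a module category by Theorem \ref{rig-fin}(a), so the whole apparatus applies there with $n$ replaced by $\delta(J(\U\amalg X_t))$, and that the numerical bound $t-1 \leq \delta(J(\U\amalg X_t))$ follows from $\delta(J(\U)) = n - t'$ decreasing by exactly one with each such summand — one sends $\X'$ to a support $\tau$-rigid object $\E_{\E_\U(X_t)}(\X')$ of $\C(J(\E_\U(X_t))) = \C(J(\U\amalg X_t))$ of $\delta$ equal to $t-1$. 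One then defines $\E_\U(\X) := \E_\U(X_t)\amalg \E_{\E_\U(X_t)}(\X')$ and checks (i) this lies in $\C(J(\U))$ and is support $\tau$-rigid there of $\delta$ equal to $t$, and (ii) the definition is independent of which summand $X_t$ was split off — for which the symmetric roles of the $X_i$ and the compatibility claim, applied to pairs $X_i, X_j$, suffice. Injectivity is inherited from the base case together with the inductive injectivity; surjectivity is the reverse construction, peeling off an indecomposable summand of a support $\tau$-rigid object of $\C(J(\U))$, pulling it back via the inverse of $\E_\U$ from Theorem \ref{main-bi}, and recursing.

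The main obstacle I expect is the compatibility claim $J(\U\amalg X) = J(\E_\U(X))$ and the attendant well-definedness under reordering summands. This is really the heart of the theorem: it says the ``mutation'' $\E_\U$ of $\tau$-rigid data is associative in the appropriate sense, so that iterated reduction does not depend on the order. I would attack it by unwinding the definition $\E_\U$ from the proof of Theorem \ref{main-bi} — it should be given by some explicit operation (taking a minimal approximation, a cokernel, or a shift, depending on whether $X$ is a module summand or a shifted projective) — and then computing both $J(\U)\cap X^\perp\cap{}^\perp(\tau X)$-type intersections directly inside $\module\Lambda$, using that $\Hom$ and $\Ext$ vanishings defining the perpendicular categories are stable under the approximation triangles involved. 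An alternative, cleaner route would be to invoke the bijection of Theorem \ref{main-bi} twice: the indecomposables of $\C(J(\U\amalg X))$ correspond, via $\E_{\U\amalg X}$, to indecomposables $\Y$ with $\Y\amalg\U\amalg X$ $\tau$-rigid and $\Y\notin\ind(\U\amalg X)$; the same set maps via $\E_\U$ into $\ind\C(J(\U))$ and lands exactly in those $\Z$ with $\Z\amalg\E_\U(X)$ support $\tau$-rigid, i.e. into $\ind\C(J(\E_\U(X)))$. Matching these two descriptions object-by-object — and checking the match respects the wide-subcategory structure, not merely the indecomposable sets — would give the identification without an explicit formula for $\E_\U$; the residual work is then to upgrade an identification of indecomposable sets to an equality of wide subcategories, which follows since a wide subcategory equivalent to a module category is determined by its simple objects, hence by a finite set of indecomposables closed under the relevant operations.
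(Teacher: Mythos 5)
Your proposal does not follow the paper's route, and it has genuine gaps. The paper defines $\E_{\U}$ on a decomposable object purely summand-wise, $\E_{\U}(\X)=\E_{\U}(\X_1)\amalg\cdots\amalg\E_{\U}(\X_t)$, and the proof of the theorem is a reduction to the factorization $\E_{\U}=\E^{J(U)}_{\E_U(P[1])}\E_U$ together with the two pure cases ($\U$ a module, $\U$ a shifted projective) already established additively in \cite[Prop.~6.7, Prop.~6.10]{bm}. The real content is therefore that \emph{mutual compatibility of summands is preserved in both directions}: if $X_1\amalg\cdots\amalg X_t\amalg\U$ is support $\tau$-rigid then so is $\E_{\U}(X_1)\amalg\cdots\amalg\E_{\U}(X_t)$ in $\C(J(\U))$, and conversely. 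Your recursion never addresses this. Instead you build a different map, $\X\mapsto \E_{\U}(X_t)\amalg\E_{\U\amalg X_t}(\X')$, whose second component lives in $\C(J(\U\amalg X_t))$. That object is not an element of the target set (b): its shifted summands are shifts of projectives of $J(\U\amalg X_t)$, not of $J(\U)$, so it is not in the prescribed form of a support $\tau$-rigid object of $\C(J(\U))$ without a further (nontrivial) lift; and even after correcting this by applying the inverse bijection $\F^{J(\U)}_{\E_{\U}(X_t)}$, the identification of your map with the additive extension of $\E_{\U}$ is precisely the associativity statement $\E^{J(\U)}_{\E_{\U}(\V)}\E_{\U}=\E_{\U\amalg\V}$ of Theorem~\ref{main-as}, which occupies Sections~\ref{as}--\ref{s:mixed} of the paper. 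So at best you would prove that \emph{some} bijection exists, not that $\E_{\U}$ induces one.

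The second serious problem is your ``compatibility claim'' $J(\U\amalg X_i)=J_{J(\U)}(\E_{\U}(X_i))$. This is exactly Theorem~\ref{main-compo}, whose proof in the paper comes \emph{after} and \emph{uses} the present theorem (to know that $\E_{\U}(\V)$ is support $\tau$-rigid with the right number of summands, via the rank count in Lemma~\ref{rank-lemma}), and moreover requires $\tau$-tilting finiteness, which the present statement does not assume. Relying on it is therefore circular in the paper's logical order and would in any case only yield the theorem for $\tau$-tilting finite algebras. You correctly identify this claim as ``the heart,'' but the two attacks you sketch (unwinding approximation triangles; matching indecomposable sets via two applications of Theorem~\ref{main-bi}) are not carried out, and the same applies to the order-independence of your recursion, which you dismiss with an appeal to symmetry but which is again essentially Theorem~\ref{main-as}. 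The efficient repair is to forget the recursion entirely, use the definition $\E_{\U}=\E^{J(U)}_{\E_U(P[1])}\E_U$, and prove the additive statement separately for a module $U$ and for a shifted projective $P[1]$ by showing directly (as in \cite{bm}) that pairwise compatibility of indecomposables is preserved and reflected by each of these two maps.
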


From now on we assume $\Lambda$ is $\tau$-tilting finite.
Then, using Theorem \ref{rig-fin}, we obtain the following as a direct consequence of
Theorems~\ref{main-bi} and~\ref{main-biadd}.

\begin{corollary}\label{main-biW}
Assume $\Lambda$ is $\tau$-tilting finite, and let $\W$ be a wide subcategory of $\module\Lambda$. 
Let $\U$ be a support $\tau$-rigid object in $\C(\W)$.
Then there is a bijection $\E^{\W}_{\U}$ from
$$\{\X \in \ind(\C(\W)) \mid \X \amalg \U \text{ } \tau\text{-rigid}\}
\setminus \ind \U $$ 
to
$$\{\X \in  \ind(\C(J_{\W}(\U)) \mid \text{$\X$ is support $\tau$-rigid in $\C(J_{\W}(\U))$} \}.$$
Furthermore, the map $\E^{\W}_{\U}$ induces a bijection between:
\begin{itemize}
\item[(a)] The set of support $\tau$-rigid objects $\X$ in $\C(\W)$ such that $\X\amalg \U$ is support $\tau$-rigid, with
$\delta(\X) = t$ and $\add \X \cap \add \U=0$, and
\item[(b)] The set of support $\tau$-rigid objects $\X$ in $\C(J_{\W}(\U))$ with $\delta(\X) = t$.
\end{itemize}
\end{corollary}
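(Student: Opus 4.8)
The plan is to deduce the corollary from Theorems~\ref{main-bi} and~\ref{main-biadd} by realising the wide subcategory $\W$ as a genuine module category and transporting all of the relevant data along the resulting equivalence.

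\emph{Step 1: present $\W$ as a module category.} Since $\Lambda$ is $\tau$-tilting finite, Theorem~\ref{rig-fin}(b) yields a support $\tau$-rigid object $\U'$ in $\C(\Lambda)$ with $\W = J(\U')$, and then Theorem~\ref{rig-fin}(a) shows that $\W$ is equivalent, as an abelian category, to $\module\Gamma$ for some finite dimensional algebra $\Gamma$. I fix once and for all such an exact equivalence $F\colon \W \xrightarrow{\sim} \module\Gamma$, and let $n$ be the number of isomorphism classes of simple $\Gamma$-modules. Note that Theorems~\ref{main-bi} and~\ref{main-biadd} apply to $\Gamma$ with no further hypothesis, so that the $\tau$-tilting finiteness of $\Lambda$ is used only to produce $F$.

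\emph{Step 2: transport of structure along $F$.} Because $F$ is exact it preserves projective objects, minimal projective presentations and $\Ext^1$, hence it commutes with the Auslander--Reiten translate: $F\tau_\W \cong \tau_\Gamma F$. It therefore induces an equivalence $\C(\W) \to \C(\Gamma)$, still denoted $F$, sending $\W$ to $\module\Gamma$ and $\W[1]$ to $(\module\Gamma)[1]$, and under this equivalence: support $\tau$-rigid objects of $\C(\W)$ correspond to support $\tau$-rigid objects of $\C(\Gamma)$; an object $\X \amalg \U$ is $\tau$-rigid in $\C(\W)$ if and only if $F\X \amalg F\U$ is $\tau$-rigid in $\C(\Gamma)$; and $F(J_\W(\V)) = J(F\V)$ for every $\tau$-rigid $\V$ in $\C(\W)$, so that $F$ restricts to an equivalence $J_\W(\U) \xrightarrow{\sim} J(F\U)$ and hence to $\C(J_\W(\U)) \xrightarrow{\sim} \C(J(F\U))$. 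Finally $F$ preserves indecomposability, $\ind(-)$, $\add(-)$ and $\delta(-)$; in particular $\delta(F\U) = \delta(\U)$.

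\emph{Step 3: apply the known bijections and pull back.} Write $t' = \delta(\U)$. Applying Theorem~\ref{main-bi} to $\Gamma$ and the support $\tau$-rigid object $F\U$ in $\C(\Gamma)$ gives a bijection $\E_{F\U}$ from $\{\X \in \ind(\C(\Gamma)) \mid \X \amalg F\U \text{ is } \tau\text{-rigid}\} \setminus \ind(F\U)$ to $\{\X \in \ind(\C(J(F\U))) \mid \X \text{ is support } \tau\text{-rigid in } \C(J(F\U))\}$. Define $\E^\W_\U$ as the conjugate $F^{-1} \circ \E_{F\U} \circ F$; by Step~2 its source and target are exactly the two sets in the statement, and it is a bijection. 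For the additive part, apply Theorem~\ref{main-biadd} to $\Gamma$ and $F\U$: for each positive integer $t \le n - t'$ the map $\E_{F\U}$ restricts to a bijection between the support $\tau$-rigid objects $\X$ in $\C(\Gamma)$ with $\X \amalg F\U$ support $\tau$-rigid, $\delta(\X) = t$ and $\add \X \cap \add F\U = 0$, and the support $\tau$-rigid objects $\X$ in $\C(J(F\U))$ with $\delta(\X) = t$. For $t > n - t'$ both of these sets are empty, since a support $\tau$-rigid object over an algebra with $n$ simples has at most $n$ indecomposable summands (ruling out (a), where $\X \amalg F\U$ would have $t + t' > n$ summands) and $J(F\U)$ is a module category with $n - t'$ simples (ruling out (b)); so the displayed bijection holds for every $t$, and conjugating by $F$ transfers it to $\E^\W_\U$.

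\emph{Main obstacle.} The only substantive point, as opposed to bookkeeping, is the naturality in Step~2: that the abstractly defined translate $\tau_\W$ really matches $\tau_\Gamma$ under $F$, and hence that $F$ is compatible with the operators $J(-)$ and with the passage from an abelian category to its associated category $\C(-)$. This rests on the equivalence $\W \simeq \module\Gamma$ being exact. The remaining ingredients — that support $\tau$-rigid objects over an $n$-simple algebra have at most $n$ indecomposable summands, and that $J(-)$ of a support $\tau$-rigid object of rank $t'$ is a module category with $n - t'$ simples — are standard.
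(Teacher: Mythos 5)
Your proposal is correct and is essentially the argument the paper intends: the paper derives this corollary directly from Theorems~\ref{main-bi} and~\ref{main-biadd} by using Theorem~\ref{rig-fin} to realise $\W$ as (a category equivalent to) a module category, which is exactly your Steps 1--3. The only difference is presentational — the paper defines $\E^{\W}_{\U}$ by running the construction of Definition~\ref{d:EUindecomposable} intrinsically inside $\W$ rather than by conjugating with a chosen equivalence $F$, but since that construction is categorical (almost split sequences, torsion functors, approximations are all preserved by an equivalence of abelian categories) the two definitions agree.
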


%We prove Theorems~\ref{main-bi} and~\ref{main-biadd}
%in Section~\ref{bi}. 
Note that $[1]$ always denotes the shift in $D^b(\module\Lambda)$ rather than the shift in $D^b(\W)$ for some wide subcategory $\W$.

The next main ingredient is the following:

\begin{theorem}[Theorem \ref{main-comp}]\label{main-compo}
Assume $\Lambda$ is $\tau$-tilting finite. Let $\U$ and $\V$ be support $\tau$-rigid objects in $\C(\Lambda)$ with no common direct summands, and suppose that $\U\amalg \V$ is support $\tau$-rigid.
Then $\E_{\U}(\V)$ is support $\tau$-rigid in $\C(J(\U))$ and
the following equation holds:
$$J_{J(\U)}(\E_{\U}(\V)) = J(\U \amalg \V).$$
\end{theorem}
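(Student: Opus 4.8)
The plan is to prove the identity $J_{J(\U)}(\E_{\U}(\V)) = J(\U \amalg \V)$ by a double inclusion, using the combinatorial bijections of Theorem~\ref{main-biadd} (equivalently Corollary~\ref{main-biW}) to reduce the claimed equality of wide subcategories to a statement about support $\tau$-rigid objects, which we can then track through the maps $\E_{\U}$. First I would note that, since $\U \amalg \V$ is support $\tau$-rigid, $\V$ is a support $\tau$-rigid object of $\C(\Lambda)$ with $\V \amalg \U$ support $\tau$-rigid and $\add\V \cap \add\U = 0$; so by Theorem~\ref{main-bi}/\ref{main-biadd} the object $\E_{\U}(\V)$ is indeed support $\tau$-rigid in $\C(J(\U))$, giving the first assertion for free. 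It then makes sense to form $J_{J(\U)}(\E_{\U}(\V))$, and this is a wide subcategory of $J(\U)$, hence of $\module\Lambda$; on the other side, $J(\U \amalg \V)$ is also a wide subcategory, and by construction $J(\U\amalg \V) = J(\U) \cap J(\V) \subseteq J(\U)$ (unwinding the definition $J(\U \amalg \V) = J(U \amalg V) \cap (P_\U \amalg P_\V)^\perp$), so both sides live inside $J(\U)$ and it suffices to compare them there.

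The key step is to show the two wide subcategories have the same support $\tau$-rigid objects, and for this I would compare the bijections of Corollary~\ref{main-biW} applied at the two ``levels''. Concretely, apply $\E_{\U}$ to pass from $\C(\Lambda)$ to $\C(J(\U))$: it restricts to a bijection between support $\tau$-rigid objects $\X$ in $\C(\Lambda)$ with $\X\amalg\U$ support $\tau$-rigid (and $\add\X\cap\add\U = 0$) and support $\tau$-rigid objects of $\C(J(\U))$. Under this bijection, one expects that $\X\amalg\V$ being support $\tau$-rigid (with $\add\X\cap\add(\U\amalg\V)=0$) on the source corresponds precisely to $\E_{\U}(\X)\amalg\E_{\U}(\V)$ being support $\tau$-rigid on the target; this ``compatibility of the bijection with adding $\V$'' is really the content that needs proving. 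Granting it, composing the two bijections $\E_{\U\amalg\V}$ (from $\C(\Lambda)$ to $\C(J(\U\amalg\V))$) and the pair $\E_\U$ then $\E_{\E_\U(\V)}$ (from $\C(\Lambda)$ to $\C(J(\U))$ to $\C(J_{J(\U)}(\E_\U(\V)))$) gives that $J(\U\amalg\V)$ and $J_{J(\U)}(\E_\U(\V))$ have the same indecomposable support $\tau$-rigid objects, indeed the same $\tau$-rigid objects of every size. Finally, since a wide subcategory equivalent to a module category is recovered from its (support) $\tau$-rigid objects — e.g.\ via Theorem~\ref{rig-fin}(b): each is of the form $J(\mathcal{Z})$ for the maximal such $\mathcal{Z}$, or simply because a $\tau$-tilting object of a wide subcategory generates it — this forces $J_{J(\U)}(\E_\U(\V)) = J(\U\amalg\V)$.

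As an alternative, more hands-on route in case the functoriality above is awkward to state cleanly, I would instead argue both inclusions directly. For $J(\U\amalg\V)\subseteq J_{J(\U)}(\E_\U(\V))$: take $X \in J(\U\amalg\V)$; since $X\in J(\U)$ we may view $X$ inside the module category $J(\U)$, and we must check $X$ lies in $\E_\U(\V)^\perp \cap {}^\perp(\tau_{J(\U)}\E_\U(\V))\cap J(\U)$. Writing $\V = V\amalg Q[1]$ and $\E_\U(\V) = V'\amalg Q'[1]$, one must relate $\Hom_{J(\U)}(V',X)$ and $\Hom_{J(\U)}(X,\tau_{J(\U)}V')$ to the vanishing conditions $\Hom_\Lambda(V,X)=0$, $\Hom_\Lambda(X,\tau V)=0$, $\Hom_\Lambda(Q,X)=0$ that define $X\in J(\U\amalg\V)$; this is where one uses the explicit description of $\E_\U$ (via Jasso's reduction, as in \cite{jasso}, \cite{bm}), in particular how $\tau_{J(\U)}$ is computed from $\tau$ and how $V'$ is built from $V$ by taking the relevant approximation triangle over $\add\U$. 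The reverse inclusion is symmetric. The \textbf{main obstacle} I anticipate is exactly this bookkeeping: controlling how $\tau$-translates, projective summands, and the $[1]$-shifted parts transform under the reduction $\E_\U$, and making sure the Hom-vanishing conditions match up on the nose — the abstract bijection count tells us the subcategories agree, but turning that into the stated equality requires either a clean naturality statement for $\E$ or a careful triangle chase through Jasso's functor.
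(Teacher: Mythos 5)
Your first assertion (that $\E_{\U}(\V)$ is support $\tau$-rigid in $\C(J(\U))$) and your observation that one can exploit rank counting are both in line with the paper, which indeed deduces the support $\tau$-rigidity from Theorem~\ref{rigid-sums} and reduces the equality to a single inclusion $J(\U\amalg\V)\subseteq J_{J(\U)}(\E_{\U}(\V))$ plus the rank formula $r(J(\T))=n-\delta(\T)$ together with $\tau$-tilting finiteness. However, your first route has a genuine gap: the bijections of Theorem~\ref{main-biadd} give you a \emph{bijection} between the sets of support $\tau$-rigid objects of $J(\U\amalg\V)$ and of $J_{J(\U)}(\E_{\U}(\V))$ (namely the composite $\E^{J(\U)}_{\E_{\U}(\V)}\E_{\U}\circ\E_{\U\amalg\V}^{-1}$), not an \emph{equality} of these sets. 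Since the maps $\E$ involve passing to torsion-free quotients $f_U(-)$ and Bongartz-type approximations, there is no a priori reason this composite is the identity; the assertion that it is, is essentially Theorem~\ref{main-as}, which the paper proves only afterwards and whose proof uses the present theorem. So "same number of $\tau$-rigid objects of every size" cannot be upgraded to "same subcategory" without further input, and your Route 1 is circular as stated.

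Your second route correctly identifies where the real work lies, but leaves it undone and contains one misstep. The content of the paper's proof is precisely the Hom-vanishing bookkeeping you flag as the "main obstacle": for $M\in J(\U\amalg\V)$ one shows $M\in\E_{\U}(\V)^{\perp}\cap{}^{\perp}(\tau_{J(\U)}\E_{\U}(\V))$ by a case analysis ($\V$ a module not in $\Gen U$, a module in $\Gen U$, a shifted projective; $\U$ a module or a shifted projective), using the approximation triangles defining $\E_{\U}$, Lemma~\ref{rigid-rigid}, and the AR-formula, and then handles the mixed case by induction on $r(\module\Lambda)$. None of this is carried out in your proposal. Moreover, the reverse inclusion is \emph{not} "symmetric": the paper never proves it directly, but obtains it from the forward inclusion by realizing $J(\U\amalg\V)$ as $J_{J_{J(\U)}(\E_{\U}(\V))}(\V')$ for some support $\tau$-rigid $\V'$ (this is where $\tau$-tilting finiteness enters, via Theorem~\ref{rig-fin}(b)) and then forcing $\V'=0$ by the rank count. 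You should combine your rank observation from Route 1 with a genuine proof of the single forward inclusion; as written, neither route closes the argument.
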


This has the following direct consequence, using Theorem \ref{rig-fin}.

\begin{corollary}\label{main-compoW}
Assume $\Lambda$ is $\tau$-tilting finite and let 
$\W$ be a wide subcategory of $\module\Lambda$. Let $\U$ and $\V$ be
support $\tau$-rigid objects in $\C(\W)$ with no common direct summands. Then $\E^{\W}_{\U}(\V)$ is support $\tau$-rigid in $\C(J_{\W}(\U))$ and
the following equation holds:
$$J_{J_{\W}(\U)}(\E^{\W}_{\U}(\V)) = J_{\W}(\U \amalg \V).$$
\end{corollary}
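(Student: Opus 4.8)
The plan is to deduce the statement from Theorem~\ref{main-compo} by transporting the entire situation from the wide subcategory $\W$ to an honest module category. Since $\Lambda$ is $\tau$-tilting finite, Theorem~\ref{rig-fin}(b) gives a support $\tau$-rigid object $\SS$ in $\C(\Lambda)$ with $\W=J(\SS)$, and then Theorem~\ref{rig-fin}(a) furnishes an equivalence $F\colon\W\xrightarrow{\ \sim\ }\module\Gamma$ for some finite dimensional algebra $\Gamma$. One first records that $\Gamma$ is again $\tau$-tilting finite: by Theorem~\ref{main-biadd} the support $\tau$-rigid objects of $\C(\W)=\C(J(\SS))$ are in bijection (via $\E_{\SS}$) with the support $\tau$-rigid objects $\X$ in $\C(\Lambda)$ for which $\X\amalg\SS$ is support $\tau$-rigid and $\add\X\cap\add\SS=0$, and there are only finitely many of these because $\Lambda$ is $\tau$-tilting finite; transporting along $F$, $\module\Gamma$ has only finitely many support $\tau$-rigid objects, so $\Gamma$ is $\tau$-tilting finite.

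Next, $F$ extends to an equivalence $\C(\W)\xrightarrow{\ \sim\ }\C(\Gamma)$ carrying the module part of $\C(\W)$ to $\module\Gamma$ and the shifted part $\W[1]$ to $(\module\Gamma)[1]$ (here, per the remark following Corollary~\ref{main-biW}, the shift on the left is that of $D^b(\module\Lambda)$ while on the right it is that of $D^b(\module\Gamma)$). Since the notion of support $\tau$-rigidity is intrinsic to $\W$ — it only involves projectivity in $\W$, the translate $\tau_{\W}$, and $\Hom$-orthogonality inside $\W$ — this equivalence takes support $\tau$-rigid objects to support $\tau$-rigid objects, takes the operation $J_{\W}$ to the operation $J$ computed in $\module\Gamma$, and restricts to an identification of the wide subcategory $J_{\W}(\U)\subseteq\W$ with $J(F\U)\subseteq\module\Gamma$ under which $J_{J_{\W}(\U)}$ corresponds to $J_{J(F\U)}$. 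Moreover, by the way $\E^{\W}_{\U}$ is defined in Corollary~\ref{main-biW} (namely by transporting the maps of Theorems~\ref{main-bi} and~\ref{main-biadd} along $F$), we have $\E^{\W}_{\U}=F^{-1}\circ\E_{F\U}\circ F$ on the relevant sets.

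Now apply Theorem~\ref{main-compo} with $\Lambda$ replaced by $\Gamma$ and $\U,\V$ replaced by $F\U,F\V$: these are support $\tau$-rigid in $\C(\Gamma)$ with no common direct summands, and $F\U\amalg F\V=F(\U\amalg\V)$ is support $\tau$-rigid — this last being exactly the (implicit) hypothesis making $\E^{\W}_{\U}(\V)$ defined. The theorem yields that $\E_{F\U}(F\V)$ is support $\tau$-rigid in $\C(J(F\U))$ and that
$$J_{J(F\U)}\bigl(\E_{F\U}(F\V)\bigr)=J\bigl(F\U\amalg F\V\bigr).$$
Applying $F^{-1}$ and using the identifications of the previous paragraph turns this into the two assertions of the corollary: $\E^{\W}_{\U}(\V)$ is support $\tau$-rigid in $\C(J_{\W}(\U))$, and $J_{J_{\W}(\U)}(\E^{\W}_{\U}(\V))=J_{\W}(\U\amalg\V)$. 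The only real work is the bookkeeping in the second paragraph — checking that the equivalence $\W\simeq\module\Gamma$ propagates to $\C(\W)\simeq\C(\Gamma)$ compatibly with the splitting into module part and shifted part, with the intrinsic notion of support $\tau$-rigidity, and with the constructions $J_{\W}$ and $\E^{\W}_{\U}$; once this is in place, together with the observation that $\Gamma$ is $\tau$-tilting finite, the corollary is a formal consequence of Theorem~\ref{main-compo}.
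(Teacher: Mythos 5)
Your proposal is correct and is essentially the paper's own argument: the authors declare the corollary a ``direct consequence, using Theorem~\ref{rig-fin}'', i.e.\ exactly the transport of Theorem~\ref{main-compo} along the equivalence $\W\simeq\module\Gamma$ that you carry out, with the $\tau$-tilting finiteness of $\Gamma$ supplied by Proposition~\ref{tau-finite}(b). You simply make explicit the bookkeeping (including the implicit hypothesis that $\U\amalg\V$ is support $\tau$-rigid) that the paper leaves to the reader.
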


For a $\tau$-tilting finite algebra $\Lambda$, we can now define $\mathfrak{W}_{\Lambda}$ as follows.
The objects of $\mathfrak{W}_{\Lambda}$ are the wide subcategories of
$\module\Lambda$. Suppose $\W_1$ and $\W_2$ are two such wide
subcategories. If $\W_2\not\subseteq \W_1$, then we set $\Hom(\W_1,\W_2)=\emptyset$. Suppose that $\W_2\subseteq \W_1$.
Then we set 
%\begin{multline*}\Hom(\W_1,\W_2)=  \\ \{ g^{\W_1}_T\,:\,T \text{ is a basic support 
%$\tau$-rigid object
%in $\C(\W_1)$ such that $\W_2=J_{\W_1}(T)$}\},
%\end{multline*}
$$
\Hom(\W_1,\W_2)=  \left\{ g^{\W_1}_T\,\left| \ \ \begin{minipage}{0.5\textwidth} $T$ is a basic support 
$\tau$-rigid object in $\C(\W_1)$ \linebreak and $\W_2=J_{\W_1}(T)$ \end{minipage} \ \ \right. \right\},
$$
where $g_T^{\W_1}$ is a formal symbol associated to $\W_1$ and $T$.
Thus, in general $g_T^{\W}$ is a morphism in $\mathfrak{W}_{\Lambda}$ from
$\W$ to $J_{\W}(T)$.

Suppose that $\W_1$, $\W_2$ and $\W_3$ are wide subcategories of $\Lambda$ and $\W_3\subseteq \W_2\subseteq \W_1$.
Let $a\in \Hom(\W_1,\W_2)$ and $b\in\Hom(\W_2,\W_3)$. Then there are
support $\tau$-rigid objects $\U$ in $\W_1$ and $\overline{\V}$ in $\W_2$ such that $a=g^{\W_1}_{\U}$ and $b=g^{\W_2}_{\overline{\V}}$,
so that $\W_2=J_{\W_1}(\U)$ and $\W_3=J_{\W_2}(\overline{V})$.
By Theorem~\ref{main-biadd}, we can write
$\overline{\V}=\E^{\W_1}_{\U}(\V)$ for some support $\tau$-rigid object $\V$ in
$\C(\W_1)$ such that $\U\amalg \V$ is support$\tau$-rigid and $\add \V\cap \add \U=0$.
Thus, we have $b=g^{\W_2}_{\E^{\W_1}_{\U}(\V)}$.

By Theorem~\ref{main-compoW},
$$J_{W_1}(\U\amalg \V)=J_{J_{\W_1}(\U)}(\E_{\U}^{\W_1}(\V))=
J_{\W_2}(\E_{\U}^{\W_1}(\V))=J_{\W_2}(\overline{\V})=\W_3,$$
so we may define:
$$b\circ a=g^{\W_2}_{\E_{\U}(\V)} \circ g^{\W_1}_{\U} = g^{\W_1}_{\U \amalg \V},$$
since this is a morphism from $\W_1$ to $\W_3$.

For associativity of composition in $\mathfrak{W}_{\Lambda}$ we need the following theorem.

\begin{theorem}[Theorem~\ref{main-asso}]\label{main-as}
Assume $\Lambda$ is $\tau$-tilting finite, and 
let $\U$ and $\V$ be support $\tau$-rigid objects in $\C(\Lambda)$ with no common direct summands. Then $$\E_{\E_{\U}(\V)}^{J(\U)}\E_{\U} = \E_{\U \amalg \V}$$
\end{theorem}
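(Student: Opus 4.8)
The plan is to prove the identity $\E_{\E_\U(\V)}^{J(\U)}\E_\U = \E_{\U\amalg\V}$ of maps by checking that both sides send an arbitrary indecomposable $\X$ with $\X\amalg\U\amalg\V$ $\tau$-rigid in $\C(\Lambda)$ (and $\X$ not a summand of $\U\amalg\V$) to the same object of $\C(J(\U\amalg\V))$. First I would fix such an $\X$ and record the ambient identification $J(\U\amalg\V)=J_{J(\U)}(\E_\U(\V))$ furnished by Theorem~\ref{main-compo}; this is what makes the equation typecheck, since the right-hand side $\E_{\U\amalg\V}(\X)$ lives in $\C(J(\U\amalg\V))$ and the left-hand side, after applying $\E_\U$ and then $\E^{J(\U)}_{\E_\U(\V)}$, lands in $\C\big(J_{J(\U)}(\E_\U(\V))\big)$, which is the same subcategory. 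I would also invoke Theorem~\ref{main-biadd} (or Theorem~\ref{main-bi}) to see that $\E_\U(\X)$ is indeed a legitimate input for $\E^{J(\U)}_{\E_\U(\V)}$: because $\X\amalg\U$ is $\tau$-rigid, $\E_\U(\X)$ is support $\tau$-rigid in $\C(J(\U))$, and because $\X\amalg\U\amalg\V$ is $\tau$-rigid, $\E_\U(\X)\amalg\E_\U(\V)$ is support $\tau$-rigid in $\C(J(\U))$ — this last point needs the bijection of Theorem~\ref{main-biadd} applied to the object $\U$ and the sum $\X\amalg\V$, or, more carefully, the compatibility of $\E_\U$ with direct sums together with the fact that $\add\X\cap\add\V=0$ forces $\add\E_\U(\X)\cap\add\E_\U(\V)=0$.

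The core of the argument is then to trace the definition of the maps $\E$. Both $\E_\U$ and $\E_{\U\amalg\V}$ are defined (in the proof of Theorem~\ref{main-bi}, to which I would appeal) by a concrete recipe: given $\X$, one takes the minimal left approximation of $\X$ (or of the relevant module/shifted-projective pieces) with respect to $\add\U$ (resp.\ $\add(\U\amalg\V)$), forms a cone/cosyzygy, and lands in the appropriate $J$. So the plan is to compute $\E_{\E_\U(\V)}^{J(\U)}\big(\E_\U(\X)\big)$ by composing two such approximation-cone constructions — first approximate $\X$ in $\add\U$, then approximate the result in $\add\E_\U(\V)$ inside $J(\U)$ — and to show the composite is naturally isomorphic to the single approximation-cone construction defining $\E_{\U\amalg\V}(\X)$. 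The key mechanism is a transitivity/octahedral property of left approximations: a minimal left $\add(\U\amalg\V)$-approximation of $\X$ can be built in two stages, first approximating in $\add\U$ and then approximating the cone in the image of $\add\V$ under the first construction, since $\add\E_\U(\V)$ is precisely the "image of $\V$" inside $J(\U)$. I expect this to reduce, via the octahedral axiom in $D^b(\module\Lambda)$, to a diagram chase showing the two cones agree.

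The main obstacle is matching up the approximation theories across the three different ambient categories $\C(\Lambda)$, $\C(J(\U))$, and $\C(J(\U\amalg\V))$, together with their three a priori different Auslander--Reiten translates $\tau$, $\tau_{J(\U)}$, $\tau_{J(\U\amalg\V)}$: one must be sure that "support $\tau$-rigid in $\C(J(\U))$" behaves well under the further passage to $J_{J(\U)}(\E_\U(\V))$, and that the functorial description of $\E$ is genuinely natural rather than just a bijection of sets — otherwise the two-stage composite and the one-stage map could differ by a non-identity automorphism. To handle this I would lean on the explicit construction used to prove Theorem~\ref{main-bi} (so that $\E$ is defined by an actual functorial operation, not merely counted) and on Theorem~\ref{main-compo}, which already certifies that the relevant $J$-subcategories coincide; the remaining work is to push the natural isomorphism of cone constructions through that identification. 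A clean alternative, if the direct diagram chase becomes unwieldy, is to argue by an inductive / cardinality-free uniqueness principle: show that any two "reasonable" bijections from $\tau$-rigid completions of $\U\amalg\V$ to support $\tau$-rigid objects of $\C(J(\U\amalg\V))$ that are compatible with mutation and with direct sums must coincide, and verify both $\E_{\E_\U(\V)}^{J(\U)}\E_\U$ and $\E_{\U\amalg\V}$ satisfy these compatibilities (the direct-sum compatibility being exactly the additive extension in Theorem~\ref{main-biadd} and its iterate via Corollary~\ref{main-compoW}). Either way, once the two sides are shown to agree on indecomposables, additivity (Theorem~\ref{main-biadd}) extends the equality to all support $\tau$-rigid $\X$, completing the proof.
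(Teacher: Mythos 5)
Your overall strategy---verify the identity on each indecomposable $\X$ in the common domain, use Theorem~\ref{main-compo} to identify the target categories, and compare the explicit approximation constructions defining the two sides---is exactly the strategy of the paper. However, as written the plan has substantive gaps. There is no single ``approximation-cone recipe'' to chase: $\E_{\U}(\X)$ is given by genuinely different formulas according to whether $\U$ is a module or a shifted projective, and whether $\X$ is a module outside $\Gen U$, a module inside $\Gen U$, or a shifted projective (Definition~\ref{d:EUindecomposable}); in the latter two cases the answer is $f_U(B_X)[1]$ for a Bongartz-type complement $B_X$, not a cone on an approximation of $\X$ itself. The proof therefore unavoidably splits into a case analysis (the paper's Cases I--IV, each with subcases for $\X$), and the octahedral-axiom step you describe is only one ingredient, used to produce a left approximation $Y_X\to U_X\amalg Y_X'$ comparing the two Bongartz-type complements. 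The genuinely hard point, which your plan does not isolate, is relating approximations taken in $\module\Lambda$ with respect to $\T={}^{\perp}(\tau U\amalg\tau V)$ to approximations taken inside $J(U)$ with respect to $\T'={}^{\perp}(\tau_{J(U)}f_U(V))\cap J(U)$: one needs that $f_U(\T)=\T'$ and that $f_U$ carries left $\T$-approximations to left $\T'$-approximations (the paper's Lemmas~\ref{al1}, \ref{al5} and~\ref{al2}). Without these facts the two-stage and one-stage constructions live in different approximation theories and cannot be compared; your worry about ``non-identity automorphisms'' is, by contrast, a non-issue, since everything is a map on isomorphism classes.

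A second gap: even once the pure cases ($\U$ and $\V$ each a module, or each a shifted projective) are established, the mixed case does not follow formally from them alone. The paper needs an induction on $n=r(\module\Lambda)$, repeatedly invoking the composition formula $J_{J(\U)}(\E_{\U}(\V))=J(\U\amalg\V)$ together with the defining identity $\E_{U\amalg P[1]}=\E^{J(U)}_{\E_U(P[1])}\E_U$ to move summands between the two factors; your plan contains no such reduction. Finally, the fallback ``uniqueness principle'' is not a viable shortcut: verifying that the composite $\E^{J(\U)}_{\E_{\U}(\V)}\E_{\U}$ is compatible with mutation and direct sums in the sense required for such a rigidity statement is essentially equivalent to the identity being proved.
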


The following is then a direct consequence, using Theorem \ref{rig-fin}.

\begin{corollary}\label{main-asW}
Assume $\Lambda$ is $\tau$-tilting finite, and 
let $\W$ be a wide subcategory of $\module\Lambda$. 
Let $\U$ and $\V$ be support $\tau$-rigid objects in $\C(\W)$ with no common direct summands, and suppose that $\U\amalg \V$ is support $\tau$-rigid in $\C(\W)$.
Then $$\E_{\E^{\W}_{\U}(\V)}^{J_{\W}(\U)}\E^{\W}_{\U} = \E^{\W}_{\U \amalg \V}$$
\end{corollary}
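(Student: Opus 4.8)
The plan is to deduce Corollary~\ref{main-asW} from Theorem~\ref{main-as} by a "relativization" argument, exactly parallel to the way Corollaries~\ref{main-biW} and~\ref{main-compoW} were obtained from Theorems~\ref{main-bi},~\ref{main-biadd} and~\ref{main-compo}. The point is that Theorem~\ref{main-as} is stated for the ambient category $\module\Lambda$, but its proof never really used that $\module\Lambda$ was a genuine module category of the given algebra — it only used that it is a $\tau$-tilting finite module category together with the structure of $\C$ of it. So the first step is to invoke Theorem~\ref{rig-fin}(a): since $\W$ is a wide subcategory of $\module\Lambda$, it is itself equivalent to $\module\Gamma$ for a finite dimensional algebra $\Gamma$, and since $\Lambda$ is $\tau$-tilting finite, $\W$ has only finitely many $\tau_\W$-rigid objects (this is implicit in Theorem~\ref{rig-fin}(b) / the finiteness statements underlying the Corollaries already stated), hence $\Gamma$ is $\tau$-tilting finite as well.

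Next I would set up the translation of notation under the equivalence $F\colon \W \xrightarrow{\ \sim\ } \module\Gamma$. This induces an equivalence $\C(\W)\simeq\C(\Gamma)$ carrying support $\tau_\W$-rigid objects to support $\tau$-rigid objects, and, crucially, carrying the operators $\E^{\W}_{\U}$, $J_\W(-)$, $J_{\W}(\U\amalg\V)$ to the corresponding operators $\E_{F\U}$, $J(-)$, $J(F\U\amalg F\V)$ for $\Gamma$. One must check that the construction of $\E$ (via the bijections of Theorem~\ref{main-bi}, which are themselves built from the Bongartz-type completions / the bijections underlying \cite{bm}) is intrinsic to the module category and hence commutes with equivalences; this is the bookkeeping heart of the argument but is conceptually routine once one observes that everything in sight — $\tau$-rigidity, the Hom-orthogonality defining $J$, projectivity, the exchange maps — is preserved by an exact equivalence. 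With that in hand, applying Theorem~\ref{main-as} to $\Gamma$ with the objects $F\U$ and $F\V$ (which have no common summands and whose coproduct is support $\tau$-rigid, since $F$ is an equivalence) gives
$$
\E^{J(F\U)}_{\E_{F\U}(F\V)}\,\E_{F\U} \;=\; \E_{F\U\amalg F\V},
$$
and transporting back along $F$ yields precisely $\E^{J_\W(\U)}_{\E^{\W}_{\U}(\V)}\E^{\W}_{\U} = \E^{\W}_{\U\amalg\V}$, which is the assertion.

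The main obstacle I anticipate is not any deep new idea but the verification that all the relevant operators are genuinely "defined on the module category" and therefore natural with respect to the equivalence $F$ — in particular that $\E_{\U}$, which a priori is defined by a choice-laden construction, in fact depends only on the pair $(\W,\U)$ up to the intrinsic structure, and that $J_\W$ iterated (as in $J_{J_\W(\U)}$) behaves correctly under $F$. One subtlety to watch is the remark in the excerpt that $[1]$ always denotes the shift in $D^b(\module\Lambda)$, never in $D^b(\W)$: the equivalence $\W\simeq\module\Gamma$ sends $C(\W)$ to $\C(\Gamma)$ compatibly, so within the statement of the corollary this causes no clash, but one should state explicitly that $\C(\W)$ is being identified with $\C(\Gamma)$ and all shifts interpreted there. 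Once these compatibilities are recorded — ideally as a single lemma applied uniformly to derive Corollaries~\ref{main-biW},~\ref{main-compoW} and~\ref{main-asW} at once — the corollary follows immediately.
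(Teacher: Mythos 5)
Your proposal is correct and is essentially the paper's own argument: the corollary is deduced from Theorem~\ref{main-as} by noting (via Theorem~\ref{rig-fin} and Proposition~\ref{tau-finite}) that $\W$ is equivalent to the module category of a $\tau$-tilting finite algebra and transporting all the operators along that equivalence. The paper records this only as ``a direct consequence, using Theorem~\ref{rig-fin}''; your write-up simply makes the implicit bookkeeping explicit.
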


We are then in position to prove the following.

\begin{corollary}
The composition operation defined above is associative.
\end{corollary}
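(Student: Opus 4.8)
The plan is to verify associativity directly from the definition of composition, reducing everything to the identity provided by Corollary~\ref{main-asW}. First I would set up notation: take wide subcategories $\W_4\subseteq \W_3\subseteq \W_2\subseteq \W_1$ of $\module\Lambda$ and morphisms $a\in\Hom(\W_1,\W_2)$, $b\in\Hom(\W_2,\W_3)$, $c\in\Hom(\W_3,\W_4)$. By definition of the hom-sets there are support $\tau$-rigid objects $\U$ in $\C(\W_1)$, $\overline{\V}$ in $\C(\W_2)$ and $\overline{\overline{\W}}$ in $\C(\W_3)$ with $a=g^{\W_1}_{\U}$, $b=g^{\W_2}_{\overline{\V}}$, $c=g^{\W_3}_{\overline{\overline{\W}}}$, and $\W_2=J_{\W_1}(\U)$, $\W_3=J_{\W_2}(\overline{\V})$, $\W_4=J_{\W_3}(\overline{\overline{\W}})$.

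Next I would use Corollary~\ref{main-biW}(a) repeatedly to pull these objects back to $\C(\W_1)$: write $\overline{\V}=\E^{\W_1}_{\U}(\V)$ for a unique support $\tau$-rigid $\V$ in $\C(\W_1)$ with $\U\amalg\V$ support $\tau$-rigid and $\add\V\cap\add\U=0$, and then write $\overline{\overline{\W}}=\E^{\W_2}_{\overline{\V}}(\overline{\W}')$ for a support $\tau$-rigid object $\overline{\W}'$ in $\C(\W_2)$, which in turn we write as $\overline{\W}'=\E^{\W_1}_{\U}(\W)$ for a support $\tau$-rigid object $\W$ in $\C(\W_1)$ with $\U\amalg\V\amalg\W$ support $\tau$-rigid and pairwise $\add$-disjoint. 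Unwinding the definition of composition, $b\circ a=g^{\W_1}_{\U\amalg\V}$, and then $c\circ(b\circ a)=g^{\W_1}_{\U\amalg\V\amalg\W}$ — here one must check that the object playing the role of ``$\V$'' in the second composition step is $\W$ together with the identification $\overline{\overline{\W}}=\E^{\W_1}_{\U\amalg\V}(\W)$, which is exactly Corollary~\ref{main-asW} applied with the pair $(\U,\V)$: $\E^{J_{\W_1}(\U)}_{\E^{\W_1}_{\U}(\V)}\E^{\W_1}_{\U}=\E^{\W_1}_{\U\amalg\V}$, so $\E^{\W_2}_{\overline{\V}}(\E^{\W_1}_{\U}(\W))=\E^{\W_1}_{\U\amalg\V}(\W)=\overline{\overline{\W}}$.

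For the other bracketing, $c\circ b=g^{\W_2}_{\overline{\V}\amalg\overline{\W}'}$ by definition, and I would rewrite $\overline{\V}\amalg\overline{\W}'=\E^{\W_1}_{\U}(\V)\amalg\E^{\W_1}_{\U}(\W)=\E^{\W_1}_{\U}(\V\amalg\W)$ using additivity of $\E^{\W_1}_{\U}$ from Corollary~\ref{main-biW}; then $(c\circ b)\circ a=g^{\W_1}_{\U\amalg(\V\amalg\W)}$. Since $\U\amalg\V\amalg\W=\U\amalg(\V\amalg\W)=(\U\amalg\V)\amalg\W$ as objects of $\C(\W_1)$, both bracketings produce the morphism $g^{\W_1}_{\U\amalg\V\amalg\W}$, giving $(c\circ b)\circ a=c\circ(b\circ a)$. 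I would also note the easy boundary checks: if any containment $\W_{i+1}\subseteq\W_i$ fails then the relevant hom-set is empty and associativity is vacuous, and composability forces $\W_4\subseteq\W_3\subseteq\W_2\subseteq\W_1$.

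The main obstacle is bookkeeping: one has to be careful that the ``$\V$'' appearing when we compute $c\circ(b\circ a)$ really is the $\W$ above and that the $\E$-maps used at each level match up, since composition is defined level-by-level and the symbols $\E_{\U}$, $\E^{\W}_{\U}$, $\E^{J(\U)}_{\E_{\U}(\V)}$ all refer to different ambient categories. Corollary~\ref{main-asW} is precisely the compatibility statement that makes these match, so once the identifications via Corollary~\ref{main-biW} are pinned down, the equality of the two bracketings is immediate; the only genuine content is in the three cited corollaries, and the proof here is a diagram-chasing exercise assembling them.
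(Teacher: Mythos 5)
Your proof is correct and follows essentially the same route as the paper: both arguments reduce associativity to the identity $\E_{\E^{\W}_{\U}(\V)}^{J_{\W}(\U)}\E^{\W}_{\U} = \E^{\W}_{\U \amalg \V}$ of Corollary~\ref{main-asW}, together with the compatibility of domains from Corollary~\ref{main-biW}. The only difference is notational — the paper phrases the computation in terms of the inverse bijections $\F^{\W}_{\U}$, whereas you pull all three objects back to $\C(\W_1)$ via the forward maps $\E$ and show both bracketings yield $g^{\W_1}_{\U\amalg\V\amalg\W}$ — but the content is identical.
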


\begin{proof}
For a wide subcategory $\W$ of $\module \Lambda$ and support $\tau$-rigid object $\U$ in 
$\C(\W)$, let $\F^{\W}_{\U}$ denote the inverse of the bijection $\E^{\W}_{\U}$. 

Consider now maps $$\W_1 \xrightarrow{g^{\W_1}_{\U}} \W_2 \xrightarrow{g^{\W_2}_{\V}} \W_3 \xrightarrow{g^{\W_3}_{\W}} \W_4$$ where
$\W_2 = J_{\W_1}(\U)$, $\W_3 = J_{\W_2}(\V)$ and $\W_4 = J_{\W_3}(\W)$. Thus $\U$ is a support $\tau$-rigid object in $\C(\W_1)$,
the object $\V$ is support $\tau$-rigid in $\C(\W_2)$ and $\W$ is a support $\tau$-rigid object in $\C(\W_3)$,
and $\W_4\subseteq \W_3\subseteq \W_2\subseteq \W_1$.

We then have that $g^{\W_2}_{\V} \circ g^{\W_1}_{\U} = g^{\W_1}_{\U \amalg \F^{\W_1}_{\U}(\V)}$ and
$g^{\W_3}_{\W} \circ g^{\W_2}_{\V} = g^{\W_2}_{\V \amalg \F^{\W_2}_{\V}(\W)}$.  
Hence it follows that 
$$g^{\W_3}_{\W} \circ (g^{\W_2}_{\V} \circ g^{\W_1}_{\U}) = g^{\W_1}_{\U \amalg \F^{\W_1}_{\U}(\V) \amalg  \F^{\W_1}_{\U \amalg \F^{\W_1}_{\U}(\V)}(\W)}$$
and that  
$$(g^{\W_3}_{\W} \circ g^{\W_2}_{\V}) \circ g^{\W_1}_{\U} = g^{\W_1}_{\U \amalg \F^{\W_1}_{\U}(\V \amalg \F^{\W_2}_{\V}(\W))} =
g^{\W_1}_{\U \amalg \F^{\W_1}_{\U}(\V) \amalg \F^{\W_1}_{\U}\F^{\W_2}_{\V}(\W)}.$$

It follows from Theorem \ref{main-as} that $$\F^{\W_1}_{\U \amalg \F^{\W_1}_{\U}(\V)}
 = \F^{\W_1}_{\U}\F^{\W_2}_{\V}$$
and the claim follows. 
\end{proof}

Finally, we note that for each wide subcategory $\W$, we can consider
the trivial support $\tau$-rigid object $0$ in $\C(\W)$ which gives rise to a map
$g^{\W}_0:\W\rightarrow \W$. It easy to check that this satisfies the
axioms required for an identity map.
This completes the proof of the main result, Theorem~\ref{main}.

% there is the obvious identity
%map $\id_{\W} \colon \W \to \W$, and that we may consider
%the trivial $\tau$-rigid
%object $0$ in $\C(\W)$ such that $\id_{\W} = f_{0} \colon \W \to \W = J(0)$.

The paper is organized as follows. First, in Section \ref{not}, we give some 
background and notation. In Section \ref{bi} we prove Theorem \ref{main-bi} and Theorem \ref{main-biadd},
while in Section \ref{compo} we deal with Theorem \ref{main-compo}. Sections \ref{as} - 
\ref{s:mixed} are devoted
to the proof of Theorem \ref{main-as}.
In Section~\ref{s:irreducible}, we consider the morphisms in $\mathfrak{W}_\Lambda$ from a wide subcategory to a subcategory of corank 1, and in Section~\ref{s:factorization} we show how to interpret signed $\tau$-exceptional sequences in terms of factorizations of morphisms in $\mathfrak{W}_{\Lambda}$.
We conclude with an example in Section~\ref{examp}.

\section{Background and notation}\label{not}

Let $\P(\Lambda)$ denote the full
subcategory of projective objects in $\module \Lambda$ and
if $\mathsf{X}$ is a subcategory of $\module \Lambda$, let 
$\P(\mathsf{X})$ denote the full subcategory of $\mathsf{X}$ consisting of the Ext-projective
objects in $\mathsf{X}$, i.e. the objects $P$ in $\mathsf{X}$ such that $\Ext^1(P,X)=0$ for all $X\in \mathsf{X}$.

For an object $U$ in an additive category $\C$, let $\add U$ denote
the additive subcategory of $\C$ generated by $U$, i.e. the full subcategory of all direct summands in direct sums of copies of $U$. If $\mathsf{A}$ is abelian, we denote by $\Gen U$ the full subcategory of $\mathsf{A}$ consisting of all objects which are factor objects of objects in $\add U$.
We assume throughout that $\Lambda$ is basic and denote $\delta(\Lambda)$ by $n$.
We now recall notation and definitions of from \cite{air}. 

We consider $\module \Lambda$ as a full subcategory of $D^b(\module \Lambda)$ by regarding a module as a stalk complex concentrated in degree $0$, and we 
consider the full subcategory $\C(\Lambda) = \module \Lambda \amalg 
\module \Lambda [1]$ of $D^b(\module \Lambda)$.
For a module $M$, we denote by $\PP_M$ its minimal projective presentation,
considered as a two-term object in $K^b(\P(\Lambda)) \subseteq D^b(\module \Lambda)$.
Here, a two-term object in $\K$ is a complex of the form 
$$\cdots 0 \to 0 \to P^{-1} \to P^0 \to 0 \to 0 \to \cdots$$

The following summarizes some facts which we will use throughout the paper.

\begin{lemma}\label{rigid-rigid}
Let $U,X$ be in $\module \Lambda$.
\begin{itemize}
\item[(a)] \cite[Lemma 3.4]{air} $\Hom(U, \tau X)= 0$ if and 
only if $\Hom_{\D}(\PP_X, \PP_U[1]) = 0$. In particular, the module $U$ is 
$\tau$-rigid if and only if $\Hom_{\D}(\PP_U, \PP_U[1])= 0$.
\item[(b)] \cite[Theorem 5.10]{aus-sma} $\Hom(U, \tau X)= 0$ if and 
only if $\Ext^1(X, \Gen U) = 0$
\item[(c)] Let $\mathbb{X}$ and $\mathbb{Y}$ be two-term objects in $\K$.
Then $H^0$ induces an \sloppy  epimorphism 
$\Hom_{\K}(\mathbb{X},\mathbb{Y}) \to \Hom(H^0(\mathbb{X}), H^0(\mathbb{Y}))$
with kernel consisting of the maps factoring through $\add \Lambda[1]$.
\end{itemize}
\end{lemma}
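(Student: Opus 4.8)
The plan is to treat the three parts essentially as citations plus a short bridge, since (a) and (b) are literally quoted from the literature and only (c) requires genuine work. For part (a): the statement $\Hom(U,\tau X)=0 \iff \Hom_{\D}(\PP_X,\PP_U[1])=0$ is \cite[Lemma 3.4]{air}, so I would simply recall it, and then specialize to $U=X$ to get the "in particular" clause. For part (b): the equivalence $\Hom(U,\tau X)=0 \iff \Ext^1(X,\Gen U)=0$ is the Auslander--Smalø formula \cite[Theorem 5.10]{aus-sma}; nothing to prove. These two parts are purely bibliographic.

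The substance is part (c). Here I would argue as follows. Let $\mathbb{X}=(X^{-1}\to X^0)$ and $\mathbb{Y}=(Y^{-1}\to Y^0)$ be two-term complexes of projectives. The functor $H^0$ sends $\mathbb{X}$ to $\coker(X^{-1}\to X^0)$ and likewise for $\mathbb{Y}$, and it is additive, so it induces a map $\Hom_{\K}(\mathbb{X},\mathbb{Y})\to\Hom(H^0\mathbb{X},H^0\mathbb{Y})$.

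\emph{Surjectivity.} Given $f\colon H^0\mathbb{X}\to H^0\mathbb{Y}$, note that $X^0\to H^0\mathbb{X}\xrightarrow{f} H^0\mathbb{Y}$ lifts through the epimorphism $Y^0\to H^0\mathbb{Y}$ because $X^0$ is projective; call the lift $f^0\colon X^0\to Y^0$. The composite $X^{-1}\to X^0\xrightarrow{f^0} Y^0\to H^0\mathbb{Y}$ is zero (it factors through $X^{-1}\to H^0\mathbb{X}$, which is zero), so $f^0$ carries the image of $X^{-1}$ into the image of $Y^{-1}$; using projectivity of $X^{-1}$, lift to obtain $f^{-1}\colon X^{-1}\to Y^{-1}$ making the square commute. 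This chain map $(f^{-1},f^0)$ maps to $f$ under $H^0$.

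\emph{The kernel.} Suppose a chain map $(f^{-1},f^0)$ induces the zero map on $H^0$. Then $f^0\colon X^0\to Y^0$ has the property that $X^0\to Y^0\to H^0\mathbb{Y}$ is zero, hence $f^0$ factors as $X^0\xrightarrow{s} Y^{-1}\to Y^0$ for some $s$ (since $X^0$ is projective and $Y^{-1}\to Y^0\to H^0\mathbb{Y}$ is exact in the middle, i.e. $Y^{-1}\twoheadrightarrow \ker(Y^0\to H^0\mathbb{Y})$). Then $s$ is precisely a homotopy witnessing that $(f^{-1},f^0)$ is homotopic to a chain map supported in the $(-1)$-term, i.e. one factoring through $Y^{-1}[1]\in\add\Lambda[1]$; conversely any map factoring through $\add\Lambda[1]$ clearly dies under $H^0$. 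Care is needed to phrase this as "maps (in $\K$) factoring through $\add\Lambda[1]$" rather than "null-homotopic maps", and to check the identification of homotopies with such factorizations; that bookkeeping is the only mildly delicate point, and I would write it out carefully. Everything else is a routine diagram chase using projectivity of $X^{-1}$ and $X^0$.

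Overall I expect no real obstacle: the main risk is merely getting the precise form of the kernel right (maps factoring through $\add\Lambda[1]$, computed in the homotopy category), so I would be explicit that a homotopy $X^0\to Y^{-1}$ is the same datum as a factorization through the stalk complex $Y^{-1}[1]$.
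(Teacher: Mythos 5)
Your treatment of (a) and (b) as pure citations matches the paper exactly: the lemma is presented there as a summary of known facts, and no proof is given for any part, so there is nothing in the paper to compare your argument for (c) against. Your proof of (c) is the standard one and is essentially correct: surjectivity by lifting through the projectivity of $X^0$ and $X^{-1}$, and the kernel computed by producing the homotopy $s\colon X^0\to Y^{-1}$ with $f^0=d_Y s$.

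One small correction in the bookkeeping you flagged yourself: after subtracting the homotopy, the residual chain map $(f^{-1}-sd_X,\,0)$ factors through $X^{-1}[1]$, not through $Y^{-1}[1]$. Explicitly, it is the composite $\mathbb{X}\xrightarrow{(\mathrm{id},0)} X^{-1}[1]\xrightarrow{\bar g}\mathbb{Y}$, where $\bar g$ is given in degree $-1$ by $g^{-1}=f^{-1}-sd_X$; this is a chain map precisely because $d_Y g^{-1}=0$, which follows from $f^0=d_Y s$ and the commutativity of the original square. By contrast, the map $Y^{-1}[1]\to\mathbb{Y}$ given by the identity in degree $-1$ is not a chain map unless $d_Y=0$, so the factorization cannot be routed through $Y^{-1}[1]$ in general. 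With that adjustment the argument is complete.
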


We recall that if $U$ is a $\tau$-rigid module in $\module \Lambda$
then, by \cite[Theorem 5.8]{aus-sma} there is a torsion pair $(\Gen U,U^{\perp})$ in $\module\Lambda$.
We denote the corresponding torsion functors by $t_U:\module\Lambda\rightarrow \Gen U$ and $f_U:\module\Lambda\rightarrow U^{\perp}$.
If $U$ is $\tau$-rigid in $\W$, where $\W$ is a wide subcategory equivalent to a module category, we denote the corresponding torsion functors by $t^{\W}_U$
and $f^{\W}_U$ respectively.

\section{Bijection}\label{bi}
Let $\U$ be a an arbitrary (not necessarily indecomposable) 
support $\tau$-rigid object in $\C(\Lambda)$.
Then $\U = U \amalg P[1]$, where $U$ is a $\tau$-rigid module, $P$ is in $\P(\Lambda)$ and $\Hom(P,U) = 0$.
In this section, we will show that there is a bijection $\E_{\U}$ from the set 
$$\{\X \in \ind(\C(\Lambda)) \mid \X \amalg \U \text{ } \tau\text{-rigid}\} 
\setminus \ind \U$$ 
to $$\{\X \in \ind(\C(J(\U)) \mid \X \text{ } \tau\text{-rigid} \}$$

%In fact, we will Let $\W$ be a wide subcategory of $\module \Lambda$ which is 
%equivalent to a module category, and $\V$ a $\tau$-rigid object in $\C(\W)$.
%Then we will use the notation 
%$\E^{\W}_{\V}$ for the corresponding map. 

Such a map has already been defined in~\cite[Section 4-6]{bm} for the case
$\U$ is either a $\tau$-rigid module or a shift of a projective module,
so we first summarize the construction given there.

\begin{definition} \label{d:EUindecomposable}
Let $\U$ be a support $\tau$-rigid object in $\C(\Lambda)$ which is either
a module or a shift of a projective module. Suppose $\X$ lies in the set
$$\{\X \in \ind(\C(\Lambda)) \mid \X \amalg \U \text{ } \tau\text{-rigid}\} 
\setminus \ind \U.$$
Define $\E_{\U}(\X)$ in the following way.

\noindent \textbf{Case I}: $\U= U$ is a module.

\noindent Case I(a): If $X$ is in $\ind(\module \Lambda)$, $X\amalg U$ is
$\tau$-rigid and $X\not\in \Gen U$, then
$$\E_{U}(X) = f_U(X).$$

\bigskip

\noindent Case I(b): If $X$ is in $\ind(\module \Lambda)$ with $X \amalg U$ $\tau$-rigid and $X$ is in $\Gen U$, then 
$$\E_{U}(X) = f_U(H^0(\RR_X)[1]$$ 
where the triangle 
$$\RR_X \to \PP_{U_X} \to \PP_X \to $$
arises from the completion of the minimal right $\add \PP_{U}$-approximation
$\PP_{U_X} \to \PP_X$ to a triangle.
We have that $\RR_X = \PP_{B_X}$, for an indecomposable
direct summand $B_X$ of the Bongartz complement $B$ of $U$. Then we have 

\begin{itemize} 
\item[(i)] The triangle
\begin{equation}
\label{e:maintriangle}
\PP_{B_X} \to \PP_{U_X} \to \PP_X \to
\end{equation}
where the first map is a minimal left $\add \PP_{U}$-approximation and the second map is a minimal right $\add \PP_{U}$-approximation;
\item[(ii)] The exact sequence obtained from taking the homology of~\eqref{e:maintriangle}:
$$B_X \to U_X \to X \to 0,$$
where the first map is a minimal left $\add U$-approximation and the second
map is a minimal right $\add U$-approximation,
\end{itemize}
and we have $$\E_{U}(X) = f_U (B_X)[1].$$

The object $B_X$ is shown to be in $\P({^{\perp}\tau U})$ and $f_U (B_X)$ is in $\P(J(U))$, so 
$\E_{U}(X)$ is in $\ind \P(J(U))[1]$.

\bigskip

\noindent Case I(c): If $X$ is in $\ind (\P(\Lambda) \cap {^\perp{U}})[1]$, write $X=Q[1]$, with $Q$ in $\ind \P(\Lambda) \cap {^\perp{U}}$.
We have the triangle 
$$\PP_{B_X} \to \PP_{U_X} \to Q[1] \to $$
as in case (b), where the first map is a minimal left $\add \PP_{U}$-approximation and the second map is a minimal right $\add \PP_{U}$-approximation. Taking homology gives the exact sequence
$$Q \to B_X \to U_X \to 0$$
where the first map is a minimal left $\P({^\perp}(\tau U))$-approximation and the second map is a minimal left $\add U$-approximation.

We set
$$\E_{U}(X) = f_U (B_X)[1].$$

\noindent \textbf{Case II}: $\U= P[1]$, where $P$ is a projective module.

\bigskip

\noindent Case II(a): If $X$ is $\tau$-rigid in $\ind(\module \Lambda)$ and $\Hom(P,X) =0$, then set $\E_{P[1]}(X) = X$.

\bigskip

\noindent Case II(b): If $X= Q[1]$ with $Q$ in $(\ind (\P(\Lambda) 
\setminus \ind P)[1]$, then set
$\E_{P[1]}(X) = \E_{P[1]}(Q[1]) = f_P (Q)[1]$.

\end{definition}

\begin{theorem}\label{U-theorem} \cite[Proposition 5.6 and 5.10]{bm}
Let $\U$ be a support $\tau$-rigid object in $\C(\Lambda)$. Then we have the following.
\begin{itemize}
\item[(a)] If $\U=U$ is a module, then $\E_U$ gives a bijection between
\begin{itemize}
\item[(i)]
$$\{X \in \ind(\module \Lambda) \mid X \amalg U \text{ } \tau\text{-rigid, }
X \not \in \Gen U \}$$
and
$$\{X \in \ind(J(U)) \mid \text{$X$ $\tau$-rigid in $J(U)$} \}.$$
\item[(ii)]
 $$\{X \in  \ind(\module \Lambda) \mid  X \amalg U \text{ } \tau\text{-rigid, }
X \in  \Gen U \} \cup \{(\ind \P(\Lambda) \cap {^{\perp}U})[1] \}$$ and $$\{\ind 
\P(J(U))[1] \}.$$
\end{itemize}
\item[(b)] If $\U=P[1]$ is the shift of a projective module, then
$\E_{\U}$ gives a bijection between
$$(\{X \in \ind(\module \Lambda) \mid  X \text{ } \tau\text{-rigid}  \} \cap P^{\perp}) \cup 
(\ind \P(\Lambda)\setminus{\ind P})[1]$$
and
$$\{X \in \ind(J(\U)) \mid X \text{ } \tau\text{-rigid}  \} \cup  \ind(\P(J(\U))[1]$$
(noting that $J(\U)=P^{\perp}$ in this case).
\end{itemize}
\end{theorem}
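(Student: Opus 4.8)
The plan is to go through the cases of Definition~\ref{d:EUindecomposable} one at a time, in each case checking that $\E_\U$ lands in the claimed target set and constructing a two-sided inverse; the theorem is then the assembly of these cases. The main tools I would use are: the torsion pair $(\Gen U, U^\perp)$ with its functors $t_U$ and $f_U$; the reformulation of $\tau$-rigidity via two-term complexes (Lemma~\ref{rigid-rigid}(a),(c)), which converts ``$\X\amalg\U$ is $\tau$-rigid'' into the vanishing of $\Hom_\D$ between the associated two-term objects and thereby lets one pass freely between $\module\Lambda$ and $\C(\Lambda)$; Theorem~\ref{rig-fin}(a), which tells us $J(\U)$ is a wide subcategory equivalent to a module category (so $\Ext^1_{J(\U)}$ is the restriction of $\Ext^1_{\module\Lambda}$, and Lemma~\ref{rigid-rigid}(b) can be applied inside $J(\U)$); and the approximation triangle~\eqref{e:maintriangle} together with its long exact homology sequence.

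For Case~I(a) ($\U=U$ a module, $X$ indecomposable with $X\amalg U$ $\tau$-rigid and $X\notin\Gen U$) I would first check $f_U(X)\in J(U)$: it lies in $U^\perp$ by construction, and since $X\twoheadrightarrow f_U(X)$ and $\Hom(X,\tau U)=0$ we get $\Hom(f_U(X),\tau U)=0$, so $f_U(X)\in{}^\perp(\tau U)$ as well; also $f_U(X)\neq0$ because $X\notin\Gen U$. That $f_U(X)$ is indecomposable and $\tau$-rigid in $J(U)$, and that the assignment is bijective, is then read off from the torsion sequence $0\to t_U(X)\to X\to f_U(X)\to 0$ together with $\Ext^1(X,\Gen U)=0$ (equivalent to $\Hom(U,\tau X)=0$ by Lemma~\ref{rigid-rigid}(b)): the inverse sends a $\tau$-rigid indecomposable $Y\in J(U)$ to the indecomposable $X\notin\Gen U$ with $X\amalg U$ $\tau$-rigid obtained as a Bongartz-type maximal extension of $Y$ by an object of $\Gen U$, and one checks that the two constructions undo one another.

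I would handle Cases~I(b) and~I(c) ($X\in\Gen U$, respectively $X=Q[1]$ with $Q\in\ind\P(\Lambda)\cap{}^\perp U$) together through the triangle $\PP_{B_X}\to\PP_{U_X}\to\PP_X\to$ coming from the $\add\PP_U$-approximations. One identifies $\RR_X=\PP_{B_X}$ with an indecomposable summand of the Bongartz complement, shows $B_X\in\P({}^\perp(\tau U))$, hence that $f_U(B_X)\in\P(J(U))$ is indecomposable, so that $\E_U(X)=f_U(B_X)[1]\in\ind\P(J(U))[1]$. Bijectivity comes from reversing the construction: a given $P'\in\ind\P(J(U))$ lifts through the torsion pair to $B'\in\P({}^\perp(\tau U))$, and the cone of a left $\add\PP_U$-approximation of $\PP_{B'}$ recovers either $\PP_X$ (Case~I(b)) or $Q[1]$ (Case~I(c)), the two being distinguished by whether this cone is a module or the shift of a projective. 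Finally one notes that the two source sets in part~(a) are disjoint and jointly exhaust $\{\X\in\ind\C(\Lambda)\mid\X\amalg U\ \tau\text{-rigid}\}\setminus\ind U$, and likewise for the two target sets, so the two bijections combine into the one asserted.

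Case~II ($\U=P[1]$, $P$ projective) is the easiest. Here $J(\U)=P^\perp$ is already a module category, $\E_{P[1]}$ is the identity on $\tau$-rigid modules lying in $P^\perp$, and on $\ind(\P(\Lambda)\setminus\ind P)[1]$ it is $Q[1]\mapsto f_P(Q)[1]$; one checks that $f_P(Q)$ is indecomposable and Ext-projective in $P^\perp$ and that $Q\mapsto f_P(Q)$ is a bijection onto $\ind\P(P^\perp)$, e.g.\ using the description of $P^\perp$ as the module category of an idempotent quotient. I expect the real obstacle to be the bookkeeping in Cases~I(b) and~I(c): proving that passing to the Bongartz complement and then back down through $f_U$ is genuinely inverse on indecomposables, while keeping the two Auslander--Reiten translations $\tau$ (in $\module\Lambda$) and $\tau_{J(U)}$ (in $J(U)$) carefully separated. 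An alternative route that hides much of this bookkeeping is to recognize the whole statement as an instance of $\tau$-tilting/silting reduction, identifying $\C(J(\U))$ with the reduced two-term homotopy category and $\E_\U$ with the induced reduction functor; but the case-by-case version above stays closer to the explicit definition of $\E_\U$ in Definition~\ref{d:EUindecomposable}.
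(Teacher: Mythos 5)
The first thing to say is that the paper does not prove this theorem at all: it is imported verbatim from \cite[Propositions 5.6 and 5.10]{bm}, and the present paper's ``proof'' is the citation. So what you have written is not an alternative to an internal argument but a reconstruction of the content of the earlier paper. Your outline does follow the same strategy as that source: a case analysis tracking Definition~\ref{d:EUindecomposable}, with the torsion pair $(\Gen U, U^{\perp})$ and Lemma~\ref{rigid-rigid} handling Case~I(a), the approximation triangle and the Bongartz complement handling Cases~I(b),(c), and the idempotent-quotient description of $P^{\perp}$ handling Case~II. The observation that the source and target sets in (a)(i) and (a)(ii) are disjoint and jointly exhaustive, so the two bijections assemble, is also the right way to finish.

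The genuine gap is that everything you defer is the actual content of the theorem. In Case~I(a) you assert that indecomposability and $\tau$-rigidity of $f_U(X)$ are ``read off'' from the torsion sequence, and that the inverse is ``a Bongartz-type maximal extension'' which ``one checks'' undoes $f_U$; but injectivity of $f_U$ on this class of indecomposables (that $f_U(X)\simeq f_U(X')$ forces $X\simeq X'$), indecomposability of $f_U(X)$, and the well-definedness of the proposed inverse are each nontrivial lemmas (they appear as \cite[Lemmas 4.6, 5.6, 5.7]{bm} and are used repeatedly elsewhere in this paper, e.g.\ in Case~I*(b) of Section~\ref{s:caseI}). Likewise in Cases~I(b),(c) the claim that the cone construction ``recovers'' $\PP_X$ or $Q[1]$ from a lift $B'$ of $P'\in\ind\P(J(U))$ presupposes exactly the surjectivity and injectivity statements being proved, and you give no argument that the lift $B'$ through the torsion pair lands in $\P({}^{\perp}(\tau U))$ and is unique. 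As it stands the proposal is a correct map of the proof, not a proof; each ``one checks'' needs to be replaced by the corresponding argument from \cite{bm} (or by the silting-reduction argument you mention in passing, which would indeed subsume the bookkeeping but which you also do not carry out).
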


We now consider the general case, where $\U= U \amalg P[1]$, for modules
$P,U$ with $P$ projective, is an arbitary support $\tau$-rigid object in
$\C(\Lambda)$. Note first that
$$\{\X \in \ind(\C(\Lambda)) \mid \X \amalg \U \text{ } \tau\text{-rigid}\} 
\setminus \ind \U$$
is the union of the sets
$$(\{X \in \ind(\module \Lambda) \mid X \amalg U \text{ } \tau\text{-rigid} \} \cap P^{\perp}) \setminus \ind U$$
and
$$((\ind \P(\Lambda) \cap {^\perp{U}})  \setminus \ind P)[1],$$
and that
\begin{multline*}
\{\X \in \ind(\C(J(\U)) \mid \text{$\X$ support $\tau$-rigid in $\C(J(U))$} \} \\ = \{X \in \ind(J(\U)) \mid \text{$X$ $\tau$-rigid in $J(U)$} \} \cup  \ind(\P(J(\U))[1],
\end{multline*}
so we next analyse the behaviour of $\E_{U}$ when applied to a module
$X\in P^{\perp}$.

\begin{lemma}\label{rest-Lemma}
Let $U$ be a $\tau$-rigid module. Then:
\begin{itemize}
\item[(a)] The map $\E_{U}$ restricts to a bijection between
$$\{X \in  \ind(\module \Lambda) \mid X \amalg U \text{ } \tau\text{-rigid, }
X \not \in \Gen U 
 \} \cap P^{\perp}$$ and
$$\{X \in \ind(J(U)) \mid \text{$X$ $\tau$-rigid in $J(U)$} \}
\cap P^{\perp}$$ 
\item[(b)]The map $\E_U$ restricts to a bijection between 
$$
 \{X \in \ind(\module \Lambda) \mid X \amalg U \text{ } \tau\text{-rigid, }
X \in  \Gen U \} \cap P^{\perp} \\ \cup 
((\ind \P(\Lambda)  \setminus \ind P) \cap {^\perp U})[1]   
$$
and $$\ind(\P(J(U)))[1]  \setminus \ind \E_U(\add P[1]).$$
\item[(c)] The map $\E_{U}$ restricts to a bijection between
$$(\{X \in  \ind(\module \Lambda) \mid  X \amalg U \text{ } \tau\text{-rigid }
 \} \cap P^{\perp})\cup   ((\ind \P(\Lambda) \setminus \ind P) \cap {^\perp U}))[1]  $$  

and
$$(\{X \in  \ind(J(U)) \mid \text{$X$ $\tau$-rigid in $J(U)$} \} \cap P^{\perp} ) \cup
 (\ind \P(J(U))[1] \setminus \ind \E_U(P[1])
$$ 
\end{itemize}
\end{lemma}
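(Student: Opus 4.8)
The plan is to deduce all three statements from Theorem~\ref{U-theorem}(a), which already provides the unrestricted bijections, by checking that the map $\E_U$ and its inverse are compatible with the conditions ``$X\in P^\perp$'' on the source side and the corresponding conditions on the target side. The key observation is that $P$ is a projective module with $\Hom(P,U)=0$, so $P\in {}^\perp(\tau U)$ automatically (since $\Hom(P,\tau U)$ embeds appropriately, or more directly since $\Ext^1(P,-)=0$), hence $P$ itself lies in $J(U)$ and is projective there. Thus $\E_U(P[1])$ is a well-defined object of $\ind\P(J(U))[1]$, and $\add\E_U(P[1]) = \E_U(\add P[1])$ makes sense. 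So the content of the lemma is really: \emph{$\E_U$ and $\F_U:=\E_U^{-1}$ preserve the property of being (co)generated/annihilated in the way encoded by $P^\perp$.}

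First I would prove (a). For $X$ a module with $X\amalg U$ $\tau$-rigid and $X\notin\Gen U$, we have $\E_U(X)=f_U(X)$ by Case~I(a). Since $f_U(X)$ is a quotient (in fact a submodule-type torsion-free part) of $X$ — more precisely it sits in a short exact sequence $0\to t_U(X)\to X\to f_U(X)\to 0$ — any map $P\to f_U(X)$ lifts or restricts along this sequence; using $\Hom(P,U)=0$ and that $t_U(X)\in\Gen U$, one gets $\Hom(P,t_U(X))=0$, so $\Hom(P,X)=0$ iff $\Hom(P,f_U(X))=0$. Actually the cleaner direction: $\Hom(P,t_U(X))=0$ always (as $P$ projective kills nothing but $t_U(X)\in\Gen U$ and $\Hom(P,U)=0$ forces $\Hom(P,\Gen U)=0$), so the sequence gives $\Hom(P,X)\cong\Hom(P,f_U(X))$, proving $X\in P^\perp\iff \E_U(X)\in P^\perp$. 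The inverse direction is symmetric once one recalls how $\F_U$ is built. This establishes (a).

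Next, for (b), the source set is the ``$\Gen U$ / projective-shift'' part intersected with $P^\perp$-type conditions, and by Case~I(b),(c) the map sends such $X$ (resp. $Q[1]$) to $f_U(B_X)[1]$, where $B_X$ is an indecomposable summand of the Bongartz complement. Here the point is to identify precisely which summands $B_X$ arise: by Theorem~\ref{U-theorem}(a)(ii), $\E_U$ gives \emph{all} of $\ind\P(J(U))[1]$, and the summands coming from $X\in\Gen U$ or from $Q[1]$ with $Q\in\P(\Lambda)\setminus\ind P$ are exactly those \emph{other than} the ones $\E_U$ assigns to the summands of $P[1]$ itself. So the restriction statement in (b) is essentially a bookkeeping identity: remove $\ind\E_U(\add P[1])$ from $\ind\P(J(U))[1]$ and from the source remove the preimages, which are precisely $\add P[1]$, i.e. remove $\ind P$ from the projective shifts — matching the written source set. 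I would spell this out by noting $\E_U(\add P[1])\subseteq\ind\P(J(U))[1]$ and that these are disjoint from the images of the genuine $\Gen U$-classes, using injectivity from Theorem~\ref{U-theorem}(a). Then (c) follows immediately by taking the disjoint union of (a) and (b), since the source in (c) is the union of the two sources and the target in (c) is the union of the two targets, and all the unions are disjoint.

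The main obstacle I anticipate is part (b): making rigorous the claim that the summands $B_X$ produced for $X\in\Gen U\cap P^\perp$ (and for $Q[1]$ with $Q\notin\add P$) are exactly the complement of $\E_U(\add P[1])$ inside $\ind\P(J(U))[1]$. This requires knowing that $\E_U(P[1])$ is computed by the \emph{same} triangle machinery (the minimal $\add\PP_U$-approximation triangle) as in Cases~I(b),(c) — essentially that $P[1]$, viewed as a projective shift lying in ${}^\perp U$, is handled by Case~I(c) with $Q=P$ and $B_P$ the corresponding Bongartz-complement summand — and that the assignment $X\mapsto B_X$ (or $Q\mapsto B_Q$) is injective with image all of $\ind B$, so that $f_U(-)[1]$ transports this to a bijection onto $\ind\P(J(U))[1]$. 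Once that correspondence $B_X\leftrightarrow$ summands of the Bongartz complement is in hand (it is asserted in Definition~\ref{d:EUindecomposable}), the exclusion ``$\setminus\ind\E_U(\add P[1])$'' on the target matches ``$\setminus\ind P$'' on the source, and the proof of (b) closes. All the $\Hom(P,-)$ computations needed for (a) are routine given $\Hom(P,U)=0$ and projectivity of $P$.
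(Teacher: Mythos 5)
Your proposal is correct and follows essentially the same route as the paper: part (a) via the isomorphism $\Hom(P,X)\cong\Hom(P,f_U(X))$ obtained from $\Hom(P,\Gen U)=0$ and the projectivity of $P$, part (b) as bookkeeping on the bijection of Theorem~\ref{U-theorem}(a)(ii) (where the $P^{\perp}$ condition is vacuous on the $\Gen U$ part), and part (c) by combining (a) and (b). The concerns you raise about part (b) are already resolved by the facts you cite (Definition~\ref{d:EUindecomposable}, Case I(c) applied to $P$, and the injectivity supplied by Theorem~\ref{U-theorem}), so there is no gap.
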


\begin{proof}
\noindent (a) Let $X$ be in $\ind(\module \Lambda)$ with $X \amalg U$ a $\tau$-rigid module, and such that
$X$ is not in $\Gen U$. Then $\E_U(X) = f_U(X)$.
Since $\Hom(P,U) = 0$, we also have $\Hom(P,\Gen U) = 0$ and in particular $\Hom(P,t_U(X)) = 0$. Since $P$ is projective, it then follows that 
$\Hom(P,X) \simeq \Hom(P, f_U (X)) =
\Hom(P, \E_U(X))$. 
Hence the claim follows, using 
Theorem~\ref{U-theorem}(a).

\noindent (b) Since $\Hom(P,U) = 0$, we have $\Hom(P, \Gen U) = 0$, and the 
claim follows \sloppy Theorem~\ref{U-theorem}(b).

\noindent (c) The claim follows directly from combining (a) and (b).
\end{proof}

If $\W$ is a wide subcategory of $\module\Lambda$ which is equivalent
to a module category, and $\U$ is a support $\tau$-rigid object in $\C(\W)$ which is either a module or the shift of a projective object in $\W$,
then we denote by $\E^{\W}_{\U}$ the map corresponding to that defined in Definition~\ref{d:EUindecomposable}.

Note that $\E_U(P[1]) = P[1]$, so we have the map
$$\E_{P[1]}^{J(U)} = \E_{\E_U(P[1])}^{J(U)}.$$

\begin{lemma}\label{comp}
Let $U$ be a $\tau$-rigid module.
Then the set $$(\{X \in  \ind(J(U)) \mid \text{$X$ $\tau$-rigid in $J(U)$} \} \cap P^{\perp} ) \cup
 (\ind \P(J(U))[1] \setminus \ind \E_U(P[1])
$$ 
is the domain of $\E^{J(U)}_{\E_U(P[1])}$.
\end{lemma}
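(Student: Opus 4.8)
The plan is to identify the map $\E^{J(U)}_{\E_U(P[1])}$ with an instance of the construction of Definition~\ref{d:EUindecomposable}, carried out inside the module category $J(U)$ and in its ``shift of a projective'' form (Case~II), and then to read off its domain directly from Theorem~\ref{U-theorem}(b). Since $J(U)$ is equivalent to a module category (Theorem~\ref{rig-fin}(a), i.e.\ Jasso's theorem), the whole setup of Definition~\ref{d:EUindecomposable} and Theorem~\ref{U-theorem} transfers verbatim with $\module\Lambda$ replaced by $\W=J(U)$, the subcategory $\P(\Lambda)$ of projectives replaced by $\P(J(U))$, and $\tau$ replaced by the Auslander-Reiten translate $\tau_{J(U)}$ of $J(U)$. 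By the observation made just before the lemma, $\E_U(P[1])=P[1]$ with $P$ (the image of) a projective object of $J(U)$, and $\E^{J(U)}_{\E_U(P[1])}=\E^{J(U)}_{P[1]}$; hence this map is exactly the Case~II map of Definition~\ref{d:EUindecomposable} for the module category $J(U)$ and the shift $P[1]$ of its projective $P$.

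Applying Theorem~\ref{U-theorem}(b) with $J(U)$ in place of $\module\Lambda$ then shows that the domain of $\E^{J(U)}_{P[1]}$ is
$$\bigl(\{X\in\ind(J(U))\mid X\text{ is }\tau\text{-rigid in }J(U)\}\cap P^{\perp}\bigr)\cup\bigl(\ind\P(J(U))\setminus\ind P\bigr)[1].$$
Here the condition $X\in P^{\perp}$ is unambiguous: since $J(U)$ is a full subcategory, $\Hom_{J(U)}(P,-)$ agrees with $\Hom(P,-)$, so this is the same as requiring $\Hom(P,X)=0$, and in the transferred Theorem~\ref{U-theorem}(b) the role of $J(\U)$ is played by $P^{\perp}\cap J(U)$.

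It then remains to match this with the set in the statement. The first summands are literally the same. For the second summands, the identity $\E_U(P[1])=P[1]$ gives $\ind\E_U(P[1])=(\ind P)[1]$, and since the shift $[1]$ is a bijection on isomorphism classes of indecomposables it commutes with the set difference, so $\bigl(\ind\P(J(U))\setminus\ind P\bigr)[1]=\ind\P(J(U))[1]\setminus\ind\E_U(P[1])$; hence the two unions coincide, as asserted. There is no genuine obstacle beyond this bookkeeping: the one step requiring care is the first, namely that $\E_U(P[1])$ really is the shift of a projective object of the module category $J(U)$ — which is precisely what is recorded by the identity $\E_U(P[1])=P[1]$ together with the inclusion $\E_U(\add P[1])\subseteq\ind\P(J(U))[1]$ already used in Lemma~\ref{rest-Lemma}(b) — so that Theorem~\ref{U-theorem}(b) is applicable inside $J(U)$.
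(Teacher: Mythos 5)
Your overall strategy --- transfer Definition~\ref{d:EUindecomposable} and Theorem~\ref{U-theorem} to the module category $J(U)$, recognise $\E^{J(U)}_{\E_U(P[1])}$ as a Case~II map there, and read off its domain --- is the right framework, and it is how the paper sets the lemma up. However, there is a genuine gap at the central step: the identity $\E_U(P[1])=P[1]$ on which everything else rests is false in general. By Definition~\ref{d:EUindecomposable}, Case~I(c), one has $\E_U(P[1])=Q[1]$ where $Q=f_U(Y_P)$ and $P\to Y_P$ is a minimal left ${}^{\perp}(\tau U)$-approximation; this $Q$ is a projective object of $J(U)$, but it is in general not isomorphic to $P$, and $P$ itself need not even lie in $J(U)$ (support $\tau$-rigidity gives $\Hom(P,U)=0$, not $\Hom(U,P)=0$ or $\Hom(P,\tau U)=0$). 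For example, for $\Lambda=kA_2$ with quiver $1\to 2$, $U=S_1$ and $P=P_2=S_2$, one computes $\E_U(P[1])=P_1[1]\neq P_2[1]$. The sentence ``Note that $\E_U(P[1])=P[1]$'' appearing in the text just before the lemma is inconsistent with Case~I(c) and with the proof of the lemma itself (which introduces $Q$ with $\E_U(P[1])=Q[1]$ and works to compare $P^{\perp}$ with $Q^{\perp}$), so it should not be taken at face value.

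Once this is corrected, your transfer of Theorem~\ref{U-theorem}(b) gives that the domain of $\E^{J(U)}_{\E_U(P[1])}=\E^{J(U)}_{Q[1]}$ is
$$\bigl(\{X\in\ind(J(U))\mid X\ \tau\text{-rigid in }J(U)\}\cap Q^{\perp}\bigr)\cup\bigl(\ind\P(J(U))\setminus\ind Q\bigr)[1],$$
and matching this with the set in the statement is no longer bookkeeping: one must prove that $J(U)\cap P^{\perp}=J(U)\cap Q^{\perp}$. That equality is the entire substance of the lemma. The paper establishes it by applying $\Hom(-,M)$ to the exact sequence $P\to Y_P\to U_P\to 0$ for the inclusion $\subseteq$, and for the reverse inclusion by combining the canonical sequence $0\to t_U(Y_P)\to Y_P\to Q\to 0$ with an argument on projective presentations using the triangle $\PP_{U_P}[-1]\to P\to \PP_{Y_P}\to\PP_{U_P}$ and Lemma~\ref{rigid-rigid}. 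Your proposal contains none of this, so the gap is essential rather than cosmetic.
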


\begin{proof}
Let $Q$ in $\P(J(U))$ be such that $\E_U(P[1]) = Q[1]$.
Recall (see Definition~\ref{d:EUindecomposable}, Case I(c))
that $Q = f_U(Y_P)$,
where $P \to Y_P$ is a minimal left ${^\perp (\tau U)}$-approximation,
and there is an exact sequence
\begin{equation}\label{seq1}
P \to Y_P \to U_P \to 0
\end{equation}
with $U_P$ in $\add U$.

We claim that 
\begin{equation}\label{P-Q}
J(U) \cap P^{\perp} = J(U) \cap Q^{\perp}.
\end{equation}
It is clear by the definition of $\E^{J(U)}_{\E_U(P[1])}$ that the 
assertion of the lemma follows from this claim.

In order to prove the claim, let $M$ be in $J(U) \cap P^{\perp}$ and apply 
the right exact functor $\Hom(\ ,M)$ to the sequence \eqref{seq1}, to obtain the exact sequence
$$0 \to \Hom(U_P, M) \to \Hom(Y_P,M) \to \Hom(P,M).$$
 We have by assumption that $\Hom(U_P, M) =0 = \Hom(P,M)$, and hence \sloppy also  
$\Hom(Y_P,M) =0$. It then follows that $\Hom(Q,M) = 0$, since there is
an epimorphism $Y_P \to f_U (Y_P) = Q$. So we have $J(U) \cap P^{\perp} \subseteq 
J(U) \cap Q^{\perp}$.

Conversely, suppose $M$ is in $J(U) \cap Q^{\perp}$. Consider the canonical sequence
$$0 \to t_U(Y_P) \to Y_P \to f_U (Y_P) (= Q) \to 0$$
for $Y_P$, and apply $\Hom(\ , M)$ to obtain the exact sequence
$$0\to \Hom(Q,M) \to \Hom(Y_P,M) \to \Hom(t_U (Y_P),M)$$
We have by assumption $\Hom(Q,M) = 0$, and $\Hom(t_U (Y_P),M) = 0$, since $t_U (Y_P)$ is in 
$\Gen U$ and $M$ is in $U^{\perp}$. Hence, also $\Hom(Y_P,M) = 0$.
By Lemma \ref{rigid-rigid}, we then have 
\begin{equation}\label{eq-1}
\Hom(\PP_{Y_P},\PP_M)/(\add \Lambda[1]) = 0.
\end{equation}
We have the following triangle (from the computation of $\E_U(P[1]) = Q[1]$;
see Definition~\ref{d:EUindecomposable}, Case I(c)).
\begin{equation}\label{tria-1}
\PP_{U_P}[-1] \to P \to \PP_{Y_P} \to \PP_{U_P}
\end{equation}
Now let $\alpha \colon P \to \PP_M$ be arbitrary. Since $M$ is in $J(U)$, we have
$\Hom(M, \tau U)$= 0 and so by Lemma \ref{rigid-rigid}, we have $\Hom(\PP_{U_P}, \PP_M[1]) = 0$.
Applying $\Hom(\ , \PP_M)$ to the triangle~\eqref{tria-1}, we obtain that 
$\Hom(\PP_{Y_P}, \PP_M) \to \Hom(P,\PP_M)$ is surjective, and hence that $\alpha$
factors through a map $\PP_{Y_P} \to \PP_M$ and hence through $\Lambda[1]$ by
\eqref{eq-1}.
We have $\Hom(P,\Lambda[1]) = 0$ and hence we obtain $\Hom(P, \PP_M) = 0$.
So we have $J(U) \cap Q^{\perp} \subseteq 
J(U) \cap P^{\perp}$, and this finishes the proof of the claim that 
$J(U) \cap Q^{\perp} =
J(U) \cap P^{\perp}$, and hence the proof of the lemma.
\end{proof}

By Lemmas~\ref{rest-Lemma}(c) and~\ref{comp}, the composition
$\E_{\E_U(P[1])}^{J(U)} \E_U$ is a well-defined map
with domain $$\{X \in  \ind(\C(\Lambda)) \mid  X \amalg \U \text{ } \tau\text{-rigid}\} \setminus \ind \U.$$ 
We make the following definition:

\begin{definition}
Let $U$ and $P$ be modules such that
$\U = U \amalg P[1]$ is a support $\tau$-rigid object in $\C(\Lambda)$.
We set $\E_{\U} \colon = \E_{\E_U(P[1])}^{J(U)} \E_U$.
\end{definition}

We can now prove the main result of this section.

\begin{theorem}\label{thm-bi}
Let $\U = U \amalg P[1]$ be a support $\tau$-rigid object in $\C(\Lambda)$.
Then the map $\E_{\U}$ is a bijection between the sets
$$\{\X \in  \ind(\C(\Lambda)) \mid  \X \amalg \U \text{ } \tau\text{-rigid}\}
\setminus \ind \U$$ 
and $$\{\X \in \ind(\C(J(\U))) \mid \text{$\X$ support $\tau$-rigid in $\C(J(\U))$} \}.$$
\end{theorem}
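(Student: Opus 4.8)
The plan is to reduce the general case $\U = U \amalg P[1]$ to the two already-established special cases (Theorem~\ref{U-theorem}) by exhibiting $\E_{\U}$ as a composite of two bijections, which is precisely the content of the definition $\E_{\U} = \E_{\E_U(P[1])}^{J(U)} \E_U$. First I would recall the set-theoretic decompositions displayed just before the statement: the domain $\{\X \in \ind(\C(\Lambda)) \mid \X \amalg \U \text{ $\tau$-rigid}\}\setminus \ind \U$ is the union of $(\{X \in \ind(\module\Lambda)\mid X\amalg U \text{ $\tau$-rigid}\}\cap P^{\perp})\setminus \ind U$ and $((\ind\P(\Lambda)\cap{}^{\perp}U)\setminus\ind P)[1]$, while the target $\{\X\in\ind(\C(J(\U)))\mid \X \text{ support $\tau$-rigid in }\C(J(\U))\}$ decomposes as $\{X\in\ind(J(\U))\mid X \text{ $\tau$-rigid in }J(U)\}\cup\ind(\P(J(\U)))[1]$, bearing in mind that $J(\U) = J(U)\cap P^{\perp}$.

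Next I would invoke Lemma~\ref{rest-Lemma}(c): this says $\E_U$ restricts to a bijection from the above domain onto the set
$$
(\{X\in\ind(J(U))\mid X \text{ $\tau$-rigid in }J(U)\}\cap P^{\perp})\cup(\ind\P(J(U))[1]\setminus\ind\E_U(P[1])).
$$
Then I would invoke Lemma~\ref{comp}, which identifies exactly this set as the domain of $\E^{J(U)}_{\E_U(P[1])}$. Since $\E_U(P[1]) = Q[1]$ is the shift of a projective object in $J(U)$ (this was recorded right after Lemma~\ref{rest-Lemma}, with $Q = f_U(Y_P)\in\P(J(U))$), Theorem~\ref{U-theorem}(b), applied inside the module category $J(U)$ in place of $\module\Lambda$, tells us that $\E^{J(U)}_{Q[1]}$ is a bijection from its domain onto
$$
\{X\in\ind(J(U)\cap Q^{\perp})\mid X \text{ $\tau$-rigid in }J(U)\cap Q^{\perp}\}\cup\ind(\P(J(U)\cap Q^{\perp}))[1].
$$
Using the equality $J(U)\cap Q^{\perp} = J(U)\cap P^{\perp} = J(\U)$ established in the proof of Lemma~\ref{comp} (equation~\eqref{P-Q}), this target is exactly $\{X\in\ind(J(\U))\mid X \text{ $\tau$-rigid in }J(U)\}\cup\ind(\P(J(\U)))[1]$, which matches the claimed codomain. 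Composing the two bijections $\E_U$ and $\E^{J(U)}_{\E_U(P[1])}$ then yields that $\E_{\U}$ is a bijection between the two sets in the statement.

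The main obstacle is making sure that Theorem~\ref{U-theorem}(b) may legitimately be applied one level down, inside $J(U)$: one needs $J(U)$ to be (equivalent to) a genuine module category of a finite dimensional algebra so that the notions of $\tau$-rigid object, projective object, and the translate $\tau_{J(U)}$ all make sense there, and one needs $Q[1] = \E_U(P[1])[1]$ — sorry, $\E_U(P[1]) = Q[1]$ with $Q$ projective in $J(U)$ — to really be the shift of a projective object of $J(U)$; both of these are supplied by Theorem~\ref{rig-fin}(a) and by the construction in Definition~\ref{d:EUindecomposable}, Case I(c), respectively. A secondary point requiring care is the bookkeeping of the various subsets (the $\Gen U$ versus non-$\Gen U$ dichotomy, and which shifted projectives get removed, namely $\ind\E_U(P[1])$), but this is entirely handled by the careful statements of Lemmas~\ref{rest-Lemma} and~\ref{comp}, so once those are in hand the proof is a short assembly.
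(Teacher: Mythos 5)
Your proposal is correct and follows essentially the same route as the paper's proof: factor $\E_{\U}$ as $\E^{J(U)}_{\E_U(P[1])}\E_U$, use Lemma~\ref{rest-Lemma}(c) together with the identity $J(U)\cap Q^{\perp}=J(U)\cap P^{\perp}$ from Lemma~\ref{comp} to match the image of the first bijection with the domain of the second, and then apply the shifted-projective case (Theorem~\ref{U-theorem}(b)/Case II) inside $J(U)$. No gaps.
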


\begin{proof}
First note that if $P = 0$ or $U=0$, this is proved in \cite[Proposition 5.6 and 5.10]{bm}.

Using Lemma \ref{rest-Lemma}(c) and \eqref{P-Q} and the fact that
$\E_{U}(P[1]) = Q[1]$, we have that $\E_U$ restricts to a bijection
between
$$(\{X \in  \ind(\module \Lambda) \mid  X \amalg U \text{ } \tau\text{-rigid}
 \} \cap P^{\perp}) \cup   ((\ind \P(\Lambda) \setminus \ind P) \cap {^\perp U})[1]    $$  
and
$$(\{X \in \ind(J(U)) \mid X \text{ } \tau\text{-rigid}  \} \cap Q^{\perp})  \cup
 (\ind \P(J(U)) \setminus \ind Q)[1] \}.
$$ 

The target of this map is the domain of $\E_{Q[1]}^{J(U)} = \E_{\E_U(P[1])}^{J(U)}$.
Moreover (see Case II in Definition~\ref{d:EUindecomposable}), the map
$\E_{Q[1]}^{J(U)}$ gives a bijection between
%\begin{multline*} 
$$
\{\{X \in  \ind(J(U)) \mid X \text{ } \tau\text{-rigid} \} \cap Q^{\perp} \} \cup  (\ind \P(J(U))) \setminus \ind Q[1] 
$$
%\end{multline*}
and
$$\{X \in  \ind(J(U)) \mid X \text{ } \tau\text{-rigid} \} \cup  \ind(\P(J(\U))[1].$$

This finishes the proof of the claim.
\end{proof}

Note that we have so far only defined $\E_{\U}(\X)$ for an object $\X$ in the set
$$\{\X \in \ind(\C(\Lambda)) \mid \X \amalg \U \text{ } \tau\text{-rigid}\} 
\setminus \ind \U.$$
However, we will also need to consider $\E_{\U}$ as a map from the set of
all basic objects $\X$ (not necessarily indecomposable) in $\C(\Lambda)$
such that $\X \amalg \U$ is support $\tau$-rigid and $\add \X\cap \add \U=0$,
to the set of all support $\tau$-rigid objects in $\C(J(\U))$.
So for such $\X = \X_1 \amalg \cdots \amalg \X_t$ where the $\X_i$ are 
indecomposable, we define
$\E_{\U}(\X) = \E_{\U}(\X_1) \amalg \cdots \amalg \E_{\U}(\X_t)$.
 
\begin{theorem}\label{rigid-sums}
Let $\U$ be a support $\tau$-rigid object in $\C(\Lambda)$ with $\delta(\U) = t'$.
For any positive integer $t \leq n- t'$, 
the map $\E_{\U}$ induces a bijection between the set of basic support $\tau$-rigid objects $\X$ in $\C(\Lambda)$ such that
$\delta(X) = t$, with $\X \amalg \U$ support $\tau$-rigid and $\add \X \cap \add \U=0$, and the set of basic support $\tau$-rigid objects $\Y$ in $\C(J(\U))$ with $\delta(\Y)=t$.
\end{theorem}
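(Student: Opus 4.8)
The plan is to upgrade Theorem~\ref{thm-bi} from indecomposables to sums, the essential point being that support $\tau$-rigidity is a \emph{pairwise} property: a basic object $\Z=\Z_1\amalg\cdots\amalg\Z_r$ of $\C(\Lambda)$ — or of $\C(\W)$ for a wide subcategory $\W$ equivalent to a module category — is support $\tau$-rigid if and only if $\Z_i\amalg\Z_j$ is support $\tau$-rigid for all $i,j$. This is immediate from the definition, since a direct sum is projective iff each summand is, $\Hom(\bigoplus_i Q_i,\bigoplus_j Z_j)=0$ iff each $\Hom(Q_i,Z_j)=0$, and $\Hom(Z,\tau Z)=\bigoplus_{i,j}\Hom(Z_i,\tau Z_j)$; in particular every indecomposable support $\tau$-rigid object is a module or the shift of a projective. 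Granting this, a basic object $\X=\X_1\amalg\cdots\amalg\X_t$ belongs to the first set exactly when $\X_1,\dots,\X_t$ are pairwise non-isomorphic objects of the domain of $\E_\U$ (as in Theorem~\ref{thm-bi}) with $\X_i\amalg\X_j\amalg\U$ support $\tau$-rigid for all $i\neq j$, and a basic $\Y$ belongs to the second set exactly when $\Y_1,\dots,\Y_t$ are pairwise non-isomorphic support $\tau$-rigid objects of $\C(J(\U))$ with $\Y_i\amalg\Y_j$ support $\tau$-rigid in $\C(J(\U))$ for all $i\neq j$. As the additive extension of $\E_\U$ restricts, by Theorem~\ref{thm-bi}, to an injective bijection between the underlying indecomposables, it preserves $\delta$ automatically; it is therefore enough to show that $\E_\U$ transports pairwise compatibility in \emph{both} directions, i.e.\ that for all distinct indecomposables $\X_1,\X_2$ in the domain of $\E_\U$, the object $\X_1\amalg\X_2\amalg\U$ is support $\tau$-rigid if and only if $\E_\U(\X_1)\amalg\E_\U(\X_2)$ is support $\tau$-rigid in $\C(J(\U))$. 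Both implications are needed, as in the absence of $\tau$-tilting finiteness there is no counting argument to fall back on.

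To establish this pairwise equivalence I would exploit the defining factorization $\E_\U=\E^{J(U)}_{Q[1]}\circ\E_U$, where $Q[1]=\E_U(P[1])$, together with the domain--codomain matching already set up in Lemmas~\ref{rest-Lemma} and~\ref{comp} and used in the proof of Theorem~\ref{thm-bi}: applying the module case to $\X_1\amalg\X_2\amalg P[1]$ moves the problem into $\C(J(U))$, by~\eqref{P-Q} the image lies in the domain of $\E^{J(U)}_{Q[1]}$, and applying the projective-shift case completes the passage to $\C(J(\U))$, with the converse obtained by running this backwards and using bijectivity. This reduces the assertion for an arbitrary $\U=U\amalg P[1]$ to the two elementary cases in which $\U$ is a module, respectively the shift of a projective; and in each of those, by the pairwise reduction of the previous paragraph applied now inside $J(U)$, it suffices to treat distinct indecomposables $\X_1,\X_2$ (either of which may itself be a shifted projective). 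There one rewrites both conditions, via Lemma~\ref{rigid-rigid}(a),(c), as the simultaneous vanishing of $\Hom_\D(\PP_{\X_1},\PP_{\X_2}[1])$ and $\Hom_\D(\PP_{\X_2},\PP_{\X_1}[1])$ (with the obvious substitutions when some $\X_i$ is a shifted projective), and then chases the triangles~\eqref{e:maintriangle} defining $\E_U$, together with the torsion-functor manipulations of Lemma~\ref{comp}, to identify these groups with the corresponding $\Hom$-groups computed over $J(U)$; the $\add\PP_U$- and $\add P[1]$-components drop out precisely because $\X_1\amalg\X_2\amalg\U$ is support $\tau$-rigid, which is what pins down the approximation maps in those triangles. (An alternative would be to reduce first to the case $\delta(\X)=n-t'$ by Bongartz completion inside $\C(\Lambda)$ and $\C(J(\U))$, where the statement becomes a form of Jasso's reduction theorem~\cite{jasso}; but this only relocates the same difficulty.)

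The step I expect to be the main obstacle is this last identification of $\Hom$-groups in the elementary module case when \emph{both} $\X_1$ and $\X_2$ lie in $\Gen U$, so that $\E_U(\X_i)=f_U(B_{\X_i})[1]$ is built from a Bongartz-type complement through the triangle~\eqref{e:maintriangle}: one has to show that the two such triangles, used simultaneously, neither create nor destroy a nonzero map between the shifted presentations, and that a map in $\Hom_\D(\PP_{\X_1},\PP_{\X_2}[1])$ corresponds \emph{bijectively}, and not just injectively or surjectively, to one over $J(U)$. Arranging the argument so that it covers uniformly every combination of Cases~I(a), I(b), I(c) and~II into which $\X_1$ and $\X_2$ may fall is where the bookkeeping is most delicate. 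None of this uses that $\Lambda$ is $\tau$-tilting finite; the hypothesis $t\le n-t'$ only excludes the vacuous range, since $\delta(\X\amalg\U)\le n$ holds for every support $\tau$-rigid object.
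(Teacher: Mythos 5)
Your overall skeleton coincides with the paper's: both factor $\E_{\U}=\E^{J(U)}_{\E_U(P[1])}\circ\E_U$ and reduce the additive statement to the two elementary cases ($\U$ a module, $\U$ a shifted projective), with Lemma~\ref{comp} and \eqref{P-Q} matching the intermediate target to the intermediate domain; your observation that both implications of the pairwise-compatibility transport are needed, since no counting argument is available without $\tau$-tilting finiteness, is also correct and well taken. The difference is that the paper closes the argument at that point by citing \cite[Prop.~6.7, Prop.~6.10]{bm}, which are precisely the additive (equivalently, pairwise) versions of the bijections of Theorem~\ref{U-theorem} for the two elementary cases, whereas you set out to reprove those statements and do not complete the proof.

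Concretely, the unproven step is the one you yourself flag as ``the main obstacle'': for $\U=U$ a module and distinct indecomposables $\X_1,\X_2$ in the domain, the equivalence between support $\tau$-rigidity of $\X_1\amalg\X_2\amalg U$ and of $\E_U(\X_1)\amalg\E_U(\X_2)$ in $\C(J(U))$. This is not a routine Hom-group identification. When both $\X_i$ lie in $\Gen U$, the images $f_U(B_{\X_1})[1]\amalg f_U(B_{\X_2})[1]$ are shifted projectives of $J(U)$ and hence \emph{automatically} form a support $\tau$-rigid pair, so the equivalence asserts in particular that $\Hom(\X_1,\tau\X_2)=0=\Hom(\X_2,\tau\X_1)$ holds automatically for any two distinct such modules each compatible with $U$ --- a genuine theorem (essentially Jasso's reduction, proved in \cite{bm} via the Bongartz complement of $U$), not something that ``drops out'' of chasing the triangles \eqref{e:maintriangle}. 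The mixed cases (one $\X_i$ in $\Gen U$ or a shifted projective, the other not) each require their own argument as well. As written, your proposal correctly organizes the reduction but leaves its crux as an acknowledged sketch; either carry out these case-by-case verifications or, as the paper does, invoke \cite[Prop.~6.7, Prop.~6.10]{bm} directly.
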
 
\begin{proof} Recall that by definition $\E_{\U} = \E_{\E_U(P[1])}^{J(U)} \E_U$,
so the result follows from~\cite[Prop. 6.7, Prop. 6.10]{bm}.
\end{proof}

%\begin{proposition}\label{rigid-sums}
%Let $\U$ be $\tau$-rigid in $\C(\Lambda)$.
%\begin{itemize}
%\item[(a)] If $X$ in $\C(\Lambda)$ is such that $\U \amalg X$ is $\tau$-rigid, 
%and $\add X\cap \add U=0$, then
%$\E_{\U}(X)$ is $\tau$-rigid in $\C(\U)$. 
%\item[(b)] If $Y$ is basic $\tau$-rigid in $J(\U)$ with $\delta(Y) =t$, there exists a 
%unique basic
%$X$ in $\C(\Lambda)$, such that $\U \amalg X$ is $\tau$-rigid,
%and $\E_{\U}(X) =Y$ with $\add \U \cap \add X = 0$,
%and we have $\delta(X) = t$.
%\end{itemize} 
%\end{proposition} 

%\begin{proof} Recall that by definition $\E_{\U} = \E_{\E_U(P[1])}^{J(U)} \E_U$.
%\noindent (a) By \cite[?]{bm}, we have that $\E_U(X)$ is $\tau$-rigid in $J(U)$, and 
%by
%\cite[?]{bm}, we then have that $\E_{\E_U(P[1])}^{J(U)} \E_U(X)$ is $\tau$-rigid in
%$J(\U)$.
%\noindent (b) This follows from \cite[]{bm}.
%\end{proof}

\begin{lemma}\label{tau-in-pperp}
Let $U$ be a $\tau$-rigid module, and $P$ a projective module with $\Hom(P,U) = 0$.
Then
\begin{equation*}
 ^{\perp}(\tau_{P^{\perp}}U)  \cap P^{\perp} = 
  \cap {^{\perp}(\tau U)} \cap P^{\perp}. 
\end{equation*}
\end{lemma}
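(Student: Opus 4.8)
The plan is to compare the Auslander--Reiten translate $\tau_{P^{\perp}}$ in the module category $P^{\perp}$ with $\tau$ in $\module\Lambda$, and to do so via projective presentations inside $K^b(\P(\Lambda))$, using Lemma~\ref{rigid-rigid}. Write $\W = P^{\perp}$; recall that $\W$ is a wide subcategory equivalent to a module category and that $J(\U) = J(U)\cap P^{\perp} = {}^{\perp}(\tau U)\cap U^{\perp}\cap P^{\perp}$, while the left-hand side of the claimed identity is ${}^{\perp}(\tau_{\W}U)\cap P^{\perp}$ (there is an obvious typo on the right-hand side of the statement: the term $U^{\perp}$ has been dropped, and the assertion to prove should read ${}^{\perp}(\tau_{\W}U)\cap U^{\perp}\cap P^{\perp} = {}^{\perp}(\tau U)\cap U^{\perp}\cap P^{\perp}$, equivalently $J_{\W}(U) = J(U)\cap P^{\perp}$). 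So I will prove that for a module $M\in U^{\perp}\cap P^{\perp}$ one has $\Hom(M,\tau_{\W}U)=0$ if and only if $\Hom(M,\tau U)=0$.

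First I would use Lemma~\ref{rigid-rigid}(b): for $M$ with the torsion condition available, $\Hom(M,\tau X)=0$ is equivalent to $\Ext^1(X,\Gen M)=0$. Applying this both in $\module\Lambda$ and in $\W$, the equality reduces to showing that for $M\in U^{\perp}\cap P^{\perp}$, the condition $\Ext^1_{\W}(U,\Gen_{\W}M)=0$ is equivalent to $\Ext^1_{\Lambda}(U,\Gen M)=0$. Since $\W$ is an exact abelian (wide) subcategory closed under extensions, $\Ext^1_{\W}$ agrees with the restriction of $\Ext^1_{\Lambda}$ on objects of $\W$; the subtlety is only that $\Gen_{\W}M$ (factor objects in $\W$) may differ from $\Gen M$ (factor objects in $\module\Lambda$). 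Here is where I would invoke the structure of $\W=P^{\perp}$: a quotient in $\module\Lambda$ of an object of $\Gen M$ need not lie in $\W$, but since $M\in P^{\perp}$ and $P$ is projective, $\Hom(P,-)$ is exact, so $\Gen M\subseteq P^{\perp}=\W$; hence $\Gen_{\W}M=\Gen M$, and the two Ext-vanishing conditions literally coincide.

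Alternatively — and this is the cleaner route, which I would likely present — I would work with projective presentations. For $M\in P^{\perp}$, Lemma~\ref{rigid-rigid}(a) gives $\Hom(M,\tau U)=0 \iff \Hom_{\D}(\PP_U,\PP_M[1])=0$, and an analogous statement holds in $\W$ with the $\W$-minimal projective presentation $\PP^{\W}_U$ (using that $\W$ is equivalent to $\module(\End_{\W}(\P(\W)))$ and that Jasso's equivalence is compatible with the relevant $\Hom$-spaces). The point is then to relate $\PP^{\W}_U$ to $\PP_U$: the projective cover of $U$ in $\W$ is obtained from that in $\module\Lambda$ by applying the torsion-free truncation (or equivalently by the reflection functor onto $P^{\perp}$), and since $M\in P^{\perp}$ the extra summands in $\PP_U$ coming from $\add P$ contribute nothing to $\Hom_{\D}(-,\PP_M[1])$ because $\Hom(P,M)=0$ and $\Hom(P,\Lambda[1])=0$ — an argument of exactly the shape already used in the proof of Lemma~\ref{comp}. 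Thus $\Hom_{\D}(\PP_U,\PP_M[1])=0\iff\Hom_{\D}(\PP^{\W}_U,\PP_M[1])=0$, giving the claimed equality.

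The main obstacle I anticipate is making precise the comparison between $\tau$-computations in $\W$ and in $\module\Lambda$: one must pin down how Jasso's equivalence $\W\simeq\module\Gamma$ (for $\Gamma$ the relevant endomorphism algebra) transports minimal projective presentations and the functors $\Hom_{\D}(-,-[1])$, and verify that the ``surplus'' projective summand $P$ (the difference between $\PP_U$ and $\PP^{\W}_U$, up to the truncation) is invisible to $\PP_M$ for $M\in P^{\perp}$. Once that bookkeeping is set up, the equivalence of the two vanishing conditions — hence the equality of the two subcategories — is immediate. The $\Gen$-based argument in the second paragraph is the most economical and sidesteps most of this, so I would lead with it and relegate the presentation-level argument to a remark.
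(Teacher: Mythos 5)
Your lead ($\Gen$-based) argument is exactly the paper's proof: the authors apply Lemma~\ref{rigid-rigid}(b) both in $P^{\perp}$ and in $\module\Lambda$ and observe that $\Gen_{P^{\perp}}Y=\Gen Y$ for $Y\in P^{\perp}$, since $P$ projective makes $P^{\perp}$ closed under quotients. One remark on your reading of the typo: the intended statement (and the one actually needed later, e.g.\ in Lemma~\ref{al3}) is ${}^{\perp}(\tau_{P^{\perp}}U)\cap P^{\perp}={}^{\perp}(\tau U)\cap P^{\perp}$ with no $U^{\perp}$ factor — the stray $\cap$ is simply spurious — but this is harmless, since your argument never uses $M\in U^{\perp}$ and therefore establishes the stronger identity.
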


\begin{proof}
We have 
\begin{align}
\label{eq-x} {^{\perp}(\tau_{P^{\perp}} U}) \cap P^{\perp}  & =
  \{ Y \in \module \Lambda \mid \Ext^1(U, \Gen_{P^{\perp}} Y)= 0\} \cap P^{\perp}  \\ \label{eq-y}
& =   \{ Y \in \module \Lambda \mid \Ext^1(U, \Gen Y)= 0\} \cap P^{\perp} \\ \nonumber
& = {^{\perp}(\tau U)} \cap P^{\perp},
\end{align}
where \eqref{eq-x} holds by Lemma \ref{rigid-rigid}, and  \eqref{eq-y} holds since
$\Gen_{P^{\perp}} Y = \Gen Y$ for $Y$ in $P^{\perp}$.
\end{proof}

\section{Composition}\label{compo}

The aim of this section is to prove Theorem \ref{main-compo}.

If $\A$ is (a category equivalent to) a module category, we let $r(\A)$ \sloppy denote the rank of the 
Grothendieck group of $\A$, that is: the number of simple objects in 
$\A$ up to isomorphism. Recall that $\delta(X)$ denotes the number of indecomposable summands in a basic object $X$. We always write
$r(\module \Lambda) = n$. Recall the following important facts.
 
\begin{proposition}\label{prop-wide}
Let $\U$ be a $\tau$-rigid object in $\module \Lambda$. Then the following hold.
\begin{itemize}
\item[(a)] \cite[Theorem 3.28]{dirrt} $J(\U)$ is a wide subcategory of $\module \Lambda$. 
\item[(b)] \cite[Theorem 3.8]{jasso} $J(\U)$ is equivalent to a module category with rank $r(J(\U)) = 
n- \delta(\U)$.
\end{itemize}
\end{proposition}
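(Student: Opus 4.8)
The plan is to derive both parts from the $\tau$-tilting reduction of Jasso~\cite{jasso} and Demonet--Iyama--Reading--Reiten--Thomas~\cite{dirrt}; in the paper itself I would simply cite \cite[Theorem 3.28]{dirrt} for (a) and \cite[Theorem 3.8]{jasso} for (b), but here is the idea behind those results. Since $\U$ is a $\tau$-rigid \emph{module}, write $U=\U$, so that $J(U)=U^{\perp}\cap{}^{\perp}(\tau U)$. By \cite[Theorem 5.8]{aus-sma}, $U^{\perp}$ is the torsion-free class of the torsion pair $(\Gen U,U^{\perp})$, hence closed under submodules and extensions; and ${}^{\perp}(\tau U)=\{X\mid \Hom(X,\tau U)=0\}$ is a torsion class (closed under quotients, extensions and coproducts) simply because $\Hom(-,\tau U)$ is left exact, with $\Gen U\subseteq{}^{\perp}(\tau U)$ by \cite{air}. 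So $J(U)$ is the intersection of a torsion class with a torsion-free class.

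For (a), closure under extensions is then immediate, and closure under cokernels is a short check: for $f\colon X\to Y$ in $J(U)$, the image $\operatorname{im}f$ is a quotient of $X\in{}^{\perp}(\tau U)$ and hence lies in ${}^{\perp}(\tau U)$, so $\Ext^1(U,\operatorname{im}f)=0$ by Lemma~\ref{rigid-rigid}(b) (applied with the roles of $U$ and $X$ interchanged); then applying $\Hom(U,-)$ to $0\to\operatorname{im}f\to Y\to\coker f\to 0$ and using $\Hom(U,Y)=0$ forces $\Hom(U,\coker f)=0$, while $\coker f$ is a quotient of $Y\in{}^{\perp}(\tau U)$, so $\coker f\in J(U)$. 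The remaining point, closure under kernels, is the genuinely hard one, since a submodule of an object of $J(U)$ need not lie in ${}^{\perp}(\tau U)$; I would obtain it together with (b).

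For (b), Bongartz-complete $U$ to a $\tau$-tilting module $T=U\oplus B$, so that $\delta(B)=n-\delta(U)$ since $T$ is basic with $\delta(T)=n$. Jasso's $\tau$-tilting reduction then produces a finite-dimensional algebra $\Lambda_U$ — built from $\End_{\Lambda}(T)$, equivalently from $\End_{\Lambda}(B)$ modulo the morphisms factoring through $\add U$ — together with an equivalence $J(U)\xrightarrow{\ \sim\ }\module\Lambda_U$ under which the embedding $J(U)\hookrightarrow\module\Lambda$ becomes exact. This settles both statements at once: $J(U)$ is abelian and its embedding is exact, so it is wide (completing (a); in particular kernels in $J(U)$ agree with those in $\module\Lambda$), and $r(J(U))$ equals the number of simple $\Lambda_U$-modules, namely $\delta(B)=n-\delta(U)$. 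For a general support $\tau$-rigid object $\U=U\amalg P[1]$ one has $J(\U)=J(U)\cap P^{\perp}$, and the same reduction applied to $\U$ gives $r(J(\U))=n-\delta(\U)$.

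The main obstacle is exactly the construction of this equivalence — equivalently, closure of $J(U)$ under kernels. Unlike closure under extensions and cokernels, this is not formal; it uses the $\tau$-rigidity of $U$ essentially (for instance, that $\Gen U\subseteq{}^{\perp}(\tau U)$, so that the interval $[\Gen U,{}^{\perp}(\tau U)]$ of torsion classes is ``wide''), and carrying it out is precisely the work of \cite[Theorem 3.28]{dirrt} and \cite[Theorem 3.8]{jasso}. Since those are available, in the paper I would quote them directly.
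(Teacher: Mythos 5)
Your proposal is correct and follows the same route as the paper, which gives no proof beyond the citations to \cite[Theorem 3.28]{dirrt} and \cite[Theorem 3.8]{jasso} embedded in the statement. Your supplementary sketch (extensions and cokernels by the torsion-pair description of $J(U)$ together with Lemma~\ref{rigid-rigid}(b), kernels and the rank count via Bongartz completion and Jasso's reduction) accurately reflects the content of the cited results.
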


The following results are crucial.
\begin{proposition}\label{tau-finite}
Assume that $\Lambda$ is $\tau$-tilting finite. 
\begin{itemize}
\item[(a)] For each wide subcategory $\W$ of $\module \Lambda$, there is a 
support $\tau$-rigid object $\U$ in $\C(\Lambda)$ such that $\W = J(\U)$.
\item[(b)] If $\Lambda$ is $\tau$-tilting finite, then each wide subcategory $\W$ of $\module \Lambda$ is $\tau$-tilting finite.
\end{itemize}
\end{proposition}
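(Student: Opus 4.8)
The plan is to prove Proposition~\ref{tau-finite} in two parts, with part (b) being essentially a consequence of the machinery already set up, and part (a) following from Theorem~\ref{rig-fin}(b).

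\medskip

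\textbf{Part (a).} This is just a restatement of Theorem~\ref{rig-fin}(b): if $\Lambda$ is $\tau$-tilting finite, then any wide subcategory $\W$ of $\module\Lambda$ is of the form $J(\U)$ for some support $\tau$-rigid object $\U$ in $\C(\Lambda)$. So the proof of (a) is one line, a citation of~\cite[Thm.~3.34]{dirrt}. (Alternatively one can cite the same result as quoted in Theorem~\ref{rig-fin}(b) of this paper.)

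\medskip

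\textbf{Part (b).} Let $\W$ be a wide subcategory of $\module\Lambda$. First I would use part (a) to write $\W = J(\U)$ for some support $\tau$-rigid object $\U$ in $\C(\Lambda)$, and by Theorem~\ref{rig-fin}(a) we know $\W$ is equivalent to a module category of a finite dimensional algebra $\Gamma$, so the statement ``$\W$ is $\tau$-tilting finite'' makes sense. The key point is that $\tau$-tilting finiteness of $\Gamma$ amounts to finiteness of the set of indecomposable $\tau$-rigid objects in $\W$, equivalently (via Lemma~\ref{rigid-rigid}, transported into $\W$) finiteness of the set of indecomposable support $\tau$-rigid objects in $\C(\W)$. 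Now apply Theorem~\ref{thm-bi} (or rather Theorem~\ref{main-bi}) with the base object being $\U$ itself: it gives a bijection between the set of indecomposable objects $\X$ of $\C(\Lambda)$ with $\X\amalg\U$ $\tau$-rigid, minus $\ind\U$, and the set of indecomposable support $\tau$-rigid objects in $\C(J(\U)) = \C(\W)$. Since $\Lambda$ is $\tau$-tilting finite, $\C(\Lambda)$ has only finitely many indecomposable support $\tau$-rigid objects, hence so does the source set, hence the target set $\ind$ of support $\tau$-rigid objects in $\C(\W)$ is finite. Therefore $\W$ has only finitely many indecomposable $\tau$-rigid objects, i.e. $\W$ is $\tau$-tilting finite.

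\medskip

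The only mild subtlety — and the step I would be most careful about — is making sure the correspondence between ``$\tau$-tilting finite'' and ``finitely many indecomposable support $\tau$-rigid objects in $\C(\W)$'' is correctly stated for the algebra $\Gamma$ with $\module\Gamma\simeq\W$, since the AR-translate $\tau_\W$ in $\W$ differs from $\tau$ in $\module\Lambda$; but this is exactly the content of~\cite{air,dij} applied internally to $\W$, together with the observation (Lemma~\ref{rigid-rigid}, in its $\W$-version) that a module $V$ is $\tau_\W$-rigid iff $\Hom_{D^b(\W)}(\PP_V,\PP_V[1])=0$. I do not expect any genuine obstacle here: the heavy lifting is done by Theorem~\ref{rig-fin} and Theorem~\ref{main-bi}, both of which may be assumed.
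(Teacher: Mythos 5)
Your proposal is correct and follows exactly the paper's own argument: part (a) is cited from \cite[Thm.~3.34]{dirrt} (i.e.\ Theorem~\ref{rig-fin}(b)), and part (b) is deduced from Theorem~\ref{thm-bi} applied to a $\U$ with $\W=J(\U)$, using that $\tau$-tilting finiteness of $\Lambda$ makes the source of the bijection finite. The paper states this in one line; you have merely spelled out the same details.
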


\begin{proof}
\noindent (a) This is contained in Theorem 3.34 in \cite{dirrt}.

\noindent (b) This is a direct consequence of Theorem \ref{thm-bi}, using (a). 
\end{proof}

We will from now on assume $\Lambda$ is a $\tau$-tilting finite algebra.

In this section we prove the following (Theorem~\ref{main-compo}).
\begin{theorem}\label{main-comp}
Let $\U$ and $\V$ be objects in $\C(\Lambda)$ with no common direct summands,
and suppose that $\U\amalg \V$ is support $\tau$-rigid. Then $\E_{\U}(\V)$ is support $\tau$-rigid in $\C(J(\U))$ and
the following equation holds
$$J_{J(\U)}(\E_{\U}(\V)) = J(\U \amalg \V).$$
\end{theorem}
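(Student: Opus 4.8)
The plan is to reduce the general statement to the two special cases already handled in \cite{bm} — the case where $\U$ is a module and the case where $\U$ is the shift of a projective — by exploiting the factorization $\E_{\U} = \E_{\E_U(P[1])}^{J(U)}\circ \E_U$ that defines $\E_{\U}$ when $\U = U\amalg P[1]$. First I would reduce to the case $\V$ indecomposable: since $\E_{\U}$ is defined on sums componentwise and $J_{J(\U)}(-)$ of a sum is the intersection of the $J_{J(\U)}(-)$ of the summands (as is $J(\U\amalg\V) = J(\U)\cap\bigcap_i J(\U\amalg\V_i)$), and since by Theorem~\ref{rigid-sums} the object $\E_{\U}(\V)$ is genuinely support $\tau$-rigid in $\C(J(\U))$, it suffices to prove the displayed equality for each indecomposable direct summand $\V_i$ of $\V$ separately. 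So assume $\V = X$ or $\V = Q[1]$ is indecomposable.

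Next I would split $\U = U \amalg P[1]$ and run the two halves of the factorization in turn. Writing $\U' = U$ (a module, possibly zero), the first special case gives $J_{J(U)}(\E_U(\V)) = J(U\amalg\V)$; here I must be careful that $\V$, which is support $\tau$-rigid over $\Lambda$ and compatible with $\U$, lies in the domain of $\E_U$ — this follows from $\V \amalg \U$ being support $\tau$-rigid together with $\Hom(P,U)=0$, exactly as in the domain analysis preceding Lemma~\ref{rest-Lemma}. Now $\E_U(\V)$ together with $\E_U(P[1]) = Q[1]$ is support $\tau$-rigid in $\C(J(U))$ (again by Theorem~\ref{rigid-sums} applied to $\U = U$ and the object $\V \amalg P[1]$), with no common summands, so I can apply the second special case — the $\U=$ shift-of-projective case — inside the wide subcategory $J(U)$, which is $\tau$-tilting finite by Proposition~\ref{tau-finite}(b) and equivalent to a module category by Proposition~\ref{prop-wide}(b). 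This yields
$$J_{J_{J(U)}(Q[1])}\big(\E_{Q[1]}^{J(U)}(\E_U(\V))\big) = J_{J(U)}(Q[1]\amalg\E_U(\V)).$$
The left side is $J_{J(\U)}(\E_{\U}(\V))$ once I identify $J_{J(U)}(Q[1]) = J(U)\cap Q^{\perp} = J(U)\cap P^{\perp} = J(\U)$ using \eqref{P-Q} from Lemma~\ref{comp}. The right side should be rewritten as $J_{J(U)}(\E_U(Q[1]\amalg\V)) = \ldots$; but here I do not yet have a composition statement at my disposal — this is circular. Instead I would compute the right side directly: $J_{J(U)}(Q[1]\amalg\E_U(\V)) = J_{J(U)}(\E_U(\V)) \cap Q^{\perp} = J(U\amalg\V)\cap Q^{\perp}$ by the first special case, and then $J(U\amalg\V)\cap Q^{\perp} = J(U\amalg\V)\cap P^{\perp}$ by another application of \eqref{P-Q}-type reasoning — noting $J(U\amalg\V)\subseteq J(U)$, so the argument of Lemma~\ref{comp} applies verbatim inside $J(U)$ — and finally $J(U\amalg\V)\cap P^{\perp} = J(U\amalg P[1]\amalg\V) = J(\U\amalg\V)$ by the definition $J(\W) = J(W)\cap P^{\perp}$. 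Chaining these equalities gives the claim.

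The main obstacle I anticipate is the bookkeeping in the reduction to the module and shift-of-projective cases when $\U$ has \emph{both} a module part and a projective-shift part: one must check that the intermediate objects $\E_U(\V)$, $Q[1]$, $\E_U(\V)\amalg Q[1]$ all satisfy the support $\tau$-rigidity and no-common-summands hypotheses needed to invoke \cite{bm} inside $J(U)$, and that the shift $[1]$ appearing there is consistently the shift in $D^b(\module\Lambda)$ rather than in $D^b(J(U))$ — a point the paper explicitly flags. The other delicate point is the identity $J(U\amalg\V)\cap Q^{\perp} = J(U\amalg\V)\cap P^{\perp}$: it is essentially Lemma~\ref{comp} relativized, and I expect one has to re-examine whether the exact sequence \eqref{seq1} and the triangle \eqref{tria-1} used to define $Q = \E_U(P[1])$ behave correctly when one intersects with the smaller category $J(U\amalg\V)$ rather than $J(U)$; since $J(U\amalg\V)\subseteq J(U)$ the one-sided inclusion coming from the right-exact-functor argument is immediate, and the reverse inclusion uses only that modules in $J(U\amalg\V)\subseteq J(U)$ kill $\tau U$ and $\Gen U$, which still holds. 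Everything else is routine diagram-chasing with torsion functors.
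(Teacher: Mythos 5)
Your reduction of the mixed case to the two ``pure'' cases via the factorization $\E_{\U}=\E^{J(U)}_{\E_U(P[1])}\E_U$, the componentwise splitting over the summands of $\V$, and the identification $J(U)\cap Q^{\perp}=J(U)\cap P^{\perp}$ from Lemma~\ref{comp} is essentially the paper's own ``General case'' step (and your observation that the inner application concerns the pure pair $(Q[1],\E_U(\V))$ inside $J(U)$, so that no induction on rank is needed there, is a legitimate small streamlining of the paper's inductive bookkeeping). The problem is what you take as given. The statements you call ``the two special cases already handled in \cite{bm}'' --- namely $J_{J(U)}(\E_U(\V))=J(U\amalg\V)$ for $U$ a module, and its analogue for $U=P[1]$ --- are \emph{not} in \cite{bm}: what \cite{bm} provides (and what the paper quotes as Theorem~\ref{U-theorem}) is only the bijectivity of $\E_U$ on indecomposables. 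The composition formula in the pure cases is precisely the substantive content of Theorem~\ref{main-comp}, and the paper spends most of the proof establishing it (Cases I, II(a), II(b), III, IV). Your proposal therefore assumes the hardest part of the theorem and proves only the reduction.

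Concretely, two ingredients are missing. First, the paper's mechanism for obtaining \emph{equality} in the pure cases is to prove only the inclusion $J(\U\amalg\V)\subseteq J_{J(\U)}(\E_{\U}(\V))$ and then upgrade it to equality by a rank count: both sides are wide subcategories of rank $n-\delta(\U)-\delta(\V)$ (Lemma~\ref{rank-lemma}), one sits inside the other as a wide subcategory (Lemma~\ref{abc-lemma}), and by Proposition~\ref{tau-finite} the smaller one is $J_{(\cdot)}(\V')$ for some support $\tau$-rigid $\V'$ which must then be $0$. Nothing in your proposal plays this role. Second, the inclusion itself requires genuine work in each pure case: for $U,V$ modules with $V\notin\Gen U$ one must show $\Hom(M,\tau_{J(U)}f_U(V))=0$ for $M\in J(U\amalg V)$, which goes through $\Ext^1(f_U(V),\Gen M\cap J(U))$, the canonical sequence of $V$, and the AR-formula; when $\E_U(V)$ is a shifted projective of $J(U)$ one needs the triangle $\PP_{B_V}\to\PP_{U_V}\to\PP_V\to$ and Lemma~\ref{rigid-rigid}(c) to show $\Hom(B_V,M)=0$. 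None of this ``routine diagram-chasing'' is sketched, and it is not routine. Until the pure cases are actually proved (or a correct citation for them is produced), the argument is incomplete.
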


This theorem is the key for proving that composition is well-defined in 
the category $\mathfrak{W}_{\Lambda}$. Note first that by Theorem \ref{rigid-sums}, 
we have 
that $\E_{\U}(\V)$ is support $\tau$-rigid in $\C(J(\U))$. The remainder of this section is 
devoted to proving the second assertion of the Theorem.

We first make the following observation.

\begin{lemma}\label{rank-lemma}
In the setting of Theorem \ref{main-comp} we have
$$r(J_{J(\U)}(\E_{\U}(\V))) = r(J(\U \amalg \V)).$$
\end{lemma}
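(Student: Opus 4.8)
\textbf{Proof proposal for Lemma~\ref{rank-lemma}.}

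The plan is to compute both sides via the rank formula of Proposition~\ref{prop-wide}(b), which says that for any support $\tau$-rigid object $\W$ in (a wide subcategory playing the role of) a module category $\mathcal{A}$, one has $r(J_{\mathcal{A}}(\W)) = r(\mathcal{A}) - \delta(\W)$, where $J_{\mathcal{A}}$ denotes the corresponding $J$-construction inside $\mathcal{A}$. So the whole statement should reduce to a bookkeeping identity about the numbers $\delta$.

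First I would apply Proposition~\ref{prop-wide}(b) to the outer $J$-construction: since $\U \amalg \V$ is support $\tau$-rigid in $\C(\Lambda)$, we get
\[
r(J(\U \amalg \V)) = n - \delta(\U \amalg \V) = n - \delta(\U) - \delta(\V),
\]
the last equality using that $\U$ and $\V$ have no common direct summands (so $\add\U \cap \add\V = 0$). Next I would unwind the left-hand side in two steps. Applying Proposition~\ref{prop-wide}(b) inside $\module\Lambda$ gives $r(J(\U)) = n - \delta(\U)$. Now $\E_{\U}(\V)$ is a support $\tau$-rigid object in $\C(J(\U))$ by Theorem~\ref{rigid-sums} (this is exactly the part of Theorem~\ref{main-comp} already established, as the paragraph before the lemma notes), and moreover Theorem~\ref{rigid-sums} is stated as a \emph{$\delta$-preserving} bijection, so $\delta(\E_{\U}(\V)) = \delta(\V)$. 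Since $J(\U)$ is equivalent to a module category, Proposition~\ref{prop-wide}(b) applies inside it to the support $\tau$-rigid object $\E_{\U}(\V)$, yielding
\[
r\bigl(J_{J(\U)}(\E_{\U}(\V))\bigr) = r(J(\U)) - \delta(\E_{\U}(\V)) = \bigl(n - \delta(\U)\bigr) - \delta(\V).
\]
Comparing the two displayed expressions gives the claim.

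The only genuine point requiring care — and the step I expect to be the main obstacle — is justifying that the rank formula $r(J_{\mathcal{A}}(-)) = r(\mathcal{A}) - \delta(-)$ may legitimately be applied \emph{relative to the wide subcategory $J(\U)$} rather than to $\module\Lambda$ itself, and that the relevant $\delta$ is computed in $\C(J(\U))$ (via its equivalence with a module category) consistently with how $\delta(\E_{\U}(\V))$ is counted in Theorem~\ref{rigid-sums}. This is a matter of checking that Proposition~\ref{prop-wide} and Theorem~\ref{rigid-sums} are being invoked with compatible conventions: $J(\U)$ is equivalent to $\module\Gamma$ for some finite-dimensional algebra $\Gamma$ by Theorem~\ref{rig-fin}(a), support $\tau$-rigid objects and the $J$-construction transport across this equivalence, and $\delta$ is invariant under it. Once that compatibility is spelled out, the count of indecomposable summands is additive and the arithmetic above closes the argument. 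I would also remark that no $\tau$-tilting finiteness of $\Lambda$ is actually needed for this particular lemma — it is purely a rank count — although it is in force by the standing assumption of the section.
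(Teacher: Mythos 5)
Your proposal is correct and follows essentially the same route as the paper: both sides are computed via Jasso's rank formula $r(J(\T)) = n - \delta(\T)$ (Proposition~\ref{prop-wide}(b)), applied once in $\module\Lambda$ and once relative to $J(\U)$, using that $\E_{\U}$ preserves $\delta$ by Theorem~\ref{rigid-sums}. The paper's proof is exactly this arithmetic, so no further comment is needed.
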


\begin{proof}
Let $r(\module \Lambda)= n$.
By \cite{jasso} we have 
$r(J(\T)) = n - \delta(\T)$ for any support $\tau$-rigid object $\T$ in $\C(\Lambda)$.
So $r(J(\U) = n- \delta(\U)$ and
$r(J(\U \amalg \V)) = n- \delta(\U) -\delta(\V)$. Furthermore 
$r(J_{J(\U)}(\E_{\U}(\V))) = (n - \delta(\U))) - (\delta(\E_{\U}(\V))) = 
n- \delta(\U) -\delta(\V)$, and the claim follows.
\end{proof}

\begin{lemma}\label{abc-lemma}
Let $\A$ be an abelian category and $\A'' \subseteq \A'$ wide subcategories of $\A$.
Then $\A''$ is a wide subcategory of $\A'$.
\end{lemma}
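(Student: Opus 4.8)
The plan is to work throughout with the characterization of a wide subcategory recalled in the introduction, namely a full subcategory closed under kernels, cokernels and extensions in the ambient abelian category. Since $\A'$ is wide in $\A$, it is itself abelian and the inclusion $\A'\hookrightarrow\A$ is an exact functor; the crux of the argument is that, because of this, kernels, cokernels and short exact sequences computed in $\A'$ agree with those computed in $\A$. Once this is established, the three closure properties of $\A''$ inside $\A'$ are immediate transcriptions of the corresponding properties of $\A''$ inside $\A$.

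First I would record the following elementary observation: if $\B$ is a wide subcategory of an abelian category $\A$ and $f\colon X\to Y$ is a morphism between objects of $\B$, then the kernel $k\colon K\to X$ of $f$ formed in $\A$ satisfies $K\in\B$ (as $\B$ is closed under kernels in $\A$), and $(K,k)$ also has the universal property of the kernel of $f$ in $\B$. The latter uses only that $\B$ is a \emph{full} subcategory: any test morphism $g\colon Z\to X$ with $Z\in\B$ and $fg=0$ factors uniquely through $k$ by a morphism $Z\to K$ in $\A$, which automatically lies in $\B$ by fullness. The dual statement holds for cokernels. Applying this with $\B=\A'$, I deduce that a sequence $0\to X\to E\to Z\to 0$ of objects and morphisms of $\A'$ is short exact in $\A'$ if and only if it is short exact in $\A$ (since exactness of such a sequence is expressed by the first map being a kernel and the second a cokernel, and these notions coincide in $\A'$ and $\A$).

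With these identifications in place I would finish as follows. Given a morphism $f$ between objects of $\A''$, it is in particular a morphism between objects of $\A'$, and its kernel computed in $\A'$ coincides with its kernel computed in $\A$, which lies in $\A''$ because $\A''$ is closed under kernels in $\A$; hence $\A''$ is closed under kernels in $\A'$. The same reasoning with cokernels gives closure under cokernels in $\A'$. Finally, a short exact sequence in $\A'$ whose outer terms lie in $\A''$ is, by the observation of the previous paragraph, a short exact sequence in $\A$ with outer terms in $\A''$, so its middle term lies in $\A''$ because $\A''$ is closed under extensions in $\A$. Therefore $\A''$ is closed under kernels, cokernels and extensions in $\A'$, i.e.\ it is a wide subcategory of $\A'$.

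Every step is formal, so there is no genuine obstacle; the only point needing a little care is the verification that the kernel, cokernel and short-exact-sequence notions transfer between $\A'$ and $\A$, which is exactly what fullness of $\A'$ in $\A$ together with its closure under kernels and cokernels provides.
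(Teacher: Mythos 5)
Your proof is correct and takes essentially the same route as the paper, which simply notes that the claim follows directly from the characterization of wide subcategories as those closed under kernels, cokernels and extensions; you have just spelled out the (routine but worthwhile) verification that these notions agree in $\A'$ and in $\A$.
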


\begin{proof}
This follows directly from the fact that a subcategory is
wide if and only if it is closed under kernels, cokernels and
extensions.  
\end{proof}

\begin{proof}[Proof of Theorem \ref{main-comp}]
We first claim it is sufficient to prove 
$$ J(\U \amalg \V) \subseteq J_{J(\U)}(\E_{\U}(\V)).$$
If this holds then, by Lemma \ref{abc-lemma}, we have that $J(\U \amalg \V)$ is a wide subcategory of
$J_{J(\U)}(\E_{\U}(\V))$. Then, by Proposition \ref{tau-finite}, there is a support $\tau$-rigid object 
$\V'$ in $\C(J_{J(\U)}(\E_{\U}(\V)))$ such that 
$$J(\U \amalg \V) = J_{J_{J(\U)}(\E_{\U}(\V))}(\V')$$
We have $r(J_{J_{J(\U)}(\E_{\U}(\V))}(\V')) = n- \delta(\U) - \delta(\V) - \delta(\V')$
by Proposition~\ref{prop-wide}(b) and Theorem~\ref{rigid-sums}.
Hence $r(\V') = 0$, so $\V'= 0$, and we have
$$J(\U \amalg \V) = J_{J(\U)}(\E_{\U}(\V)).$$  

In order to prove 
\begin{equation}\label{inc} 
J(\U \amalg \V) \subseteq J_{J(\U)}(\E_{\U}(\V))
\end{equation}
we first discuss various special cases.

\bigskip 

\noindent {\bf Case I:}
Let $U$ be $\tau$-rigid in $\module \Lambda$, and $V \not \in\Gen U$, such that
$\overline{V}=\E_U(V)$ is $\tau$-rigid in $J(U)$. Then $\overline{V} = f_U(V)$, and there
is an epimorphism $V \to \overline{V}$.

Let $M$ be in $J(U \amalg V)$. Then we have $M \in J(V) \subseteq V^{\perp}$, and since
$0 \to \Hom(\overline{V},M) \to \Hom(V,M)$ is exact, we also have
$\Hom(\overline{V},M) =0$.

We next need to show $\Hom(M, \tau_{J(U)}\overline{V}) = 0$.
By Lemma \ref{rigid-rigid}, this is equivalent to showing 
$\Ext^1(\overline{V}, \Gen_{J(U)} M)= 0$. We have $\Gen_{J(U)} M =
\Gen M \cap J(U)$, and hence it is sufficient to prove 
$\Ext^1(\overline{V}, \Gen M \cap J(U))= 0$. 
Let $M'$ be in $\Gen M \cap J(U)$. Apply $\Hom(\ ,M')$ to the canonical sequence 
$0 \to t_U(V) \to V \to f_U (V) = \overline{V} \to 0$
for $V$, to obtain the exact sequence 
\begin{equation}\label{e-seq}
\Hom(t_U(V),M') \to \Ext^1(\overline{V},M')  \to \Ext^1(V,M').
\end{equation}
The first term in \eqref{e-seq} vanishes, since $t_U (V)$ is in $\Gen U$ and $M'$ is in $U^{\perp}$. We have $\Hom(M, \tau V) = 0$, since $M$ is in $J(V)$, so $\Hom(M', \tau V)= 0$, since $M'$ is in $\Gen M$. Using the AR-formula, we obtain that the third term in \eqref{e-seq} also vanishes, and hence also the second term vanishes.
Hence we have that $\Ext^1(\overline{V}, \Gen M \cap J(U)) = 0$ and so
$\Hom(M, \tau_{J(U)} \overline{V}) = 0$. So $M$ is in $J_{J(U)}(\overline{V})$,
and we have shown inclusion \eqref{inc} in this case.
 
\bigskip
 
\noindent {\bf Case II (a):}
Let $U$ be $\tau$-rigid in $\module \Lambda$, and $V$ in $\Gen U$ such that
$\E_{U}(V) = \overline{V}$ is in $\P(J(U))[1]$. Recall that $\overline{V}$ is
computed as follows.
We have a triangle $$\xymatrix{
\PP_{B_V} \ar[r]^a &  \PP_{U_V}  \ar[r]^b &  \PP_{V} \ar[r]^(0.4){c} & \PP_{B_V}[1]}$$
where $a$ is a minimal left $\add \PP_{U}$-approximation and $b$ is a minimal right $\add \PP_{U}$-approximation, and taking homology gives
the exact sequence
$$\xymatrix{
B_V \ar[r]^{a'} &  U_V \ar[r]^{b'} &  V \ar[r] & 0}$$  
where $a'$ is a minimal left $\add U$-approximation and $b'$
is a minimal right $\add U$-approximation. Let $\overline{Q} = f_U (B_V) $.
Then $\overline{V} = \overline{Q}[1]$.

Now suppose that $M$ lies in $J(U \amalg V)$. Note that $J_{J(U)}(\E_U(V)) = J(U) \cap 
\overline{Q}^{\perp}$. Since $M$ is in $J(U)$, it is sufficient to
show that $\Hom(\overline{Q},M) = 0$. Since $\overline{Q}$ is a quotient of
$B_V$, it is sufficient to show that $\Hom(B_V,M) = 0$. For this
let $g:\PP_{B_V} \to \PP_{M}$ be an arbitrary map. By Lemma \ref{rigid-rigid}, we have that
$\Hom(\PP_V, \PP_{M}[1]) =0$, since $\Hom(M,\tau V) = 0$. 
Hence, the composition $g\circ c[-1]:\PP_V[-1] \to \PP_{M}$ vanishes, and there is a factorization $g=ha$ for some $h:\PP_{U_V}\rightarrow \PP_M$:
$$\xymatrix{
\PP_V[-1] \ar[r]^(0.55){c[-1]} & \PP_{B_V} \ar[d]_g \ar[r]^a &  \PP_{U_V} \ar[dl]^h \ar[r]^b &  \PP_{V} \ar[r]^(0.4){c} & \PP_{B_V}[1] \\
& \PP_M}$$
Since $\Hom(U,M) = 0$, we have $\Hom(\PP_{U_V}, \PP_M)/\Lambda[1] = 0$, and
it follows that $\Hom(\PP_{B_V}, \PP_M)/\Lambda[1] = 0$, and hence by
Lemma \ref{rigid-rigid}, we have $\Hom(B_V,M)= 0$. Hence we have shown inclusion \eqref{inc} in this case.

\bigskip

\noindent {\bf Case II (b):} 
Let $U$ be $\tau$-rigid in $\module \Lambda$, and $V \in (\P(\Lambda) \cap 
{^{\perp}U})[1]$. Assume $V= Q[1]$ for $Q$ in $\P(\Lambda) \cap 
{^{\perp}U}$.

Recall that $\E_{U}(V) = \overline{V}$ is computed as follows. 
There is an exact sequence
$$Q \to B_V \to U_V \to 0$$
where the first map is a minimal left ${^\perp(\tau U)}$-approximation
(or, equivalently, a minimal left $\P({^\perp(\tau U)})$-approximation), and
$\overline{V} = f_U(B_V)[1]$; we set $\overline{Q}=f_U(B_V)$.

Now let $M$ be in $J(U \amalg V)$, that is $M$ is in $J(U)$ and $\Hom(Q,M) = 0$.
We need to prove that $\Hom(\overline{Q},M) = 0$. Since $\overline{Q} = f_U(B_V)$
is a quotient of $B_V$, it is sufficient to show that 
$\Hom(B_V,M) = 0$. Recall that there is a triangle
$$Q \to \PP_{B_V} \to \PP_{U_V} \to$$ 
and consider
an arbitrary map $\PP_{B_V} \to \PP_M$.
The composition $Q \to \PP_{B_V} \to \PP_M$ vanishes, since $\Hom(Q,M) = 0$
and hence $\Hom(Q, \PP_M) = 0$, by Lemma \ref{rigid-rigid}.
Therefore, the map  $\PP_{B_V} \to \PP_M$ factors  $\PP_{B_V} \to \PP_{U_V} \to \PP_M$.
Since $M$ is in $J(U) \subseteq U^{\perp}$, we have $\Hom(U_V,M) =0$, so 
$\Hom(\PP_{U_V}, \PP_{M})/\add \Lambda [1] = 0$. Hence also 
$\Hom(\PP_{B_V}, \PP_{M})/\add \Lambda [1] = 0$ and $\Hom(B_V,M)= 0$ as required.
Hence we have shown that the inclusion \eqref{inc} holds also in this case.

\bigskip

\noindent {\bf Case III:}
Let $U=P[1]$ with $P$ in $\P(\Lambda)$, and let $V$ be $\tau$-rigid. Then
$J(U) = P^{\perp}$ and
$\E_U(V) = \overline{V} = V$ is also $\tau_{J(U)}$-rigid, 
 by \cite[Lemma 2.1]{air}. Furthermore, by Lemma \ref{tau-in-pperp}  
we have 
\begin{equation*}
J_{P^{\perp}}(V) = P^{\perp} \cap V^{\perp} \cap {}^{\perp}(\tau_{P^{\perp}}V) = 
 P^{\perp} \cap V^{\perp} \cap {}^{\perp}(\tau V) = J(U \amalg V),
\end{equation*}
which finishes the proof of case III.

\bigskip

\noindent {\bf Case IV:}
Now let $U=P[1]$ and $V=Q[1]$, for $P,Q \in \P(\Lambda)$.
Then $\E_U(V)= \overline{V} = (f_P Q)[1]$. For an object $M$ in $P^{\perp}$,
apply $\Hom(\ , M)$ to the exact sequence
$$0 \to t_P(Q) \to Q \to f_P (Q) \to 0$$
to obtain the exact sequence
$$0 \to \Hom(f_P (Q), M) \to \Hom(Q,M) \to \Hom(t_P (Q), M)$$
The last term vanishes, since $t_P (Q)$ is in $\Gen P$, so 
$\Hom(f_P (Q), M) \simeq \Hom(Q,M)$. Hence, we have
$J_{J(U)}(\E_U(V))= J_{P^{\perp}}(\overline{V})  = P^{\perp} \cap (f_P Q)^{\perp}  
= P^{\perp} \cap Q^{\perp} = J(U\amalg V)$,
which finishes the proof of case IV.

\bigskip

\noindent {\bf General case.}
Let $\U = U \amalg P[1]$ and $\V= V \amalg Q[1]$, for $U,V$ $\tau$-rigid modules
and $P,Q$ in $\P(\Lambda)$. We assume that $\U\amalg \V$ is support $\tau$-rigid in
$\C(\Lambda)$.  We proceed by induction on the rank $n = r(\module \Lambda)$. 
We therefore first assume $U \neq 0$, so $r(J(U)) < n$.

Then
\begin{equation}\label{eq-first}
J(\U \amalg \V) = J(U \amalg V) \cap P^{\perp} \cap Q^{\perp}
\end{equation}
and
\begin{align}\nonumber
J_{J(\U)}(\E_{\U}(\V))  & = J_{J(U) \cap P^{\perp}}(\E_{U \amalg P[1]}(V \amalg Q[1]))
 \\\label{eq-a}
 & = J_{J(U) \cap P^{\perp}}(\E_{\E_U(P[1])}^{J(U)}\E_U(V \amalg Q[1])) \\ \label{eq-b}
  & = J_{J(U) \cap P^{\perp}}(\E_{\E_U(P[1])}^{J(U)}\E_U(V)) \cap
    J_{J(U) \cap P^{\perp}}(\E_{\E_U(P[1])}^{J(U)}\E_U(Q[1]))
\end{align}

Where (\ref{eq-a}) is by definition by of $\E_{\U} = \E_{U\amalg P[1]}$.

Note that we have 
\begin{equation}\label{eq-third}
J(U) \cap P^{\perp} = J(U \amalg P[1]) = J_{J(U)}(\E_U(P[1]))
\end{equation}
by case II(b).

We next compute the terms of (\ref{eq-b}) separately. For the first term, we obtain

\begin{align}
\label{eq-c} J_{J(U) \cap P^{\perp}}(\E_{\E_U(P[1])}^{J(U)}\E_U(V)) & =
J_{J_{J(U)}(\E_U(P[1]))}(\E_{\E_U(P[1])}^{J(U)}\E_U(V))  \\ \label{eq-d}
& = J_{J(U)}(\E_U(P[1]) \amalg \E_U(V))  \\ \nonumber
& = J_{J(U)}(\E_U(P[1])) \cap  J_{J(U)}(\E_U(V)) \\ \label{eq-e}
& = J(U \amalg P[1]) \cap  J_{J(U)}(\E_U(V))  \\ \nonumber
& = J(U) \cap P^{\perp} \cap  J_{J(U)}(\E_U(V))  \\ \label{eq-f}
& = P^{\perp} \cap  J_{J(U)}(\E_U(V)) 
\end{align}
where (\ref{eq-c}) follows from (\ref{eq-third}), and (\ref{eq-d}) is obtained by using the
induction assumption for the proper subcategory $J(U)$, while (\ref{eq-e}) holds by
case II(b) and (\ref{eq-f}) holds by $ J_{J(U)}(\E_U(V)) \subseteq J(U)$.

Similarly, for the second term in \eqref{eq-b}, we obtain
\begin{align}
\nonumber 
J_{J(U) \cap P^{\perp}}(\E_{\E_U(P[1])}^{J(U)}\E_U(Q[1]))  & = 
J_{J_{J(U)}(\E_U(P[1]))}(\E_{\E_U(P[1])}^{J(U)}\E_U(Q[1]))  \\ \nonumber
& = J_{J(U)}(\E_U(P[1]) \amalg \E_U(Q[1]))  \\ \nonumber
& = J_{J(U)}(\E_U(P[1] \amalg Q[1]))  \\ \nonumber
& = J(U \amalg P[1] \amalg Q[1])  \\ \label{eq-bb}
& = J(U) \cap P^{\perp} \cap Q^{\perp} 
\end{align}

We then obtain
\begin{align}
\label{likn1} J_{J(\U)}(\E_{\U}(\V)) 
  & = J_{J(U) \cap P^{\perp}}(\E_{\E_U(P[1])}^{J(U)}\E_U(V)) \cap
    J_{J(U) \cap P^{\perp}}(\E_{\E_U(P[1])}^{J(U)}\E_U(Q[1])) \\ \label{likn2}
    & = J_{J(U)}(\E_U(V)) \cap P^{\perp} \cap J(U) \cap P^{\perp} \cap Q^{\perp} \\ \label{likn3}
 &= J(U\amalg V)\cap P^{\perp} \cap Q^{\perp} \\ \label{likn4}
 &   =J(\U \amalg \V)
\end{align}
where (\ref{likn1}) is (\ref{eq-b}) and where (\ref{likn2}) follows from combining  
 (\ref{eq-f}) and (\ref{eq-bb}). Furthermore (\ref{likn3}) follows from 
Cases I and II(a) and (\ref{likn4}) follows from \eqref{eq-first} respectively.

So we have that the claim of the theorem holds in the general case, with the assumption that $U \neq 0$.

Now, consider the case where $U = 0$.

We then have 
\begin{align}
\nonumber J(\U \amalg \V) & = J(P[1] \amalg V \amalg Q[1]) \\ \label{eq-g}
& = J(V) \cap P^{\perp} \cap Q^{\perp}
\end{align}

and
\begin{align}\nonumber
J_{J(\U)}(\E_{\U}(\V))  & = J_{J(P[1])}(\E_{P[1]}(V \amalg Q[1]))
 \\ \nonumber
 & = J_{J(P[1])}(\E_{P[1]}(V)) \cap J_{J(P[1])}(\E_{P[1]}(Q[1])) \\ \label{eq-h}
  & = J(V \amalg P[1]) \cap J(P[1] \amalg Q[1]) \\ \nonumber
  & = J(V) \cap P^{\perp}  \cap Q^{\perp} \\ \nonumber
  & = J(\U \amalg \V)
\end{align}
where for (\ref{eq-h}), we use cases III and IV. This finishes the proof for the case $U = 0$, and hence the proof of the theorem.
\end{proof}

\section{Associativity}\label{as}

The aim of this section is to prove that the composition operation defined in Section \ref{compo}
is associative. The main step is to prove Theorem \ref{main-as}. We prepare for this, by giving several
useful lemmas.

\begin{lemma}\label{al1}
Let $U,X,Y$ be in $\module \Lambda$ where $U$ is $\tau$-rigid and $\Hom(U,\tau X)= 0$.
Then the induced map $\alpha \colon \Hom(X,Y) \to \Hom(f_U (X), f_U (Y))$ is an epimorphism.
\end{lemma}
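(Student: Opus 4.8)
The plan is to recall that $f_U$ is the torsion-free functor for the torsion pair $(\Gen U, U^{\perp})$, so for any module $Z$ there is a canonical short exact sequence $0 \to t_U(Z) \to Z \to f_U(Z) \to 0$ with $t_U(Z) \in \Gen U$ and $f_U(Z) \in U^{\perp}$. The hypothesis $\Hom(U, \tau X) = 0$ means, by Lemma~\ref{rigid-rigid}(b), that $\Ext^1(X, \Gen U) = 0$, and in particular $\Ext^1(X, t_U(Y)) = 0$. This vanishing is precisely what makes the passage from maps $X \to Y$ to maps $X \to f_U(Y)$ surjective, and then one handles the source side using that $\Gen U \subseteq {}^{\perp}(\text{anything in } U^{\perp})$ via the torsion pair.

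First I would factor the claimed map $\alpha$ as a composite. Given $g \colon X \to Y$, composing with $Y \to f_U(Y)$ gives a map $X \to f_U(Y)$, and since $t_U(X) \in \Gen U$ while $f_U(Y) \in U^{\perp}$, the restriction of this composite to $t_U(X)$ is zero; hence it factors uniquely through $X \to f_U(X)$, yielding $f_U(g) \colon f_U(X) \to f_U(Y)$. So $\alpha$ is genuinely the map $g \mapsto f_U(g)$. Next I would prove surjectivity: take any $\bar h \colon f_U(X) \to f_U(Y)$. Precompose with the projection $\pi_X \colon X \to f_U(X)$ to get $\bar h \pi_X \colon X \to f_U(Y)$. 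Apply $\Hom(X, -)$ to $0 \to t_U(Y) \to Y \to f_U(Y) \to 0$ to obtain the exact sequence
$$
\Hom(X, Y) \to \Hom(X, f_U(Y)) \to \Ext^1(X, t_U(Y)).
$$
Since $t_U(Y) \in \Gen U$ and $\Ext^1(X, \Gen U) = 0$ by Lemma~\ref{rigid-rigid}(b) (using $\Hom(U,\tau X)=0$), the last term vanishes, so $\bar h \pi_X$ lifts to some $h \colon X \to Y$. It then remains to check $f_U(h) = \bar h$: both $f_U(h)\pi_X$ and $\bar h \pi_X$ equal the chosen map $X \to f_U(Y)$ (the former by construction/naturality of $\pi$, using $\pi_Y h = f_U(h)\pi_X$), and $\pi_X$ is an epimorphism, so $f_U(h) = \bar h$. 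This gives $\alpha(h) = \bar h$, proving $\alpha$ is onto.

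The main obstacle is getting the bookkeeping of the two torsion sequences straight: one must be careful that the naturality square for the projection $\pi \colon \id \to f_U$ commutes, i.e.\ $\pi_Y \circ h = f_U(h) \circ \pi_X$, and that $\pi_X$ is epi (immediate, since $f_U(X)$ is a quotient of $X$). Once that is in place, surjectivity of $\alpha$ reduces cleanly to the single $\Ext^1$-vanishing statement $\Ext^1(X, t_U(Y)) = 0$, which is exactly the content of $\Hom(U, \tau X) = 0$ via Lemma~\ref{rigid-rigid}(b). No finiteness of $\Lambda$ and no wideness is needed here; it is a purely torsion-theoretic argument, and $X$ need not be $\tau$-rigid.
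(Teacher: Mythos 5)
Your proof is correct and follows essentially the same route as the paper: both arguments rest on the isomorphism $\Hom(f_U(X),f_U(Y))\simeq \Hom(X,f_U(Y))$ coming from $\Hom(t_U(X),f_U(Y))=0$, and on the surjectivity of $\Hom(X,Y)\to\Hom(X,f_U(Y))$, which follows from $\Ext^1(X,t_U(Y))=0$ via Lemma~\ref{rigid-rigid}(b). The only difference is presentational: the paper writes $\alpha=a^{-1}\circ b$ directly, whereas you lift a given map and verify the naturality square, which amounts to the same thing.
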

\begin{proof}
Consider the canonical sequences for $X$ and $Y$,
$$0 \to t_U (X) \to X \to f_U (X) \to 0$$
and 
$$0 \to t_U (Y) \to Y \to f_U (Y) \to 0$$ 
Applying $\Hom(\ ,f_U (Y))$ to the canonical sequence for $X$ gives
the exact sequence
$$0 \to \Hom(f_U (X), f_U (Y))  \xrightarrow{a} \Hom(X, f_U (Y)) \to \Hom(t_U (X), f_U (Y))$$
Noting that the last term vanishes, this gives that $a$ is an isomorphism.

Applying $\Hom(X,\ )$
 to the canonical sequence for $Y$ gives
the exact sequence
$$\Hom(X, Y)  \xrightarrow{b} \Hom(X, f_U (Y)) \to \Ext^1(X, t_U (Y)).$$
Since $\Hom(U, \tau X)= 0$ we have by Lemma \ref{rigid-rigid} that 
$\Ext^1(X, \Gen U) = 0$, so in particular $\Ext^1(X, t_U (Y)) = 0$. 
Hence the map $b$ is an epimorphism. The induced map
$\alpha = a^{-1} \circ b$ is then also an epimorphism.
\end{proof}

We have the following similar lemma:

\begin{lemma} \label{al1b}
 Let $U,X,Y$ be in $\module\Lambda$ where $U$ is $\tau$-rigid
and $\Hom(U,Y)=0$. Then the induced map $\Hom(X,Y) \rightarrow
\Hom(f_U (X), f_U (Y))$ is an isomorphism.
\end{lemma}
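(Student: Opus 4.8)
The plan is to mimic the proof of Lemma~\ref{al1}, but exploit the stronger hypothesis $\Hom(U,Y)=0$ to upgrade both the relevant maps to isomorphisms. First I would record that $\Hom(U,Y)=0$ forces $t_U(Y)=0$, since $t_U(Y)\in\Gen U$ and any nonzero object of $\Gen U$ receives a nonzero map from $\add U$; hence the canonical sequence for $Y$ degenerates to $Y = f_U(Y)$. (More carefully: $\Hom(U,Y)=0$ gives $\Hom(U,t_U(Y))=0$, and $t_U(Y)$ being a quotient of an object of $\add U$ then forces $t_U(Y)=0$.) So the target of the induced map is simply $\Hom(f_U(X),Y)$.

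Next I would run the two short exact sequence arguments from Lemma~\ref{al1}, now with $f_U(Y)=Y$. Applying $\Hom(-,Y)$ to the canonical sequence $0\to t_U(X)\to X\to f_U(X)\to 0$ gives the exact sequence
$$0 \to \Hom(f_U(X),Y) \xrightarrow{a} \Hom(X,Y) \to \Hom(t_U(X),Y),$$
and since $t_U(X)\in\Gen U$ and $Y\in U^{\perp}$ (as $\Hom(U,Y)=0$ means $Y\in U^\perp$), the last term vanishes, so $a$ is an isomorphism. Unlike in Lemma~\ref{al1}, here there is no second step to worry about: the map $\Hom(X,Y)\to\Hom(f_U(X),f_U(Y))=\Hom(f_U(X),Y)$ is precisely the inverse of $a$ (the map sending $\phi\colon X\to Y$ to the induced map $f_U(X)\to f_U(Y)$ is, under the identification $f_U(Y)=Y$, just restriction along $X\to f_U(X)$, i.e.\ $a$), hence an isomorphism.

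There is essentially no obstacle here; the only point requiring a little care is the claim $Y\in U^{\perp}$ and $t_U(Y)=0$, which I would state explicitly since it is what distinguishes this lemma from Lemma~\ref{al1}. One should also note that, in contrast to Lemma~\ref{al1}, we do \emph{not} need the hypothesis $\Hom(U,\tau X)=0$: the $\Ext^1$ term that appeared in the second step of that proof never arises, because the canonical sequence for $Y$ is trivial. Thus the lemma holds for arbitrary $X$ once $U$ is $\tau$-rigid with $\Hom(U,Y)=0$, and the proof is a one-step diagram chase.
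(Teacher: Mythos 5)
Your proposal is correct and follows essentially the same route as the paper's proof: observe that $\Hom(U,Y)=0$ gives $t_U(Y)=0$ so $f_U(Y)\simeq Y$, then apply $\Hom(-,Y)$ to the canonical sequence for $X$ and use $\Hom(t_U(X),Y)=0$ to conclude. Your extra remark identifying the induced map as the inverse of $a$, and the observation that no $\Ext^1$ term (hence no hypothesis $\Hom(U,\tau X)=0$) is needed, are both accurate.
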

\begin{proof}
Since $\Hom(U,Y) =0$, we have $t_U (Y) = 0$, so $f_U (Y) \simeq Y$.
We have the canonical sequence for $X$:
$$\xymatrix{
0 \ar[r] & t_U(X) \ar[r] & X \ar[r] & f_U(X) \ar[r] & 0 }$$
Applying $\Hom(\ ,Y)$ to this we obtain the exact sequence
$$\xymatrix{
0 \ar[r] &  \Hom(f_U (X), Y) \ar[r] & \Hom(X, Y) \ar[r] & \Hom(t_U (X), Y).}$$
The last term vanishes since $\Hom(U,Y) = 0$ implies that $\Hom(\Gen U,Y) = 0$.
So we have $\Hom(f_U(X), f_U(Y)) \simeq \Hom(f_U (X), Y) \simeq \Hom(X,Y)$.
\end{proof}

Lemma~\ref{al1} has the following consequence in terms of
approximations:

\begin{lemma}\label{al5}
Let $\T$ be a subcategory of $\module \Lambda$. Let $U$ be $\tau$-rigid and assume
$\Hom(U, \tau B) = 0$. If $a \colon B \to A$ is a left $\T$-approximation, then
$f_U(a)\colon f_U(B) \to f_U(A)$ is a left $f_U(\T)$-approximation.
\end{lemma}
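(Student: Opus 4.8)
The plan is to unwind the definition of a left approximation and use Lemma~\ref{al1} directly. Recall that $a\colon B\to A$ being a left $\T$-approximation means $A\in\T$ and, for every $T\in\T$, the induced map $\Hom(A,T)\to\Hom(B,T)$ obtained by precomposition with $a$ is surjective. We want to show $f_U(a)\colon f_U(B)\to f_U(A)$ is a left $f_U(\T)$-approximation, so we must check that $f_U(A)\in f_U(\T)$ (immediate, since $A\in\T$) and that for every object of $f_U(\T)$, which has the form $f_U(T)$ for some $T\in\T$, the precomposition map $\Hom(f_U(A),f_U(T))\to\Hom(f_U(B),f_U(T))$ is surjective.

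First I would fix $T\in\T$ and a morphism $g\colon f_U(B)\to f_U(T)$, and consider the commutative square relating $\Hom(B,T)$, $\Hom(A,T)$, $\Hom(f_U(B),f_U(T))$ and $\Hom(f_U(A),f_U(T))$, where the vertical maps are the functorial maps $\alpha_{B,T}$ and $\alpha_{A,T}$ from Lemma~\ref{al1} and the horizontal maps are precomposition with $a$ (on top) and with $f_U(a)$ (on the bottom). Commutativity of this square is just functoriality of $f_U$ together with naturality of the canonical maps $\Hom(X,Y)\to\Hom(f_U(X),f_U(Y))$. By Lemma~\ref{al1}, applied with the roles $X=B$ (using the hypothesis $\Hom(U,\tau B)=0$), the map $\alpha_{B,T}\colon\Hom(B,T)\to\Hom(f_U(B),f_U(T))$ is an epimorphism, so there is $h\colon B\to T$ with $\alpha_{B,T}(h)=g$.

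Next, since $a$ is a left $\T$-approximation, $h$ factors as $h=h'\circ a$ for some $h'\colon A\to T$. Chasing the square: $\alpha_{A,T}(h')$ precomposed with $f_U(a)$ equals $\alpha_{B,T}(h'\circ a)=\alpha_{B,T}(h)=g$. Hence $g$ lies in the image of precomposition with $f_U(a)$, which is exactly what we needed. This shows $f_U(a)$ is a left $f_U(\T)$-approximation.

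I do not expect a serious obstacle here; the only point requiring a little care is the verification of the hypothesis of Lemma~\ref{al1} — we need $\Hom(U,\tau B)=0$, which is exactly what is assumed, so nothing extra is needed — and making sure the naturality square genuinely commutes, i.e. that the functorial maps $\alpha_{-,-}$ are natural in both variables, which follows from their construction in the proof of Lemma~\ref{al1} as a composite $a^{-1}\circ b$ of natural transformations (the maps induced by the canonical torsion sequences). One should also note that $f_U(\T)$ is understood here as the essential image $\{f_U(T)\mid T\in\T\}$, so that every object of $f_U(\T)$ really is of the form $f_U(T)$; with this understood the argument is complete.
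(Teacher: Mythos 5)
Your proof is correct and follows essentially the same route as the paper: lift a map $g\colon f_U(B)\to f_U(T)$ to some $h\colon B\to T$ via the surjectivity in Lemma~\ref{al1} (which applies because $\Hom(U,\tau B)=0$), factor $h$ through the approximation $a$, and apply the functoriality of $f_U$ to conclude. The extra discussion of the naturality square is a harmless elaboration of the paper's one-line observation that $f_U(b)=f_U(c)f_U(a)$.
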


\begin{proof}
Let $f_U(T)$ be in $f_U(\T)$, and consider a map $b' \colon f_U (B) \to f_U (T)$.
By Lemma \ref{al1}, there is $b \colon B \to T$ such that $f_U(b) = b'$.
Since $a \colon B \to A$  is a left $\T$-approximation, there is $c \colon A \to T$
such that $b = ca$. It follows that $f_U(b) = f_U(c) f_U(a)$, which proves the claim.
\end{proof}

Lemma~\ref{al5} is used in the proof of part (b) of the following lemma.

\begin{lemma}\label{al2}
Let $U, V$ be in $\module \Lambda$, where $U \amalg V$ is
$\tau$-rigid. Assume no indecomposable summand in $V$ lies
in $\Gen U$ and let $\overline{V} = f_U (V)$.
Let $\T = {^{\perp}(\tau U \amalg \tau V)}$ and let 
$\T' = {^{\perp}(\tau_{J(U)}} \overline{V}) \cap J(U)$.
Then the following hold.
\begin{itemize}
\item[(a)] We have $f_U(\T) = \T'$.
\item[(b)] If $B \to A$ is a left $\T$-approximation in $\module\Lambda$ and $\Hom(U,\tau B)=0$,
then $f_U(B) \to f_U (A)$ is a left $\T'$-approximation (in $\module\Lambda$).
\end{itemize}
\end{lemma}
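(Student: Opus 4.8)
\textbf{Proof proposal for Lemma~\ref{al2}.}

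The plan is to prove part (a) by double inclusion, establishing first that $f_U(\T)\subseteq \T'$ and then the reverse, and then to deduce part (b) by combining (a) with Lemma~\ref{al5}. For the inclusion $f_U(\T)\subseteq \T'$, I would take an object $T$ in $\T = {}^{\perp}(\tau U\amalg\tau V)$. Since $\Hom(T,\tau U)=0$, the object $f_U(T)$ lies in $U^\perp$ automatically (it is a torsion-free quotient), and one needs to see it lies in ${}^{\perp}(\tau U)$ as well, so that $f_U(T)\in J(U)$; here I expect to use that $f_U$ preserves the property of having no maps from $U$ to the AR-translate, via the characterization $\Hom(T,\tau U)=0 \iff \Ext^1(U,\Gen T)=0$ from Lemma~\ref{rigid-rigid}(b), together with $\Gen f_U(T)\subseteq \Gen T$. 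Then, to show $f_U(T)\in {}^{\perp}(\tau_{J(U)}\overline V)$, I would again invoke Lemma~\ref{rigid-rigid}, reducing to $\Ext^1_{J(U)}(\overline V,\Gen_{J(U)} f_U(T))=0$, and relate this $\Ext^1$ in $J(U)$ to $\Ext^1$ in $\module\Lambda$ using the canonical sequence $0\to t_U(V)\to V\to\overline V\to 0$ (exactly as in Case I of the proof of Theorem~\ref{main-comp}): applying $\Hom(-,M')$ for $M'\in\Gen_{J(U)} f_U(T)\subseteq\Gen T\cap J(U)$ gives a four-term exact sequence whose outer terms vanish because $t_U(V)\in\Gen U$ kills maps into $U^\perp$, and because $\Hom(T,\tau V)=0$ forces $\Ext^1(V,M')=0$ by the AR-formula.

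For the reverse inclusion $\T'\subseteq f_U(\T)$, I would start with $N\in {}^{\perp}(\tau_{J(U)}\overline V)\cap J(U)$ and must produce $T\in\T$ with $f_U(T)\cong N$; the natural candidate is $T=N$ itself, since $N\in J(U)\subseteq U^\perp$ gives $t_U(N)=0$, hence $f_U(N)=N$. So the content is to check $N\in\T$, i.e. $\Hom(N,\tau U)=0$ (true since $N\in J(U)$) and $\Hom(N,\tau V)=0$. For the latter I would reverse the argument above: use $\Hom(N,\tau_{J(U)}\overline V)=0$, translate via Lemma~\ref{rigid-rigid} into $\Ext^1_{J(U)}(\overline V,\Gen N\cap J(U))=0$, and then push back to $\Ext^1(V,-)$ over $\module\Lambda$ using the same canonical sequence, being careful that $\Gen N$ over $\module\Lambda$ equals $\Gen N\cap J(U)$ is \emph{not} quite true, so here the key point is that $N\in J(U)$ and $V$'s interaction with $\Gen N$ is controlled by the torsion pair — one checks $\Hom(N,\tau V)=0\iff\Ext^1(V,\Gen N)=0$ and that any object of $\Gen N$ has its $t_U$-part annihilating $\Ext^1$ from $V$ suitably. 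I expect this direction to be the main obstacle, precisely because relating $\Gen$ and $\Ext^1$ computed in $\module\Lambda$ versus in the subcategory $J(U)$ requires care with the torsion pair $(\Gen U,U^\perp)$; the saving grace is that $\overline V=f_U(V)$ is a quotient of $V$ with kernel in $\Gen U$, so every obstruction lands in a $\Hom$ or $\Ext^1$ group with one argument in $\Gen U$ and the other in $U^\perp$, which vanishes.

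Finally, part (b) is immediate once (a) is in hand: given a left $\T$-approximation $a\colon B\to A$ with $\Hom(U,\tau B)=0$, Lemma~\ref{al5} says $f_U(a)\colon f_U(B)\to f_U(A)$ is a left $f_U(\T)$-approximation, and by part (a) we have $f_U(\T)=\T'$, so $f_U(a)$ is a left $\T'$-approximation. The hypothesis $\Hom(U,\tau B)=0$ is exactly what is needed to apply Lemma~\ref{al1} (and hence Lemma~\ref{al5}), namely that $f_U$ acts surjectively on the relevant $\Hom$-spaces. No further computation is needed for (b).
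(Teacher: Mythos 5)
Your treatment of the inclusion $f_U(\T)\subseteq \T'$ is essentially the paper's (the canonical sequence $0\to t_U(V)\to V\to\overline V\to 0$ plus Lemma~\ref{rigid-rigid}(b)), and part (b) is indeed immediate from (a) together with Lemma~\ref{al5}. The genuine gap is in the reverse inclusion $\T'\subseteq f_U(\T)$. Your reduction there is fine as far as it goes: since $N\in\T'\subseteq U^{\perp}$ we have $f_U(N)=N$, so the claim amounts to $N\in\T$, i.e.\ $\Hom(N,\tau V)=0$, i.e.\ $\Ext^1(V,M)=0$ for every $M\in\Gen N$. But the step where you ``push back to $\Ext^1(V,-)$'' does not close. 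Splitting off $t_U(M)$ is harmless (because $\Ext^1(V,\Gen U)=0$ follows from $\Hom(U,\tau V)=0$), so one may assume $M\in\Gen_{J(U)}N$; the canonical sequence of $V$ then gives the exact sequence
$$\Ext^1(\overline V,M)\to\Ext^1(V,M)\to\Ext^1(t_U(V),M),$$
and the hypothesis on $N$ only kills the left-hand term, so $\Ext^1(V,M)$ merely injects into $\Ext^1(t_U(V),M)$. There is no reason for that group to vanish: your ``saving grace'' — that every obstruction is a $\Hom$ or $\Ext^1$ group with one argument in $\Gen U$ and the other in $U^{\perp}$ — is exactly where the argument fails, since the torsion pair gives $\Hom(\Gen U,U^{\perp})=0$ but says nothing about $\Ext^1(\Gen U,U^{\perp})$, and $\tau t_U(V)$ is not controlled by $\tau U$ or $\tau_{J(U)}\overline V$.

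This is precisely why the paper takes a different route for this direction. It first observes $\T=\Gen U\ast f_U(\T)\subseteq\Gen U\ast\T'$ using \cite[Theorem 3.12]{jasso}, and then proves the reverse containment by showing that $U$ and $V$ are both Ext-projective in the torsion class $\Gen U\ast\T'$. The hard point is the Ext-projectivity of $V$: one uses $\P(\T')=f_U\P(\Gen U\ast\T')$ to find $V'\in\P(\Gen U\ast\T')$ with $f_U(V')=\overline V$, checks that $U\amalg V'$ is $\tau$-rigid, and invokes the uniqueness statements of \cite[Lemmas 5.6, 5.7]{bm} to conclude $V\simeq V'$; then \cite[Proposition 2.9]{air} yields $\Gen U\ast\T'\subseteq{}^{\perp}(\tau U\amalg\tau V)=\T$, whence $f_U(\T)=\T'$. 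Some argument of this strength (or an appeal to Jasso's reduction theorem) is required; the elementary diagram chase you propose cannot supply the missing vanishing.
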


\begin{proof}
\noindent (a) We first show $f_U(\T) \subseteq \T'$.
Since $U$ is in $\T$, we have $\Gen U \subseteq \T$, and clearly
$\T \subseteq {^{\perp}(\tau U)}$. By \cite[Theorem 3.14]{jasso},
we have that $f_U (\T) = \T \cap U^{\perp}$ is a torsion class
in $J(U)$. So 
$$f_U (\T) = \T \cap  U^{\perp} = {^{\perp}(\tau U \amalg \tau V)} 
\cap  U^{\perp} 
={^{\perp}(\tau V)} \cap J(U) ,$$
and we want to show that 
$f_U (\T)  = {^{\perp}(\tau V)} \cap J(U) \subseteq
 {^{\perp}(\tau_{J(U)} V)} \cap J(U) = \T'$.

Now let $Y$ be in $f_U (\T)$, and consider
the canonical sequence 
$$0 \to t_U (V) \to V \to f_U (V) \to 0$$ 
which, after applying $\Hom(\ , \Gen Y \cap J(U))$ gives rise to an exact sequence
$$\Hom(t_U (V), \Gen Y \cap J(U)) \to 
\Ext^1(\overline{V}, \Gen Y \cap J(U)) \to \Ext^1(V, \Gen Y \cap J(U)).$$
Since $Y$ is in ${^{\perp}(\tau V)}$, we have 
$\Ext^1(V, \Gen Y)= 0$ by Lemma \ref{rigid-rigid}, so in particular
$\Ext^1(V, \Gen Y \cap J(U))= 0$. Since $t_U (V)$ is in $\Gen U$
and $J(U) \subseteq U^{\perp}$, we have that 
$\Hom(t_U (V), \Gen Y \cap J(U)) = 0$. Hence, we also have  
$\Ext^1(\overline{V}, \Gen Y \cap J(U))= 0$, so
$\Ext^1(\overline{V}, \Gen_{J(U)} Y)= 0$ which implies
$\Hom(Y, \tau_{J(U)}\overline{V}) = 0$, by Lemma \ref{rigid-rigid}. Hence we have that 
$Y$ is in
$\T' = {^{\perp}(\tau_{J(U)} V)} \cap J(U)$, which gives
$f_U (\T) \subseteq \T'$.

For full subcategories $\X$ and $\Y$ of $\module \Lambda$, we let
$\X \ast \Y$ denote the full subcategory $$\{M \in \module \Lambda \mid \text{There is an
exact sequence } 0 \to X \to M \to Y \to 0 \text{ with } X \in \X, Y \in \Y \}.$$
Since $f_U (\T) \subseteq \T'$, we have 
$\Gen U \ast f_U (\T) \subseteq \Gen U \ast \T'$.
Since $f_U (\T) = \T \cap U^{\perp}$, it follows from 
\cite[Theorem 3.12]{jasso} that 
$\Gen U \ast f_U (\T) = \T$, so we have 
$\T \subseteq \Gen U \ast \T'$, and we aim to prove equality.

We first claim that $U$ is $\Ext$-projective in 
$\Gen U \ast \T'$. Since $\Hom(U,\tau U)= 0$, we have
$\Ext^1(U, \Gen U) = 0$. We have $\T' \subseteq J(U)$,
so $\Hom(\T', \tau U) = 0$ and hence $\Ext^1(U, \T') = 0$.
From this we obtain that also $\Ext^1(U, \Gen U \ast \T') = 0$,
as required.

We next claim that $V$ is $\Ext$-projective in 
$\Gen U \ast \T'$. Note first that by
\cite[Theorem 3.12]{jasso} we have  
$(\Gen U \ast \T') \cap U^{\perp} = \T'$. 
Since $\overline{V}$ is $\tau_{J(U)}$-rigid in $J(U)$ and
$\T' = {^{\perp}(\tau_{J(U)}} \overline{V}) \cap J(U)$, we have
that $\overline{V}$ is in $\P(\T')$ by~\cite[Theorem 2.10]{air}.
By~\cite[Theorem 3.15]{jasso} 
we have $\P(\T') = f_U\P(\Gen U \ast \T')$, and hence there is
$V'$ in $\P(\Gen U \ast \T')$ such that $\overline{V} = f_U (V')$.
We claim that $V' \amalg U$ is $\tau$-rigid.
Since $V'$ is in $\P(\Gen U \ast \T')$, we have
$\Hom(V', \tau V') = 0$ by \cite[Proposition 1.2(c)]{air}
(noting that $\T'$ is functorially finite in $J(U)$ by~\cite[Theorem 2.10]{air} and
therefore $\Gen U \ast \T'$ is functorially finite in $\module\Lambda$ by~\cite[Theorem 3.14]{jasso}).

Since $\Ext^1(V', \Gen U) = 0$, we have that $\Hom(U,\tau V') = 0$.
We also have that $\Hom(\Gen U, \tau U) = 0$, since $U$ is $\tau$-rigid
and since $\T' \subseteq J(U)$ we have $\Hom(\T', \tau U) = 0$.
Since $V'$ is in $\Gen U \ast \T'$ we hence have $\Hom(V', \tau U)
= 0$, so we have proved the claim that 
 $V' \amalg U$ is $\tau$-rigid.
Since $f_U (\Gen U) = 0$, we may assume that $V'$ has no
direct summands in $\Gen U$. We have $\overline{V} = f_U (V) =
f_U (V')$.
It follows from \cite[Lemmas 5.6, 5.7]{bm} that $\overline{V}$ is basic, since $V$ is basic by assumption. Similarly, also $V'$ is basic and $V \simeq V'$.
So we have proved the claim that $V$ is in $\P(\Gen U \ast \T')$.

Now, using that $U \amalg V$ is in $\P(\Gen U \ast \T')$ in combination with 
\cite[Proposition 2.9]{air}, gives that $\Gen U \ast \T' \subseteq \T={}^{\perp}(\tau U\amalg \tau V)$, and hence we 
obtain  $\T = \Gen U \ast \T'$, which implies $f_U  (\T) = f_U (\Gen U \ast \T') = \T'$,
and this finishes the proof of (a).

\bigskip

\noindent Part (b) follows from part (a) and Lemma~\ref{al5}.
%Let $T'$ be in $\T'$, and let $b' \colon f_U (B) \to T'$
%be a map. By part (a), we have $T' = f_U (T)$ for some $T$ in $\T$.
%Let $a' = f_U(a)$, where $a$ is the left $\T$-approximation $B %\overset{a}{\to} A$.
%
%By Lemma \ref{al1} we have that $\Hom(B,T) \to \Hom(f_U (B), f_U %(T))$ is an epimorphism,
%so there is $B \overset{b}{\to} T$ such that $b' = f_U(b)$. Since %$a$ is by assumption a left $\T$-approximation, there is $A %\overset{c}{\to} T$ such that $b=ca$, and by applying 
%$f_U$ we obtain $f_U (b) = f_U (c) f_U (a)$. This proves the claim %of (b), that $a' =f_U (a)$
%is a $\T'$-approximation.
\end{proof}

\begin{lemma}\label{gen-lem}
Let $U \amalg V$ be $\tau$-rigid in $\module \Lambda$,
let $\T = \Gen (U \amalg V)$ and let $\T' = \Gen f_U (V) \cap J(U)
= \Gen_{J(U)} f_U(V)$. Then$f_U (\T) = \T'$. 
\end{lemma}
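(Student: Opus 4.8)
The plan is to prove the two inclusions $f_U(\T) \subseteq \T'$ and $\T' \subseteq f_U(\T)$ separately, where $\T = \Gen(U \amalg V)$ and $\T' = \Gen_{J(U)} f_U(V)$. Throughout I would use the characterization of $\Gen$ via surjections from $\add$ of the generator, together with the exactness of the torsion functor $f_U$ on the relevant morphisms, which is available from Lemma~\ref{al1} (noting that everything in $\Gen(U\amalg V)$ satisfies $\Hom(U\amalg V, \tau(-))=0$ since $U\amalg V$ is $\tau$-rigid, hence in particular $\Hom(U,\tau(-))=0$, so $f_U$ behaves well).

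For the inclusion $f_U(\T) \subseteq \T'$: take $M \in \T = \Gen(U\amalg V)$, so there is a surjection $(U\amalg V)^m \twoheadrightarrow M$. First I would note that $f_U(U) = 0$, so applying $f_U$ kills the $U$-part; since $f_U$ is right exact on this situation (it preserves epimorphisms, being a left adjoint / coreflection onto the torsion-free class, or directly because the canonical sequences give epimorphisms $V^m \to f_U(V^m) = f_U(V)^m$ and these are compatible), I obtain a surjection $f_U(V)^m \twoheadrightarrow f_U(M)$ in $J(U)$. Hence $f_U(M) \in \Gen_{J(U)} f_U(V) = \T'$. The only subtlety is checking that $f_U$ of a surjection $(U\amalg V)^m \to M$ really yields a surjection onto $f_U(M)$; this follows because $f_U$ is an additive functor that sends the canonical epimorphism $X \to f_U(X)$ to an isomorphism and is the left adjoint of the inclusion $U^\perp \hookrightarrow \module\Lambda$, so it preserves epimorphisms.

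For the reverse inclusion $\T' \subseteq f_U(\T)$: take $N \in \T' = \Gen_{J(U)} f_U(V)$, so there is a surjection $f_U(V)^m \twoheadrightarrow N$ in $J(U)$ (equivalently in $\module\Lambda$, since $J(U)$ is a wide — in particular abelian exact — subcategory). By Lemma~\ref{al1} applied with $U$ $\tau$-rigid and $\Hom(U,\tau V)=0$, the map $\Hom(V^m, E) \to \Hom(f_U(V^m), f_U(E))$ is surjective for any $E$ with $\Hom(U,\tau E)=0$; more to the point, I would pull back the surjection $f_U(V)^m \to N$ along the canonical epimorphism to lift it. Concretely, form the pullback of $V^m \to f_U(V)^m \twoheadrightarrow N$ — or better, set $M$ to be the preimage in $V^m$ of (a lift of) $N$ under the composite $V^m \to f_U(V)^m$: since $f_U(V^m) = N$ would require care, the cleanest route is to take $M = $ the pullback $V^m \times_{f_U(V)^m} N$, which surjects onto $V^m$'s torsion-free quotient appropriately. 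Then $M \in \Gen(V) \subseteq \T$ and I must check $f_U(M) \cong N$: applying $f_U$ to $0 \to t_U(V^m) \to M \to N \to 0$ (the pullback kernel being $t_U(V^m)$, which lies in $\Gen U$) gives $f_U(M) \cong f_U(N) = N$ since $N \in U^\perp$. Thus $N = f_U(M) \in f_U(\T)$.

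The main obstacle I anticipate is the careful bookkeeping in the reverse inclusion: making sure the pullback construction produces an object genuinely in $\Gen(U\amalg V)$ and that its torsion part is killed correctly by $f_U$, so that $f_U$ of it returns exactly $N$ and not merely something mapping onto $N$. Everything hinges on the right-exactness of $f_U$ (preservation of epimorphisms) and on the identity $f_U(X) \cong X$ for $X \in U^\perp = \Gen(U)^\perp$, both of which are standard properties of the torsion coreflection associated to the torsion pair $(\Gen U, U^\perp)$; once these are in hand the argument is a short diagram chase, and one can also shortcut the whole lemma by invoking \cite[Theorem 3.14]{jasso} to identify $f_U$ as the quotient functor realizing $J(U)$, under which torsion classes generated by a module correspond as claimed.
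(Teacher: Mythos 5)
Your overall strategy (two inclusions, using surjections from $\add(U\amalg V)$ and the fact that $f_U$ preserves epimorphisms) is the same as the paper's, and the forward inclusion $f_U(\T)\subseteq \T'$ is correct as written: the paper phrases the same argument via a commutative square of canonical epimorphisms rather than via the adjunction, but the content is identical. In the reverse inclusion, however, your construction as literally written does not parse: $N$ is a \emph{quotient} of $f_U(V)^m$, so there is no map $N\to f_U(V)^m$ along which to form the pullback $V^m\times_{f_U(V)^m}N$, and likewise "the preimage in $V^m$ of a lift of $N$" is not defined. Fortunately no such construction is needed, and the correct argument is already implicit in what you wrote: the composite $V^m\to f_U(V)^m\twoheadrightarrow N$ is a surjection, so $N$ itself lies in $\Gen V\subseteq \T$, and since $N\in J(U)\subseteq U^{\perp}$ one has $f_U(N)=N$, whence $N\in f_U(\T)$ directly (this is exactly the paper's argument, taking $T=N$). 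The paper also frames the statement by first invoking \cite[Theorem 3.15]{jasso} to identify $f_U(\T)=\T\cap U^{\perp}$ as a torsion class in $J(U)$, which is the shortcut you mention at the end; either route is fine once the reverse inclusion is stated correctly.
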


\begin{proof}
Since $\Hom(U \amalg V, \tau U) = 0$, we have $\T \subseteq {^{\perp}(\tau U)}$,
so we  have $$\Gen U \subseteq \T \subseteq {^{\perp}(\tau U)}.$$
By \cite[Theorem 3.15]{jasso}, we have that $f_U (\T) = \T \cap U^{\perp}$ is a torsion class in $J(U)$.

Let $Y$ be in $f_U (\T)$ and let $T \in \T$ be such that $Y = f_U (T)$.
There is an epimorphism $U' \amalg V' \xrightarrow{a} T$ with
$U'\in \add U$ and $V'\in \add V$. The canonical maps $U' \amalg V' \xrightarrow{c} 
f_U (V')$
and $T \xrightarrow{d} f_U (T)$ are also epimorphisms, and there is a commutative diagram
$$
\xymatrix{
U' \amalg V' \ar_{c}[d] \ar^{a}[r] & T \ar^{d}[d] \\
f_U (V') \ar^{b}[r] & f_U (T)
}
$$
where $b = f_U(a)$.
\bigskip

\noindent Since $bc = da$ is an epimorphism, also $b$ must be an epimorphism, and hence
$Y = f_U (T)$ is in $\Gen f_U (V)$. Since $f_U (\T) \subseteq J(U)$, we have that $Y$
is in $J(U)$ and hence in $\Gen f_U (V) \cap J(U)$.

Conversely suppose $Y$ is in $\Gen f_U (V) \cap J(U)$. Since $f_U (V)$ is a factor module
of $V$, we have that $Y$ is in $\Gen V$, so $Y$ is in $\T$.
Since $Y$ is in $J(U) \subseteq U^{\perp}$, we hence have that $Y$ is in
$\T \cap U^{\perp} = f_U (\T)$. This finishes the proof of the lemma.
\end{proof}

We also need the following reformulation of Lemma \ref{tau-in-pperp}.

\begin{lemma}\label{al3}
Let $P,V$ be in $\module \Lambda$, with $P$ projective and $\Hom(P,V) = 0$,
and let $\overline{V} = f_P V = V$.
Let $\T = {^{\perp}(\tau V)}$ and let 
$\T' = {^{\perp}(\tau_{P^{\perp}}} \overline{V}) \cap P^{\perp}$.
Then we have $f_P (\T) = \T'$.
\end{lemma}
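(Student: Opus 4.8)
The plan is to reduce Lemma~\ref{al3} directly to Lemma~\ref{tau-in-pperp}, since the two statements are essentially reformulations of each other. First I would unwind the definitions: since $P$ is projective and $\Hom(P,V)=0$, we have $t_P(V)=0$, so $f_P(V)=V=\overline{V}$, justifying the notation in the statement. The torsion functor $f_P$ associated to the $\tau$-rigid module $P$ has essential image $P^{\perp}$ (indeed $\Gen P$ is the torsion class and $P^{\perp}=(\Gen P)^{\perp}$ in the torsion pair of \cite[Theorem 5.8]{aus-sma}), and by \cite[Theorem 3.15]{jasso}, for any torsion class $\T$ in $\module\Lambda$ containing $\Gen P$ we have $f_P(\T)=\T\cap P^{\perp}$. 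Here $\T = {^{\perp}(\tau V)}$ contains $\Gen V$ and, more importantly, contains $\Gen P$ because $\Hom(P,\tau V)=0$ follows from $\Hom(P,V)=0$ and $P$ projective (so $\Ext^1(V,\Gen P)=0$, hence $\Hom(P,\tau V)=0$ by Lemma~\ref{rigid-rigid}(b), giving $P\in{}^{\perp}(\tau V)$ and thus $\Gen P\subseteq{}^{\perp}(\tau V)$ since the latter is closed under quotients). Therefore $f_P(\T)=\T\cap P^{\perp}={}^{\perp}(\tau V)\cap P^{\perp}$.

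Next I would apply Lemma~\ref{tau-in-pperp} with $U$ replaced by $V$ (the roles of the letters differ from that lemma, but the hypotheses match: $V$ is $\tau$-rigid, $P$ is projective, $\Hom(P,V)=0$), which gives
$$
{}^{\perp}(\tau_{P^{\perp}}V)\cap P^{\perp} = {}^{\perp}(\tau V)\cap P^{\perp}.
$$
Since $\overline{V}=V$, the left-hand side is exactly $\T'={}^{\perp}(\tau_{P^{\perp}}\overline{V})\cap P^{\perp}$. Combining the two displayed equalities yields $f_P(\T)={}^{\perp}(\tau V)\cap P^{\perp}=\T'$, as desired.

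The only real subtlety — and the step I would be most careful about — is the claim $f_P(\T)=\T\cap P^{\perp}$, i.e. that applying \cite[Theorem 3.15]{jasso} is legitimate. This requires $\T={}^{\perp}(\tau V)$ to be a torsion class containing $\Gen P$: it is a torsion class because $V$ is $\tau$-rigid (so ${}^{\perp}(\tau V)=\{Y:\Ext^1(V,\Gen Y)=0\}$ is closed under quotients, extensions and coproducts, being the left perpendicular to a torsion class pattern; this is standard from Lemma~\ref{rigid-rigid}(b)), and the containment $\Gen P\subseteq\T$ was argued above. I should also note that $\Gen_{P^{\perp}}\overline{V}$-type subtleties do not arise here since $\overline{V}=V$, so the right perpendicular $\tau_{P^{\perp}}$ is taken inside the wide subcategory $P^{\perp}$ exactly as in Lemma~\ref{tau-in-pperp}. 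With those observations in place the proof is a two-line citation of Lemma~\ref{tau-in-pperp} and \cite[Theorem 3.15]{jasso}.
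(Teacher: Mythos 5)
Your overall reduction is exactly the paper's: its proof of this lemma is the one-line observation that the claim follows from Lemma~\ref{tau-in-pperp} together with the identity $f_P(\T)=P^{\perp}\cap{}^{\perp}(\tau V)$. However, your justification of that identity contains a false step. You claim that $\Hom(P,V)=0$ with $P$ projective forces $\Ext^1(V,\Gen P)=0$, hence $\Hom(P,\tau V)=0$ and $\Gen P\subseteq\T={}^{\perp}(\tau V)$, so that Jasso's Theorem~3.15 applies. This implication is wrong: take $\Lambda=kQ$ with $Q\colon 1\to 2$, $P=P_2=S_2$ and $V=S_1$. Then $\Hom(P,V)=0$, but $\tau V=S_2$, so $\Hom(P,\tau V)\neq 0$, equivalently $\Ext^1(V,\Gen P)=\Ext^1(S_1,S_2)\neq 0$, and $\Gen P\not\subseteq{}^{\perp}(\tau V)$. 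Indeed, the whole point of allowing shifted summands $P[1]$ in support $\tau$-rigid objects is that $\Hom(P,V)=0$ does \emph{not} imply $\Hom(P,\tau V)=0$. So the hypothesis $\Gen P\subseteq\T$ needed for Jasso's theorem fails, and that route to $f_P(\T)=\T\cap P^{\perp}$ breaks down.

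Fortunately the identity you need is true for a more elementary reason, which is presumably what the paper intends: ${}^{\perp}(\tau V)$ is closed under factor modules (if $T\twoheadrightarrow T'$ then $\Hom(T',\tau V)$ embeds into $\Hom(T,\tau V)$), so $f_P(T)\in{}^{\perp}(\tau V)\cap P^{\perp}$ for every $T\in\T$; conversely, every $M\in\T\cap P^{\perp}$ satisfies $M=f_P(M)$. Hence $f_P(\T)=\T\cap P^{\perp}={}^{\perp}(\tau V)\cap P^{\perp}$ with no appeal to Theorem~3.15. With that repair, your application of Lemma~\ref{tau-in-pperp} (with the $U$ there replaced by $V$, and using $\overline{V}=V$) finishes the proof exactly as in the paper.
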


\begin{proof}
This follows directly from Lemma \ref{tau-in-pperp}, using that $f_{P}(\T) = P^{\perp} \cap
{^{\perp}(\tau V)}$.
\end{proof}

\bigskip

Finally, we need the following. Suppose that $\U$ and $\V$ are objects
in $\C(\Lambda)$, with $\add \U\cap \add \V=0$ and such that $\U\amalg \V$ is support $\tau$-rigid. 
Note that the domain of $\E_{\U\amalg \V}$ is:
$$\{\X\in \ind \C(\Lambda)\,:\, \text{$\X\amalg \U\amalg \V$ support $\tau$-rigid and
$\add \X\cap \add(\U\amalg \V)=0$} \}.$$
Then:

\begin{lemma} \label{l:domain}
Let $\U$ and $\V$ be objects in $\C(\Lambda)$ such that $\U \amalg \V$ is
support $\tau$-rigid and $\add \U\cap \add \V=0$.
Then $\E_{\U}$ induces a bijection between the sets:
$$\{\X\in \ind \C(\Lambda)\,:\, \text{$\X\amalg \U\amalg \V$ support $\tau$-rigid and
$\add \X\cap \add(\U\amalg \V)=0$} \}.$$
and
$$\{\X\in \ind \C(J(\U))\,:\, \text{$\X\amalg \E_{\U}(\V)$ support $\tau$-rigid and
$\add \X\cap \add(\E_{\U}(\V))=0$} \}.$$
\end{lemma}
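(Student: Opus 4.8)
The statement to prove (Lemma~\ref{l:domain}) is that $\E_{\U}$ restricts to a bijection between the set of indecomposables $\X$ in $\C(\Lambda)$ such that $\X\amalg\U\amalg\V$ is support $\tau$-rigid with $\add\X\cap\add(\U\amalg\V)=0$, and the set of indecomposables $\X$ in $\C(J(\U))$ such that $\X\amalg\E_{\U}(\V)$ is support $\tau$-rigid with $\add\X\cap\add(\E_{\U}(\V))=0$. The natural approach is to start from the bijection of Theorem~\ref{thm-bi}, which already gives that $\E_{\U}$ is a bijection from $\{\X\in\ind\C(\Lambda)\mid \X\amalg\U \text{ }\tau\text{-rigid}\}\setminus\ind\U$ onto $\{\X\in\ind\C(J(\U))\mid \X \text{ support }\tau\text{-rigid in }\C(J(\U))\}$, and then to show that this bijection carries the first subset described in the lemma onto the second. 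So the real content is that, for $\X$ in the domain of $\E_{\U}$, the object $\X\amalg\U\amalg\V$ is support $\tau$-rigid (with the disjointness of additive closures) \emph{if and only if} $\E_{\U}(\X)\amalg\E_{\U}(\V)$ is support $\tau$-rigid in $\C(J(\U))$ (with the corresponding disjointness).

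\textbf{Key steps.} First I would record that both $\V$ and any $\X$ in the domain lie in the domain of $\E_{\U}$ (since $\X\amalg\U$ and $\V\amalg\U$ are support $\tau$-rigid), so $\E_{\U}(\X)$ and $\E_{\U}(\V)$ are defined and, by Theorem~\ref{thm-bi}, are support $\tau$-rigid in $\C(J(\U))$; the disjointness $\add\X\cap\add\U=0$ together with injectivity of $\E_{\U}$ gives $\add\E_{\U}(\X)\cap\add\E_{\U}(\V)=0$ as soon as $\add\X\cap\add\V=0$, and conversely. The crux is the equivalence of support $\tau$-rigidity. For this I would invoke Theorem~\ref{main-compo}: $\X\amalg\U\amalg\V$ is support $\tau$-rigid if and only if $\X$ (together with $\V$) lies in the appropriate combinatorial completion of $\U$, which by the additivity of $\E_{\U}$ (Theorem~\ref{rigid-sums}, or rather its indecomposable refinement combined with the definition $\E_{\U}(\X_1\amalg\cdots)=\E_{\U}(\X_1)\amalg\cdots$) translates to: $\E_{\U}(\X)\amalg\E_{\U}(\V)$ is support $\tau$-rigid in $\C(J(\U))$. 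More concretely, applying Theorem~\ref{rigid-sums} to the object $\U$ and the object $\X\amalg\V$ of $\C(\Lambda)$ (which has $\delta=\delta(\X)+\delta(\V)$ when the additive closures are disjoint) shows that $\E_{\U}(\X\amalg\V)=\E_{\U}(\X)\amalg\E_{\U}(\V)$ is support $\tau$-rigid in $\C(J(\U))$ precisely when $\X\amalg\V\amalg\U$ is support $\tau$-rigid with $\add(\X\amalg\V)\cap\add\U=0$. This is exactly the equivalence needed, and it runs in both directions because $\E_{\U}$ (extended additively) is a bijection at the level of support $\tau$-rigid objects of each fixed $\delta$.

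\textbf{Main obstacle.} The delicate point is bookkeeping around the sets rather than any deep new argument: one must be careful that ``$\X\amalg\U\amalg\V$ support $\tau$-rigid'' is \emph{not} literally ``$\X\amalg\U$ support $\tau$-rigid'' plus ``$\V\amalg\U$ support $\tau$-rigid'' (the pairwise compatibility of $\X$ and $\V$ is extra information), and that the inverse bijection $\F_{\U}=\E_{\U}^{-1}$ sends $\E_{\U}(\V)$ back to $\V$ so that applying the additive form of the bijection with base object $\U$ and the object $\E_{\U}(\V)$ in $\C(J(\U))$ recovers the original set. So I would phrase the argument symmetrically: given $\X$ in the domain of $\E_{\U}$, the conditions ``$\X\amalg\U\amalg\V$ support $\tau$-rigid and $\add\X\cap\add\V=0$'' and ``$\E_{\U}(\X)\amalg\E_{\U}(\V)$ support $\tau$-rigid and $\add\E_{\U}(\X)\cap\add\E_{\U}(\V)=0$'' are equivalent by the additive bijection of Theorem~\ref{rigid-sums} (applied to $\X\amalg\V$ of total $\delta$-value $\delta(\X)+\delta(\V)\le n-\delta(\U)$), whence $\E_{\U}$ restricts to a bijection between the two displayed sets. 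The only genuinely new thing to check is that $\E_{\U}$ of a coproduct of indecomposables is the coproduct of their images, which is true by the very definition of the additive extension given just before Theorem~\ref{rigid-sums}, so no essential difficulty remains.
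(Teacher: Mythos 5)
Your proposal is correct and follows essentially the same route as the paper, whose proof of this lemma is simply the observation that it follows from Theorem~\ref{rigid-sums}; your argument (apply the additive bijection to the object $\X\amalg\V$ with base $\U$, and use injectivity of $\E_{\U}$ on indecomposables for the disjointness conditions) is exactly the intended unwinding of that statement. The bookkeeping points you flag are handled correctly, so no gap remains.
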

\begin{proof}
This follows from Theorem~\ref{rigid-sums}.
\end{proof}

\begin{corollary} \label{c:domain}
The composition $\E_{\E_{\U}(\V)}^{J(\U)}\E_{\U}$ is a
well-defined map with domain coinciding with the domain of $\E_{\U \amalg \V}$.
\end{corollary}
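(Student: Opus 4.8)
The plan is to deduce Corollary~\ref{c:domain} directly from the two bijection results already established, namely Lemma~\ref{l:domain} and Theorem~\ref{thm-bi} (applied with $J(\U)$ in place of $\Lambda$, which is legitimate since $J(\U)$ is equivalent to a module category of a finite dimensional algebra by Theorem~\ref{rig-fin}(a)). The key observation is that the domain of a composite $g \circ f$ consists of exactly those elements of the domain of $f$ whose image under $f$ lies in the domain of $g$; so I need to check that $\E_{\U}$ carries the domain of $\E_{\U\amalg \V}$ \emph{onto} the domain of $\E_{\E_{\U}(\V)}^{J(\U)}$, and that it does so bijectively (the latter being automatic from Theorem~\ref{thm-bi}, but not actually needed for the statement).

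First I would record the two relevant domains explicitly. By the discussion preceding Lemma~\ref{l:domain}, the domain of $\E_{\U\amalg \V}$ is
$$\{\X\in \ind \C(\Lambda)\,:\, \text{$\X\amalg \U\amalg \V$ support $\tau$-rigid and $\add \X\cap \add(\U\amalg \V)=0$}\},$$
and by Theorem~\ref{thm-bi}, applied to the support $\tau$-rigid object $\E_{\U}(\V)$ in $\C(J(\U))$ (which is support $\tau$-rigid by Theorem~\ref{rigid-sums}), the domain of $\E_{\E_{\U}(\V)}^{J(\U)}$ is
$$\{\X\in \ind \C(J(\U))\,:\, \text{$\X\amalg \E_{\U}(\V)$ support $\tau$-rigid and $\add \X\cap \add(\E_{\U}(\V))=0$}\}.$$
Now Lemma~\ref{l:domain} states precisely that $\E_{\U}$ restricts to a bijection between these two sets. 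Hence every object in the domain of $\E_{\U\amalg \V}$ is mapped by $\E_{\U}$ into the domain of $\E_{\E_{\U}(\V)}^{J(\U)}$, so the composite $\E_{\E_{\U}(\V)}^{J(\U)}\E_{\U}$ is defined on all of the domain of $\E_{\U\amalg \V}$; conversely, the domain of the composite is contained in the domain of $\E_{\U}$ on indecomposables — which, by Theorem~\ref{thm-bi}, is exactly $\{\X\in \ind \C(\Lambda)\,:\, \X\amalg \U \text{ }\tau\text{-rigid}\}\setminus \ind\U$ — and consists only of those $\X$ with $\E_{\U}(\X)$ in the domain of $\E_{\E_{\U}(\V)}^{J(\U)}$; applying the bijection of Lemma~\ref{l:domain} in the reverse direction identifies this with the domain of $\E_{\U\amalg \V}$. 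I would then note that by the additive extension convention introduced just before Theorem~\ref{rigid-sums}, both $\E_{\U\amalg \V}$ and $\E_{\E_{\U}(\V)}^{J(\U)}\E_{\U}$ extend to all basic objects $\X$ with $\X\amalg \U\amalg \V$ support $\tau$-rigid and $\add\X\cap\add(\U\amalg\V)=0$, and the domains again agree summand by summand.

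There is essentially no obstacle here: the corollary is a bookkeeping consequence of the domain descriptions in Theorem~\ref{thm-bi} and Lemma~\ref{l:domain}, together with the elementary fact about domains of composites. The only mild subtlety worth stating carefully is that one must invoke Theorem~\ref{rig-fin}(a) (or Proposition~\ref{tau-finite}) to know that $J(\U)$ is genuinely (equivalent to) the module category of a finite dimensional algebra, so that Theorem~\ref{thm-bi} may be applied with $J(\U)$ playing the role of $\Lambda$; everything else is immediate. I would keep the proof to two or three lines, citing Lemma~\ref{l:domain} and Theorem~\ref{thm-bi}.
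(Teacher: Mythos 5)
Your proposal is correct and follows essentially the same route as the paper: the paper's proof simply cites Lemma~\ref{l:domain} together with the observation that the target set in that lemma is exactly the domain of $\E_{\E_{\U}(\V)}^{J(\U)}$, which is precisely the bookkeeping you carry out (in somewhat more detail). No gaps.
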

\begin{proof}
This follows from Lemma~\ref{l:domain} and the fact that target
set in Lemma~\ref{l:domain} is exactly the domain of $\E_{\E_{\U}(\V)}^{J(\U)}$.
\end{proof}

The following sections will be devoted to proving the following theorem (Thoerem~\ref{main-as} from Section~\ref{main}).

\begin{theorem}\label{main-asso}
Let $\U$ and $\V$ be support $\tau$-rigid objects in $\C(\Lambda)$ with no common direct summands, and suppose that $\U\amalg \V$ is support $\tau$-rigid in $\C(\Lambda)$. Then 
\begin{equation}\label{assoc}
\E_{\E_{\U}(\V)}^{J(\U)}\E_{\U} = \E_{\U \amalg \V}
\end{equation}
\end{theorem}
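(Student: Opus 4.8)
The plan is to verify the identity $\E_{\E_{\U}(\V)}^{J(\U)}\E_{\U} = \E_{\U \amalg \V}$ by evaluating both sides on an arbitrary indecomposable $\X$ lying in the common domain (the domains coincide by Corollary~\ref{c:domain}). Since both maps are additive on objects with no repeated summands, it suffices to treat indecomposable $\X$. Writing $\U = U \amalg P[1]$, $\V = V \amalg Q[1]$ and $\W' = U \amalg V$, the definitions unravel to
\begin{equation*}
\E_{\U} = \E_{\E_U(P[1])}^{J(U)}\E_U, \qquad \E_{\U\amalg \V} = \E_{\E_{\W'}((P\amalg Q)[1])}^{J(\W')}\E_{\W'},
\end{equation*}
so the claim becomes a compatibility statement between the module part and the projective-shift part of the construction in Definition~\ref{d:EUindecomposable}. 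The natural strategy is induction on $n = r(\module\Lambda)$, exactly as in the proof of Theorem~\ref{main-comp}: reduce first to the case $P = Q = 0$ (so $\U = U$, $\V = V$ are $\tau$-rigid modules), and then bootstrap the general case by stripping off the projective shifts using Cases II(b), III, IV together with the already-established Theorem~\ref{main-comp} identity $J_{J(\U)}(\E_\U(\V)) = J(\U\amalg\V)$, which guarantees the categories on which the two composites are computed actually agree.

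For the heart of the argument — two $\tau$-rigid modules $U$ and $V$ with $U\amalg V$ $\tau$-rigid — I would split into the cases appearing in Definition~\ref{d:EUindecomposable}, according to whether an indecomposable $\X$ (a module or a shifted projective) is sent by $\E_U$ into the module part or the shifted-projective part, and then track where $\E_V$ and $\E_{U\amalg V}$ send it. Concretely: when $X\notin\Gen U$ and $\E_U(X)=f_U(X)\notin\Gen_{J(U)}\overline V$, all three maps are torsion-free quotients and the identity reduces to $f_{\overline V}f_U = f_{U\amalg V}$ on the relevant module, which should follow from the description of the torsion functors and the fact (Lemma~\ref{al2}, Lemma~\ref{gen-lem}) that $f_U$ carries $\Gen(U\amalg V)$ to $\Gen_{J(U)}\overline V$ and $^{\perp}(\tau U\amalg\tau V)$ to $^{\perp}(\tau_{J(U)}\overline V)\cap J(U)$. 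When $X\in\Gen U$ or $X$ lands in $\Gen_{J(U)}\overline V$ after applying $f_U$, one must instead chase the defining triangles $\PP_{B_X}\to\PP_{U_X}\to\PP_X\to$ and their images under $f_U$; here Lemma~\ref{al2}(b) and Lemma~\ref{al5} are exactly what is needed to show that a left $\add\PP_U$-approximation maps to a left $\add\PP_{\overline V}$-approximation, so that the Bongartz-type complement $B_X$ computed in $J(U)$ relative to $\overline V$ is the $f_U$-image of the one computed in $\module\Lambda$ relative to $U\amalg V$. The cases where $\X = Q'[1]$ is a shifted projective are handled the same way, now invoking Lemma~\ref{al3} and the minimal left $^{\perp}(\tau U)$-approximation sequences.

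The main obstacle I anticipate is the bookkeeping in the mixed cases: showing that the two-term complex constructed inside $\C(J(U))$ when computing $\E^{J(U)}_{\overline V}(\E_U(\X))$ is genuinely isomorphic (in $D^b(\module\Lambda)$, via the equivalence $J(U)\simeq \module\Lambda'$ and the functor $f_U$ at the level of projective presentations) to the one constructed in $\C(\Lambda)$ when computing $\E_{U\amalg V}(\X)$. This requires knowing that applying $f_U$ to a minimal projective presentation gives the minimal projective presentation in $J(U)$ — or at least commutes appropriately with $H^0$ — and that minimal approximations are preserved; the approximation statements are supplied by Lemmas~\ref{al1}, \ref{al1b}, \ref{al5}, \ref{al2}, but assembling them into an identity of objects (not just of subcategories) will need care, and is presumably why the paper devotes Sections~\ref{as}--\ref{s:mixed} to it. A secondary subtlety is ensuring uniqueness: once both sides are shown to be support $\tau$-rigid objects in $\C(J(\U\amalg\V))$ of the same cardinality $\delta(\X)$ with $J_{J(\U\amalg\V)}$ of the image equal on both sides (using Corollary~\ref{main-compoW}-type reasoning), one could alternatively try to deduce equality from the bijectivity in Theorem~\ref{thm-bi} rather than a direct computation — but I expect the cleanest route is still the explicit case analysis sketched above, with the $J$-identity of Theorem~\ref{main-comp} used to keep the ambient wide subcategories aligned at each step.
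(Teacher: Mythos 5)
Your plan follows essentially the same route as the paper: establish that the domains agree via Corollary~\ref{c:domain}, prove the identity for the four ``pure'' cases ($\U$ and $\V$ each a module or a shifted projective) by a case analysis on $X$ tracking torsion-free quotients and Bongartz-type approximation sequences via Lemmas~\ref{al1}--\ref{al3} and~\ref{gen-lem}, and then resolve the mixed cases by induction on $r(\module\Lambda)$ using the reduction $\E_{\U}=\E^{J(U)}_{\E_U(P[1])}\E_U$ together with the identity $J_{J(\U)}(\E_{\U}(\V))=J(\U\amalg\V)$ of Theorem~\ref{main-comp} to keep the ambient wide subcategories aligned. The obstacles you flag (showing $f_U$ carries the defining minimal approximations in $\module\Lambda$ to the corresponding minimal approximations in $J(U)$, and the resulting bookkeeping) are precisely what Sections~\ref{s:caseI}--\ref{s:mixed} carry out in detail, so your sketch is an accurate outline of the paper's argument rather than an alternative to it.
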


%\section{Proof of Theorem~\ref{main-asso}}
\noindent \textbf{Proof}
We assume that $\U = U \amalg P[1]$ and $\V= V \amalg Q[1]$, with $U,V,P,Q$ modules and $P,Q$ projective, $\add(\U)\cap \add(\V)=0$ and
$\U\amalg \V$ support $\tau$-rigid.
In view of Corollary~\ref{c:domain}, 
we need to show that
$\E_{\E_{\U}(\V)}^{J(\U)}\E_{\U}(X) = \E_{\U \amalg \V}(X)$
for each indecomposable object $X$ in the domain
%$$\{X \in \ind(\C(\Lambda)) \mid X \amalg \U \text{ } \tau\text{-rigid}\}
%\setminus \ind \U,$$ 
%of $\E_{\U}$.
$$\{X\in \ind \C(\Lambda)\,:\, \text{$X\amalg \U\amalg \V$ support $\tau$-rigid and
$\add X\cap \add(\U\amalg \V)=0$} \}$$
of each of the maps
$\E_{\E_{\U}(\V)}^{J(\U)}\E_{\U}$ and $\E_{\U \amalg \V}$.

Our strategy is to employ a case analysis, based on the properties of $\U$, $\V$ and
$X$, since the maps $\E_{\U}$, $\E_{\E_{\U}(\V)}^{J(\U)}$ and $\E_{\U\amalg \V}$
are defined via cases. We will consider the following cases for $\U$ and $\V$.

\bigskip

\begin{center}
    \begin{tabular}{ | l | l |}
    \hline
    Case I & $\U = U$ and $\V = V$ \\ \hline
    Case II & $\U = U$ and $\V = Q[1]$ \\ \hline
    Case III & $\U = P[1]$ and $\V = V$ \\ \hline
    Case IV & $\U = P[1]$ and $\V = Q[1]$ \\ \hline 
    \end{tabular}
\end{center}

\bigskip

In case II, we assume that $\U = U$ lies in $\module\Lambda$ and that $\V = Q[1]$, where $Q$ lies in $\P(\Lambda) \cap {^{\perp}U}$.
In this case the claim that equation~\eqref{assoc} holds follows directly, since we have 
$$\E_{\U \amalg \V} = \E_{U \amalg P[1]} = \E^{J(U)}_{\E_U(P[1])} \E_U =\E^{J(U)}_{\E_U(\V)} \E_U = \E^{J(\U)}_{\E_{\U}(\V)} \E_{\U}$$
where the second equality holds by definition of 
$\E_{U \amalg P[1]}$. So it remains to consider cases I, III and
IV.

For each of the cases I, III and IV we will also need to further subdivide according to the properties of $X$ in $\ind \C(\Lambda)$.
We consider Case I in Section~\ref{s:caseI}, Case III in Section~\ref{s:caseIII}
and Case IV in Section~\ref{s:caseIV}.
Finally, we must consider the `mixed case', where $\U$ and
$\V$ have both module and shifted projective direct summands;
this is considered in Section~\ref{s:mixed}.

\section{Proof of Theorem~\ref{main-asso}, Case I}
\label{s:caseI}

We assume that $\U = U$ and $\V = V$ for $U, V$ in $\module \Lambda$ where $U \amalg V$ is $\tau$-rigid.
We divide Case I into the following subcases.
\begin{itemize}
\item Case I*: $\U = U$ and $\V = V$ where $U$ and $V$ lie in $\module \Lambda$ and $\add V \cap \Gen U = 0$. 
\item Case I**: $\U = U$ and $\V = V$ where $U$ and $V$ lie in $\module \Lambda$ and $\V \in \Gen U$.
\end{itemize}

We firstly note that Case I will follow from these two cases:

\begin{proposition}
Assume that (\ref{assoc}) holds in both cases I* and I**. Then (\ref{assoc}) holds in Case I.
\end{proposition}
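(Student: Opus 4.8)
The plan is to reduce the general Case~I to the two extreme subcases I* and I** by decomposing $\V$ according to the torsion pair $(\Gen U, U^\perp)$ applied at the level of indecomposable summands. Write $\V = \V' \amalg \V''$, where $\V'$ is the direct sum of those indecomposable summands of $\V$ lying in $\Gen U$ and $\V''$ is the direct sum of the remaining indecomposable summands (those not in $\Gen U$); this is possible because each indecomposable summand of the module $\V$ either lies in $\Gen U$ or does not. Then $\U \amalg \V''$ falls under Case~I* (with $U$ unchanged and $V''$ having no indecomposable summand in $\Gen U$), and, after passing to $J(\U \amalg \V'')$, the remaining piece $\V'$ should fall under Case~I**. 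So the strategy is to apply the already-established Case~I* to the pair $(\U, \V'')$, then apply the already-established Case~I** (together with Case~I*, as needed) to the pair $(\E_{\U}(\V''), \E_{\U}(\V'))$ inside $\C(J(\U))$, and finally splice the two together using the multiplicativity of $\E$ under taking direct sums and under iterated $J$-constructions.

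Concretely, the key steps I would carry out are: (1) Using that $\E_{\U}$ is defined summand-wise (as recorded just before Theorem~\ref{rigid-sums}), write $\E_{\U}(\V) = \E_{\U}(\V') \amalg \E_{\U}(\V'')$ and similarly for $\E_{\U \amalg \V}$. (2) Apply the Case~I* hypothesis to the support $\tau$-rigid pair $(\U, \V'')$: this gives $\E^{J(\U)}_{\E_{\U}(\V'')}\E_{\U} = \E_{\U \amalg \V''}$. (3) Invoke Corollary~\ref{main-asW} / Theorem~\ref{main-asso} in the already-proven cases, or rather the associativity-type bookkeeping: we want to identify $\E^{J(\U)}_{\E_{\U}(\V)}$ with $\E^{J(\U \amalg \V'')}_{\E_{\U \amalg \V''}(\V')} \circ \E^{J(\U)}_{\E_{\U}(\V'')}$. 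For this I would use Theorem~\ref{main-compo}, which tells us $J_{J(\U)}(\E_{\U}(\V'')) = J(\U \amalg \V'')$, so that $\E^{J(\U)}_{\E_{\U}(\V'')}$ indeed lands in $\C(J(\U \amalg \V''))$ and the composition makes sense. (4) Now apply the Case~I** hypothesis inside the wide subcategory $J(\U \amalg \V'')$ to the pair $(\text{trivial}, \E_{\U \amalg \V''}(\V'))$ — more precisely, one needs to check that $\E_{\U}(\V')$, viewed in $\C(J(\U \amalg \V''))$ after applying $\E^{J(\U)}_{\E_{\U}(\V'')}$, is in the situation of Case~I**, i.e. its module part lies in $\Gen$ of the module part of $\E_{\U \amalg \V''}(\,\cdot\,)$ — here one uses Lemma~\ref{gen-lem}, which says precisely that $f_U(\Gen(U \amalg V)) = \Gen_{J(U)} f_U(V)$, so that membership in $\Gen U$ is preserved by $f_U$. (5) Chain the equalities together and read off (\ref{assoc}) for the original pair $(\U, \V)$.

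The main obstacle I anticipate is step~(4): verifying that after the reduction the residual piece genuinely satisfies the Case~I** hypothesis, and that the various $J$-constructions and $\E$-maps compose in the expected way. This requires the compatibility of $\E_{\U}$ with the torsion-theoretic decomposition — i.e. that $\E_{\U}$ carries summands in $\Gen U$ to objects built from the Bongartz-type complements (Case~I(b)/I(c) of Definition~\ref{d:EUindecomposable}) and carries summands not in $\Gen U$ to their $f_U$-images (Case~I(a)) — and the claim that these two behaviours interact correctly when one further applies $\E^{J(\U)}_{\E_{\U}(\V'')}$. The essential input making this work is Lemma~\ref{gen-lem} together with the torsion-pair computations in Lemmas~\ref{al1} and~\ref{al2}, which control how $f_U$ interacts with $\Gen$ and with approximations; these should let one show both that the decomposition $\V = \V' \amalg \V''$ is respected and that the composite $\E$-map on the $\Gen U$-part is computed by exactly the Case~I** recipe, so that no genuinely new case analysis beyond I* and I** is needed.
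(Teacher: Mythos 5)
Your decomposition of $\V$ into the part $\V'$ lying in $\Gen U$ and the part $\V''$ with no summands in $\Gen U$ is the same decomposition the paper uses, but you treat the two pieces in the opposite order, and the proposal breaks down exactly at the splicing step you yourself flag as the main obstacle. Step (4) as described is not correct: Case~I** cannot be applied ``inside $J(\U\amalg\V'')$ to the pair $(\text{trivial},\E_{\U\amalg\V''}(\V'))$'' (with trivial first component the hypothesis $V\in\Gen U$ forces $V=0$), and the object $\E_{\U}(\V')$ is \emph{not} a module with a $\Gen$-condition inside $J(\U)$ --- since every summand of $\V'$ lies in $\Gen U$, Definition~\ref{d:EUindecomposable}, Case I(b) sends it to $\P(J(\U))[1]$, so it has no module part at all. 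Consequently Lemma~\ref{gen-lem} is not the relevant input. Moreover, the phrase ``multiplicativity of $\E$ under iterated $J$-constructions'' with which you propose to splice is precisely the associativity statement~\eqref{assoc} being proved, so as written the argument is circular at the decisive point, and you never set up the induction that would be needed to break that circularity.

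The gap is repairable, and your ordering in fact leads to a proof slightly cleaner than the paper's. Apply Case~I** not inside $J(\U\amalg\V'')$ but in $\module\Lambda$ to the pair $(\U\amalg\V'',\V')$, which is legitimate because $\V'\in\Gen U\subseteq\Gen(\U\amalg\V'')$; this gives $\E_{\U\amalg\V}=\E^{J(\U\amalg\V'')}_{\E_{\U\amalg\V''}(\V')}\E_{\U\amalg\V''}$. Then Case~I* applied to $(\U,\V'')$ rewrites both $\E_{\U\amalg\V''}$ and $\E_{\U\amalg\V''}(\V')$ in terms of $\E^{J(\U)}_{\E_{\U}(\V'')}\E_{\U}$, and Theorem~\ref{main-comp} identifies $J(\U\amalg\V'')$ with $J_{J(\U)}(\E_{\U}(\V''))$. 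The resulting composite is $\E^{J_{J(\U)}(M')}_{\E^{J(\U)}_{M'}(\P')}\E^{J(\U)}_{M'}\E_{\U}$ with $M'=\E_{\U}(\V'')$ a module and $\P'=\E_{\U}(\V')$ a shifted projective in $J(\U)$, and this equals $\E^{J(\U)}_{M'\amalg\P'}\E_{\U}=\E^{J(\U)}_{\E_{\U}(\V)}\E_{\U}$ \emph{by the very definition} of $\E$ for a mixed object (the configuration ``module first, shifted projective second'' is Case~II, which holds by definition) --- so no induction is needed. By contrast, the paper absorbs $\V'$ into the base first, writing $\E_{U\amalg V}=\E_{(U\amalg V_1)\amalg V_2}$ with $\Gen(U\amalg V_1)=\Gen U$, and its final splice is then an instance of associativity for the pair (shifted projective, module) in $J(U)$, i.e. Case~III there, which forces an induction on $r(\module\Lambda)$. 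Either route works; yours would avoid the induction, but only once the splicing step is recognised as the definitional Case~II rather than a Case~I** statement, which your write-up does not do.
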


\begin{proof}
Write $V = V_1 \amalg V_2$, where $V_1$ is in $\Gen U$, and $\add V_2 \cap 
\Gen U = 0$.
Then $\E_{U \amalg V} = \E_{(U \amalg V_1) \amalg V_2}$.
If $U = 0$, then also $V_1 = 0$, and the result is trivial.
We therefore assume $U \neq 0$. We proceed by induction on $n = r(\module \Lambda)$. Hence we can assume that it holds for $J(U)$, since $r(J(U)) < n$.

Note that we have $\Gen (U \amalg V_1) = \Gen U$,
so $\add V_2 \cap \Gen(U \amalg V_1) = 0$.
Hence we have:
\begin{align}
\nonumber \E_{U \amalg V} &= \E_{(U\amalg V_1)\amalg V_2} \\ \label{l1}
&=\E^{J(U \amalg V_1)}_{\E_{U \amalg V_1}(V_2)} \E_{U \amalg V_1} \\[5pt]
\label{l2}  &= \E^{J(U \amalg V_1)}_{\E_{U \amalg V_1}(V_2)} \E_{\E_U(V_1)}^{J(U)} \E_U \\[5pt]
\label{l3} &= \E^{J_{J(U)}(\E_U(V_1))}_{\E_{U \amalg V_1}(V_2)} \E_{\E_U(V_1)}^{J(U)} \E_U \\[5pt]
\label{l4} &= \E^{J_{J(U)}(\E_U(V_1))}_{\E^{J(U)}_{\E_U(V_1)}(\E_U(V_2))} \E_{\E_U(V_1)}^{J(U)} \E_U \\[5pt]
\label{l5} &= \E^{J(U)}_{\E_U(V_1) \amalg \E_U(V_2)} \E_U \\[5pt]
\label{l6} &=  \E^{J(U)}_{\E_U(V_1 \amalg V_2)} \E_U \\[5pt]
\nonumber &=  \E^{J(U)}_{\E_U(V)} \E_U
\end{align}
where~\eqref{l1} holds by Case I*, the equations~\eqref{l2} and~\eqref{l4} hold
by Case I**, and \eqref{l3} holds by Theorem \ref{main-comp}.
Furthermore (\ref{l5}) holds (in $J(U)$) by the induction assumption, while (\ref{l6}) holds by definition.
\end{proof}

For each of the subcases I* and I**, 
we will need to consider the following cases for $X$.

\begin{itemize}
\item[(a)]  $X \in \ind \Lambda$ and $X \not \in \Gen(U \amalg V)$
\item[(b)]  $X \in \ind \Lambda$ and $X \in \Gen(U \amalg V) \setminus \Gen U$
\item[(c)]  $X \in \ind \Lambda$ and $X \in \Gen U$
\item[(d)]  $X \in \ind \P(\Lambda)[1]$
\end{itemize}

\bigskip

\noindent{\bf Case I*:}
We assume that $\U = U$ and $\V = V$ where $U$ and $V$ lie in $\module \Lambda$ and $\add V \cap \Gen U = 0$, i.e. $V$ has no
direct summands in $\Gen U$. We set $\overline{V} = f_U (V)$.

\bigskip

\begin{lemma}\label{a-case}
With the above assumptions on $U$ and $V$, we have that $f_U(X)$ is not
in $\Gen \overline{V}$ and that
$$f^{J(U)}_{\overline{V}} (f_U (X))\simeq f_{U \amalg V}(X)$$
for any $X$ not in $\Gen(U \amalg V)$.
\end{lemma}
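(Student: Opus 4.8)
The plan is to relate the torsion functor $f_{U\amalg V}$ on $\module\Lambda$ to the composite $f^{J(U)}_{\overline V}\circ f_U$ by exploiting that $\Gen(U\amalg V)$ should be the torsion class whose torsion-free part is computed by this composite. The key geometric fact I would establish first is that, under the hypothesis $\add V\cap\Gen U=0$, the torsion pair $(\Gen(U\amalg V), (U\amalg V)^\perp)$ in $\module\Lambda$ "decomposes" along the torsion pair $(\Gen U, U^\perp)$: concretely, Lemma~\ref{gen-lem} already tells us $f_U(\Gen(U\amalg V))=\Gen f_U(V)\cap J(U)=\Gen_{J(U)}\overline V$, and by Lemma~\ref{al2}(a) (with $\T={}^\perp(\tau U\amalg\tau V)$) we have the matching statement on the torsion-free side, $f_U({}^\perp(\tau(U\amalg V)))={}^\perp(\tau_{J(U)}\overline V)\cap J(U)$. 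I will need the analogous statement for the genuine torsion-free classes rather than the ${}^\perp\tau$-classes: that $f_U((U\amalg V)^\perp)\supseteq \overline V^\perp\cap J(U)$, or more precisely that $f_U$ sends $(U\amalg V)^\perp$ into $J_{J(U)}$-style orthogonals appropriately. This should follow by the same "apply $\Hom(-,M')$ to the canonical sequence $0\to t_U(V)\to V\to\overline V\to 0$" computation used repeatedly in Case I of Theorem~\ref{main-comp}.

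Next, concretely: start from $X\notin\Gen(U\amalg V)$ and take its canonical sequence $0\to t_{U\amalg V}(X)\to X\to f_{U\amalg V}(X)\to 0$. Since $\Gen U\subseteq\Gen(U\amalg V)$ we have $t_U(X)\subseteq t_{U\amalg V}(X)$, giving a surjection $f_U(X)\to f_{U\amalg V}(X)$ whose kernel is $t_{U\amalg V}(X)/t_U(X)$, a quotient of an object of $\Gen(U\amalg V)$ lying in $U^\perp$, hence — using Lemma~\ref{gen-lem} — in $\Gen_{J(U)}\overline V$. Meanwhile $f_{U\amalg V}(X)\in (U\amalg V)^\perp\cap J(U)$, and the point is that this lies in $\overline V^\perp\cap J(U)=J(U)\cap\overline V^\perp$ (here I use that $M\in J(U)\cap V^\perp$ forces $\Hom(\overline V,M)=0$ since $V\twoheadrightarrow\overline V$, exactly as in Case I of Theorem~\ref{main-comp}). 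So inside $J(U)$ the sequence
\[
0\to t_{U\amalg V}(X)/t_U(X)\to f_U(X)\to f_{U\amalg V}(X)\to 0
\]
exhibits $f_{U\amalg V}(X)$ as a $\Gen_{J(U)}\overline V$-free quotient of $f_U(X)$ with $\Gen_{J(U)}\overline V$-torsion kernel; by uniqueness of the torsion decomposition in $J(U)$ for the torsion pair $(\Gen_{J(U)}\overline V, \overline V^\perp\cap J(U))$ this identifies $f_{U\amalg V}(X)\simeq f^{J(U)}_{\overline V}(f_U(X))$, and simultaneously shows $f_U(X)\notin\Gen_{J(U)}\overline V=\Gen\overline V\cap J(U)$ (otherwise $f_U(X)$ would be its own torsion part, forcing $f_{U\amalg V}(X)=0$, i.e. $X\in\Gen(U\amalg V)$, contrary to hypothesis).

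I expect the main obstacle to be verifying cleanly that $f_U(X)\in J(U)$, i.e. that $\Hom(f_U(X),\tau_{J(U)}(\cdot))$ behaves as needed, and more precisely pinning down that the kernel $t_{U\amalg V}(X)/t_U(X)$ genuinely lands in $J(U)$ and in $\Gen_{J(U)}\overline V$ rather than just in $\Gen\overline V$; this is where Lemma~\ref{gen-lem} and the torsion-class statement of \cite[Theorem 3.15]{jasso} do the real work, and one must be careful that quotients of torsion objects stay torsion. A secondary subtlety is that $f_U(X)$ need not lie in $J(U)$ a priori — but since $X\in J(U\amalg V)$ is \emph{not} assumed here (only $X\notin\Gen(U\amalg V)$), I should instead phrase everything in terms of $f_U(X)$'s image in $J(U)$ via $f_U$ being the torsion-free quotient, and note that $f_U(X)$ automatically lies in $U^\perp$; the ${}^\perp\tau_{J(U)}$-condition is not needed since $f^{J(U)}_{\overline V}$ only requires membership in the ambient module category $J(U)$, and $f_U(X)\in U^\perp$ need not be all of $J(U)$ — so I may in fact need to first replace $f_U$ on $X\notin\Gen(U\amalg V)$ by working with the full $J(U)$-torsion-free part, or simply observe that the statement as used downstream (Case I* of the associativity proof) only concerns $X$ with $X\amalg U\amalg V$ support $\tau$-rigid, so $X\in{}^\perp(\tau(U\amalg V))$ and hence $f_U(X)\in J(U)$ by Lemma~\ref{al2}(a). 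That last remark likely resolves the membership issue and is the step I would pin down most carefully.
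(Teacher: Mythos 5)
Your argument is correct, but it proves the isomorphism by a different mechanism than the paper. The paper considers the composite $X\xrightarrow{a} f_U(X)\xrightarrow{b} f^{J(U)}_{\overline{V}}f_U(X)$ and shows by a factorization argument (factor an arbitrary map $X\to Y$ with $Y\in (U\amalg V)^{\perp}$ first through $a$, then through $b$ using that $t^{J(U)}_{\overline V}f_U(X)\in\Gen V$) that $ba$ is a minimal left $(U\amalg V)^{\perp}$-approximation, whence it agrees with $X\to f_{U\amalg V}(X)$ by uniqueness of minimal approximations; the non-membership of $f_U(X)$ in $\Gen\overline V$ is then read off exactly as you do, from $f_{U\amalg V}(X)\neq 0$. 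You instead compare the two canonical filtrations directly: from $t_U(X)\subseteq t_{U\amalg V}(X)$ you produce the exact sequence $0\to t_{U\amalg V}(X)/t_U(X)\to f_U(X)\to f_{U\amalg V}(X)\to 0$, identify the kernel as $f_U(t_{U\amalg V}(X))\in\Gen_{J(U)}\overline V$ via Lemma~\ref{gen-lem} and the cokernel as lying in $\overline V^{\perp}\cap J(U)$, and conclude by uniqueness of the torsion decomposition in $J(U)$. The two proofs use the same key inputs (Lemma~\ref{gen-lem}/Jasso's torsion-class transfer, and $V\twoheadrightarrow\overline V$ forcing $(U\amalg V)^{\perp}\cap J(U)\subseteq\overline V^{\perp}$), but yours is more direct and yields the extra identification $t^{J(U)}_{\overline V}(f_U(X))\simeq t_{U\amalg V}(X)/t_U(X)$ for free, while the paper's approximation-theoretic formulation is the one that transports to the shifted-projective cases elsewhere in Section 6 where one must compose left approximations rather than torsion-free quotients. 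Your closing remark on the implicit hypothesis is also well taken: the lemma (and the paper's own proof) tacitly uses $f_U(X)\in J(U)$, which holds because every $X$ to which it is applied lies in ${}^{\perp}(\tau U\amalg\tau V)\subseteq{}^{\perp}(\tau U)$, a torsion class closed under quotients; pinning that down as you do closes the only real gap in the blind version.
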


\begin{proof}
Consider the composition
$$X \xrightarrow{a} f_U (X) \xrightarrow{b} f^{J(U)}_{\overline{V}}f_U (X)$$
We first claim that $ba$ is a left $(U\amalg V)^{\perp}$-approximation.
Let $c \colon X \to Y$ be a map, with $Y$ in $(U\amalg V)^{\perp}$.
Since $Y$ is in $U^{\perp}$, and $a$ is left $U^{\perp}$-approximation, there is a map
$d \colon f_U (X) \to Y$ such that $da = c$. Applying $\Hom(\ , Y)$ to the canonical
sequence 
$$0 \to t^{J(U)}_{\overline{V}}f_U (X) \xrightarrow{e} f_U (X) \xrightarrow{b} f^{J(U)}_{\overline{V}}f_U (X) \to 0$$
of $f_U (X)$ gives the exact sequence
\begin{equation}\label{seq2}
\Hom(f^{J(U)}_{\overline{V}}f_U (X), Y) \to \Hom(f_U (X),Y) \to \Hom(t^{J(U)}_{\overline{V}}f_U (X),Y).
\end{equation}
We have that $t^{J(U)}_{\overline{V}}f_U (X)$ is in $\Gen_{J(U)} \overline{V}
= \Gen \overline{V} \cap J(U) \subseteq \Gen V$, since $\overline{V} = f_U (V)$
is a factor module of $V$. Since $Y$ is in $V^{\perp}$, we then have that $de = 0$.
Hence, by the sequence (\ref{seq2}), there is a map $g \colon f^{J(U)}_{\overline{V}}f_U (X)
\to Y$, such that $g b = d$. Hence $c = g ba$, which proves that $ba$ is 
a left $(U\amalg V)^{\perp}$-approximation as claimed. We have that $ba$ is minimal,
since both $a$ and $b$ are epimorphisms.

The canonical map $X \to f_{U \amalg V}(X)$ is a also a minimal left $(U\amalg V)^{\perp}$-approximation. So we have  
\begin{equation}\label{iso1}
 f^{J(U)}_{\overline{V}}f_U (X) \simeq f_{U \amalg V}(X)
\end{equation}

In particular, since by assumption $X$ is not in $\Gen (U \amalg V)$, we have that 
$f^{J(U)}_{\overline{V}}f_U (X) \simeq f_{U \amalg V}(X)$ in non-zero, so
$f_U (X)$ is not in $\Gen \overline{V}$. 
\end{proof}

\bigskip

\noindent {\bf Case I* (a):}  
We assume that $X$ is indecomposable in $\module \Lambda$,
that $X \amalg U  \amalg V$ is a $\tau$-rigid module, and that
$X$ does not lie in $\Gen(U \amalg V)$.

\bigskip

We then have that $\E_U(X) = f_U(X)$, since $X$ does
not lie in $\Gen U$.
Using the first claim of Lemma \ref{a-case} we have that $f_U(X)$ is not in $\Gen \overline{V}$.
Hence,
$$\E^{J(U)}_{\overline{V}} (\E_U X) =  f^{J(U)}_{\overline{V}} (f_U (X)).$$
Since $X$ is not in $\Gen (U \amalg V)$, we have $\E_{U \amalg V}(X) = f_{U \amalg V}(X)$, and
equation~\eqref{assoc} now follows from the second claim of Lemma \ref{a-case}.
 
\bigskip

\noindent {\bf Case I* (b):}  
We assume that $X$ is indecomposable in $\module \Lambda$, that 
$X \amalg U  \amalg V$ is a $\tau$-rigid module, that
$X$ is in $\Gen(U \amalg V)$, and that $X$ does not lie in $\Gen(U)$.

\bigskip

We have $\E_U(X) = f_U(X)$, since by assumption, $X$ is not in $\Gen U$.
By Lemma \ref{gen-lem}, we have that $f_U \Gen(U \amalg V) = \Gen f_U(V) \cap J(U)=
\Gen_{J(U)}(f_U (V))$.
Hence we have that $f_U (X)$ is in $\Gen_{J(U)}(f_U (V))$.

There is a right exact sequence 
\begin{equation}\label{seq-4}
Y_{f_U(X)} \to \overline{V}'_{f_U(X)} \to f_U(X) \to 0,
\end{equation}
where the first map is a minimal left $\add U$-approximation, the second map is a
minimal right $\add U$-approximation, and
$Y_{f_U(X)}$ lies in $\P({^{\perp}(\tau_{J(U)} \overline{V})} \cap J(U))$.
We then have $\E_{\overline{V}}^{J(U)}\E_U(X) = f_{\overline{V}}^{J(U)} (Y_{f_U(X)})$.

To compute $\E_{U \amalg V}(X)$, consider the right exact sequence
\begin{equation}\label{seq-3}
Y_{X}' \to {U}' \amalg  {V}' \xrightarrow{\alpha} X \to 0
\end{equation}
where the first map is a minimal left $\add (U \amalg V)$-approximation, the second map is a
minimal right $\add (U \amalg V)$-approximation and
$Y_X'$ lies in $\P({^{\perp}(\tau U \amalg \tau V}))$.
We then have $\E_{U \amalg V}(X) = f_{U \amalg V} (Y_X')$.

We now aim to prove the following.
\begin{claim}\label{fu-claim}
Applying $f_U$ to the right exact sequence (\ref{seq-3}) gives the right exact sequence (\ref{seq-4}).
\end{claim}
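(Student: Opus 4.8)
The strategy is to show that the sequence obtained by applying $f_U$ to \eqref{seq-3} is a right exact sequence with exactly the defining properties of \eqref{seq-4}, and then invoke uniqueness of minimal approximations. Concretely, applying the right exact functor $f_U$ to $Y_X'\to U'\amalg V'\xrightarrow{\alpha}X\to 0$ yields a right exact sequence $f_U(Y_X')\to f_U(U'\amalg V')\to f_U(X)\to 0$; since $U'\in\add U$ we have $f_U(U')=0$, so $f_U(U'\amalg V')=f_U(V')$. Thus we get $f_U(Y_X')\to f_U(V')\to f_U(X)\to 0$. It remains to identify $f_U(V')$ with $\overline V'_{f_U(X)}$, the term appearing in \eqref{seq-4}, to check that $f_U(\alpha)$ restricted to $f_U(V')$ is a minimal right $\add\overline V$-approximation, and to check that $f_U(Y_X')$ lies in $\P({}^{\perp}(\tau_{J(U)}\overline V)\cap J(U))$ and that the first map is a minimal left approximation to that subcategory. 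Uniqueness of these (minimal) approximations then forces the resulting sequence to agree with \eqref{seq-4}.

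First I would handle the right-hand approximation. The map $U'\amalg V'\xrightarrow{\alpha}X$ is a minimal right $\add(U\amalg V)$-approximation; I claim that applying $f_U$ and restricting to the $\add V$-part turns it into a minimal right $\add\overline V$-approximation $f_U(V')\to f_U(X)$. Surjectivity of $f_U(\alpha)|_{f_U(V')}$ onto $f_U(X)$ follows from right-exactness of $f_U$ together with $f_U(U')=0$. For the approximation property, given any $\overline V''\in\add\overline V$ and a map $\overline V''\to f_U(X)$, one lifts using Lemma~\ref{al1} (applied with the $\tau$-rigid module $U$, noting $\Hom(U,\tau X)=0$ since $X\amalg U$ is $\tau$-rigid) to a map $V''\to X$ with $V''\in\add V$, then factors through $\alpha$, then applies $f_U$. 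Minimality is inherited because $f_U$ of a minimal map between objects with no $\add U$-summands stays minimal; here one uses that $V$ (hence $V'$) has no summands in $\Gen U$ by the Case~I* hypothesis, so $f_U$ is fully faithful on $\add V$ by Lemma~\ref{al1b} (with $\Hom(U,V)=0$), which preserves the radical and hence minimality.

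Next I would identify the left-hand term. The object $Y_X'$ lies in $\P({}^{\perp}(\tau U\amalg\tau V))$, and the first map $Y_X'\to U'\amalg V'$ is a minimal left $\add(U\amalg V)$-approximation. Applying $f_U$, Lemma~\ref{al2}(b) tells us that $f_U(Y_X')\to f_U(U'\amalg V')=f_U(V')$ is a minimal left $\T'$-approximation, where $\T'={}^{\perp}(\tau_{J(U)}\overline V)\cap J(U)$ — provided $\Hom(U,\tau Y_X')=0$, which holds because $Y_X'\in{}^{\perp}(\tau U\amalg\tau V)\subseteq{}^{\perp}\tau U$. Moreover $f_U(Y_X')\in f_U(\P(\T))=\P(\T')$ by Lemma~\ref{al2}(a) together with the projectivization statement $\P(\T')=f_U\P(\Gen U\ast\T')$ from \cite[Theorem 3.15]{jasso} already invoked in the proof of Lemma~\ref{al2}. (For minimality of the left map one again uses that $f_U$ restricted to the relevant summand-free objects is fully faithful, via Lemma~\ref{al1b}, so radicals are preserved.) Putting the three pieces together, $f_U$ applied to \eqref{seq-3} is a right exact sequence with a minimal left $\P(\T')$-approximation followed by a minimal right $\add\overline V$-approximation onto $f_U(X)$, which is exactly the characterization of \eqref{seq-4}; uniqueness of minimal approximations gives the claim.

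The main obstacle I anticipate is the minimality bookkeeping: one must be careful that the summands of $Y_X'$, $V'$, $U'$ that get killed or identified by $f_U$ do not spoil minimality of the approximations after applying $f_U$, and that $f_U(V')$ really has no extraneous direct summands so that it matches $\overline V'_{f_U(X)}$ on the nose. This is where the Case~I* hypothesis $\add V\cap\Gen U=0$ and the full faithfulness of $f_U$ on $\add V$ (Lemma~\ref{al1b}) do the essential work, together with the basicness arguments of the type used in Lemma~\ref{al2} (via \cite[Lemmas 5.6, 5.7]{bm}). The exactness statements themselves are routine applications of right-exactness of $f_U$ and the long exact sequences already used throughout Section~\ref{as}.
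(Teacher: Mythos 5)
Your overall strategy --- apply $f_U$ to \eqref{seq-3}, show the result is a right exact sequence whose terms and maps have the defining properties of \eqref{seq-4}, and conclude by uniqueness --- is the same as the paper's, and your treatment of $f_U(Y_X')\in\P(\T')$ (Lemma \ref{al2}(a) plus \cite[Theorem 3.15]{jasso}) matches the paper exactly. However, two of your steps do not go through as stated. First, $f_U$ is \emph{not} a right exact functor, so ``applying the right exact functor $f_U$ yields a right exact sequence'' is not a valid step, and the exactness is not ``routine''. The torsion-free quotient of a torsion pair only computes cokernels relative to the torsion-free class: for $\Lambda=k(1\to 2)$ and $U=S_1$, applying $f_U$ to $0\to S_2\to P_1\to S_1\to 0$ gives $S_2\to P_1\to 0\to 0$, which is not right exact. (The paper proves right exactness of $f_P$ only for $P$ \emph{projective}.) This is precisely why the paper inserts the auxiliary Lemma \ref{prep}: it gives $\beta$ the universal property of the cokernel of $s$ among maps into $U^{\perp}$, and actual exactness then follows because all three terms of the image sequence lie in the wide (hence abelian, exactly embedded) subcategory $J(U)$. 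Your proposal is missing this argument.

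Second, your identification of the first map relies on Lemma \ref{al2}(b), whose hypothesis is that the input map is a left $\T$-approximation for $\T={}^{\perp}(\tau U\amalg\tau V)$. The first map of \eqref{seq-3} is only a minimal left $\add(U\amalg V)$-approximation, which is a strictly weaker property since $\add(U\amalg V)\subsetneq\T$ in general, so the lemma does not apply; and the conclusion it would give (left $\T'$-approximation) is in any case not the property characterizing the first map of \eqref{seq-4}, which is a minimal left $\add\overline{V}$-approximation with source in $\P(\T')$. The paper instead proves directly that $s\colon f_U(Y_X')\to f_U(V')$ is a minimal left $\add\overline{V}$-approximation, by lifting a map $f_U(Y_X')\to\overline{V}''$ to a map $Y_X'\to V''$ using $\Ext^1(Y_X',t_U(V''))=0$ (Ext-projectivity of $Y_X'$ in $\T$), factoring through $s'$, and descending again; left minimality then comes from the indecomposability of $\coker(s)\simeq f_U(X)$ together with \cite[Lemma 5.7]{bm}, not from full faithfulness of $f_U$ on $\add V$. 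Note also that the Case I${}^*$ hypothesis $\add V\cap\Gen U=0$ does not imply $\Hom(U,V)=0$, so Lemma \ref{al1b} is not available in the way you invoke it for the minimality bookkeeping.
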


To prepare for the proof of Claim \ref{fu-claim}, we consider first a 
more general set-up.

\begin{lemma}\label{prep}
Let $(\T, \F)$ be an arbitrary torsion pair in $\module \Lambda$. Assume that there is commutative diagram
$$
\xymatrix{
A \ar[r]^{a} \ar[d]^x & B \ar[r]^{b} \ar[d]^y & C \ar[d]^{z} \ar[r] & 0 \\
A' \ar[r]^{a'} & B' \ar[r]^{b'} & C' \ar[r] & 0
}
$$
where the vertical maps are minimal left $\F$-approximations (and hence epimorphisms),
and the upper row is a right exact sequence. Then the map $b'$ is an epimorphism and
 for any $Z$
in $\F$, and any map $t \colon B'\to Z$ with $t a' = 0$, there is a map $u \colon C' \to Z$,
such that $u b' = t$.
\end{lemma}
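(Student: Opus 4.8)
The plan is to prove both assertions by a direct diagram chase, using only two facts about the data: that the top row is right exact (so that $b$ is a cokernel of $a$), and that a minimal left $\F$-approximation for the torsion pair $(\T,\F)$ is the canonical surjection onto the torsion-free quotient, hence in particular an epimorphism. No minimality beyond the left-approximation property will actually be needed.

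First I would dispose of the claim that $b'$ is an epimorphism. Commutativity of the right-hand square gives $b'y = zb$. Here $b$ is an epimorphism because the top row is right exact, and $z$ is an epimorphism because it is a minimal left $\F$-approximation, hence the canonical map onto a torsion-free quotient. Therefore $b'y = zb$ is an epimorphism, and so is $b'$.

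For the second assertion, fix $Z \in \F$ and a map $t \colon B' \to Z$ with $ta' = 0$. The idea is to transport $t$ up along $y$, factor it through $C$, and then push it forward along $z$. Concretely, consider $ty \colon B \to Z$. Using commutativity of the left-hand square, $ty \circ a = t \circ (ya) = t \circ (a'x) = (ta') \circ x = 0$. Since the top row is right exact, $b = \coker a$, so there is a (unique) map $s \colon C \to Z$ with $sb = ty$. Now $z \colon C \to C'$ is a left $\F$-approximation and $Z \in \F$, so $s$ factors as $s = uz$ for some $u \colon C' \to Z$. Then $ub' \circ y = u \circ (b'y) = u \circ (zb) = (uz) \circ b = sb = ty$, and since $y$ is an epimorphism we may cancel it to obtain $ub' = t$, as required.

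I do not expect a genuine obstacle here; the statement is a routine application of the cokernel universal property followed by the left-approximation universal property, glued together by the epimorphism $y$. The only points that require a little care are using the top row's exactness in the strong form $b = \coker a$ (this is what licenses the first factorization through $C$), and recalling at the outset that minimal left $\F$-approximations for a torsion pair are automatically surjective, which is what makes $y$ and $z$ epimorphisms.
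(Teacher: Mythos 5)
Your proof is correct and follows essentially the same route as the paper's: both deduce that $b'$ is epi from $b'y=zb$, then use $ta'x=0$ to get $tya=0$, factor through $C$ via the cokernel property of $b$, factor through $C'$ via the left $\F$-approximation $z$, and cancel the epimorphism $y$. The only cosmetic difference is that the paper also records the (unneeded for this statement) identity $b'a'=0$ along the way.
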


\begin{proof}
For the first claim, note that $zb =  b' y$ is an epimorphism, hence also $b'$ is an
epimorphism. 
We have that $zba = 0$, and this implies $b'a'x = 0$ and hence $b'a' = 0$,
since $x$ is an epimorphism. Now $ta' = 0$ implies $ta' x = 0$, and hence $tya = 0$. Since $b$ is the cokernel of $a$, there is a map $u' \colon C \to Z$
such that $ty = u'b$. Since $z$ is an $\F$-approximation, and by assumption $Z$ is 
in $\F$, there is $u \colon C' \to Z$ such that $u' = uz$. It then follows that
$ty = u' b = uzb = u b' y$, and since $y$ is an epimoprhism, we have 
$t = u b'$, as claimed. 
\end{proof}

\begin{proof}[Proof of Claim \ref{fu-claim}]
Apply $f_U$ to the sequence~\eqref{seq-3}, and consider the commutative diagram
$$
\xymatrix{
& 0 \ar[d] & 0 \ar[d]  & & \\
& t_U(U' \amalg V') \ar[r]^{\gamma} \ar[d]^c & t_U(X) \ar[d]^d & \\ 
Y_X' \ar[r]^{s'} \ar[d]^r & U' \amalg V' \ar[r]^{\alpha} \ar[d]^p & X \ar[d]^{q} \ar[r] & 0 \\
f_U(Y_X') \ar[r]^{s}  \ar[d] & f_U(V') \ar[r]^{\beta}  \ar[d]  & f_U(X) \ar[r]  \ar[d] & 0 \\
0 & 0 & 0 &
}
$$
where the second row is sequence (\ref{seq-3}), so is exact, and the second and third columns are
the canonical sequences for $U' \amalg V'$ and $X$, respectively. Note that the
map $\gamma$ exists since $q \alpha c = \beta p  c = 0$, so $\alpha c$ factors through
$d$.

We first claim that all
objects in the third row are in $J(U)$. This follows from the fact that 
all objects in sequence (\ref{seq-3}) are in ${^{\perp}(\tau U)}$, and hence the same
hold for all objects in the third row, since ${^{\perp}(\tau U)}$ is closed under
factor objects. All objects in the third row are by definition in $U^{\perp}$, and
hence also in $J(U) =  U^{\perp} \cap {^{\perp}(\tau U)}$.

We next claim that $\beta$ is the cokernel of $s$. We have $\beta s = f_U(\alpha s') = f_U(0) = 0$.
It now follows from applying Lemma \ref{prep}, with the
torsion pair $(\Gen U, U^{\perp})$, and using that $J(U) \subseteq U^{\perp}$ 
that $\beta$ is the cokernel of $s$ in $J(U)$, that is the sequence
\begin{equation}\label{seq-5}
f_U(Y_X') \xrightarrow{s} f_U(V')  \xrightarrow{\beta} f_U(X) \to 0
\end{equation}
is exact in $J(U)$ (and hence also an exact sequence in $\module \Lambda$).

We now claim that the map $s$ is a minimal left $\add \overline{V}$-approximation.
For this let $b \colon f_U (Y_X') \to \overline{V}''$ be a map with 
$ \overline{V}''$ in $\add \overline{V}$. Let $V''$ in  $\add V$ be such that 
$f_U(V'') =  \overline{V}''$, and let $g \colon V'' \to  \overline{V}''$ be the
canonical epimorphism.
Consider the canonical exact sequence for $V''$,
$$0 \to t_U (V'') \to V''  \xrightarrow{g} \overline{V}'' \to 0$$
and note that since $\Gen U \subseteq {^{\perp}(\tau U \amalg \tau V)}$, all terms
in the sequence are in ${^{\perp}(\tau U \amalg \tau V)}$.
Applying $\Hom(Y_X', \ )$ 
gives the exact sequence
$$\Hom(Y_X',V'') \to \Hom(Y_X', \overline{V}'') \to \Ext^1(Y_X', t_U (V'')).$$
Using that $Y_X'$ is in $\P({^{\perp}(\tau U \amalg \tau V)})$ and that 
$t_U(V'') \in \Gen U \subseteq {}^{\perp}(\tau U \amalg \tau V)$, we have that 
the last term vanishes, and hence the first map is surjective, and therefore there
 is a map $f' \colon Y_X' \to V''$
such that $g f' = br$.

The map $s'$ is an $\add(U \amalg V)$-approximation, therefore there is a map 
$f \colon U' \amalg V' \to V''$, such that 
$f' = fs'$. Then we have $br = gf' = gf s'$. 
Now consider the canonical sequence 
\begin{equation}
\label{e:canU}
0 \to t_U(U' \amalg V') \to U' \amalg V' \xrightarrow{p} f_U(U' \amalg V') = f_U(V') 
\to 0
\end{equation}

Note that
$\Hom( t_U(U' \amalg V'), \overline{V}'') =0$ since
$\overline{V}$ is in ${^\perp(\tau U)}$.
Applying $\Hom(\ , \overline{V}'')$ to~\eqref{e:canU}, we obtain
$$\Hom(f_U(V'), \overline{V}'') \simeq \Hom(U' \amalg V',  \overline{V}'').$$
Hence there is a map $e \colon f_U(V') \to \overline{V}''$, such that $ep = gf$.
We then have $br= gfs' = eps' = esr$, that is $(b-es)r= 0$. Since $r$ is an epimorphism,
this implies $b= es$, and we have proved the claim that $s$ is a left
$\add \overline{V}$-approximation.

We next claim that $s$ is a left minimal map. Let $$s= \begin{pmatrix}
s_1 \\0
\end{pmatrix} \colon f_U (Y_X') \to f_U (V')$$ where 
$s_1$ is left minimal. Then $\coker(s) \simeq  \coker(s_1) \amalg M$
for some module $M$. Note that
$X$ indecomposable implies that $\coker (s) \simeq f_U(X)$ is indecomposable by \cite[Lemma 4.6]{bm},
hence we have $\coker(s_1) = 0$ or $M=0$. 
If $\coker(s_1) = 0$, then $f_U (X)$ is in $\add f_U(V')$. 
Then there is an indecomposable direct summand $V_0$ of $V$, such that
$f_U (X) \simeq f_U(V_0)$. But then, by \cite[Lemma 5.7]{bm} we have 
$X \simeq V_0$, but this is a contradiction, since $X$ is by assumption not in $\add V$. 
Hence $M= 0$, and therefore $s= s_1$ is left minimal.

We claim that $f_U(Y_X')$ is in $\P(\T')$, where $\T' = {^\perp(\tau_{J(U)} 
\overline{V}) \cap J(U)}$ is a torsion class in $J(U)$. For this 
consider the torsion class $\T = {^\perp(\tau U \amalg \tau V)}$ in $\module \Lambda$.
By Lemma \ref{al2} we have that $f_U(\T) = \T'$.
It then follows from \cite[Theorem 3.15]{jasso}, that $f_U(\P(\T)) = \P(\T')$, and hence
$f_U(Y_X')$ is in $\P(\T')$, since $Y'_X\in \P(T)$.

We can now apply \cite[Proposition 4.7]{bm} to obtain that the sequences (\ref{seq-4}) and (\ref{seq-5}) are isomorphic, and this concludes the proof of the claim.
\end{proof}

Using Claim \ref{fu-claim} we obtain
$$\E_{\overline{V}}^{J(U)} \E_U (X) = \E_{\overline{V}}^{J(U)} f_U (X) = 
f_{\overline{V}}^{J(U)}(Y_{f_U(X)})=f_{\overline{V}}^{J(U)}f_U (Y_X').$$

Moreover, we have that $Y_X'$ is not in $\Gen(U \amalg V)$, since 
$\E_{U \amalg V}(X) = f_{U \amalg V}(Y_X') \neq 0$. Therefore, using 
Lemma \ref{a-case}, we obtain
$$\E_{U \amalg V}(X) = f_{U \amalg V} (Y_X') \simeq f_{\overline{V}}^{J(U)} f_U(Y'_X).$$
This finishes the proof that equation (\ref{assoc}) holds in this case.

\bigskip

\noindent {\bf Case I* (c):} 
We assume that $X$ is an indecomposable module in
$\module \Lambda$, that $X \amalg U  \amalg V$ is a $\tau$-rigid
module and that $X$ is in $\Gen(U)$.

\bigskip

Let $\T= {^{\perp}(\tau U \amalg \tau V)}$ and $\T' = {^{\perp}(\tau_{J(U)} \overline{V})}
\cap J(U)$, and consider the exact sequence 

\begin{equation}\label{esq-1}
Y_X' \to U_X \to X  \to 0,
\end{equation}
where the first map is a minimal left $\add U$-approximation, and the second map 
is a minimal right $\add U$-approximation.
We then have that $\E_{\U}(X) = \E_U(X)= f_U(Y_X')[1]$. Note that by Theorem
 \ref{rigid-sums} the object
$\E_{U}(X \amalg V) =f_U(Y_X')[1] \amalg \overline{V}$  is support $\tau$-rigid in $\C(J(\U))$, and  hence
we have that $f_U(Y_X')$ is in ${^{\perp} \overline{V}}$.

We have $\E^{J(\U)}_{\E_{\U}(\V)}\E_{\U}(X) = \E^{J(\U)}_{\overline{\V}}\E_{U}(X) =
f^{J(U)}_{\overline{V}}Y_X''$, where $f_U (Y_X') \to Y_X''$ is a minimal left
$\T'$-approximation.

To compute $\E_{\U \amalg \V}(X)$ we consider the right exact sequence
\begin{equation}\label{esq-2}
Y_X''' \to U_X' \amalg V_X' \to X  \to 0
\end{equation}
where the first map is a minimal left $\add (U \amalg V)$-approximation, and the second map 
is a minimal right $\add (U \amalg V)$-approximation.
Then $\E_{\U \amalg \V}(X) = \E_{U \amalg V}(X) = f_{U \amalg V}(Y_X''')$.

Since $\E_{U \amalg V}(X) \neq 0$, we have that $Y_X'''$ is not in $\Gen(U \amalg V)$. 
By Lemma \ref{a-case}, we hence have that 
$\E_{U \amalg V}(X) = f_{U \amalg V}(Y_X''')[1] = f_{\overline{V}}^{J(U)} f_U(Y_X''')[1]$.

It is therefore sufficient to prove that 
$$f_{\overline{V}}^{J(U)} (Y_X'') \simeq f_{\overline{V}}^{J(U)} f_U(Y_X''')$$
The main steps are as follows:

\begin{claim} 
\begin{itemize}
\item[(a)] There is a map $Y_X' \to U_X \amalg Y_X'''$, which is a left $\T$-approximation. 

\item[(b)] The induced map $f_U(Y_X') \to f_U(Y_X''')$ is a left $\T'$-approximation.

\item[(c)] We have that $Y_X''$  is a direct summand in $f_U(Y_X''')$
\item[(d)] We have $f_{\overline{V}}^{J(U)} (Y_X'') \simeq f_{\overline{V}}^{J(U)} f_U(Y_X''')$.
\end{itemize}
\end{claim}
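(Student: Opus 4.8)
**Proof plan for the Claim (Case I* (c)).**

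The plan is to establish parts (a)--(d) in sequence, each feeding into the next, with part (a) being the technical heart. First I would prove (a) by exhibiting the map explicitly. We have two right exact sequences for $X$, one over $\add U$ (sequence~\eqref{esq-1}) and one over $\add(U\amalg V)$ (sequence~\eqref{esq-2}); since $\add U\subseteq \add(U\amalg V)$, a standard comparison of approximations gives a commutative diagram relating them, and $Y_X'''$ maps to $Y_X'$ together with $U_X\to U_X'\amalg V_X'$. The claim is that the combined map $Y_X'\to U_X\amalg Y_X'''$ is a left $\T$-approximation, where $\T={^{\perp}(\tau U\amalg\tau V)}$. To see this, given any $Z\in\T$ and a map $Y_X'\to Z$, I would first extend it over the left $\add U$-approximation $Y_X'\to U_X$ using that $U\in\P(\T)$ (so $\Ext^1(U,\Gen\text{-stuff})$ vanishes appropriately), and then use that $Y_X'''\to U_X'\amalg V_X'$ is a left $\add(U\amalg V)$-approximation together with the fact that $Z\in{^{\perp}(\tau U\amalg\tau V)}$ to produce a factorization through $Y_X'''$. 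The precise bookkeeping of which approximation absorbs which map is exactly the kind of diagram chase done in \cite{bm}, and I would model it on \cite[Proposition 4.7]{bm}; combining the two partial factorizations yields the left $\T$-approximation $Y_X'\to U_X\amalg Y_X'''$. Note $U_X\in\add U\subseteq\T$, so adding the summand $U_X$ does not disturb the approximation property.

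For part (b), I would apply Lemma~\ref{al5} (via Lemma~\ref{al2}(b)): since $\Hom(U,\tau Y_X')=0$ (as $Y_X'\in\P(\T)$ and $U\in\add U\subseteq\T$, or directly from support $\tau$-rigidity of $\E_U(X\amalg V)$), applying $f_U$ to the left $\T$-approximation from (a) gives a left $f_U(\T)=\T'$-approximation $f_U(Y_X')\to f_U(U_X\amalg Y_X''')=f_U(Y_X''')$, using $f_U(U_X)=0$ since $U_X\in\Gen U$. This is where Lemma~\ref{al2}(a), identifying $f_U(\T)=\T'$, does the real work.

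For part (c), recall that by definition $f_U(Y_X')\to Y_X''$ is a \emph{minimal} left $\T'$-approximation, while by (b) the map $f_U(Y_X')\to f_U(Y_X''')$ is \emph{some} left $\T'$-approximation; since $f_U(Y_X''')\in\T'$ (all objects of the third row type lie in $\T'$, as $\overline{V}\in\P(\T')$ and the relevant terms are in $J(U)$ and in ${^{\perp}(\tau_{J(U)}\overline{V})}$ by support $\tau$-rigidity), a minimal approximation is a direct summand of any approximation, so $Y_X''$ is a direct summand of $f_U(Y_X''')$. Write $f_U(Y_X''')\simeq Y_X''\amalg W$; the complement $W$ is a left $\T'$-approximation of $0$, hence $W\in\P(\T')$ with the property that its $f^{J(U)}_{\overline V}$-image contributes nothing new. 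Finally, for part (d): apply $f^{J(U)}_{\overline V}$ to $f_U(Y_X''')\simeq Y_X''\amalg W$. It remains to check $f^{J(U)}_{\overline V}(W)=0$, i.e.\ $W\in\Gen_{J(U)}\overline V$. This follows because $W$ is a direct summand of $f_U(Y_X''')$ but maps to $0$ under the minimal approximation, so it lies in the ``torsion'' part relative to $\T'$; more directly, one argues as in \cite{bm} that the minimal left $\T'$-approximation already captures the whole $f^{J(U)}_{\overline V}$-free quotient, giving $f^{J(U)}_{\overline V}(Y_X'')\simeq f^{J(U)}_{\overline V}(f_U(Y_X'''))$.

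The main obstacle I expect is part (a): verifying that the composite really is a left $\T$-approximation requires carefully tracking the interplay between the $\add U$- and $\add(U\amalg V)$-approximation properties and the Ext-projectivity of $Y_X'$ in $\T$, and making sure the minimality and summand-decomposition arguments (needed for (c) and to invoke \cite[Proposition 4.7]{bm}) go through. Once (a) is in hand, parts (b)--(d) are relatively formal consequences of Lemmas~\ref{al2} and~\ref{al5} and the uniqueness of minimal approximations up to summands.
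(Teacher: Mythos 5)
Your parts (b) and (c) follow the paper's route (Lemma~\ref{al2}/\ref{al5} applied to the approximation from (a), then minimality of $f_U(Y_X')\to Y_X''$), but there are genuine gaps in (a) and (d). For (a), the factorization step you describe does not work as stated: a map $Y_X'\to Z$ with $Z\in\T$ need \emph{not} factor through the left $\add U$-approximation $Y_X'\to U_X$ — if it always did, $Y_X'\to U_X$ would itself be a left $\T$-approximation and the summand $Y_X'''$ would be superfluous. You also have the comparison map in the wrong direction ("$Y_X'''$ maps to $Y_X'$"); the map one actually obtains, and needs, goes $Y_X'\to Y_X'''$. Moreover, a purely module-level diagram chase is obstructed by the fact that the sequences \eqref{esq-1} and \eqref{esq-2} are only right exact, so the kernels of $Y_X'\to U_X$ and $Y_X'''\to U_X'\amalg V_X'$ are uncontrolled. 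The paper instead lifts everything to two-term complexes: the map of triangles $\PP_{Y_X'}\to\PP_{U_X}\to\PP_X\to$ and $\PP_{Y_X'''}\to\PP_{U_X'\amalg V_X'}\to\PP_X\to$ over $\id_{\PP_X}$ (which exists because $\Hom(\PP_{U_X},\PP_{Y_X'''}[1])=0$) is completed to an induced triangle
$$\PP_{U_X'\amalg V_X'}[-1]\xrightarrow{g}\PP_{Y_X'}\xrightarrow{h}\PP_{U_X\amalg Y_X'''}\to\PP_{U_X'\amalg V_X'},$$
and then for any $T\in\T$ one has $\Hom(\PP_{U_X'\amalg V_X'},\PP_T[1])=0$ by Lemma~\ref{rigid-rigid}, so every map $\PP_{Y_X'}\to\PP_T$ kills $g$ and factors through $h$; passing to $H^0$ gives the approximation. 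This triangulated ingredient is the heart of (a) and is missing from your plan; citing \cite[Prop.\ 4.7]{bm} does not supply it (that result is used elsewhere, for identifying two exact sequences).

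For (d), your argument that the complement $W$ in $f_U(Y_X''')\simeq Y_X''\amalg W$ satisfies $f^{J(U)}_{\overline{V}}(W)=0$ is not justified: being a complement of a minimal left $\T'$-approximation does not place $W$ in the torsion class $\Gen_{J(U)}\overline{V}$ (which is smaller than $\T'={}^{\perp}(\tau_{J(U)}\overline{V})\cap J(U)$), and "$W$ is a left $\T'$-approximation of $0$" has no content. The correct, and much shorter, argument is the paper's: apply the additive functor $f^{J(U)}_{\overline{V}}$ to the summand inclusion from (c), and observe that both $f^{J(U)}_{\overline{V}}(Y_X'')$ and $f^{J(U)}_{\overline{V}}f_U(Y_X''')$ are nonzero and indecomposable (by the indecomposability results of \cite{bm} together with $\E_{U\amalg V}(X)\neq 0$), so the summand inclusion is an isomorphism. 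That $f^{J(U)}_{\overline{V}}(W)=0$ is then a consequence, not an input.
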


\begin{proof}
\noindent (a):
Consider the diagram
$$
\xymatrix{
\PP_{Y_X'} \ar[r] &  \PP_{U_X}  \ar[r] &  \PP_{X}   \ar[r]  \ar^{=}[d] & \PP_{Y_X'}[1] \\
\PP_{Y_X'''}  \ar[r] & \PP_{U_X' \amalg V_X'}  \ar[r] &  \PP_{X}  \ar[r] & \PP_{Y_X'''}[1]
}$$
where the rows are triangles giving rise (by taking homology) to
the exact sequences (\ref{esq-1}) and (\ref{esq-2}), respectively (see Section
\ref{bi}).
We have that $U, Y_X'''$ are in $\P(\T)$, so in particular $\Hom(Y_X''', \tau U) = 0$.
Hence by Lemma \ref{rigid-rigid}
we have that $\Hom(\PP_{U_X}, \PP_{Y_X'''}[1]) = 0$.
Therefore (see \cite[Section 1.4]{nee}) the above diagram can be completed to a commutative diagram in 
such a way that there is an induced triangle
$$\PP_{U_X' \amalg V_X'}[-1] \xrightarrow{g} \PP_{Y_X'} \xrightarrow{h} \PP_{U_X' \amalg Y_X'''} \to  \PP_{U_X' \amalg V_X'}.$$
Now, let $k \colon \PP_{Y_X'} \to \PP_T$ be a map with $T \in \T= {^{\perp}(\tau U \amalg \tau V)}$. Then $\Hom(T, \tau U \amalg \tau V) = 0$, and hence by Lemma \ref{rigid-rigid},
we have $\Hom(\PP_{U_X' \amalg V_X'}, \PP_T[1])= 0$. Hence we have $kg = 0$, so 
by exactness of $\Hom(\ ,\PP_T)$ we have that there is map $l \colon 
\PP_{U_X' \amalg Y_X'''} \to \PP_T$, such that $lh  =k$.
It then follows that the map
$H^0(k) \colon Y_X \to T$ factors through $H^0(l)$,
and by Lemma \ref{rigid-rigid}, we have that any map $Y_X' \to T$ factors through
$Y_X' \to U_X' \amalg Y_X'''$. This finishes the proof of the claim.

\bigskip

\noindent (b): This follows directly from Lemma \ref{al2}, using that $U, Y_X''' \in \P(\T)$
and therefore $\Hom(U, \tau Y_X''')= 0$. 

\bigskip

\noindent (c): This follows directly from (b), using that $f_U(Y_X') \to Y_X''$ is a 
minimal left $\T'$-approximation.

\bigskip

\noindent (d): This follows directly from (c), using that both  $f_{\overline{V}}^{J(U)} Y_X''$ and $f_{\overline{V}}^{J(U)} f_U(Y_X''')$ are indecomposable.
\end{proof}

So equation (\ref{assoc}) is proved for this case.

\bigskip

\noindent {\bf Case I* (d):} 
We assume that $X$ is of the form $R[1]$, where $R$ is an
indecomposable module in $(\P(\Lambda) \cap {^{\perp}(U \amalg V)})$.

\bigskip

Let $\T = {^\perp(\tau U \amalg \tau V)}$ and let 
$\T' = {^\perp(\tau_{J(U)} 
\overline{V}) \cap J(U)}$.

We first compute $\E_{U \amalg V}(X)$. For this, let $R \to Y_R$ be a 
minimal left $\T$-approximation. Then $\E_{U \amalg V}(X) = f_{U \amalg V} (Y_R)[1]
= f^{J(U)}_{\overline{V}} f_U(Y_R)[1]$, where the last equation follows from
Lemma \ref{a-case}.

Similarly, we compute $\E_{U}(X)$ by letting $R \to Y_R'$ be a 
minimal left $\T$-approximation and then $\E_U(X) = f_U (Y_R')[1]$.

To compute $\E_{\overline{V}}^{J(U)} \E_U(X)$, let $f_U (Y_R') \to Y_R''$ be a
minimal $\T'$-approximation. Then we have $\E_{\overline{V}}^{J(U)} \E_U(X) = 
f_{\overline{V}}^{J(U)} (Y_R'')[1]$.

\begin{claim}\label{c:iso}
We have that $f_U (Y_R) \simeq Y_R''$.
\end{claim}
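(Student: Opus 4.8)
The plan is to prove Claim~\ref{c:iso} by showing that both $f_U(Y_R)$ and $Y_R''$ are the minimal left $\T'$-approximation of the object $f_U(R) = R$ (note that $R\in{}^\perp(U\amalg V)\subseteq{}^\perp U$, so $R$ is already in $J(U)$ and hence $f_U(R)\simeq R$), and then invoke uniqueness of minimal left approximations. So the first step is to record that $R$ lies in $J(U)$: it is in $U^\perp$ since $\Hom(U,R)=0$, and it is in ${}^\perp(\tau U)$ since $R$ is projective, hence $t_U(R)=0$ and $f_U(R)\simeq R$.

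Next I would analyze $Y_R''$. By construction $Y_R'$ is a minimal left $\T$-approximation of $R$, and then $f_U(Y_R')\to Y_R''$ is a minimal left $\T'$-approximation of $f_U(Y_R')$. I want to assemble these into a single left $\T'$-approximation of $R$. The composite $R\to Y_R'\to f_U(Y_R')\to Y_R''$ should be a left $\T'$-approximation: given any $T'\in\T'\subseteq J(U)\subseteq U^\perp$ and a map $R\to T'$, since $Y_R'$ is a left $\T$-approximation and $\T'\subseteq J(U)\subseteq{}^\perp(\tau U\amalg\tau V)=\T$ (here I use $\T'={}^\perp(\tau_{J(U)}\overline V)\cap J(U)\subseteq J(U)\subseteq{}^\perp(\tau V)\cap{}^\perp(\tau U)$, which needs the inclusion $J(U)\subseteq{}^\perp(\tau V)$ coming from $V\amalg U$ being $\tau$-rigid, or more carefully from Lemma~\ref{al2}(a) that $f_U(\T)=\T'$ together with $\T'\subseteq J(U)$), the map $R\to T'$ factors through $Y_R'$; then because $T'\in U^\perp$ the induced map $Y_R'\to T'$ factors through $f_U(Y_R')\to Y_R'$... wait, through the canonical map $Y_R'\to f_U(Y_R')$ since $T'\in U^\perp$; and finally because $Y_R''$ is a left $\T'$-approximation of $f_U(Y_R')$, that map factors through $Y_R''$. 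Minimality of the composite follows since $R\to Y_R'$ and $Y_R'\to f_U(Y_R')\to Y_R''$ are all epimorphisms (the first because a minimal left approximation into a torsion-free class is an epimorphism when the source has trivial torsion part, the second by minimality of the $\T'$-approximation and surjectivity of the canonical map). Hence $Y_R''$ is \emph{the} minimal left $\T'$-approximation of $R$.

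In parallel I would show $f_U(Y_R)$ is also the minimal left $\T'$-approximation of $R$. Here $Y_R$ is the minimal left $\T$-approximation of $R$. Since $R$ is projective, $\Hom(U,\tau R)=0$, so Lemma~\ref{al2}(b) applies directly: $f_U(R)\to f_U(Y_R)$ is a left $\T'$-approximation. Since $f_U(R)\simeq R$, this says $R\to f_U(Y_R)$ is a left $\T'$-approximation. It is minimal: $R\to Y_R$ is an epimorphism (same reasoning as above, $R$ being torsion-free relative to $\T$ in the appropriate sense, or simply because $Y_R$ can be taken minimal and $R$ generates it) and $Y_R\to f_U(Y_R)$ is an epimorphism, so the composite is an epimorphism, and a minimal approximation that is an epimorphism with indecomposable-ish... actually one should argue minimality via the standard fact that applying the exact-on-the-relevant-part functor $f_U$ to a minimal left $\T$-approximation, when combined with $\Hom(U,\tau R)=0$, yields a minimal left $\T'$-approximation — this is what Lemma~\ref{al2}(b) gives provided one checks the minimality is preserved, which in the cited framework (following \cite{bm}) it is. By uniqueness of minimal left $\T'$-approximations, $f_U(Y_R)\simeq Y_R''$, which is Claim~\ref{c:iso}.

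The main obstacle I anticipate is the bookkeeping around which torsion classes the various approximations live in, and in particular making sure the chain of factorizations in step two goes through the \emph{correct} intermediate torsion-free conditions: one must be careful that $\T'\subseteq\T$ (so that a $\T$-approximation detects $\T'$-maps) and that $\T'\subseteq U^\perp$ (so that maps out of $Y_R'$ into $\T'$-objects descend to $f_U(Y_R')$), both of which follow from $\T'\subseteq J(U)$ and Lemma~\ref{al2}, but need to be stated cleanly. A secondary subtlety is verifying the minimality/epimorphism claims precisely enough to apply uniqueness of minimal left approximations; alternatively one can circumvent minimality entirely by showing each of $f_U(Y_R)$ and $Y_R''$ is a left $\T'$-approximation of $R$ and that each is a direct summand of the other (via the universal property in both directions), forcing them to be isomorphic, which avoids the need to track minimality through $f_U$.
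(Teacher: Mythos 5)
Your overall strategy is the same as the paper's: show that $R\to f_U(Y_R)$ and the composite $R\to Y_R'\to f_U(Y_R')\to Y_R''$ are both minimal left approximations of $R$ into $\T'$ (the paper works with $\P(\T')$, but the factorization chains are identical), and invoke uniqueness of minimal left approximations. The approximation-property parts of your argument are essentially correct. There is, however, a small error at the outset: $R\in{}^{\perp}(U\amalg V)$ means $\Hom(R,U\amalg V)=0$, not $\Hom(U,R)=0$, so $t_U(R)$ need not vanish and $f_U(R)\not\simeq R$ in general. This is repairable (every map from $R$ to an object of $\T'\subseteq U^{\perp}$ factors through $R\to f_U(R)$, so Lemma~\ref{al2}(b) still yields that $R\to f_U(Y_R)$ is a left $\T'$-approximation of $R$), but as written the reduction is off.

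The genuine gap is in the minimality step. The maps $R\to Y_R$ and $R\to Y_R'$ are Bongartz-type minimal left approximations into the torsion classes ${}^{\perp}(\tau U\amalg\tau V)$ and ${}^{\perp}(\tau U)$, not into torsion-free classes; their cokernels lie in $\add(U\amalg V)$, resp.\ $\add U$, and they are not epimorphisms in general (the same applies to $f_U(Y_R')\to Y_R''$, whose cokernel lies in $\add\overline{V}$). So the ``composite of epimorphisms, hence minimal'' argument fails. Lemma~\ref{al2}(b) asserts only that $f_U$ sends a left $\T$-approximation to a left $\T'$-approximation; neither it nor anything else in the paper asserts that minimality is preserved. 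Finally, the fallback of showing each object is ``a direct summand of the other via the universal property'' is logically insufficient: two left $\T'$-approximations of the same object need only share their minimal direct summand and need not be isomorphic. The paper closes exactly this gap by citing \cite{bm} (Prop.~3.7 and Lemma~4.6) to get that $Y_R$, $Y_R''$ and hence $f_U(Y_R)$ are indecomposable; a nonzero left approximation with indecomposable target is automatically minimal, and only then does uniqueness apply. Without this indecomposability input (or some other genuine minimality argument) your proof does not close.
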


\begin{proof}
We first claim that the composition $R \xrightarrow{h} Y_R  \xrightarrow{k} f_U(Y_R)$
is a left $\P(\T')$-approximation.
Consider a map $f \colon R \to N'$ with $N'$ in $\P(\T')$.
By Lemma \ref{al2}(b), we have that $f_U \P(\T) = \P(\T')$, so there is a module
$N$ in $\P(\T)$ satisfying $f_U(N) = N'$.
Since $R$ is projective, there is a map $u \colon R \to N$, such 
that $gu = f$. 
Since $h \colon R \to Y_R$ is a $\P(\T)$-approximation and $N$ is in $\P(\T)$,
there is $v \colon Y_R \to N$ such that $u =vh$.
Note that $N$ is in $\P(\T) \subseteq J(U) \subseteq U^{\perp}$.
Since $k$ is a left $U^{\perp}$-approximation, there is $w \colon f_U (Y_R) \to N$
such that $v = wk $. So, we have $f = gu = gvh = gwhk$.
Note that $N'$ is in $\P(\T') \subseteq J(U)\subseteq U^{\perp}$.
Since $k$ is a left $U^{\perp}$-approximation, there is $w \colon f_U (Y_R) \to N$
such that $gv = wk $. So, we have $f = gu = gvh = wkh$,
and hence $kh$ is a  left $\P(\T')$-approximation.

Next, we claim that the composition $$R \xrightarrow{b} Y_R'  \xrightarrow{c} f_U(Y_R')
\xrightarrow{d} Y_R''$$
is a left $\P(\T')$-approximation.
Let $N$ be in  $\P(\T')$, and let $a \colon R \to N$ be a map.
Since $N$ is in ${^{\perp}(\tau U)}$ and $b$ is a left ${^{\perp}(\tau U)}$-approximation, there is a map $a' \colon Y_R' \to N$ such that $a = a' b$.   
Since $N$ is in ${U^{\perp}}$ and $c$ is a left  ${U^{\perp}}$-approximation, there is a map $a'' \colon f_U(Y_R') \to N$ such that $a' = a'' c$.   
Since $N$ is in $\T'$ and $d$ is a left  $\T'$-approximation, there is a map $a''' \colon Y_R'' \to N$ such that $a'' = a''' d$.   
So we have $a'''dcb = a''cb = a'b = a$, so $dcb$ is a left $\P(\T')$-approximation as claimed.

Note that both $Y_R$ and $Y_R''$ are indecomposable by \cite[Prop. 3.7]{bm}, hence
also $f_U(Y_R)$ is indecomposable, by \cite[Lemma 4.6]{bm}.
It then follows that both
 $$X \xrightarrow{h} Y_R  \xrightarrow{k} f_U(Y_R)$$ and
 $$X \xrightarrow{b} Y_R'  \xrightarrow{c} f_U(Y_R')
 \xrightarrow{d} Y_R''$$
 are minimal left $\T'$-approximations.
 So we obtain $f_U(Y_R) \simeq Y_R''$.
  \end{proof}
By Claim~\ref{c:iso} we now have that   
$$\E_{U \amalg V}(X) = f_{\overline{V}}^{J(U)}f_U(Y_R)[1] = f_{\overline{V}}^{J(U)}(Y_R'')
 [1] =
 \E_{\overline{V}}^{J(U)} \E_U(X)$$
 and hence equation (\ref{assoc}) holds also in this case.
  
\bigskip 
 
\noindent {\bf Case I**:}
We assume that $\U = U$ and $\V = V$ for $U, V$ in
$\module \Lambda$, that $U \amalg V$ is a $\tau$-rigid module,
and that $V$ lies in $\Gen U$.

\bigskip 

Let $\overline{V} = \overline{Q}[1]$, where $\overline{Q}$ is in $\P(J(U))$. We first make the following observation.

\begin{lemma}\label{tf}
With the assumptions of Case I**, we have that $f_{U \amalg V} = f_U$.
\end{lemma}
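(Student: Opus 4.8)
The statement to prove is that, under the assumptions of Case I** (so $U \amalg V$ is $\tau$-rigid with $V \in \Gen U$), the torsion functors $f_{U \amalg V}$ and $f_U$ coincide. The key point is that the torsion class cutting out the torsion-free part is $\Gen(U\amalg V)$ in one case and $\Gen U$ in the other, so it suffices to show $\Gen(U \amalg V) = \Gen U$. The inclusion $\Gen U \subseteq \Gen(U\amalg V)$ is trivial. For the reverse inclusion, since $V \in \Gen U$, every direct summand of a direct sum of copies of $U \amalg V$ is a quotient of a direct sum of copies of $U$ (replace each copy of $V$ by a surjection $U^{(m)} \twoheadrightarrow V$ and take the resulting surjection onto the quotient), hence lies in $\Gen U$. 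Therefore $\Gen(U \amalg V) = \Gen U$ as subcategories of $\module\Lambda$.

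\textbf{Consequences for the torsion functors.} Recall from Section~\ref{not} that for a $\tau$-rigid module $W$, $(\Gen W, W^{\perp})$ is a torsion pair, with torsion functor $t_W$ and torsion-free functor $f_W$. A torsion pair is determined by its torsion class, so from $\Gen(U\amalg V) = \Gen U$ we immediately get that the two torsion pairs $(\Gen(U\amalg V), (U\amalg V)^{\perp})$ and $(\Gen U, U^{\perp})$ are equal; in particular $(U\amalg V)^{\perp} = U^{\perp}$ and the associated torsion-free functors agree: $f_{U\amalg V} = f_U$. Concretely, for any module $X$, the canonical sequence $0 \to t_U(X) \to X \to f_U(X) \to 0$ has $t_U(X) \in \Gen U = \Gen(U\amalg V)$ and $f_U(X) \in U^{\perp} = (U\amalg V)^{\perp}$, so by uniqueness of the canonical sequence for the torsion pair $(\Gen(U\amalg V),(U\amalg V)^{\perp})$ it must be the canonical sequence for $U\amalg V$ as well, giving $f_{U\amalg V}(X) = f_U(X)$ naturally in $X$.

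\textbf{Main obstacle.} There is essentially no obstacle here: the statement is a soft consequence of the identity $\Gen(U\amalg V) = \Gen U$ together with the fact that a torsion pair is recovered from its torsion class. The only point requiring a line of care is the direction $\Gen(U\amalg V)\subseteq \Gen U$, and even that is immediate once one observes that being a quotient of $\add(U\amalg V)$ and being a quotient of $\add U$ are equivalent conditions when $V\in\Gen U$. I would phrase the proof in two short steps: first establish $\Gen(U\amalg V)=\Gen U$, then invoke uniqueness of torsion pairs / canonical sequences to conclude $f_{U\amalg V}=f_U$.
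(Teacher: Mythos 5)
Your proof is correct and follows the same route as the paper: establish $\Gen(U\amalg V)=\Gen U$ from $V\in\Gen U$, then conclude $f_{U\amalg V}=f_U$ by uniqueness of the canonical sequences for the associated torsion pair. The paper's argument is just a more terse version of yours.
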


\begin{proof}
Since $V \in \Gen U$, we have that $\Gen(U \amalg V) = \Gen U$. 
It follows that $t_{U \amalg V} = t_U$, and then by uniqueness of canonical 
sequences, that also $f_{U \amalg V} = f_U$.
\end{proof}

\bigskip
 
\noindent {\bf Case I** (a):}
We assume that $X$ is an indecomposable module in $\module \Lambda$, that $X \amalg U \amalg V$ is a $\tau$-rigid module,
and that $X$ does not lie in $\Gen (U \amalg V) = \Gen U$.

\bigskip

We have that $\E_U(X) = f_U(X)$, and 
$$\E_{\overline{V}}^{J(U)}\E_U(X) = \E_{\overline{V}}^{J(U)} (f_U(X)) = 
\E_{Q[1]}^{J(U)}(f_U(X)) =f_U(X).$$
 Note that the last equation holds since $\overline{Q}$ is in $\P(J(U))$ and
 we have $\Hom(\overline{Q}, f_U(X)) = 0$ since $\E_U(V \amalg X)$ is
support $\tau$-rigid in $\C(J(U))$ by Theorem \ref{rigid-sums}.

By Lemma \ref{tf} we have
$$\E_{U \amalg V}(X) = f_{U \amalg V}(X) =f_U(X) = \E_{\overline{V}}^{J(U)}\E_U(X) $$
 and the claim that equation (\ref{assoc}) holds, is proved in this case. 

\bigskip

\noindent {\bf Case I** (b):}
Since $\Gen(U \amalg V) = \Gen U$, this case (where $X$ lies in
$\ind \Lambda$ and $X\in \Gen(U\amalg V)\setminus \Gen U$)
cannot occur.

\bigskip

\noindent {\bf Case I** (c):} 
We assume that $X$ is an indecomposable $\tau$-rigid module,
that $X \amalg U \amalg V$ is a $\tau$-rigid module, and that
$X$ lies in $\Gen U = \Gen(U \amalg V)$.

\bigskip
 
In order to compute  $\E_{U}(X)$, we consider the exact sequence
\begin{equation}\label{es-1}
Y_X \to U_X \to X \to 0
\end{equation}
where the first map is a minimal left $\add U$-approximation, and
the second map is a minimal right $\add U$-approximation.
Then $\E_{U}(X) = f_U(Y_X)[1]$.

We have 
$$\E^{J(U)}_{\E_{\U}(\V)} \E_{\U}(X) = \E^{J(U)}_{\E_{\overline{\V}}} \E_{\U}(X) =
 f^{J(U)}_{\overline{Q}} f_U(Y_X)[1].$$

Next, to compute  $\E_{U \amalg V}(X)$, we consider the exact sequence
\begin{equation}\label{es-2}
Y_X' \to U_X' \amalg V_X' \to X \to 0,
\end{equation}
where the first map is a minimal left $\add (U \amalg V)$-approximation, and
the second map is a minimal right $\add (U \amalg V)$-approximation.
Then  $\E_{U \amalg V}(X) = f_{U \amalg V}(Y_X')[1]$.  

By Lemma~\ref{tf}, it now follows that 
$$\E_{U \amalg V}(X) = f_{U \amalg V}(Y_X')[1] =  f_{U}(Y_X')[1].$$ 

By the above, it will be sufficient to prove that 
\begin{equation}\label{eqv-1}
  f^{J(U)}_{\overline{Q}} f_U(Y_X) \simeq f_{U}(Y_X').
 \end{equation}

The main steps in the proof are as follows:

\begin{claim}\label{c:2c} 
Let $\T = {^{\perp}(\tau U \amalg \tau V)}$ and let 
$\T' =  {\overline{Q}}^{\perp} \cap J(U)$. Then the following hold.
\begin{itemize}
\item[(a)] We have $f^{J(U)}_{\overline{Q}} f_U(\T) =  f_U(\T) = \T'$.
\item[(b)] There is a map $Y_X \xrightarrow{\gamma} Y_X' \amalg U_X$ which is a left $\T$-approximation.
\item[(c)] The map $f_U(Y_X) \xrightarrow{f_U(\gamma)} f_U(Y_X')$ is a left $\T'$-approximation.
\item[(d)] The map $f^{J(U)}_{\overline{Q}}f_U(Y_X) \xrightarrow{f^{J(U)}_{\overline{Q}}f_U(\gamma)}
 f^{J(U)}_{\overline{Q}}f_U(Y_X')$ is a left $f^{J(U)}_{\overline{Q}} \T' = \T'$ -approximation.
\item[(e)] The map $f^{J(U)}_{\overline{Q}}f_U(\gamma)$ is an isomorphism.
 \item[(f)] We have $f^{J(U)}_{\overline{Q}}f_U(Y_X') \simeq f_U(Y_X')$.
 \item[(g)] We have $f^{J(U)}_{\overline{Q}} f_U(Y_X) \simeq f_{U}(Y_X')$.
  \end{itemize}
\end{claim}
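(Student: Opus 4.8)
The plan is to establish (a)--(g) in order. We may assume $V\neq 0$, hence $U\neq 0$ (if $V=0$ then $\overline Q=0$, $\T'=J(U)$, $Y_X=Y_X'$, and everything is trivial), so that $r(J(U))<n$ and the already-proved Theorem~\ref{main-comp} applies to the proper subcategory $J(U)$. For~(a) I would first check $\Gen U\subseteq\T$: $\Hom(U,\tau U)=0$ gives $\Gen U\subseteq{}^{\perp}(\tau U)$, and since $\Hom(U,\tau V)=0$ we get $\Ext^1(V,\Gen U)=0$ by Lemma~\ref{rigid-rigid}, so $\Ext^1(V,\Gen M)=0$ and $\Hom(M,\tau V)=0$ for every $M\in\Gen U$. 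By \cite[Theorems 3.14, 3.15]{jasso}, $f_U(\T)=\T\cap U^{\perp}$ is a torsion class in $J(U)$, and since $V\in\Gen U$ forces $U^{\perp}\subseteq V^{\perp}$, we obtain $f_U(\T)=U^{\perp}\cap{}^{\perp}(\tau U)\cap{}^{\perp}(\tau V)=(U\amalg V)^{\perp}\cap{}^{\perp}(\tau(U\amalg V))=J(U\amalg V)$. Since $\E_U(V)=\overline Q[1]$ is the shift of a projective in $J(U)$, we have $J_{J(U)}(\E_U(V))=\overline Q^{\perp}\cap J(U)=\T'$, while Theorem~\ref{main-comp} gives $J_{J(U)}(\E_U(V))=J(U\amalg V)$; hence $\T'=J(U\amalg V)=f_U(\T)$. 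Finally $\T'$ is the torsion-free class of the torsion pair $(\Gen_{J(U)}\overline Q,\T')$ in $J(U)$, so $f^{J(U)}_{\overline Q}$ restricts to the identity on $\T'$ and $f^{J(U)}_{\overline Q}f_U(\T)=f_U(\T)$.

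Part~(b) is the same assertion as part~(a) of the Claim in Case~I*(c), and this is where the hard part will be: it is the only step that is not formal torsion-theoretic bookkeeping. One lifts the exact sequences \eqref{es-1} and \eqref{es-2} to triangles in $D^b(\module\Lambda)$ along minimal projective presentations. Since $Y_X'\in\P(\T)\subseteq{}^{\perp}(\tau U)$ we have $\Hom(Y_X',\tau U)=0$, hence $\Hom(\PP_{U_X},\PP_{Y_X'}[1])=0$ by Lemma~\ref{rigid-rigid}, and the octahedral axiom then completes a morphism of triangles and produces an auxiliary triangle relating $\PP_{Y_X}$ to $\PP_{Y_X'\amalg U_X}$; the map $\gamma$ is obtained by applying $H^{0}$. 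For the approximation property, given $T\in\T$ one has $\Hom(T,\tau(U\amalg V))=0$, so $\Hom(\PP_{U_X'\amalg V_X'},\PP_T[1])=0$, whence any lift of a map $Y_X\to T$ factors through the middle term of the auxiliary triangle, and passing to $H^{0}$ shows that $Y_X\to T$ factors through $\gamma$. One may simply cite Case~I*(c) here, or repeat its proof.

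Parts~(c) and~(d) are two applications of Lemma~\ref{al5}. For~(c): $\gamma$ is a left $\T$-approximation and $\Hom(U,\tau Y_X)=0$, because $Y_X=B_X\in\P({}^{\perp}(\tau U))$ and $\Gen U\subseteq{}^{\perp}(\tau U)$ give $\Ext^1(Y_X,\Gen U)=0$; hence $f_U(\gamma)$ is a left $f_U(\T)=\T'$-approximation, and since $U_X\in\add U$ we have $f_U(U_X)=0$, so $f_U(\gamma)$ is the map $f_U(Y_X)\to f_U(Y_X')$. For~(d): in the module category $J(U)$ the object $\overline Q$ is projective, and $f_U(Y_X)=f_U(B_X)\in\P(J(U))$ forces $\Ext^1_{J(U)}(f_U(Y_X),\Gen_{J(U)}\overline Q)=0$, hence $\Hom_{J(U)}(\overline Q,\tau_{J(U)}f_U(Y_X))=0$; applying Lemma~\ref{al5} in $J(U)$, together with $f^{J(U)}_{\overline Q}(\T')=\T'$ from~(a), shows that $f^{J(U)}_{\overline Q}f_U(\gamma)$ is a left $\T'$-approximation $f^{J(U)}_{\overline Q}f_U(Y_X)\to f^{J(U)}_{\overline Q}f_U(Y_X')$.

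Part~(f) is immediate, since $f_U(Y_X')\in\P(\T')\subseteq\T'=\overline Q^{\perp}\cap J(U)$ has zero $\overline Q$-torsion. For~(e): both $f^{J(U)}_{\overline Q}f_U(Y_X)$ and $f_U(Y_X')$ are nonzero indecomposable objects of $\P(\T')$. Indeed $\E_{U\amalg V}(X)=f_U(Y_X')[1]$ lies in $\ind\P(J(U\amalg V))[1]=\ind\P(\T')[1]$, and $\E^{J(U)}_{\overline Q[1]}(f_U(Y_X)[1])=f^{J(U)}_{\overline Q}f_U(Y_X)[1]$ lies in $\ind\P(J_{J(U)}(\overline Q[1]))[1]=\ind\P(\T')[1]$ by Theorem~\ref{thm-bi}; moreover $f^{J(U)}_{\overline Q}f_U(Y_X)\neq 0$ because $B_X\neq B_V$ (as $X\notin\add(U\amalg V)$), so $f_U(B_X)\not\simeq f_U(B_V)=\overline Q$ by \cite[Lemma 5.7]{bm}, and a nonzero indecomposable projective of a module category lies in $\Gen$ of another indecomposable projective only if the two are isomorphic. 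Applying the approximation property of $f^{J(U)}_{\overline Q}f_U(\gamma)$ from~(d) to the identity map of its source (which lies in $\T'$) exhibits $f^{J(U)}_{\overline Q}f_U(\gamma)$ as a split monomorphism into the indecomposable $f_U(Y_X')$ with nonzero source, hence an isomorphism; this is~(e). Composing the isomorphism of~(e) with the identification of~(f) gives~(g), and then \eqref{eqv-1}, hence equation~\eqref{assoc} in this subcase, follows at once.
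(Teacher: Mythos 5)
Your proof is correct and follows essentially the same route as the paper: part (a) via Theorem~\ref{main-comp} together with the identification $f_U(\T)=\T\cap U^{\perp}$ (the paper gets there via $f_U=f_{U\amalg V}$, you via $U^{\perp}\subseteq V^{\perp}$, which amounts to the same observation); part (b) by the triangle-completion argument identical to Case I*(c); parts (c) and (d) by Lemma~\ref{al5}; and (e)--(g) by showing the left $\T'$-approximation of an object already in $\T'$ splits and then using indecomposability of the target. The only differences are cosmetic (proving (f) before (e), and your slightly more explicit verification that the source in (e) is nonzero), so no further comment is needed.
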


\begin{proof}
\noindent (a):
Since $V$ is in $\Gen U$, we have by Lemma \ref{tf}
that $f_U = f_{U \amalg V}$, and we have
\begin{multline*}
 f_U(\T) = f_{U \amalg V}(\T) = f_{U \amalg V}({^{\perp}(\tau U \amalg \tau V)}) =
(U \amalg V)^{\perp} \cap {^{\perp}(\tau U \amalg \tau V)} \\
=J(U \amalg V) \overset{(\ast)}{=} J_{J(U)}(\E_U(V)) = J_{J(U)}(\overline{Q}[1]) =\overline{Q}^{\perp} 
\cap J(U) = \T'
\end{multline*}
where $(\ast)$ holds by Theorem \ref{main-comp}. 
This proves the second equality. But $f^{J(U)}_{\overline{Q}}$ clearly acts
as the identity on objects in $\overline{Q}^{\perp} 
\cap J(U)$, and this proves the first equality.

\bigskip

\noindent (b):
Consider the diagram
$$
\xymatrix{
\PP_{Y_X} \ar[r] &  \PP_{U_X}  \ar[r] &  \PP_{X}   \ar[r]  \ar^{=}[d] & \PP_{Y_X}[1] \\
\PP_{Y_X'}  \ar[r] & \PP_{U_X' \amalg V_X'}  \ar[r] &  \PP_{X}  \ar[r] & \PP_{Y_X'}[1]
}$$
where the rows are triangles giving rise (by taking homology) to
the exact sequences (\ref{es-1}) and (\ref{es-2}), respectively (see Section 2 for details).
Since $Y_X'$ is in $\T \subseteq {^{\perp}(\tau U)}$, we have
$\Hom(Y_X', \tau U)=0$, and hence $\Hom(\PP_{U_X}, \PP_{Y_X'}[1])= 0$, by Lemma \ref{rigid-rigid}.
Hence there are maps $\PP_{U_X} \to \PP_{U_X' \amalg V_X'}$ and 
$\PP_{Y_X} \to \PP_{Y_X'}$ completing the above diagram in such a way that
there is a triangle (see \cite[Section 1.4]{nee})
$$\PP_{U_X' \amalg V_X'}[-1] \xrightarrow{g} \PP_{Y_X} \xrightarrow{h} \PP_{U_X \amalg Y_X'} \to  \PP_{U_X' \amalg V_X'}.$$
Now, let $k \colon \PP_{Y_X} \to \PP_T$ be a map with $T$ in $\T= {^{\perp}(\tau U \amalg \tau V)}$. Then $\Hom(T, \tau U \amalg \tau V) = 0$, and hence by Lemma \ref{rigid-rigid},
we have $\Hom(\PP_{U_X' \amalg V_X'}, \PP_T[1])= 0$. Hence we have $kg = 0$, so 
by exactness of $\Hom(\ ,\PP_T)$ we have that there is map $l \colon 
\PP_{U_X \amalg Y_X'} \to \PP_T$ such that $lh  =k$.
It then follows that the map
$H^0(k) \colon Y_X \to T$ factors through $H^0(l)$,
and by Lemma \ref{rigid-rigid}, it then follows that any map $Y_X \to T$ factors through
$Y_X \to U_X \amalg Y_X'$. This finishes the proof of the claim. 

\bigskip

\noindent (c):
We have $U\in \P({^{\perp}(\tau U)})$ by~\cite[Proposition 2.9]{air}, and
$Y_X\in \P({^{\perp}(\tau U)})$ by construction (see Definition~\ref{d:EUindecomposable}, Case I(b)).
Hence, in particular, $\Hom(U, \tau Y_X) = 0$, since
$\P({}^{\perp} \tau U)$ is $\tau$-rigid by~\cite[Theorem 2.10]{air}.
Then the assertion follows from Lemma \ref{al5}. 

\bigskip

\noindent (d):
We have that $f_U(Y_X)$ is in $\P(J(U))$, and hence $\tau_{J(U)} f_U(Y_X)= 0$.
Hence, the assertion follows from Lemma \ref{al5}.

\bigskip

\noindent (e): We have that the image of the map  $f^{J(U)}_{\overline{Q}}f_U$ is in $\overline{Q}^{\perp} 
\cap J(U) = \T'$, so in particular $f^{J(U)}_{\overline{Q}}f_U (Y_X)$ is in $\T'$.
By \cite[Lemma 5.6]{bm}), $f^{J(U)}_{\overline{Q}}f_U (Y_X')$ is indecomposable, so the map $f_{\overline{Q}}^{J(U)} f_U(\gamma)$ in (d) is left minimal.
The assertion follows.

\bigskip

\noindent (f): This follows from $f_U(Y_X') \in f_U(\T) = \T' = \overline{Q}^{\perp} 
\cap J(U)$.

\bigskip

\noindent (g): This now follows from (e) and (f).
\end{proof}

We have now proved that (\ref{eqv-1}) holds, and hence
(\ref{assoc}) holds in this case. 

\bigskip
 
\noindent {\bf Case I** (d):} 
We assume that $X$ is of the form $R[1]$, where $R$ is an
indecomposable module in $(\P(\Lambda) \cap {^{\perp}(U \amalg V)})$.

\bigskip

Note first that we have $\overline{\V} = \overline{Q}[1]$, where $\overline{Q} = f_U(Y_V)$
and where there is an exact sequence
$$Y_V \to U_V \to V \to 0,$$
where the first map is a minimal left $\add U$-approximation, and the second map
is a minimal right $\add U$-approximation.
  
We have that $\E_U(X) = f_U(Y_R)[1]$, where 
$R \xrightarrow{\beta} Y_R$ is a minimal left ${^\perp(\tau U)}$-approximation.
Furthermore, we have 
$$\E_{\overline{\V}}^{J(U)} \E_U(X) = \E_{\overline{Q}[1]}^{J(U)} \E_U(X) =
f_{\overline{Q}}^{J(U)} f_U(Y_R)[1].$$  

We have $f_U = f_{U \amalg V}$, by Lemma \ref{tf}.
We hence have that $\E_{U \amalg V}(X) = f_{U \amalg V}(Y'_R)[1] = f_{U}(Y'_R)[1]$, where 
$R \xrightarrow{\alpha} Y_R'$ is a left minimal ${}^\perp (\tau U\amalg \tau V)$-approximation. 
 
Hence, it will be sufficient to prove that 
\begin{equation}\label{eqv-2}
  f^{J(U)}_{\overline{Q}} f_U(Y_R) \simeq f_{U}(Y_R').
\end{equation}

The main steps in the proof are as follows.
 
\begin{claim}\label{c:2d} 
Let $\T = {^{\perp}(\tau U \amalg \tau V)}$ and let 
$\T' =  {\overline{Q}}^{\perp} \cap J(U)$. Then the following hold.
\begin{itemize}
\item[(a)]  We have $f^{J(U)}_{\overline{Q}} f_U(\T) =  f_U(\T) = \T'$.
\item[(b)] There is a map $Y_R \xrightarrow{a} Y_R'$ such that $f_U(a)$ is a left
$f_U(\T)$-approximation.
\item[(c)] We have $f^{J(U)}_{\overline{Q}}f_U(Y_R') \simeq f_U(Y_R')$.
\item[(d)] The map $f^{J(U)}_{\overline{Q}}f_U(Y_R) \xrightarrow{f^{J(U)}_{\overline{Q}}f_U(a)}
 f^{J(U)}_{\overline{Q}}f_U(Y_R')$ is a left $f^{J(U)}_{\overline{Q}} \T'=\T'$ -approximation.
\item[(e)]  The map $f^{J(U)}_{\overline{Q}}f_U(a)$ is an isomorphism.
 \item[(f)] We have $f^{J(U)}_{\overline{Q}}f_U(Y_R') \simeq f_U(Y_R')$.
 \item[(g)] We have $f^{J(U)}_{\overline{Q}} f_U(Y_R) \simeq f_{U}(Y_R')$.
\end{itemize}
\end{claim}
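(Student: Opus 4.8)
The plan is to verify (a)--(g) in turn, following the template of Claim~\ref{c:2c} (Case~I** (c)) almost line for line; the only structural novelty is that here $X=R[1]$ is a shifted projective, so the two controlling right exact sequences carry $R$ on the \emph{left}, namely $R\xrightarrow{\beta}Y_R\to U_R\to 0$ with $\beta$ a (minimal) left ${}^{\perp}(\tau U)$-approximation and $U_R=\coker\beta\in\add U$, and $R\xrightarrow{\alpha}Y_R'\to U_R'\amalg V_R'\to 0$ with $\alpha$ a (minimal) left $\T$-approximation. For (a) I would copy the proof of Claim~\ref{c:2c}(a) verbatim: since $V\in\Gen U$, Lemma~\ref{tf} gives $f_U=f_{U\amalg V}$, so $f_U(\T)=f_{U\amalg V}({}^{\perp}(\tau U\amalg\tau V))=(U\amalg V)^{\perp}\cap{}^{\perp}(\tau U\amalg\tau V)=J(U\amalg V)$, and by Theorem~\ref{main-comp} (noting $\E_U(V)=\overline{Q}[1]$ as $V\in\Gen U$) this equals $J_{J(U)}(\overline{Q}[1])=\overline{Q}^{\perp}\cap J(U)=\T'$; finally $f^{J(U)}_{\overline{Q}}$ restricts to the identity on $\T'$ because $\T'\subseteq\overline{Q}^{\perp}$.

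The main obstacle will be (b), since it is the only point at which the approximation $\beta$ relative to $U$ and the approximation $\alpha$ relative to $U\amalg V$ have to be matched up after applying $f_U$. Because $Y_R'\in\T\subseteq{}^{\perp}(\tau U)$ and $\beta$ is a left ${}^{\perp}(\tau U)$-approximation, there is $a\colon Y_R\to Y_R'$ with $a\beta=\alpha$; this is the map I would take. To check that $f_U(a)$ is a left $f_U(\T)$-approximation, let $T\in\T$ and $b'\colon f_U(Y_R)\to f_U(T)$. As $Y_R$ and $U$ both lie in the $\tau$-rigid subcategory $\P({}^{\perp}(\tau U))$ we have $\Hom(U,\tau Y_R)=0$, so Lemma~\ref{al1} gives $b'=f_U(b)$ for some $b\colon Y_R\to T$. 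Now $b\beta\colon R\to T$ factors through the left $\T$-approximation $\alpha$, say $b\beta=d\alpha=(da)\beta$, so $(b-da)\beta=0$; since $\coker\beta=U_R\in\add U$, the map $b-da$ factors through $Y_R\twoheadrightarrow U_R$, and $f_U$ kills $\add U$, so $f_U(b-da)=0$, i.e. $b'=f_U(d)\circ f_U(a)$. Combined with (a) this shows $f_U(a)$ is a left $\T'$-approximation. The key observation making the step go through is precisely that $\coker\beta$ lands in $\add U\subseteq\ker f_U$.

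The remaining parts I expect to be routine. Part (c) (equivalently (f)) is immediate from (a): $f_U(Y_R')\in f_U(\T)=\T'=\overline{Q}^{\perp}\cap J(U)$, so $f^{J(U)}_{\overline{Q}}$ fixes it. For (d): since $\E_U(R[1])=f_U(Y_R)[1]$ is a shifted projective object of $\C(J(U))$ by construction, $f_U(Y_R)\in\P(J(U))$ and hence $\tau_{J(U)}f_U(Y_R)=0$; thus $\Hom(\overline{Q},\tau_{J(U)}f_U(Y_R))=0$, and applying Lemma~\ref{al5} inside the module category $J(U)$ to the left $\T'$-approximation $f_U(a)$ gives that $f^{J(U)}_{\overline{Q}}f_U(a)$ is a left $f^{J(U)}_{\overline{Q}}(\T')=\T'$-approximation. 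For (e): $R$ indecomposable forces $Y_R$ and $Y_R'$ indecomposable by~\cite[Prop.~3.7]{bm}, whence $f^{J(U)}_{\overline{Q}}f_U(Y_R)$ and $f^{J(U)}_{\overline{Q}}f_U(Y_R')$ are indecomposable and nonzero by~\cite[Lemmas~4.6 and~5.6]{bm}; since moreover $f^{J(U)}_{\overline{Q}}f_U(Y_R)$ already lies in $\T'$ by (a), the identity of this object factors through $f^{J(U)}_{\overline{Q}}f_U(a)$, so the approximation of (d) is split mono between nonzero indecomposables and hence an isomorphism.

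Finally (g) follows by stacking (e) and (c): $f^{J(U)}_{\overline{Q}}f_U(Y_R)\simeq f^{J(U)}_{\overline{Q}}f_U(Y_R')\simeq f_U(Y_R')$, which is precisely (\ref{eqv-2}). Since $\E_{\overline{Q}[1]}^{J(U)}\E_U(R[1])=f^{J(U)}_{\overline{Q}}f_U(Y_R)[1]$ and, by Lemma~\ref{tf}, $\E_{U\amalg V}(R[1])=f_{U\amalg V}(Y_R')[1]=f_U(Y_R')[1]$, this establishes (\ref{assoc}) in Case~I** (d).
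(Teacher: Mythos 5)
Your proof is correct and follows essentially the same route as the paper: you choose the same map $a$ with $a\beta=\alpha$, lift morphisms along $f_U$ via Lemma~\ref{al1} using $U,Y_R\in\P({}^{\perp}(\tau U))$, and use the same key observation that $\coker\beta=U_R\in\add U$ is killed by $f_U$, which is exactly the paper's argument for part (b). Parts (a) and (c)--(g) also match the paper's treatment (which it gives by reference to Claim~\ref{c:2c}), with only cosmetic differences in how the isomorphism in (e) is extracted (split mono between indecomposables versus left minimality).
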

 
\begin{proof} 
(a): See Claim \ref{c:2c}(a).

\noindent (b): Since $\beta \colon R \to Y_R$ 
is a left ${^{\perp}(\tau U)}$-approximation, and $Y_R' \in \T \subseteq {^{\perp}(\tau U)}$, there is a map $a \colon  Y_R \to Y_R'$, such that $a \beta = \alpha$.

We claim that $f_U(a)$ is a left
$f_U(\T)$-approximation. Consider a map $y \colon f_U (Y_R) \to f_U (T)$, where 
$f_U (T)$ is in $f_U (\T)$.
Since $U, Y_R \in \P({^{\perp}(\tau U)})$, we have in particular that 
$\Hom(U, \tau Y_R)=0$. It then follows from Lemma \ref{al1} that $y = f_U(x)$
for some $x \colon Y_R \to T$.
Consider the diagram
$$
\xymatrix{
R  \ar@/^1.7pc/[rr]^{\alpha} \ar[r]^{\beta} \ar^{x\beta}[dr] & Y_R \ar^{a}[r] \ar^{x}[d] & Y_R' \ar@{.>}^{c}[dl] \\
& T &  
}
$$
where $c \colon Y_R' \to T$ such that $x\beta = ca\beta$ exists since
$\alpha = a\beta$ is a left $\T$-approximation. 
It follows that $(ca -x)\beta = 0$.
Applying $\Hom(\ ,T)$ to the right exact sequence 
$R \xrightarrow{\beta} Y_R \xrightarrow{l} U_R \to 0$  
gives the left exact sequence
$$0\rightarrow \Hom(U_R,T) \to \Hom(Y_R,T) \to \Hom(R,T).$$
Now $(ca -x)\beta = 0$ implies that there is a map  $n \colon U_R \to  T$,
such that $ca-x = nl$, and so $x = ca +nl$. Since $f_U(nl)= 0$,
this gives $y = f_U(x) = f_U(c) f_U(a)$. Hence we have that 
$f_U(a)$ is a left
$f_U(\T)$-approximation as claimed.

\bigskip

\noindent (c): This follows directly from (a), since $f_{\overline{Q}}^{J(U)}$
acts as the identity on $\overline{Q}^{\perp} \cap J(U)$.

\bigskip

\noindent (d,e,f,g): See Claim~\ref{c:2c}(d,e,f,g).
\end{proof}

We have now proved that (\ref{eqv-2}) holds, and hence that
equation~\eqref{assoc} holds in this case.  
  
\section{Proof of Theorem~\ref{main-asso}, Case III} 
\label{s:caseIII}
We have already dealt with Case II (see the end of Section~\ref{as}), so we must next deal with Case III.
We assume that $\U = P[1]$, where $P$ lies in $\P(\Lambda)$,
that $\V= V$ is a $\tau$-rigid module satisfying $\Hom(P,V)= 0$.

Then $\E_{\U}(\V)= \overline{\V} = \overline{V} = V$ 
is $\tau$-rigid in $J(U)= P^{\perp}$.

We need in this case to consider three possible cases for $X$:
\begin{itemize}
\item[(a)] $X$ lies in $\ind(\Lambda)$, $X$ does not lie in $\Gen V$,
$X \amalg V$ is $\tau$-rigid and $\Hom(P, X) = 0$.
\item[(b)] $X$ lies in $\ind(\Lambda)$, $X$ lies in $\Gen V$,
$X \amalg V$ is $\tau$-rigid and $\Hom(P, X) = 0$.
\item[(c)] $X$ is of the form $R[1]$ where $R$ lies in $\ind \P(\Lambda)$ and $\Hom(R,V) =0$. 
\end{itemize}

\bigskip

\noindent {\bf Case III (a):}
We assume that $X$ is an indecomposable $\tau$-rigid module
in $\module \Lambda$, that $X \amalg V$ is $\tau$-rigid,
that $\Hom(P, X) = 0$ and that $X$ does not lie in $\Gen V$. 

\bigskip

Then $\E_{\U}(X) = \E_{P[1]}(X) = X$, and $X$ is $\tau$-rigid in $J(\U) = P^{\perp}$.
We have 
$$\E_{\E_{\U}(\V)}^{J(\U)} \E_{\U}(X)= \E_{\overline{V}}^{J(\U)} \E_{\U}(X) = \E_{V}^{J(U)}(X) = f_V^{J(P[1])}(X) =
f_V^{P^{\perp}}(X).$$

We next compute $\E_{\U \amalg \V}(X)$.
We have $\E_{\V}(\U) = \E_{\V}(P[1]) = f_V (Y_P)[1]$, where $P \to Y_P$ is a left 
${^\perp(\tau V)}$-approximation.
We then obtain
$$\E_{\U \amalg \V}(X) = \E_{V \amalg P[1]}(X) = \E_{\E_{\V}(P[1])}^{J(V)} \E_V(X)
= \E_{f_V(Y_P)[1]}^{J(V)} \E_V(X) = \E_{f_V(Y_P)[1]}^{J(V)} f_V(X) = f_V(X),$$ 
where the second equality holds by definition. The last equation holds since
$\E_V(X \amalg P[1]) = f_V(X) \amalg f_V(Y_P)[1]$ is support $\tau$-rigid in $\C(J(U))$
by Theorem \ref{rigid-sums}, so $f_V(Y_P)$ is in $\P(J(U))$ with
$\Hom(f_V(Y_P), f_V (X)) = 0$.

We next claim that $f_V^{P^{\perp}}(X) = f_V(X)$.
For this, consider the canonical sequence of $X$ in $\module \Lambda$ with respect
to the torsion pair $(\Gen V, V^{\perp})$:
$$0 \to t_V(X) \to X \to f_V(X) \to 0.$$
Since $\Hom(P,X)= 0$ by assumption, and $P$ is projective, we also have 
$\Hom(P, f_V(X)) = 0$, and clearly also $\Hom(P, t_V(X)) =0$.
We have $t_V(X) \in \Gen V \cap P^{\perp} = \Gen_{P^{\perp}}V $
and $f_V(X) \in P^{\perp} \cap V^{\perp}$, so this sequence is also the canonical sequence 
of $X$ in $P^{\perp}$ with respect to the torsion pair $( \Gen_{P^{\perp}}V,
P^{\perp} \cap V^{\perp})$. Hence $f_V^{P^{\perp}}(X) = f_V(X)$ and
 it follows that 
$$\E_{\U \amalg \V}(X) = f_V(X) = f_V^{P^{\perp}}(X) = \E_{\overline{V}}^{J(U)} \E_{\U}(X),$$
and equation \eqref{assoc} holds also in this case.

\bigskip
    
\noindent {\bf Case III (b):}
We assume that $X\amalg V$ is a $\tau$-rigid module in $\module
\Lambda$ such that $X$ lies in $\Gen V$ and $\Hom(P,X) = 0$.

\bigskip

First note that $\E_{\U}(X) = \E_{P[1]}(X) = X$.
Consider the right exact sequence in $J(\U) = P^{\perp}$,
\begin{equation}\label{eqv-3}
Y_X^{P^{\perp}} \to V_X^{P^{\perp}} \to X \to 0,
\end{equation}
where the first map is a minimal left $\add V$-approximation in $P^{\perp}$, 
$Y_X\in \P({}^{\perp}(\tau V))$ and the
second map is a minimal right $\add V$-approximation in $P^{\perp}$.

We then have that $$\E_{\E_{\U}(\V)}^{J(\U)} \E_{\U} (X) = \E_{\overline{\V}}^{P^{\perp}} \E_{\U} (X) = \E_V^{P^{\perp}}(X) = f_V^{P^{\perp}} (Y_X^{P^{\perp}})[1].$$

We next compute 
$\E_{\U \amalg \V}(X)$.
First note that $\E_V(P[1]) = f_V(Y_P)[1]$, where $P \to Y_P$ is a minimal
left ${^{\perp}(\tau V)}$-approximation, and that 
$\E_V(X) = f_V(Y_X)[1]$, where there is an exact sequence
\begin{equation}\label{eqv-4}
Y_X \to V_X \to X \to 0 
\end{equation}
where the first map is a minimal left $\add V$-approximation, and the
second map is a minimal right $\add V$-approximation.
Then 
$$\E_{\U \amalg \V}(X) = \E_{P[1] \amalg V} = \E^{J(V)}_{\E_V(P[1])}\E_V(X)
= f_{f_V(Y_P)}^{J(V)} f_V(Y_X)[1].$$

Hence we need to prove that 
\begin{equation}\label{eqv5}
f_V^{P^{\perp}}(Y_X^{P^{\perp}}) \simeq  f_{f_V(Y_P)}^{J(V)}f_V(Y_X).
\end{equation}

We first make the following observation.

\begin{lemma}
Let $P$ be a projective module in $\module \Lambda$.
Then $f_P$ is a right exact functor from $\module \Lambda$ to
$P^{\perp}$, and $f_P$ sends projective modules to projective
modules in $P^{\perp}$.
\end{lemma}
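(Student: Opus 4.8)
The plan is to verify the two claims of the lemma separately, using the torsion pair $(\Gen P, P^{\perp})$ associated to the projective module $P$ and general properties of torsion functors.

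First I would establish right exactness. Let $0 \to A \xrightarrow{g} B \xrightarrow{h} C \to 0$ be a short exact sequence in $\module\Lambda$. Applying the torsion functor $t_P$ gives $t_P(A) \to t_P(B) \to t_P(C)$, and since $\Gen P$ is a torsion class it is closed under quotients, so $t_P(B) \to t_P(C)$ is surjective; a standard diagram chase (the snake lemma applied to the two canonical short exact sequences $0 \to t_P(-) \to (-) \to f_P(-) \to 0$) then shows $f_P(A) \to f_P(B) \to f_P(C) \to 0$ is exact. This is the routine direction and I would state it briefly. An alternative, cleaner argument: $f_P(M) = M/t_P(M)$ and $t_P(M)$ is the trace of $\add P$ — actually the trace of $\Gen P$ — in $M$; since $P$ is projective, $t_P(M)$ coincides with the submodule generated by all images of maps $P^{(I)} \to M$, i.e. the trace $\mathrm{tr}_P(M)$, so $f_P(M) = M/\mathrm{tr}_P(M)$ and right-exactness of this quotient construction is immediate. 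I would use whichever phrasing is shortest given what the paper has already set up.

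Next I would show $f_P$ sends projectives to projectives in $P^{\perp}$. Let $R$ be a projective $\Lambda$-module. I claim $f_P(R) = R/\mathrm{tr}_P(R)$ is Ext-projective in $P^{\perp}$, equivalently $\Ext^1_{P^{\perp}}(f_P(R), N) = 0$ for all $N \in P^{\perp}$. The key point is the adjunction-type isomorphism: for $N \in P^{\perp}$ we have $\Hom(R, N) \cong \Hom(f_P(R), N)$, since applying $\Hom(-,N)$ to $0 \to t_P(R) \to R \to f_P(R) \to 0$ gives $0 \to \Hom(f_P(R),N) \to \Hom(R,N) \to \Hom(t_P(R),N)$ and the last term vanishes because $t_P(R) \in \Gen P$ and $N \in P^{\perp}$. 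More conceptually, $f_P \colon \module\Lambda \to P^{\perp}$ is left adjoint to the inclusion, hence preserves projectives since the inclusion is exact; this is the cleanest formulation and I would probably present it that way, noting that the inclusion $P^{\perp} \hookrightarrow \module\Lambda$ being exact (as $P^{\perp}$ is closed under submodules, quotients and extensions — it is a torsion-free class) means its left adjoint $f_P$ preserves projective objects. Since the projectives of $\module\Lambda$ are $\add\Lambda$, their images $f_P(\add\Lambda)$ give (a generating set of) the projectives of $P^{\perp}$.

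The main obstacle, such as it is, is making sure the "left adjoint preserves projectives" argument is fully justified in this setting: one needs that $P^{\perp}$ is an abelian category with enough projectives and that the inclusion into $\module\Lambda$ is exact, so that for $Q$ projective in $\module\Lambda$, $\Hom_{P^{\perp}}(f_P(Q), -) \cong \Hom_{\Lambda}(Q, \iota(-))$ is exact. Since $P^{\perp}$ is a torsion-free class in a torsion pair, it is closed under subobjects, quotients and extensions, hence is an exact abelian subcategory, and the inclusion is exact; combined with right-exactness of $f_P$ and the Hom-adjunction above, this gives both claims. I would keep this short, as everything rests on standard torsion-pair formalism already invoked elsewhere in the paper.
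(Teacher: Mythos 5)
Your proof is correct, but it takes a genuinely different route from the paper's. The paper writes $P\simeq \Lambda e$ for an idempotent $e$, observes $t_P(M)=\Lambda e M$, and identifies $f_P(M)\simeq M/\Lambda eM\simeq \Lambda/\Lambda e\Lambda\otimes_{\Lambda}M$; right exactness is then that of the tensor functor, and preservation of projectives follows from $\Lambda/\Lambda e\Lambda\otimes_{\Lambda}\Lambda\simeq \Lambda/\Lambda e\Lambda$ together with additivity, which even pins down the indecomposable projectives of $P^{\perp}$ as $f_P(T)$ for $T$ indecomposable projective not in $\add P$. Your ``alternative, cleaner argument'' via the trace is essentially this, while your main argument is the abstract one: $f_P$ is left adjoint to the exact inclusion $P^{\perp}\hookrightarrow\module\Lambda$, hence preserves cokernels and projectives. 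The abstract route is more conceptual and generalizes; the paper's is more concrete and yields the explicit description of $\P(P^{\perp})$ used later.

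Two justifications in your write-up should be tightened, since as stated they invoke reasons that are false for general torsion pairs. First, surjectivity of $t_P(B)\to t_P(C)$ does \emph{not} follow from $\Gen P$ being closed under quotients --- that only shows the induced map lands in $t_P(C)$. It holds here because $t_P(C)=\mathrm{tr}_P(C)$ and every map $P\to C$ lifts along $B\twoheadrightarrow C$ since $P$ is projective; for a non-projective generator the torsion functor is only left exact, which is exactly why $f$ fails to be exact in general. Second, $P^{\perp}$ is \emph{not} closed under quotients merely by virtue of being a torsion-free class (torsion-free classes are closed under subobjects and extensions only); it is closed under quotients because $\Hom(P,-)$ is right exact for $P$ projective. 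This is precisely what makes $P^{\perp}$ a Serre subcategory with exact inclusion, which your adjunction argument needs. With these two points corrected the argument is complete.
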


\begin{proof}
Let $e$ be an idempotent such that $P \simeq \Lambda e$. 
We first note that $t_P(M) = \Lambda e M$, so $f_P(M) \simeq M/ \Lambda e M \simeq \Lambda/\Lambda e \Lambda 
\otimes_{\Lambda} M$. It follows that $f_P$ is right exact.

Moreover, since  $\Lambda/\Lambda e \Lambda 
\otimes_{\Lambda} \Lambda \simeq \Lambda/\Lambda e \Lambda$, and the tensor-functor is additive, we have that 
the indecomposable projective $\Lambda/\Lambda e \Lambda$-modules are
exactly $f_P(T)$ for $T$ indecomposable projective in $\module \Lambda$ with $T$ not a summand in $P$.
\end{proof}

We proceed to prove (\ref{eqv5}). The main steps in the proof
are as follows.

\begin{claim}
\begin{itemize}
\item[(a)] We have $f_P(Y_X) \simeq Y_X^{P^{\perp}}$. 
\item[(b)] The composition $Y_X  \xrightarrow{\alpha} f_P(Y_X)  \xrightarrow{\beta} f_{\overline{V}}^{P^{\perp}}f_P(Y_X)$ is a left $J(V) \cap P^{\perp}$-approximation.
\item[(c)] The composition $Y_X \xrightarrow{\gamma} f_V(Y_X) 
\xrightarrow{\phi} f_{f_V(Y_P)}^{J(V)} f_V(Y_X)$ is a left $J(V) \cap P^{\perp}$-approximation.
\item[(d)] We have $f_{\overline{V}}^{P^{\perp}}f_P(Y_X) \simeq f_{f_V(Y_P)}^{J(V)} f_V(Y_X)$.
\item[(e)] We have $f_V^{P^{\perp}}(Y_X^{P^{\perp}}) \simeq  f_{f_V(Y_P)}^{J(V)}f_V(Y_X)$.
\end{itemize}
\end{claim}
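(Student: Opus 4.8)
The plan is to establish the five assertions in turn, with essentially all the work concentrated in part~(a); once~(a) is available, parts~(b)--(e) are formal manipulations with torsion pairs and approximations. For~(a) I would apply the functor $f_P$ to the defining right exact sequence~\eqref{eqv-4}, namely $Y_X\to V_X\to X\to 0$. By the preceding lemma $f_P$ is right exact and takes values in $P^\perp$, and it fixes both $V_X\in\add V\subseteq P^\perp$ and $X\in P^\perp$, so one obtains a right exact sequence $f_P(Y_X)\to V_X\to X\to 0$ in $P^\perp$. Since $\Hom$ in $P^\perp$ agrees with $\Hom_\Lambda$ on objects of $P^\perp$, the map $V_X\to X$ is still a minimal right $\add V$-approximation; and using the canonical epimorphism $Y_X\twoheadrightarrow f_P(Y_X)$ together with the fact that $Y_X\to V_X$ is a left $\add V$-approximation in $\module\Lambda$ factoring through $f_P(Y_X)$, one sees $f_P(Y_X)\to V_X$ is a left $\add V$-approximation in $P^\perp$. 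Lemma~\ref{al3} identifies $f_P({}^{\perp}(\tau V))$ with ${}^{\perp}(\tau_{P^\perp}V)\cap P^\perp$, which pins down the relevant Ext-projectivity of $f_P(Y_X)$, so the sequence obtained has exactly the properties characterising~\eqref{eqv-3}; comparing via the uniqueness statement~\cite[Proposition 4.7]{bm} then gives $f_P(Y_X)\simeq Y_X^{P^\perp}$.

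For~(b) and~(c) I would argue symmetrically. Since $\overline V=V$, the map $\alpha$ is the reflection into $P^\perp$ for $(\Gen P,P^\perp)$, $\beta$ is the reflection into $V^\perp\cap P^\perp$ for the corresponding torsion pair on $P^\perp$, $\gamma$ is the reflection into $V^\perp$ for $(\Gen V,V^\perp)$ on $\module\Lambda$, and $\phi$ is the reflection into $f_V(Y_P)^\perp\cap J(V)$ for the torsion pair on $J(V)$ with torsion class $\Gen_{J(V)}f_V(Y_P)$; by Theorem~\ref{main-comp} the latter torsion-free class equals $J(V\amalg P[1])=J(V)\cap P^\perp$. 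Given a map $Y_X\to M$ with $M\in J(V)\cap P^\perp$, in~(b) it factors through $\alpha$ (as $M\in P^\perp$) and then through $\beta$ (as $M\in V^\perp\cap P^\perp$); in~(c) it factors through $\gamma$ (as $M\in V^\perp$) and then through $\phi$ (as $M$ lies in the above torsion-free class and $f_V(Y_X)\in J(V)$, the latter because $f_V(Y_X)$ is a quotient of $Y_X\in{}^{\perp}(\tau V)$ and also lies in $V^\perp$). Using~(a) one checks the target of~(b), $f_V^{P^\perp}f_P(Y_X)$, lies in $J(V)\cap P^\perp$, and the target of~(c) does so by construction, so both composites are left $(J(V)\cap P^\perp)$-approximations of $Y_X$.

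For~(d) and~(e): by~(b) and~(c) the modules $f_V^{P^\perp}f_P(Y_X)$ and $f_{f_V(Y_P)}^{J(V)}f_V(Y_X)$ are both left $(J(V)\cap P^\perp)$-approximations of the indecomposable module $Y_X$. A left approximation splits off the unique minimal one, and here both targets are indecomposable and nonzero — their shifts are $\E_V^{P^\perp}(X)$ and $\E_{\U\amalg\V}(X)$, which are indecomposable since $X$ is — so each coincides with the minimal left $(J(V)\cap P^\perp)$-approximation of $Y_X$, proving~(d). Then~(e) follows by combining~(a) and~(d): $f_V^{P^\perp}(Y_X^{P^\perp})\simeq f_V^{P^\perp}f_P(Y_X)\simeq f_{f_V(Y_P)}^{J(V)}f_V(Y_X)$, which is precisely~\eqref{eqv5}; as~\eqref{eqv5} was shown to imply~\eqref{assoc}, this finishes Case~III(b).

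The step I expect to be the main obstacle is~(a). The difficulty is that $\Hom(P,\tau Y_X)$ need not vanish, so the approximation-transfer results of Lemmas~\ref{al2} and~\ref{al5}, which were used with a genuine module in the role of $U$, cannot be invoked directly with $P$ playing that role. Instead one has to exploit the explicit description $f_P\cong(\Lambda/\Lambda e\Lambda)\otimes_\Lambda(-)$ from the preceding lemma, so that right exactness and preservation of projectivity are what carry the argument, supplemented by Lemma~\ref{al3} and the uniqueness statement for right exact sequences of this shape.
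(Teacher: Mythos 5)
Your argument is correct and follows essentially the same route as the paper: part (a) by applying the right exact functor $f_P$ to~\eqref{eqv-4} and matching the resulting sequence against the defining properties of~\eqref{eqv-3}, parts (b) and (c) by factoring an arbitrary map through the successive torsion-free reflections, and (d)--(e) by uniqueness of minimal left approximations. The only (harmless) variations are that the paper verifies the left-approximation property in (a) via Lemma~\ref{al1b} rather than via the canonical epimorphism, and deduces minimality in (d) from the composites being epimorphisms rather than from indecomposability of the targets.
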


\begin{proof} \noindent (a):
Let $\T = {^{\perp}(\tau V)}$ and let 
$\T' = {^{\perp}(\tau_{P^{\perp}}} \overline{V}) \cap P^{\perp}$.
Then we have $f_P \T = \T'$ by Lemma \ref{al3}.
We have that $Y_X^{P^{\perp}}$ is in $P^{\perp} \cap {^{\perp}(\tau V)} =
 P^{\perp} \cap {^{\perp}(\tau \overline{V})}$.

Note that since $\Hom(P,X)= 0 = \Hom(P,V_X)$, we have $f_P (X) =X$ and $f_P(V_X) = V_X$.
Hence, applying $f_P$ to the right exact sequence (\ref{eqv-4})
we obtain the right exact sequence
\begin{equation}\label{eqv6}
f_P (Y_X) \xrightarrow{f_P(a)} V_X \to X \to 0.
\end{equation}
We claim that $f_P(a)$ is a minimal left $\overline{V}=V$-approximation in $P^{\perp}$.
Let $b' \colon f_P(Y_X) \to f_P(V') = V'$ be a map with $V' \in \add V \subseteq P^{\perp}$. By Lemma \ref{al1b}, there is a map $b \colon Y_X \to V'$ such that
$b' = f_P(b)$. Since $a$ is left $\add V$-approximation, there is a map 
$c \colon V_X \to V'$ such that $b =ca$. So $f_P(c) f_P(a) = f_P(b)$.
We have that $f_P(a)$ is minimal, since otherwise we would have $X$ in $\add V$.

Using now \cite[Proposition 5.6]{bm}, we have that $f_P(Y_X) \simeq Y_X^{P^{\perp}}$, and this concludes the proof of (a).

\bigskip

\noindent (b):
Let $Z$ be in $J(V) \cap P^{\perp}$ and let $g \colon Y_X \to Z$ be a map.
Since $\alpha$ is a left $P^{\perp}$-approximation and $Z$ is in $P^{\perp}$, there is a map $l \colon f_P(Y_X) \to Z$, such that $l \alpha = g$. Since $\beta$ is a left $P^{\perp} \cap V^{\perp}$-approximation and $Z$ is in $P^{\perp} \cap V^{\perp}$, there is a map $r \colon f_{\overline{V}}^{P^{\perp}}f_P(Y_X)
\to Z$, such that $l = r \beta$. Hence $g = l \alpha = r \beta \alpha$.
Since $Y_X$ is in ${^\perp(\tau V)}$, we have that also $f_{\overline{V}}^{P^{\perp}}f_P(Y_X)$ is in  ${^\perp(\tau V)}$, and hence
$f_{\overline{V}}^{P^{\perp}}f_P(Y_X)$ is in $P^{\perp} \cap V^{\perp} \cap 
{^\perp(\tau V)} = J(V) \cap P^{\perp}$. This 
proves the claim that $\beta \alpha$ is  a left $J(V) \cap P^{\perp}$-approximation.

\bigskip

\noindent (c):
Let $Z$ be in $J(V) \cap P^{\perp}$ and let $g \colon Y_X \to Z$ be a map.
Since $\gamma$ is a left $V^{\perp}$-approximation, and
$Z$ is in $V^{\perp}$, there is a map $s \colon f_V(Y_X) \to Z$ such that $g = s \gamma$.
Note that we have that  
$f_V(Y_P)$ is in $\P(J(V))$, and so 
$$(f_V(Y_P))^{\perp} \cap J(V) = J_{J(V)}(f_V(Y_P)[1]) = J_{J(V)}(\E_{V}(P[1])) = J(V \amalg P[1]) = J(V) \cap P^{\perp}.$$
The map $\phi$ is a left $(f_V(Y_P))^{\perp} \cap J(V) =  J(V) \cap P^{\perp}$-approximation.
Hence, there is a map $t \colon f_{f_V(Y_P)}^{J(V)} f_V(Y_X)  \to Z$ such that $s = t \phi$, and therefore $g = s \gamma = t \phi \gamma$.
This proves the claim that $\phi \gamma$ is  a left $J(V) \cap P^{\perp}$-approximation.

\bigskip

\noindent (d):
Since both $\beta \alpha$ and $\phi \gamma$ are epimorphisms, they are both minimal
left $J(V) \cap P^{\perp}$-approximations, and the claim follows.

\bigskip

\noindent (e): 
This follows directly from combining (a) and (d).
\end{proof}

Equation~\eqref{assoc} in this case now follows from \eqref{eqv5}.

\bigskip

\noindent {\bf Case III (c):}    
We assume that $X = R[1]$, where $R$ is an indecomposable
projective module in $\module \Lambda$ and $\Hom(R,V)  = 0$.

\bigskip

We then have $\E_{\U}(X) = f_P(R)[1]$, which is in $\P(P^{\perp})[1]$ and
we have that $$\E_{\V}^{J(\U)}\E_{\U}(X) = \E_V^{P^{\perp}}(f_P(R)[1]).$$
Note that $V = \overline{V}$ is $\tau$-rigid in $J(\U) = P^{\perp}$.
Therefore $\E_V^{P^{\perp}}(f_P(R)[1]) = f_V^{P^{\perp}}(Y_0)[1]$, where
 $f_P(R) \to Y_0$ is a minimal left ${^{\perp}}(\tau_{P^{\perp}}V) \cap P^{\perp}$-approximation.
 
We have that $\E_V(P[1]) = f_V(Y_P)[1]$, where $P \to Y_P$ is a minimal
left ${^{\perp}(\tau V)}$-approximation and similarly $\E_V(X) =  f_V(Y_R)[1]$, where $R \to Y_R$ is a minimal
left ${^{\perp}(\tau V)}$-approximation. It follows that
\begin{equation} \label{e:UVimage}
\E_{\U \amalg \V}(X) = \E_{P[1] \amalg V}(X) = \E_{\E_V(P[1])}^{J(V)} \E_V (X)=
 \E_{f_V(Y_P)[1]}^{J(V)} \E_V (X) = f_{f_V(Y_P)}^{J(V)} f_V(Y_R)[1].
\end{equation}

So it will be sufficient to prove that
\begin{equation} \label{e:IIIc}
f_V^{P^{\perp}}Y_0 \simeq f_{f_V(Y_P)}^{J(V)}f_V(Y_R).
\end{equation}
The main steps in the proof of this are as follows.

\begin{claim}
\begin{itemize}
\item[(a)] We have that $Y_0$ is a direct
summand of $f_P(Y_R)$.
\item[(b)] We have that $f_V^{P^{\perp}}(Y_0)$ is a
direct summand of $f_V^{P^{\perp}}f_P(Y_R)$.
\item[(c)] The composition $$Y_R \xrightarrow{\alpha} f_P(Y_R) \xrightarrow{\beta} f_V^{P^{\perp}} f_P(Y_R)$$ is a minimal left $J(V) \cap P^{\perp}$-approximation.
\item[(d)] The composition $$Y_R \xrightarrow{\gamma} f_V(Y_R) \xrightarrow{\phi} 
f_{f_V(Y_P)}^{J(V)} f_V(Y_R)$$ is a minimal left $J(V) \cap P^{\perp}$-approximation.
\item[(e)] We have $f_V^{P^{\perp}} f_P(Y_R) \simeq f_{f_V(Y_P)}^{J(V)} f_V(Y_R)$.
\item[(f)] We have $f_V^{P^{\perp}}Y_0 \simeq f_{f_V(Y_P)}^{J(P)}f_V(Y_R).$
\end{itemize}
\end{claim}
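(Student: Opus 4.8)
The plan is to deduce the claimed isomorphism \eqref{e:IIIc} from the uniqueness of minimal left approximations, using the wide subcategory $J(V)\cap P^{\perp}$ as a common target for two different two-step constructions applied to $Y_R$. Throughout I would write $\T = {}^{\perp}(\tau V)$ and $\T' = {}^{\perp}(\tau_{P^{\perp}}V)\cap P^{\perp}$, recalling that $f_P(\T)=\T'$ by Lemma~\ref{al3}, that $\T$ is a torsion class and in particular is closed under quotients (by Lemma~\ref{rigid-rigid}, since $Y\in\T$ iff $\Ext^1(V,\Gen Y)=0$), and that $R$ projective forces $\tau R=0$, so $\Hom(P,\tau R)=0$ and more generally the hypotheses of Lemmas~\ref{al1b} and~\ref{al5} are available.

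For (a), since $R\to Y_R$ is a minimal left $\T$-approximation, Lemma~\ref{al5} gives that $f_P(R)\to f_P(Y_R)$ is a (not necessarily minimal) left $f_P(\T)=\T'$-approximation; comparing it with the minimal left $\T'$-approximation $f_P(R)\to Y_0$ exhibits $Y_0$ as a direct summand of $f_P(Y_R)$. Applying the additive functor $f_V^{P^{\perp}}$ then yields (b), namely that $f_V^{P^{\perp}}(Y_0)$ is a direct summand of $f_V^{P^{\perp}}f_P(Y_R)$.

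The core of the argument is (c)--(e). For (c), given $Z\in J(V)\cap P^{\perp}$ and $g\colon Y_R\to Z$, I would factor $g$ first through the canonical epimorphism $\alpha\colon Y_R\to f_P(Y_R)$ (using $Z\in P^{\perp}$) and then through $\beta\colon f_P(Y_R)\to f_V^{P^{\perp}}f_P(Y_R)$ (using $Z\in V^{\perp}\cap P^{\perp}$); the target $f_V^{P^{\perp}}f_P(Y_R)$ lies in $J(V)\cap P^{\perp}$ because it is a quotient of $Y_R\in\T$ and $\T$ is closed under quotients, so it lies in $P^{\perp}\cap V^{\perp}\cap\T = J(V)\cap P^{\perp}$, and $\beta\alpha$ is left minimal as a composite of epimorphisms. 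For (d) the same scheme works with $\gamma\colon Y_R\to f_V(Y_R)$ and $\phi\colon f_V(Y_R)\to f^{J(V)}_{f_V(Y_P)}f_V(Y_R)$; the key extra input is that $f_V(Y_P)\in\P(J(V))$, whence
$(f_V(Y_P))^{\perp}\cap J(V) = J_{J(V)}(f_V(Y_P)[1]) = J_{J(V)}(\E_V(P[1])) = J(V\amalg P[1]) = J(V)\cap P^{\perp}$
by Theorem~\ref{main-comp}, which simultaneously identifies the torsion-free class associated to $\phi$ and shows $\phi\gamma$ lands in $J(V)\cap P^{\perp}$; left minimality again follows since $\phi\gamma$ is epi. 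Then (e) is immediate: $\beta\alpha$ and $\phi\gamma$ are both minimal left $J(V)\cap P^{\perp}$-approximations of $Y_R$, so their targets are isomorphic.

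Finally, (f): combining (b) and (e), $f_V^{P^{\perp}}(Y_0)$ is a direct summand of $f^{J(V)}_{f_V(Y_P)}f_V(Y_R)$; since $R$ is indecomposable, the objects $Y_R$, $f_P(Y_R)$, $Y_0$, $f_V(Y_R)$, $f_V^{P^{\perp}}(Y_0)$ and $f^{J(V)}_{f_V(Y_P)}f_V(Y_R)$ are all indecomposable by \cite[Prop.~3.7, Lemma~4.6, Lemma~5.6]{bm}, and both $f_V^{P^{\perp}}(Y_0)$ and $f^{J(V)}_{f_V(Y_P)}f_V(Y_R)$ are nonzero as they are the $[{-1}]$-parts of the images of the indecomposable $X$ under the bijections $\E_{\overline{\V}}^{J(\U)}\E_{\U}$ and $\E_{\U\amalg\V}$; hence the direct summand is the whole object, proving \eqref{e:IIIc} and with it equation~\eqref{assoc} in Case III(c). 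The step I expect to be the main obstacle is the bookkeeping in (c) and (d): keeping precise track of which torsion pair each of $\alpha,\beta,\gamma,\phi$ arises from and checking that every intermediate object truly lies in $J(V)\cap P^{\perp}$, with the identification $(f_V(Y_P))^{\perp}\cap J(V) = J(V)\cap P^{\perp}$ from Theorem~\ref{main-comp} serving as the linchpin that makes the two constructions comparable.
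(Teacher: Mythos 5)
Your proposal is correct and follows essentially the same route as the paper: (a) via Lemma~\ref{al3} and Lemma~\ref{al5} compared against the minimal left $\T'$-approximation, (c) and (d) by the two-step factorization through the torsion-free functors with minimality from the maps being epimorphisms and the identification $(f_V(Y_P))^{\perp}\cap J(V)=J(V)\cap P^{\perp}$ as the key input, (e) by uniqueness of minimal left approximations, and (f) by indecomposability forcing the direct summand from (b) to be everything.
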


\begin{proof}
\noindent (a):
Let $\T = {^{\perp}(\tau V)}$ and
$\T' = {^{\perp}(\tau_{P^{\perp}} V)} \cap P^{\perp}$.
Note that the map $f_P(R) \to Y_0$ is a minimal left 
$\T'$-approximation so, for the claim,
it is sufficient to prove that
$f_P(R) \to f_P(Y_R)$ is a left $\T'$-approximation.
By Lemma~\ref{al3}, we have that $f_P(\T)=\T'$, so by
Lemma~\ref{al5} we have that $f_P(R) \to f_P(Y_R)$ is
a left $\T'$-approximation (noting that $\Hom(P,\tau R)=0$ as $R$ is projective), giving the claim.

\bigskip

\noindent (b): This follows directly from (a).

\bigskip

\noindent (c): Note first that since $Y_R$ is in ${^{\perp}(\tau V)}$, also
the factor module $f_V^{P^{\perp}} f_P(Y_R)$ is in ${^{\perp}(\tau V)}$.
This module also lies in $P^{\perp}\cap V^{\perp}$ by the definition of
$f_V^{P^{\perp}}$, so it lies in $J(V)\cap P^{\perp}$.

Let $Z$ be in $J(V) \cap P^{\perp}$, and consider a map $g \colon Y_R \to Z$.
Since $\alpha$ is a left $P^{\perp}$-approximation, and $Z$ is in $P^{\perp}$,
so there is a map $l \colon f_P(Y_R) \to Z$, such that $l \alpha = g$.
Since $\beta$ is a left $P^{\perp} \cap V^{\perp}$-approximation, and $Z$ is in 
$P^{\perp} \cap V^{\perp}$, there is a map $r \colon f_V^{P^{\perp}} f_P(Y_R) \to Z$,
such that $l =  r\beta$. Hence we have $g = l \alpha = r \beta \alpha$, and this proves that $\beta\alpha$ is left $J(V)\cap P^{\perp}$ approximation. Since this composition is an epimorphism,
it must also be left minimal, giving the claim.

\bigskip

\noindent (d): First note that $f_V(Y_P)$ is in $\P(J(V))$, and hence
$(f_V(Y_P))^{\perp} \cap J(V) = J_{J(V)}(f_V(Y_P)[1]) = J_{J(V)}(\E_V(P[1])) = 
J(V \amalg P[1]) = J(V) \cap P^{\perp}$. 
Let $g \colon Y_R \to Z$ be a map, with $Z$ in $J(V) \cap P^{\perp}$.
Since $\gamma$ is a $V^{\perp}$-approximation, and $Z$ is in $V^{\perp}$, there is 
a map $t \colon f_V(Y_R) \to Z$ such that $g = t \gamma$. Since $\phi$ is a left
$(f_V(Y_P))^{\perp} \cap J(V) = J(V) \cap P^{\perp}$-approximation, there
is a map $u \colon f_{f_V(Y_P)}^{J(V)} f_V(Y_R) \to Z$, such that $t = u \phi$.
It follows that $g = t \gamma = u \phi \gamma$.
This proves that $\phi\gamma$ is left $J(V)\cap P^{\perp}$ approximation. Since this composition is an epimorphism,
it must also be left minimal, giving the claim.

\bigskip

\noindent (e): This follows directly from (c) and (d).
%, noting that both 
%$f_V^{P^{\perp}} f_P(Y_R)$ and $f_{f_V(Y_P)}^{J(V)} f_V(Y_R)$ are %indecomposable
%by \cite[Proposition 4.7]{bm}.

\bigskip

\noindent (f):
Note that $f_V^{P^{\perp}}(Y_0)$ is indecomposable
by~\cite[Proposition 5.6]{bm} (see the definition of $Y_0$ above).
It is a direct summand of $f_{f_V(Y_P)}^{J(V)} f_V(Y_R)$
which is indecomposable by~\eqref{e:UVimage}
and~\cite[Proposition 5.6]{bm}. The claim follows.
%This follows directly from combining (b) and (e).
\end{proof}

We have proved that~\eqref{e:IIIc} holds, and~\eqref{assoc}
in this case now follows.

\section{Proof of Theorem~\ref{main-asso}, Case IV}
\label{s:caseIV}
We assume that $\U= P[1]$ and $\V=Q[1]$, where $P,Q$ lie in $\P(\Lambda)$.

We set $\overline{V} = \overline{Q}[1]$. Then
$\overline{Q}=f_P Q$ lies in
$\P(J(\U))= \P(P^{\perp})$.

We need in this case to consider two possible cases for $X$:
\begin{itemize}
\item[(a)] $X$ is $\tau$-rigid, $X$ lies in $\ind(\Lambda)$, and
$\Hom(P \amalg Q, X) = 0$.
\item[(b)] $X$ lies in $\ind \P(\Lambda)[1]$.
\end{itemize}

\bigskip

\noindent {\bf Case IV (a):}
We assume that $X$ is an indecomposable $\tau$-rigid module with $\Hom(P \amalg Q, X) = 0$.

\bigskip

We have $\E_{\U}(X) = \E_{P[1]}(X) = X$, and then $\E_{\V}^{J(\U)} \E_{\U}(X)
= \E^{P^{\perp}}_{\overline{Q}[1]} X = X$, since $\Hom(\overline{Q}, X) = 0$ by 
Theorem \ref{rigid-sums}.

We also have $\E_{\U \amalg \V}(X) = \E_{P[1] \amalg Q[1]}(X) = X$, so the claim that equation~\eqref{assoc} holds follows
also in this case.

\bigskip

\noindent {\bf Case IV (b):}
We assume that $X$ is of the form $R[1]$, where $R$ is an indecomposable module in $\P(\Lambda)$.

\bigskip

We then have that 
$$\E_{\V}^{J(\U)} \E_{\U}(X) = \E_{\overline{Q}[1]}^{P^{\perp}} \E_{P[1]}(X)=
\E_{\overline{Q}[1]}^{P^{\perp}} (f_P(R)[1]) = f_Q^{P^{\perp}}f_P(R)[1].$$

On the other hand, we have $\E_{\U \amalg \V}(X) = \E_{P[1] \amalg Q[1]}(X)=
f_{P \amalg Q}(R)[1]$.

So, it is sufficient to prove that 
$f_Q^{P^{\perp}} f_P(R) \simeq f_{P \amalg Q}(R)$.
The main steps in the proof are as follows.

\begin{claim}
\begin{itemize}
\item[(a)] We have $P^{\perp} \cap \overline{Q}^{\perp} = P^{\perp} \cap Q^{\perp}$.
\item[(b)] The composition $R \xrightarrow{\alpha} f_P(R) \xrightarrow{\beta} f_{\overline{Q}}^{P^{\perp}}f_P(R)$ is a $(P \amalg Q)^{\perp}$-approximation.
\item[(c)] We have $f_Q^{P^{\perp}}f_P(R) \simeq f_{P \amalg Q}(R)$.
\end{itemize}
\end{claim}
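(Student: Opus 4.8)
The plan is to prove (a), (b) and (c) in turn and then read off equation~\eqref{assoc} for Case IV(b) from the formulas for $\E_{\V}^{J(\U)}\E_{\U}(X)$ and $\E_{\U\amalg\V}(X)$ obtained just above. For part (a), I would use the canonical exact sequence of $Q$ with respect to the torsion pair $(\Gen P, P^{\perp})$,
$$0 \to t_P(Q) \to Q \to f_P(Q) = \overline{Q} \to 0 .$$
Since the right-hand map is an epimorphism, $Q^{\perp}\subseteq \overline{Q}^{\perp}$, so $P^{\perp}\cap Q^{\perp}\subseteq P^{\perp}\cap \overline{Q}^{\perp}$. For the reverse inclusion, given $M\in P^{\perp}\cap \overline{Q}^{\perp}$, apply $\Hom(-,M)$ to the displayed sequence to get the exact sequence $0\to \Hom(\overline{Q},M)\to \Hom(Q,M)\to \Hom(t_P(Q),M)$; the first term vanishes since $M\in\overline{Q}^{\perp}$ and the last since $t_P(Q)\in\Gen P$ while $M\in P^{\perp}$, hence $\Hom(Q,M)=0$ and $M\in Q^{\perp}$. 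This is essentially the computation already carried out in Case IV of the proof of Theorem~\ref{main-comp}.

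For part (b), recall that $(P\amalg Q)^{\perp}=P^{\perp}\cap Q^{\perp}$, which equals $P^{\perp}\cap \overline{Q}^{\perp}$ by part (a). The target $f_{\overline{Q}}^{P^{\perp}}f_P(R)$ lies in $\overline{Q}^{\perp}\cap P^{\perp}=(P\amalg Q)^{\perp}$, so it suffices to lift maps out of $R$. Given $g\colon R\to Z$ with $Z\in(P\amalg Q)^{\perp}$: since $Z\in P^{\perp}$ and $\alpha$ is a left $P^{\perp}$-approximation there is $l\colon f_P(R)\to Z$ with $l\alpha=g$; since $Z\in P^{\perp}\cap \overline{Q}^{\perp}$ and $\beta$ is a left $(P^{\perp}\cap\overline{Q}^{\perp})$-approximation in the abelian category $P^{\perp}$, there is $r$ with $r\beta=l$; hence $g=r(\beta\alpha)$. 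As $\alpha$ and $\beta$ are both epimorphisms, $\beta\alpha$ is an epimorphism and therefore left minimal.

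For part (c), observe that the canonical map $R\to f_{P\amalg Q}(R)$ is a minimal left $(P\amalg Q)^{\perp}$-approximation, since $P\amalg Q$ is projective, hence $\tau$-rigid, so that $(\Gen(P\amalg Q),(P\amalg Q)^{\perp})$ is a torsion pair by~\cite{aus-sma}. By part (b), $R\to f_{\overline{Q}}^{P^{\perp}}f_P(R)$ is also a minimal left $(P\amalg Q)^{\perp}$-approximation, so uniqueness of minimal approximations yields $f_{\overline{Q}}^{P^{\perp}}f_P(R)\simeq f_{P\amalg Q}(R)$; since $f_Q^{P^{\perp}}f_P(R)$ denotes exactly $f_{\overline{Q}}^{P^{\perp}}f_P(R)$ (the corresponding torsion-free classes in $P^{\perp}$ coincide by part (a)), this is (c). Combining (c) with the identifications $\E_{\V}^{J(\U)}\E_{\U}(X)=f_Q^{P^{\perp}}f_P(R)[1]$ and $\E_{\U\amalg\V}(X)=f_{P\amalg Q}(R)[1]$ recorded above then gives equation~\eqref{assoc} in Case IV(b).

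I expect no deep obstacle in this case: it is entirely ``projective-theoretic'', and the approximation-lifting arguments in (b) and (c) are of the same routine type as in the earlier cases. The only point demanding care is the bookkeeping of which torsion-free functor ($f_P$, $f_{\overline{Q}}^{P^{\perp}}$ or $f_{P\amalg Q}$) lives in which abelian category; part (a) is precisely the identity that makes the two minimal approximations compared in (c) comparable.
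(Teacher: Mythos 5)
Your proposal is correct and follows essentially the same route as the paper: part (a) via the canonical sequence $0\to t_P(Q)\to Q\to \overline{Q}\to 0$, part (b) by lifting maps through the two left approximations $\alpha$ and $\beta$, and part (c) by comparing with the canonical map $R\to f_{P\amalg Q}(R)$. The only (harmless) variation is in (c), where the paper concludes by noting that both modules are indecomposable as quotients of the indecomposable projective $R$, whereas you conclude via left minimality of the epimorphic approximations and uniqueness of minimal left approximations — both arguments are valid and of the same weight.
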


\begin{proof}
\noindent (a): 
Note first that, for any module $M$, $\Hom(P,M) = 0$ implies $\Hom(\Gen P, M) = 0$. 
Suppose $M$ lies in $P^{\perp}\cap \overline{Q}^{\perp}$.
We apply $\Hom(\ ,M)$ to the canonical sequence
$$0 \to t_P(Q) \to Q \to \overline{Q} \to 0$$
for $Q$. We have $\Hom(\overline{Q}, M) = 0$ and also $\Hom(t_P(Q), M) = 0$, since $t_P(Q)$ is in $\Gen P$. Hence $\Hom(Q,M) =0$ and we have
$P^{\perp} \cap \overline{Q}^{\perp} \subseteq P^{\perp} \cap Q^{\perp}$.
The reverse inclusion follows immediately from the fact that $\overline{Q}$ is a factor of $Q$.

\bigskip 

\noindent (b): By (a), we have that $f_{\overline{Q}}^{P^{\perp}}(f_P(R))$ is in $P^{\perp}\cap \overline{Q}^{\perp}=P^{\perp} \cap Q^{\perp}$. 
Consider a map $g \colon R \to Z$ with $Z$ in $(P \amalg Q)^{\perp}$.
Since $\alpha$ is a left $P^{\perp}$-approximation and $Z$ is in $P^{\perp}$, there is a map $t \colon f_P(R) \to Z$
such that $g = t \alpha$. Since $f_P(R) \to f_{\overline{Q}}^{P^{\perp}}f_P(R)$ is
a left  $P^{\perp} \cap \overline{Q}^{\perp} = (P \amalg Q)^{\perp}$-approximation
and $Z$ is in $P^{\perp} \cap Q^{\perp}$,
there is a map $u  \colon f_{\overline{Q}}^{P^{\perp}}f_P(R) \to Z$ such that $t =u\beta$.
We then have $g = t \alpha =  u\beta \alpha$. This proves the claim.

\bigskip 

\noindent (c): This follows directly from (b), noting that both $f_{\overline{Q}}^{P^{\perp}}f_P(R)$
and $f_{P \amalg Q}(R)$ are indecomposable (since they are factors of the
indecomposable projective module $R$).
\end{proof}

This finishes the proof that (\ref{assoc}) holds in this case.

\section{End of the proof of Theorem~\ref{main-asso}: Mixed case}
\label{s:mixed}
We have now proved that~\ref{assoc} holds for all of the
cases I-IV. It remains to deal with the mixed cases, where
we have support $\tau$-rigid objects
$\U = U \amalg P[1]$ and $\V = V \amalg Q[1]$ in $\C(\Lambda)$, with no common direct summands, but where we allow indecomposable
direct summands of $\U$ and $\V$ to lie both in $\module \Lambda$ and in $\P(\Lambda)[1]$.

Let us summarize the formulas we need to proceed.
By Cases I-IV, we have that the formulas
\begin{equation}\label{pure}
\E^{J(\U)}_{\E_{\U}(\V)} \E_{\U} = \E_{\U \amalg \V} = \E^{J(\V)}_{\E_{\V}(\U)} \E_{\V} 
\end{equation}
hold when we have both of the following:
\begin{itemize}
\item $U = 0$ or $P=0$, and
\item $V = 0$ or $Q=0$.
\end{itemize}

Note that a particular case is when $\U = U$ and $\V = Q[1]$,
where $U$ lies in $\module\Lambda$ and $Q$ lies in $\P(\Lambda)$.
We therefore have
\begin{equation}\label{caseup}
\E_{U \amalg Q[1]} = \E^{J(U)}_{\E_U(P[1])} \E_U.
\end{equation}

Recall also from Section \ref{compo} that we have 
\begin{equation}\label{compos} 
J_{J(\U)}(\E_{\U}(\V)) = J(\U \amalg \V),
\end{equation}
for any pair of support $\tau$-rigid objects $\U, \V$ in $\C(\Lambda)$.

\bigskip

\noindent {\bf Case A:} We first discuss the case with $P = 0$, that is $\U = U \neq 0$, while $\V= V \amalg Q[1]$
is arbitrary. We work by induction on $n = r(\module \Lambda)$. We then have
\begin{align}
\nonumber \E_{\U \amalg \V} &= \E_{U \amalg V \amalg Q[1]} \\[5pt]
\label{en1} &= \E^{J(U \amalg V)}_{\E_{U \amalg V}(Q[1])} \E_{U \amalg V} \\[5pt]
\label{en2}&= \E^{J(U \amalg V)}_{\E_{U \amalg V}(Q[1])} \E_{\E_U(V)}^{J(U)} \E_U \\[5pt]
\label{en3}&= \E^{J_{J(U)}(\E_U(V))}_{\E_{U \amalg V}(Q[1])} \E_{\E_U(V)}^{J(U)} \E_U \\[5pt]
\label{en4}&=  \E^{J_{J(U)}(\E_U(V))}_{\E_{\E_U(V)}^{J(U)} \E_U(Q[1])} \E_{\E_U(V)}^{J(U)} \E_U \\[5pt]
\label{en5} &=  \E^{J(U)}_{\E_U(V) \amalg \E_U(Q[1])} \E_U \\[5pt]
\nonumber  &=  \E^{J(U)}_{\E_U(V \amalg Q[1])} \E_U  \\[5pt] 
\nonumber  &=  \E^{J(\U)}_{\E_{\U}(\V)} \E_{\U} 
\end{align}
where equation~\eqref{en1} follows from (\ref{caseup}), while~\eqref{en2} and (\ref{en4}) follows from (\ref{pure}) and (\ref{en3}) from (\ref{compos}). Furthermore, equation~\eqref{en5}
follows from the induction assumption, since $r(J(U)) < n$.
This concludes the proof of the case with $P = 0$, i.e. $\U = U$.

\bigskip 

\noindent {\bf Case B:}
We next discuss the case with $U=0$, that is $\U = P[1] \neq 0$, while $\V= V \amalg Q[1]$
is arbitrary. We also assume $V \neq 0$, note that we have already dealt with the
case $V = 0$ (this is Case IV).
We then have:
\begin{align}
\nonumber \E^{J(\U)}_{\E_{\U}(\V)} \E_{\U} & = \E^{J(P[1])}_{\E_{P[1]}(V \amalg Q[1])} \E_{P[1]} \\[5pt]
\nonumber & = \E^{J(P[1])}_{\E_{P[1]}(V) \amalg \E_{P[1]}(Q[1])} \E_{P[1]} \\[5pt]
\label{lik1}  & = \E^{J_{J(P[1])}(\E_{P[1]}(V))}_{\E_{\E_{P[1]}(V)}^{J(P[1])} \E_{P[1]}(Q[1])} \E_{\E_{P[1]}(V)}^{J(P[1])} \E_{P[1]} \\[5pt]
\label{lik2} & = \E^{J_{J(P[1])}(\E_{P[1]}(V))}_{\E_{P[1] \amalg V}(Q[1])} \E_{\E_{P[1]}(V)}^{J(P[1])} \E_{P[1]} \\[5pt]
\label{lik3} & = \E^{J(P[1] \amalg V)}_{\E_{P[1] \amalg V}(Q[1])} \E_{\E_{P[1]}(V)}^{J(P[1])} \E_{P[1]} \\[5pt]
\label{lik4} & = \E^{J(P[1] \amalg V)}_{\E_{P[1] \amalg V}(Q[1])} \E_{P[1] \amalg V} \\[5pt]
\label{lik5} & = \E^{J(P[1] \amalg V)}_{\E_{P[1] \amalg V}(Q[1])} \E_{\E_{V}(P[1])}^{J(V)} \E_{V} \\[5pt]
\label{lik6} & = \E^{J(P[1] \amalg V)}_{\E^{J(V)}_{\E_V(P[1])} \E_V(Q[1])}  \E_{\E_{V}(P[1])}^{J(V)} \E_{V}  \\[5pt]
\label{lik7} & = \E^{J_{J(V)}(\E_V(P[1]))}_{\E^{J(V)}_{\E_V(P[1])} \E_V(Q[1])}  \E_{\E_{V}(P[1])}^{J(V)} \E_{V}  \\[5pt] 
\label{lik8} & = \E^{J(V)}_{\E_V(P[1] \amalg Q[1])} \E_{V} \\[5pt]
\label{lik9}& = \E_{V \amalg P[1] \amalg Q[1]} \\[5pt]
\nonumber & = \E_{\U \amalg \V}
\end{align}
where for (\ref{lik1}), we use that (\ref{assoc}) holds in $J(P[1])$ by induction.
For (\ref{lik8}), we use that the (\ref{assoc}) holds in $J(V)$ by induction.
For (\ref{lik2}) and (\ref{lik4}) we apply (\ref{pure}), for (\ref{lik5}), (\ref{lik6})
and (\ref{lik9})  we apply (\ref{caseup}), 
while for (\ref{lik3}) and  (\ref{lik7}) we apply~\eqref{compos}. 

\bigskip

\noindent {\bf The general case: } We now discuss the general case with $\U = U \amalg P[1]$ and $\V = V \amalg Q[1]$.

We then have
\begin{align}
\nonumber \E^{J(\U)}_{\E_{\U}(\V)} \E_{\U} & = \E^{J(U \amalg P[1])}_{\E_{U \amalg P[1]}(V \amalg Q[1])} \E_{U \amalg P[1]} \\[5pt]
\label{li1} & = \E^{J(U \amalg P[1])}_{\E_{U \amalg P[1]}(V \amalg Q[1])} \E^{J(U)}_{\E_U(P[1])} \E_U
\\[5pt]
\label{li2} & = \E_{\E_{\E_U(P[1])}^{J(U)}\E_U(V \amalg Q[1])}^{J(U \amalg P[1])}  \E^{J(U)}_{\E_U(P[1])} \E_U \\[5pt]
\label{li3} & = \E_{\E_{\E_U(P[1])}^{J(U)}\E_U(V \amalg Q[1])}^{J_{J(U)}(\E_U(P[1]))}  \E^{J(U)}_{\E_U(P[1])} \E_U \\[5pt]
\label{li4} & = \E^{J(U)}_{\E_U(P[1]) \amalg \E_U(V \amalg Q[1])} \E_U \\[5pt]
\nonumber & = \E^{J(U)}_{\E_U(P[1] \amalg V \amalg Q[1])} \E_U \\[5pt]
\nonumber & = \E^{J(U)}_{\E_U(V \amalg P[1] \amalg Q[1])} \E_U \\[5pt]
\label{li5}& = \E_{U \amalg P[1] \amalg V \amalg Q[1]} \\[5pt]
\nonumber & = \E_{\U \amalg \V}
\end{align}
where (\ref{li1}) and (\ref{li2}) hold by (\ref{caseup}), while (\ref{li3}) holds
by (\ref{compos}). For (\ref{li4}) we note
that $\E_U(P[1])$ is in $\P(J(U))[1]$, so that we are in the situation of Case B in $J(U)$.
For equation (\ref{li5}), we apply Case A.

This concludes the proof of Theorem~\ref{main-asso}.$\Box$

\section{Irreducible morphisms in \texorpdfstring{$\mathfrak{W}_{\Lambda}$}{W Lambda}}
\label{s:irreducible}
%\section{Morphisms to wide subcategories of corank $1$}

In this section we prove the following Theorem.

\begin{theorem}\label{main-irr}
Let $\Lambda$ be a $\tau$-tilting finite algebra, and let $\W' \subseteq \W$
be wide subcategories of $\module \Lambda$, where $r(\W) - r(\W') = 1$ (i.e $\W'$ is of corank $1$ in
$\W$) . Then exactly one of the following occurs:
\begin{itemize}
\item[(a)] There is exactly one morphism in $\mathfrak{W}_{\Lambda}$ from $\W$ to $\W'$ and $\W' = J_{\W}(U)$, where $U$ is an indecomposable $\tau$-rigid 
module which is non-projective in $\W$. 
\item[(b)] There are exactly two morphisms in $\mathfrak{W}_{\Lambda}$
from $\W$ to $\W'$ and $\W' = J_{\W}(P) = J_{\W}(P[1])$, where $P$ is an indecomposable module which is projective in $\W$.
\end{itemize} 
\end{theorem}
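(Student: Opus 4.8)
The plan is to reduce the statement to a count of the basic support $\tau$-rigid objects $T$ in $\C(\W)$ with $J_\W(T) = \W'$, using the defining description of $\Hom_{\mathfrak{W}_\Lambda}(\W,\W')$. Since $r(\W) - r(\W') = 1$, and since $r(J_\W(T)) = r(\W) - \delta(T)$ by Proposition \ref{prop-wide}(b) (applied inside the wide subcategory $\W$, which is itself a $\tau$-tilting finite module category by Proposition \ref{tau-finite}(b)), any such $T$ must satisfy $\delta(T) = 1$, i.e. $T$ is indecomposable. So the morphisms from $\W$ to $\W'$ are indexed by the set of indecomposable support $\tau$-rigid objects $T$ in $\C(\W)$ with $J_\W(T) = \W'$. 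An indecomposable support $\tau$-rigid object in $\C(\W)$ is either (i) an indecomposable $\tau$-rigid module $U$ in $\W$, or (ii) a shift $P[1]$ of an indecomposable projective $P$ in $\W$. First I would reduce to the case $\W = \module\Lambda$ by working throughout inside the module category $\W$ (replacing $\tau$ by $\tau_\W$, $J$ by $J_\W$, etc.); everything needed — Proposition \ref{tau-finite}, Proposition \ref{prop-wide}, Theorem \ref{rig-fin} — transfers.

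Next I would invoke the classification of corank-one wide subcategories obtained from $\tau$-rigid objects. The key input is Theorem \ref{rig-fin}(b): every wide subcategory of $\module\Lambda$ (here $\W'$) has the form $J(\U)$ for some support $\tau$-rigid $\U$, and by the $\delta$-count above $\U$ can be taken indecomposable. So at least one morphism exists. The crux is to show there are at most two, and that the dichotomy (a)/(b) is governed by whether the indecomposable witness is a non-projective module or a projective module (equivalently its shift). For this I would use the theory of $\tau$-exceptional / left mutation: given an indecomposable $\tau$-rigid module $U$ in $\module\Lambda$ of ``corank one'' (meaning $r(J(U)) = n-1$), the subcategory $J(U)$ determines $U$ uniquely, because $U$ is recovered as the (unique) Ext-projective generator of the torsion class ${}^{\perp}(\tau U) = \Gen U * J(U)$ complementary data; more precisely, if $J(U) = J(U')$ for indecomposable $\tau$-rigid modules $U, U'$ then $\Gen U * J(U) = {}^{\perp}(\tau U)$ and $\Gen U' * J(U')$ have the same Ext-projectives modulo $J(U)$, forcing $U \simeq U'$ by \cite[Theorem 2.10]{air} (the bijection between functorially finite torsion classes and their Ext-projective supports). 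Similarly, for a projective $P$ in $\module\Lambda$, the subcategory $J(P) = P^{\perp}$ determines $P$ up to isomorphism, as $P$ is the (unique) indecomposable projective not lying in $P^{\perp}$. Thus there is at most one module witness and at most one shifted-projective witness.

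Then I would verify that $J(P) = P^{\perp} = J(P[1])$ for $P$ projective (immediate from the definitions, since $J(P[1]) = P^{\perp}$ by the conventions set up before Theorem \ref{rig-fin}, while $J(P) = P^{\perp} \cap {}^{\perp}(\tau P) = P^{\perp}$ because $\tau P = 0$), so that a corank-one subcategory of the form $P^{\perp}$ genuinely receives \emph{two} distinct morphisms $g^{\W}_{P}$ and $g^{\W}_{P[1]}$. Finally I would argue the two alternatives are mutually exclusive: a subcategory $\W'$ of corank one cannot simultaneously equal $J(U)$ for a non-projective indecomposable $\tau$-rigid $U$ and equal $P^{\perp}$ for an indecomposable projective $P$. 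Indeed if $\W' = P^{\perp}$ then $\W'$ is itself of the form $J(\Q[1])$, and one checks (via the torsion-class description, or the cardinality/Grothendieck-group placement of $\W'$ as a ``perpendicular to a projective'' subcategory versus a ``perpendicular to a non-projective $\tau$-rigid'') that no non-projective $U$ can have $J(U) = P^{\perp}$: if it did, then $U$ would lie in ${}^{\perp}(\tau U)$ and $J(U) = P^{\perp}$ would force $P \in \Gen U$ contradicting $P$ projective and $U$ non-projective with $\Hom(U,\tau U) = 0$; I would make this precise using $\Gen U * J(U) = {}^{\perp}(\tau U) \ni P$ versus $\Gen P * P^\perp = \module\Lambda$. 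Combining: either $\W'$ is not of the form $P^{\perp}$, in which case its unique witness is a non-projective indecomposable $\tau$-rigid module and we are in case (a) with exactly one morphism; or $\W' = P^{\perp}$ for a (unique) indecomposable projective $P$, in which case the witnesses are exactly $P$ and $P[1]$ and we are in case (b) with exactly two morphisms.

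The main obstacle I anticipate is the uniqueness claim — that a corank-one wide subcategory determines its indecomposable $\tau$-rigid witness. This requires care because a priori several indecomposable $\tau$-rigid modules could cut out the same perpendicular category; the resolution goes through the bijection between functorially finite torsion classes and their Ext-projective covers \cite[Theorem 2.10]{air} together with the identity ${}^{\perp}(\tau U) = \Gen U * J(U)$ (which appears in the proof of Lemma \ref{al2}, citing \cite[Theorem 3.12]{jasso}), and through pinning down which torsion class a corank-one $J(U)$ comes from. Establishing the exclusivity of (a) and (b) is essentially the same computation viewed from the other side, so once the torsion-class dictionary is in hand both the ``at most two'' bound and the dichotomy follow.
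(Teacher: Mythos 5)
Your reduction to counting indecomposable support $\tau$-rigid witnesses via the rank formula $r(J_{\W}(T)) = r(\W) - \delta(T)$ matches the paper, as does the observation that $J_{\W}(P) = J_{\W}(P[1])$ yields the two morphisms in case (b). The gap is at the crux you yourself flag: the claim that $J(U)=J(U')$ for indecomposable $\tau$-rigid modules forces $U\simeq U'$. Your argument invokes \cite[Theorem 2.10]{air} (torsion classes $\leftrightarrow$ Ext-projectives) together with ${}^{\perp}(\tau U)=\Gen U \ast J(U)$, but this only recovers $U$ (and even then only up to the ambiguity within $\P({}^{\perp}(\tau U))=\add(U\amalg B_U)$) once the torsion class ${}^{\perp}(\tau U)$ is known --- and knowing the wide subcategory $J(U)$ does not a priori determine ${}^{\perp}(\tau U)$: the torsion classes $\Gen U\ast J(U)$ and $\Gen U'\ast J(U)$ could be distinct functorially finite torsion classes both ``reducing'' to the same $J(U)$, in which case comparing their Ext-projectives tells you nothing. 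You acknowledge needing to ``pin down which torsion class a corank-one $J(U)$ comes from,'' but give no mechanism for doing so; that is precisely the hard part.

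The paper closes this gap with two ingredients absent from your sketch: (i) the \emph{injectivity} of the Demonet--Iyama--Reading--Reiten--Thomas bijection $(T,P)\mapsto J(T_{\text{ns}})\cap P^{\perp}$ between $\tau$-tilting pairs and wide subcategories (Theorem~\ref{t:dirrtbijection}; you cite only the surjectivity direction, Theorem~\ref{rig-fin}(b)); and (ii) Lemma~\ref{l:Bongartzsplit}, showing that for a non-projective indecomposable $\tau$-rigid $U$ the non-split-projective part of the Bongartz completion $U\amalg B_U$ is exactly $U$, so that $U$ is read off from the $\tau$-tilting pair $(U\amalg B_U,0)$, which is in turn determined by $J(U)$ via (i). Your mutual-exclusivity argument suffers from the same weakness (the assertion $P\in{}^{\perp}(\tau U)$ is unjustified), whereas in the paper exclusivity of (a) and (b) falls out of the same uniqueness statement, Proposition~\ref{p:JUJV}. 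Without an input of this strength, the ``at most two'' bound and the dichotomy do not follow from what you have written.
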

 
The main step in the proof is to show that if $U$ and $V$ are indecomposable $\tau$-rigid $\Lambda$-modules satisfying $J(U)=J(V)$, then $U=V$.

\begin{definition}
A morphism $g$ in $\mathfrak{W}_{\Lambda}$ is said to be {\em irreducible} if,
whenever $g$ is expressed as a composition $g_1 \circ g_2$, we
have that either $g_1$ or $g_2$ is an identity map.
\end{definition}

\begin{lemma} \label{l:irreducible}
Let $\W$ be a wide subcategory of $\module \Lambda$ and
let $\V$ be a support $\tau$-rigid object in $\C(\W)$. Then the following are equivalent.
\begin{itemize}
\item[(a)] The morphism $g = g_{\V}^{\W} \colon \W \to J_{\W}(\V)$ is
irreducible. 
\item[(b)] The object $\V$ is indecomposable.
\item[(c)] The subcategory $J_{\W}(\V)$ is of corank 1 in $\W$.
\end{itemize}
\end{lemma}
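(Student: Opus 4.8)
The plan is to prove the cycle of implications (a) $\Rightarrow$ (b) $\Rightarrow$ (c) $\Rightarrow$ (a), using the rank formula $r(J_{\W}(\V)) = r(\W) - \delta(\V)$ from Theorem~\ref{rig-fin}(a) (via Proposition~\ref{prop-wide}(b)) together with the composition formula for morphisms in $\mathfrak{W}_\Lambda$ established via Theorems~\ref{main-compo} and~\ref{main-as}. For (b) $\Rightarrow$ (c): if $\V$ is indecomposable then $\delta(\V)=1$, so $r(J_\W(\V)) = r(\W)-1$, which is exactly the statement that $J_\W(\V)$ has corank $1$ in $\W$. For (c) $\Rightarrow$ (b): the same rank formula gives $\delta(\V) = r(\W) - r(J_\W(\V)) = 1$, so $\V$ (being basic) is indecomposable. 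Thus (b) and (c) are equivalent purely by the rank count.

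For (b) $\Rightarrow$ (a): suppose $\V$ is indecomposable and write $g_\V^\W = g_1 \circ g_2$ where $g_2\colon \W \to \W''$ and $g_1\colon \W'' \to J_\W(\V)$. By definition of the morphisms in $\mathfrak{W}_\Lambda$ there is a support $\tau$-rigid object $\U$ in $\C(\W)$ with $\W'' = J_\W(\U)$ and $g_2 = g_\U^\W$, and a support $\tau$-rigid object $\overline{\V}$ in $\C(\W'')$ with $g_1 = g_{\overline{\V}}^{\W''}$. By Corollary~\ref{main-asW} (or directly by the composition rule defined in Section~\ref{s:mainresult}) we may write $\overline{\V} = \E^{\W}_{\U}(\V')$ for a support $\tau$-rigid $\V'$ in $\C(\W)$ with $\U \amalg \V'$ support $\tau$-rigid and $\add \U \cap \add \V' = 0$, and then $g_1 \circ g_2 = g^{\W}_{\U \amalg \V'}$. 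Since $g^\W_{\U\amalg\V'} = g^\W_\V$ as morphisms $\W \to J_\W(\V)$, and morphisms are indexed by the (basic) objects themselves, we get $\U \amalg \V' \cong \V$. As $\V$ is indecomposable, either $\U = 0$ and $\V' \cong \V$, in which case $g_2 = g^\W_0$ is the identity, or $\V' = 0$ and $\U \cong \V$, in which case $\overline{\V} = \E^\W_\U(0) = 0$ and $g_1 = g^{\W''}_0$ is the identity. Hence $g$ is irreducible.

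For (a) $\Rightarrow$ (b): we prove the contrapositive. Suppose $\V$ is decomposable, say $\V = \V_1 \amalg \V_2$ with both $\V_i$ nonzero and $\add\V_1\cap\add\V_2 = 0$. Then $\V_1$ is support $\tau$-rigid in $\C(\W)$, $\V_2$ is support $\tau$-rigid in $\C(\W)$, and $\V_1\amalg\V_2$ is support $\tau$-rigid. Set $\W'' = J_\W(\V_1)$. By Theorem~\ref{main-biadd}/Corollary~\ref{main-biW}, $\E^\W_{\V_1}(\V_2)$ is a nonzero support $\tau$-rigid object in $\C(\W'')$, and by Corollary~\ref{main-compoW} we have $J_{\W''}(\E^\W_{\V_1}(\V_2)) = J_\W(\V_1\amalg\V_2) = J_\W(\V)$. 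Hence $g^{\W''}_{\E^\W_{\V_1}(\V_2)} \circ g^\W_{\V_1} = g^\W_{\V_1\amalg\V_2} = g^\W_\V$, exhibiting $g$ as a composition of two morphisms. Neither factor is an identity: $g^\W_{\V_1}$ is not an identity since $\W'' = J_\W(\V_1) \subsetneq \W$ (because $r(J_\W(\V_1)) = r(\W) - \delta(\V_1) < r(\W)$), and $g^{\W''}_{\E^\W_{\V_1}(\V_2)}$ is not an identity since its index $\E^\W_{\V_1}(\V_2)$ is nonzero (equivalently, $r(J_\W(\V)) < r(\W'')$). Thus $g$ is not irreducible, completing the cycle. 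The main subtlety is being careful that "morphisms are indexed by objects" means equality of formal symbols $g^\W_T = g^\W_{T'}$ forces $T \cong T'$, which is how one concludes $\U \amalg \V' \cong \V$ in the (b) $\Rightarrow$ (a) step; everything else is the rank count and the already-established composition formulas.
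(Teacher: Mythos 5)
Your proposal is correct and follows essentially the same route as the paper: the equivalence of (b) and (c) via the rank formula of Proposition~\ref{prop-wide}, and the equivalence of (a) and (b) via the composition rule $g^{\W_2}_{\E^{\W_1}_{\U}(\V)}\circ g^{\W_1}_{\U}=g^{\W_1}_{\U\amalg\V}$ together with the fact that the formal symbols $g^{\W}_{T}$ are indexed by the objects $T$, so that equality of morphisms forces $\U\amalg\V'\cong\V$. The paper's argument for (b)$\Rightarrow$(a) and (a)$\Rightarrow$(b) is precisely your decomposition/recombination argument, written with the inverse bijection $\F_{\U_2}$ in place of your $\V'$.
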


\begin{proof}
Suppose first that $\V$ is indecomposable, and that $g=g_1\circ g_2$
for maps $g_1$ and $g_2$ in $\mathfrak{W}$. Then we have $g_1=g_{\U_1}^{\W_1}$ and $g_2=g_{\U_2}^{\W_2}$ where $\U_1$ is
a support $\tau$-rigid object in $\C(\W_1)$, $\U_2$ is a support $\tau$-rigid
object in $\C(\W_2)$ and $J_{\W_2}(U_2)=\W_1$. The composition is:
$$g_{\V}^{\W}=g_{\U_1}^{\W_1}\circ g_{\U_2}^{\W_2}=g^{\W_2}_{\F_{\U_2}(\U_1)\amalg \U_2}.$$
Hence $\W_2=\W$ and $\V=\F_{\U_2}(\U_1)\amalg \U_2$. Since $\V$ is indecomposable, we have $\U_2=0$ or $\F_{\U_2}(\U_1)=0$. 
So $\U_1=0$ or $\U_2=0$, and $g_1$ or $g_2$ is an identity map.
It follows that $g$ is irreducible. This proves that (b) implies (a).

If $\V$ is decomposable, it can be written in
the form $\V=\V_1\amalg \V_2$ where $\V_1$ and $\V_2$ are non-zero
support $\tau$-rigid objects in $\C(\W)$. Then we have:
$$g=g_{\V_1\amalg \V_2}^{\W}=g_{\E_{\V_2}(\V_1)}^{\W_1}\circ g_{\V_2}^{\W_2},$$
where $\W_2=\W$ and $\W_1=J_{\W_2}(V_2)$. Since $\V_1$ and $\E_{\V_2}(\V_1)$ are non-zero, $g_{\E_{\V_2}(\V_1)}^{\W_1}$ and $g_{\V_2}^{\W_2}$ are not
identity maps, so $g$ is not irreducible. This proves that (a) implies (b).

We have that (b) and (c) are equivalent by Proposition \ref{prop-wide}.
\end{proof}

Recall that for any $\tau$-rigid $\Lambda$-module $U$ there is a unique basic module $B_U$, known as the \emph{Bongartz complement} of $U$, such that $\add(U\amalg B_U)$ is a $\tau$-tilting module and
$\add(U\amalg B_U)=\P({}^{\perp} \tau U)$.
We also recall that a $\Lambda$ module $M$ is said to be \emph{Gen-minimal}
if, whenever $M=M'\oplus M''$, we have $M''\not\in \Gen M'$ (see e.g.~\cite[VI.6]{assemsimsonskowronski}). We recall the following:

\begin{lemma} \cite[Lemma 2.8]{ingallsthomas}
Let $\Lambda$ be an algebra, and let $\T$ be a finitely generated torsion class in $\module \Lambda$. Then $\T$ has a unique Gen-minimal
generator, $\T_{\text{min}}$, consisting of the direct sum of the indecomposable split projective objects in $\T$.
\end{lemma}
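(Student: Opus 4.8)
The plan is to separate the argument into an \emph{existence} part and a \emph{uniqueness-and-description} part, the latter resting on an intrinsic characterization of the indecomposable summands of a Gen-minimal generator as the indecomposable split projective objects of $\T$ (recall that $P\in\T$ is \emph{split projective} if every epimorphism $X\twoheadrightarrow P$ with $X\in\T$ splits). Since the description already pins down $\T_{\mathrm{min}}$ as the direct sum of the indecomposable split projectives (of which there are finitely many, being the summands of any Gen-minimal generator), uniqueness will be automatic.

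\emph{Existence.} Because $\T$ is finitely generated I would start from a basic module $M$ with $\Gen M=\T$ and prune it: if $M$ is not Gen-minimal, write $M=M'\amalg M''$ with $M''\in\Gen M'$; then $\Gen M'=\Gen M=\T$ while $\delta(M')<\delta(M)$. Iterating this terminates at a basic Gen-minimal module $N$ with $\Gen N=\T$.

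\emph{Characterization.} I would then establish that for \emph{every} Gen-minimal generator $N$ of $\T$ the indecomposable summands of $N$ are exactly the indecomposable split projective objects of $\T$; granting this, two Gen-minimal generators are basic modules with the same indecomposable summands, hence isomorphic. One inclusion is immediate: an indecomposable split projective $P$ lies in $\T=\Gen N$, so there is a splitting epimorphism $N^{(m)}\twoheadrightarrow P$, whence $P\in\add N$. For the converse, let $N_i$ be an indecomposable summand of $N$ and $q\colon X\twoheadrightarrow N_i$ with $X\in\T$; composing with an epimorphism $N^{(m)}\twoheadrightarrow X$ (available since $\Gen N=\T$) gives $h\colon N^{(m)}\twoheadrightarrow N_i$, and as a section of $h$ induces one of $q$ it suffices to split $h$. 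Writing $N^{(m)}=N_i^{(a)}\amalg W$ with $W\in\add(N/N_i)$ and $h=(\psi\mid\phi)$, either some component of $\psi$ is an isomorphism, in which case $h$ splits, or every component of $\psi$ lies in the maximal ideal $J$ of the local ring $\End(N_i)$. In the latter case $J$ is nilpotent (as $\End(N_i)$ is finite dimensional), so $JN_i\subsetneq N_i$ and $\mathrm{im}\,\psi\subseteq JN_i$; surjectivity of $h$ then forces $\mathrm{im}\,\phi+JN_i=N_i$, so $N_i/JN_i$, being a quotient of $\mathrm{im}\,\phi\in\Gen(N/N_i)$, lies in $\Gen(N/N_i)$.

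\emph{Main obstacle.} The hard point is to deduce $N_i\in\Gen(N/N_i)$ from $N_i/JN_i\in\Gen(N/N_i)$; this contradicts Gen-minimality of $N$, thereby ruling out the bad case and forcing $h$ (hence $q$) to split. This is a Nakayama-type lifting statement, and I expect to prove it by again exploiting Gen-minimality: if $N/N_i$ generated $N_i/JN_i$ but not $N_i$, one should be able to extract a genuinely redundant indecomposable summand of $N$, contradicting minimality. (Alternatively the whole argument can be routed through the Auslander--Smal\o\ theory of functorially finite torsion classes, using that a finitely generated torsion class is functorially finite, but the elementary route above seems preferable.) As byproducts of the characterization I would also record that $\T=\Gen\T_{\mathrm{min}}$ and that $\T_{\mathrm{min}}$ is a direct summand of every generator of $\T$.
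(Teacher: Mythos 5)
The paper does not prove this lemma at all; it is quoted with a citation to Ingalls--Thomas, so there is no in-paper argument to compare against, and your proposal must be judged on its own. The existence-by-pruning step, the observation that Gen-minimality forces the generator to be basic, and the inclusion ``indecomposable split projective $\Rightarrow$ summand of $N$'' are all fine. The problem is the converse inclusion, exactly at the point you flag as the main obstacle: the general principle you propose to prove there, namely that $N_i/JN_i\in\Gen(N/N_i)$ implies $N_i\in\Gen(N/N_i)$, is false as stated. For $\Lambda=k[x]/(x^2)$ and $N_i=\Lambda$ one has $JN_i=\mathrm{soc}\,\Lambda$ and $N_i/JN_i\simeq k\in\Gen(k)$, while $\Lambda\notin\Gen(k)$. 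So no amount of massaging (with or without a further appeal to Gen-minimality, which in your sketch is circular: the ``redundant summand'' you hope to extract is exactly the conclusion $N_i\in\Gen(N/N_i)$ you are trying to reach) will establish that implication in the abstract. What you must use is the finer information your own computation already provides: the generation of $N_i/JN_i$ is witnessed by a map landing in $N_i$ itself, i.e.\ $\mathrm{im}\,\phi+JN_i=N_i$ for a specific $\phi\colon W\to N_i$ with $W\in\add(N/N_i)$.

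With that input the step closes by a short trace argument and needs no further appeal to minimality. Let $T'\subseteq N_i$ be the trace of $N/N_i$ in $N_i$, the sum of the images of all homomorphisms from modules in $\add(N/N_i)$ to $N_i$; then $\mathrm{im}\,\phi\subseteq T'$, and $T'$ is stable under every $f\in\End(N_i)$, since $f\circ g$ is again such a homomorphism. Writing $J^kN_i$ for $\sum_{h\in J^k}h(N_i)$, the identity $N_i=\sum_j\mathrm{im}\,\psi_j+\mathrm{im}\,\phi$ gives $N_i=JN_i+T'$, and then $JN_i=\sum_{f\in J}f(JN_i+T')\subseteq J^2N_i+T'$, so $N_i=J^2N_i+T'$ and inductively $N_i=J^rN_i+T'$ for all $r$. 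Nilpotency of $J$ yields $N_i=T'\in\Gen(N/N_i)$, contradicting Gen-minimality of $N$, so $h$ and hence $q$ splits. With this repair the remainder of your argument (the characterization of the summands of any Gen-minimal generator as the indecomposable split projectives, and uniqueness via Krull--Schmidt) goes through.
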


If $T$ is a support $\tau$-tilting module, then we denote the unique Gen-minimal
generator of $\Gen T$ by $T_{\text{s}}$. Note that $T$ is an additive
generator for $\P(\T)$ by~\cite[Thm. 2.7]{air}, so $T_{\text{s}}$ is a direct
summand of $T$, and we write $T_{\text{ns}}$ for a complement, the direct
sum of the non-split projective objects in $\Gen T$.

If $Z$ is a minimal direct summand of $T$ such
that $\Gen Z=\Gen T$ then $T_{\text{s}}\in \Gen =Z$, so $T_{\text{s}}$ is
a direct summand of $Z$ since it is split projective. Since
$\Gen T_{\text{s}}=\Gen T$, we must have $T_{\text{s}}=Z$.
In the light of this discussion, we also recall the following:

\begin{theorem} \cite[Thm.\ 3.34]{dirrt} \label{t:dirrtbijection}
Let $\Lambda$ be a $\tau$-tilting finite algebra.
Then there is a bijection between the set of $\tau$-tilting pairs
in $\module \Lambda$ and the set of wide subcategories of $\module\Lambda$
given by mapping a $\tau$-tilting pair $(T,P)$ to $\mathsf{W}(T,P)=
J(T_{ns})\cap P^{\perp}$.
\end{theorem}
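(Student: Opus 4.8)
The plan is to factor the assignment $(T,P)\mapsto \mathsf{W}(T,P)$ through the lattice of torsion classes of $\module\Lambda$. Since $\Lambda$ is $\tau$-tilting finite, every torsion class is functorially finite by \cite{dij}, so by the Adachi--Iyama--Reiten correspondence \cite{air} the rule $(T,P)\mapsto \Gen T$ is a bijection from the set of $\tau$-tilting pairs onto the set of \emph{all} torsion classes. It therefore suffices to prove that $\T\mapsto J(T_{ns})\cap P^{\perp}$, where $(T,P)$ is the $\tau$-tilting pair with $\Gen T=\T$, is a well-defined bijection from torsion classes to wide subcategories, and the heart of the argument will be to recognise this as the wide subcategory $\alpha(\T)$ classically attached to $\T$.

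Well-definedness is quick: $T_{ns}$ is a summand of the $\tau$-rigid module $T$, hence $\tau$-rigid, and $\Hom(P,T_{ns})=0$ since $\Hom(P,T)=0$, so $T_{ns}\amalg P[1]$ is support $\tau$-rigid, $J(T_{ns})\cap P^{\perp}=J(T_{ns}\amalg P[1])$, and this is wide by Theorem~\ref{rig-fin}(a). Note also $\Gen T\subseteq P^{\perp}$ (as $\Hom(P,-)$ is exact) and $\Gen T\subseteq{}^{\perp}(\tau T)\subseteq{}^{\perp}(\tau T_{ns})$, so passing from $\Gen T$ to $\mathsf{W}(T,P)$ only imposes vanishing of $\Hom(T_{ns},-)$. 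The key claim I would establish is the identification
$$\mathsf{W}(T,P)=J(T_{ns})\cap P^{\perp}=\alpha(\Gen T):=\{X\in\Gen T:\ \ker f\in\Gen T\text{ for every }f\colon Y\to X\text{ with }Y\in\Gen T\}.$$
Granting this, the remaining facts are the standard torsion-class/wide-subcategory dictionary of \cite{ingallsthomas,dirrt}: for $\tau$-tilting finite $\Lambda$ the map $\T\mapsto\alpha(\T)$ is onto the wide subcategories, with $\alpha(\mathrm{Filt}(\Gen\,\W))=\W$ and $\mathrm{Filt}(\Gen\,\alpha(\T))=\T$. Surjectivity of our map follows: for a wide subcategory $\W$ take $\T=\mathrm{Filt}(\Gen\,\W)$, the smallest torsion class containing $\W$ (it is a torsion class since torsion classes are closed under intersection and $\mathrm{Filt}$ of a quotient-closed class is quotient-closed), and then $\alpha(\T)=\W$. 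Injectivity follows too: if $\mathsf{W}(T_1,P_1)=\mathsf{W}(T_2,P_2)$ then $\Gen T_i=\mathrm{Filt}(\Gen\,\alpha(\Gen T_i))$ is the same torsion class for $i=1,2$, and \cite{air} recovers $(T_1,P_1)=(T_2,P_2)$.

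For the identification, I would first prove $\mathsf{W}(T,P)\supseteq\alpha(\Gen T)$: an object $X\in\alpha(\Gen T)$ lies in $\Gen T\subseteq{}^{\perp}(\tau T_{ns})\cap P^{\perp}$ by the above, and $\Hom(T_{ns},X)=0$ because every indecomposable summand of $T_{ns}$ is a non-split-projective object of $\Gen T$, so a nonzero map into $X\in\alpha(\Gen T)$ would contradict the kernel condition defining $\alpha(\Gen T)$ together with the Ingalls--Thomas lemma on $\Gen$-minimal generators. For the reverse inclusion I would compare ranks via Proposition~\ref{prop-wide}(b): with $n=r(\module\Lambda)$ and $\delta(T)+\delta(P)=n$,
$$r(\mathsf{W}(T,P))=r(J(T_{ns}\amalg P[1]))=n-\delta(T_{ns})-\delta(P)=\delta(T)-\delta(T_{ns})=\delta(T_{s}),$$
and $\delta(T_{s})=r(\alpha(\Gen T))$ because, as recalled just before the statement, $T_{s}$ is the unique $\Gen$-minimal generator of $\Gen T$ and a minimal summand of $T$ generating $\Gen T$, which corresponds to a set of generators of $\alpha(\Gen T)$ as a torsion class. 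Both $\mathsf{W}(T,P)$ and $\alpha(\Gen T)$ being wide (hence module categories by Theorem~\ref{rig-fin}(a)) with the same rank and one contained in the other, they coincide.

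The main obstacle is exactly this identification: one must check that cutting $\module\Lambda$ down by $J(-)$ along $T_{ns}\amalg P[1]$ yields neither too large nor too small a subcategory, and the delicate point is the non-split-projective part $T_{ns}$, which does map nontrivially into $T_{s}$ yet is filtered inside $\Gen T$ by the simple objects of $\alpha(\Gen T)$. Proving this amounts to redeveloping, in the $\tau$-tilting finite setting, the relevant part of the lattice-theoretic analysis of \cite{dirrt} (and of Ingalls--Thomas in the hereditary case), with the $\Gen$-minimal generator lemma and the rank formula of Proposition~\ref{prop-wide}(b) as the main bookkeeping tools.
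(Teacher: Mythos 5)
First, a point of comparison: the paper does not prove this statement at all --- it is imported verbatim as \cite[Thm.\ 3.34]{dirrt} --- so there is no internal proof to measure your argument against. Your outline does follow the route one would expect (and essentially the route of \cite{dirrt} itself): compose the Adachi--Iyama--Reiten bijection between support $\tau$-tilting pairs and torsion classes with the Ingalls--Thomas/DIRRT correspondence $\T\mapsto\alpha(\T)$, and identify $\mathsf{W}(T,P)=J(T_{\text{ns}})\cap P^{\perp}$ with $\alpha(\Gen T)$.

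However, the one step you actually attempt to prove rather than cite --- that identification --- has genuine gaps. For $\alpha(\Gen T)\subseteq T_{\text{ns}}^{\perp}$ you assert that a nonzero map $f\colon V\to X$, with $V$ an indecomposable summand of $T_{\text{ns}}$ and $X\in\alpha(\Gen T)$, ``would contradict the kernel condition''. It does not, at least not directly: the kernel condition applied to $f$ only yields $\ker f\in\Gen T$, which is no contradiction. (For $kA_2$ with $\T=\Gen(P_1\oplus S_1)$, $T_{\text{ns}}=S_1$, the map witnessing $S_1\notin\alpha(\T)$ is $P_1\to S_1$, not any map out of $T_{\text{ns}}$; the identity $S_1\to S_1$ has kernel $0\in\T$.) A correct argument lifts a map from $T_{\text{ns}}$ to a simple object $S$ of $\alpha(\T)$ along the presentation $T_{\text{s}}^i\to S$, using $\Ext$-projectivity of $T_{\text{ns}}$ in $\T$, and notes the lift is a radical map --- but this presupposes the description of the simples of $\alpha(\T)$ as the tops $T_{\text{s}}^i/r_i$ of the split projective summands, which is itself a core result of \cite{dirrt}. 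The same description is what your unproven rank identity $r(\alpha(\Gen T))=\delta(T_{\text{s}})$ amounts to: it is precisely the bijection between indecomposable split projectives of $\T$ and simple objects of $\alpha(\T)$, i.e.\ the substance of the theorem, not bookkeeping. Finally, the ``standard dictionary'' you invoke ($\mathrm{Filt}(\Gen\,\alpha(\T))=\T$ for \emph{all} torsion classes of a $\tau$-tilting finite algebra, and the step ``same rank and one contained in the other implies equal'', which rests on every wide subcategory being of the form $J(\U)$, i.e.\ Theorem~\ref{rig-fin}(b), itself cited to \cite[Thm.\ 3.34]{dirrt}) consists of statements that are parts of, or equivalent to, the theorem being proved, so the argument as written is close to circular. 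The skeleton is right, but the load-bearing steps are either missing or borrowed from the result itself.
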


\begin{lemma} \label{l:Bongartzsplit}
Let $\Lambda$ be a $\tau$-tilting finite algebra.
Let $U$ be a non-projective $\tau$-rigid module in $\module\Lambda$. Let $B_U$ be the Bongartz complement of $U$, and let $T_U=U\amalg B_U$. Then
$(T_U)_{\text{s}}=B_U$ and $(T_U)_{\text{ns}}=U$.
\end{lemma}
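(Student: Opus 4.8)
The plan is to analyze the Bongartz complement through the torsion class $\Gen T_U = {}^{\perp}(\tau U)$ and identify which summands are split projective. Recall that $T_U = U \amalg B_U$ is a $\tau$-tilting module with $\add T_U = \P({}^{\perp}\tau U)$, so $\Gen T_U = {}^{\perp}(\tau U)$ by~\cite[Thm.\ 2.7, Prop.\ 2.9]{air}. By the discussion preceding the lemma, $(T_U)_{\text{s}}$ is the unique minimal direct summand $Z$ of $T_U$ with $\Gen Z = \Gen T_U$, equivalently the direct sum of the indecomposable split projective objects in $\Gen T_U$; and $(T_U)_{\text{ns}}$ is a complement to it in $T_U$. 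Since $\add T_U = \add(U \amalg B_U)$ and $U$, $B_U$ have no common indecomposable summands (as $\add(U\amalg B_U)$ is basic), it suffices to show that every indecomposable summand of $B_U$ is split projective in $\Gen T_U$, while no indecomposable summand of $U$ is. Equivalently, one shows $\Gen B_U = \Gen T_U = {}^{\perp}(\tau U)$, which forces $(T_U)_{\text{s}}$ to be a summand of $B_U$; combined with minimality and the fact that $(T_U)_{\text{ns}}$ must account for the remaining summand $U$, this will give $(T_U)_{\text{s}} = B_U$ and $(T_U)_{\text{ns}} = U$.

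First I would establish $\Gen B_U = {}^{\perp}(\tau U)$. The inclusion $\Gen B_U \subseteq \Gen T_U = {}^{\perp}(\tau U)$ is clear since $B_U$ is a summand of $T_U$. For the reverse inclusion, recall the defining property of the Bongartz complement: there is an exact sequence $0 \to \Lambda \to B_U^0 \to B_U^1 \to 0$ with $B_U^i \in \add B_U$ (the Bongartz construction), so $\Lambda \in \Gen B_U \ast \Gen B_U$, hence every module is a quotient of a module in $\add B_U$ up to the relevant filtration; more directly, $\Lambda$ being a submodule with $\add B_U$-quotient forces $\Gen B_U$ to contain every projective-generated torsion class object lying in ${}^{\perp}(\tau U)$. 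The cleanest route: since $B_U$ is the Bongartz complement, $\Gen B_U$ is the \emph{largest} torsion class contained in ${}^{\perp}(\tau U)$ — indeed $\P(\Gen B_U) \supseteq$ all Ext-projectives, and by~\cite[Thm.\ 2.10]{air} the torsion class ${}^{\perp}(\tau U)$ has Ext-projective generator $T_U$, while $U$ is \emph{not} split projective precisely because $U$ is non-projective. So I would argue directly that for any $M \in {}^{\perp}(\tau U)$, applying $\Hom(-,M)$ to the Bongartz sequence $0 \to \Lambda \to B_U^0 \to B_U^1 \to 0$ and using $\Ext^1(B_U^1, M) = 0$ (which holds since $M \in {}^{\perp}(\tau U) = \Gen T_U$ and $B_U^1$ is Ext-projective there) shows $\Hom(B_U^0, M) \twoheadrightarrow \Hom(\Lambda, M) = M$ is surjective, hence $M \in \Gen B_U$. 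Thus $\Gen B_U = {}^{\perp}(\tau U) = \Gen T_U$.

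Next, having $\Gen B_U = \Gen T_U$, the minimal summand $Z = (T_U)_{\text{s}}$ with $\Gen Z = \Gen T_U$ is a direct summand of $B_U$ (by the uniqueness discussion: $B_U$ is such a summand, minimality makes $(T_U)_{\text{s}}$ a summand of it since $(T_U)_{\text{s}}$ consists of split projectives which must lie in $\Gen B_U = \Gen Z$, forcing them to be summands of $Z$... I would phrase it as: $(T_U)_{\text{s}} \in \Gen B_U$, and being split projective in $\Gen T_U$ it is a summand of $B_U$). Then $(T_U)_{\text{ns}}$, the complement, contains $U$ as a summand. To finish I must rule out that $U$ itself has a split projective summand, i.e.\ that some indecomposable summand of $U$ lies in $(T_U)_{\text{s}}$. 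Suppose $U_0$ is an indecomposable summand of $U$ that is split projective in $\Gen T_U$; being split projective means $U_0$ is a summand of $(T_U)_{\text{s}}$, which is a summand of $B_U$ — contradicting that $U$ and $B_U$ have no common summand. Hence no summand of $U$ is split projective, so $(T_U)_{\text{s}} = B_U$ and $(T_U)_{\text{ns}} = U$ exactly.

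\textbf{Main obstacle.} The delicate point is verifying $\Gen B_U = {}^{\perp}(\tau U)$, specifically the surjectivity argument from the Bongartz sequence; one must be careful that the relevant $\Ext^1$-vanishing holds, which relies on $M \in {}^{\perp}(\tau U)$ being equivalent to $\Ext^1(T_U, \Gen M) = 0$ via Lemma~\ref{rigid-rigid}(b) and the Ext-projectivity of $T_U$ in its own torsion class. An alternative, perhaps cleaner, is to invoke Theorem~\ref{t:dirrtbijection} directly: the wide subcategory $\mathsf{W}(T_U, 0) = J((T_U)_{\text{ns}})$ must equal $J(U)$ since both have the right rank $n - \delta(U)$ and $U \subseteq (T_U)_{\text{ns}}$ would give $J((T_U)_{\text{ns}}) \subseteq J(U)$; matching ranks forces $(T_U)_{\text{ns}} = U$, and then $(T_U)_{\text{s}} = B_U$ follows since these are complementary summands of the basic module $T_U$. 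I would likely present this rank-counting argument as the primary proof, as it sidesteps the homological computation entirely and uses only results already available in the excerpt.
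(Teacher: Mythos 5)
There is a genuine gap at the pivot of your argument. The reduction to ``every indecomposable summand of $B_U$ is split projective and no indecomposable summand of $U$ is'' is correct, but the claimed equivalence of this with $\Gen B_U=\Gen T_U$ is false in the direction you need: that equality only yields that $(T_U)_{\text{s}}$ is a direct summand of $B_U$, and your concluding step ``so $(T_U)_{\text{s}}=B_U$'' is a non sequitur --- you never exclude an indecomposable summand of $B_U$ that is Ext-projective in ${}^{\perp}(\tau U)$ but not split projective. This missing half is exactly the content of \cite[Lemma 4.12]{bm}, which is what the paper cites at this point, and nothing in your proposal substitutes for it. Moreover, the input $\Gen B_U={}^{\perp}(\tau U)$ is itself not established as written: a short exact sequence $0\to\Lambda\to B_U^0\to B_U^1\to 0$ with \emph{both} outer terms in $\add B_U$ is not the Bongartz sequence even classically (the cokernel term lies in $\add U$), and in the $\tau$-tilting setting one only has a right-exact sequence $\Lambda\to B^0\to U^1\to 0$ in which $\Lambda\to B^0$ is a generally non-injective left ${}^{\perp}(\tau U)$-approximation, so the $\Ext^1$-vanishing argument for surjectivity of $\Hom(B^0,M)\to\Hom(\Lambda,M)$ does not go through. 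A further warning sign: your argument for ``no summand of $U$ is split projective'' never visibly uses that $U$ is non-projective, yet the statement fails for projective $U$.

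The alternative you say you would present as the primary proof is circular. By Proposition~\ref{prop-wide}(b) we have $r(J((T_U)_{\text{ns}}))=n-\delta((T_U)_{\text{ns}})$, so asserting that $\W(T_U,0)=J((T_U)_{\text{ns}})$ ``has the right rank $n-\delta(U)$'' presupposes $\delta((T_U)_{\text{ns}})=\delta(U)$, which, given that $U$ is a summand of $(T_U)_{\text{ns}}$, is precisely the conclusion. Even granted $J((T_U)_{\text{ns}})=J(U)$, passing to $(T_U)_{\text{ns}}=U$ requires injectivity of $V\mapsto J(V)$ on $\tau$-rigid modules, which is Proposition~\ref{p:JUJV} --- a result the paper deduces \emph{from} this lemma. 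The paper's actual use of Theorem~\ref{t:dirrtbijection} is different and non-circular: it invokes injectivity of $(T,P)\mapsto\W(T,P)$ only in the degenerate case, showing that if $U$ were split projective then $\W(T_U,0)=\module\Lambda=\W(P,0)$ for $P$ a projective generator, whence $T_U=P$ and $U$ would be projective, a contradiction. To repair your write-up you would need to prove or cite that all summands of $B_U$ are split projective, and replace the rank count by an argument that genuinely uses the non-projectivity of $U$.
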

\begin{proof}
By the definition of Bongartz complement, we have that
$\add(T_U)=\P({}^{\perp} (\tau U))$.
By~\cite[Lemma 4.12]{bm}, the indecomposable direct summands of $B_U$ are 
split projective in ${}^{\perp} (\tau U)$.
Suppose that $U$ was also split projective in ${}^\perp \tau U$. Then we would
have $(T_U)_{\text{ns}}=0$ and therefore $\W(T_U,0)=\module \Lambda$
in Theorem~\ref{t:dirrtbijection}. But $\W(P,0)=\module \Lambda$, where $P$
is an additive generator for $\P(\Lambda)$, so $T_U=P$ by
Theorem~\ref{t:dirrtbijection}, and $U$ is projective, giving a contradiction.
Hence $U$ is not split projective in ${}^{\perp} \tau U$ and
we are done.
\end{proof}

\begin{proposition} \label{p:JUJV}
Let $\Lambda$ be a $\tau$-tilting finite algebra.
Let $U$ and $V$ be indecomposable $\tau$-rigid $\Lambda$-modules and suppose that $J(U)=J(V)$. Then $U=V$.
\end{proposition}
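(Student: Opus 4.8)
The plan is to read off $J(N)$, for an indecomposable $\tau$-rigid module $N$, as the image of an explicit support $\tau$-tilting pair under the bijection of Theorem~\ref{t:dirrtbijection}, and then to exploit injectivity of that bijection, distinguishing cases according to whether $U$ and $V$ are projective.

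\emph{Step 1 (identifying the pair).} Let $N$ be an indecomposable $\tau$-rigid module. If $N$ is \emph{not} projective, let $B_N$ be its Bongartz complement and $T_N=N\amalg B_N$, a $\tau$-tilting module; then $(T_N,0)$ is a support $\tau$-tilting pair, and by Lemma~\ref{l:Bongartzsplit} its non-split part is $(T_N)_{\text{ns}}=N$, so
\[ \W(T_N,0)=J\big((T_N)_{\text{ns}}\big)\cap 0^{\perp}=J(N). \]
If instead $N=\Lambda e$ is projective, put $\Gamma_N=\Lambda/\Lambda e\Lambda$, regarded as a $\Lambda$-module. Then $(\Gamma_N,N)$ is a support $\tau$-tilting pair: indeed $\Hom_\Lambda(N,\Gamma_N)=e\Gamma_N=0$; the module $\Gamma_N$ is $\tau_\Lambda$-rigid since it is projective as a $\Gamma_N$-module, hence $\tau_\Lambda$-rigid by \cite[Lemma 2.1]{air}; and as $N$ is projective we have $\tau N=0$, so $J(N)=N^{\perp}=\module\Gamma_N$, which has rank $n-1$ by Proposition~\ref{prop-wide}(b), giving $\delta(\Gamma_N)+\delta(N)=(n-1)+1=n$. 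Moreover $\Gen_\Lambda\Gamma_N=\module\Gamma_N$ (a $\Gamma_N$-module is a quotient of a free $\Gamma_N$-module, and quotients of $\Gamma_N$-modules are $\Gamma_N$-modules), and each indecomposable projective $\Gamma_N$-module is split projective in $\module\Gamma_N$; hence $(\Gamma_N)_{\text{ns}}=0$, and therefore
\[ \W(\Gamma_N,N)=J\big((\Gamma_N)_{\text{ns}}\big)\cap N^{\perp}=\module\Lambda\cap N^{\perp}=J(N). \]

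\emph{Step 2 (conclusion).} Assume $J(U)=J(V)$. If $U,V$ are both non-projective, then $\W(T_U,0)=\W(T_V,0)$, so $T_U=T_V$ by Theorem~\ref{t:dirrtbijection}, and comparing non-split parts gives $U=(T_U)_{\text{ns}}=(T_V)_{\text{ns}}=V$. If both are projective, then $\W(\Gamma_U,U)=\W(\Gamma_V,V)$ forces $(\Gamma_U,U)=(\Gamma_V,V)$, hence $U=V$. If $U$ is non-projective and $V$ is projective, then $\W(T_U,0)=\W(\Gamma_V,V)$ would force $(T_U,0)=(\Gamma_V,V)$, impossible as $V\neq 0$; so this case does not arise, and symmetrically with the roles of $U$ and $V$ exchanged. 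Thus $U=V$ in every case.

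The formal checks (the defining conditions of a support $\tau$-tilting pair, and $\Gen_\Lambda\Gamma_N=\module\Gamma_N$) are routine; the point demanding care is the projective case of Step~1 — exhibiting the pair $(\Gamma_N,N)$ and verifying $(\Gamma_N)_{\text{ns}}=0$, i.e.\ that $J(N)=N^{\perp}$ corresponds under Theorem~\ref{t:dirrtbijection} to a pair with \emph{nonzero} projective part. It is precisely this mismatch of projective parts that eliminates the mixed case and makes the proposition hold.
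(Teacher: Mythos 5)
Your proof is correct, and its core is the same as the paper's: read off $J(N)$ as the wide subcategory attached to an explicit support $\tau$-tilting pair under the bijection of Theorem~\ref{t:dirrtbijection}, using the Bongartz completion $(T_N,0)$ together with Lemma~\ref{l:Bongartzsplit} in the non-projective case, and then invoke injectivity. Where you differ is in the projective case: the paper's proof simply applies Lemma~\ref{l:Bongartzsplit} to both $U$ and $V$, even though that lemma is stated only for non-projective modules (and indeed fails for a projective $N$, where $T_N=\Lambda$ and $(T_N)_{\text{ns}}=0$, so $\W(T_N,0)=\module\Lambda\neq N^{\perp}$). Your substitute pair $(\Lambda/\Lambda e\Lambda,\,\Lambda e)$, with the verification that $\Gen(\Lambda/\Lambda e\Lambda)=\module(\Lambda/\Lambda e\Lambda)$ has trivial non-split-projective part, correctly identifies the preimage of $J(N)=N^{\perp}$ in that case, and your observation that the projective parts of the two kinds of pairs cannot match is exactly what rules out the mixed case. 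So your argument is not just a valid alternative — it supplies the case analysis that the paper's own two-line proof glosses over, and which is genuinely needed for the application in Theorem~\ref{main-irr}. The only cosmetic caveat is the pointer to \cite[Lemma 2.1]{air} for the $\tau_{\Lambda}$-rigidity of $\Lambda/\Lambda e\Lambda$: the fact is standard (and consistent with how this paper cites that lemma in Case~III), but one can also see it directly from $\Ext^1_{\Lambda}(\Lambda/\Lambda e\Lambda, M)$ being a quotient of a subgroup of $\Hom_{\Lambda}((\Lambda e)^m,M)=(eM)^m=0$ for $M\in\module(\Lambda/\Lambda e\Lambda)$, combined with Lemma~\ref{rigid-rigid}(b).
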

\begin{proof}
Let $B_U$ (respectively, $B_V$) be the Bongartz complement of $U$ (respectively, $V$), and set $T_U=U\amalg B_U$ and $T_V=V\amalg B_V$. Then, since
$T_U$ and $T_V$ are $\tau$-tilting modules, we have that
$(T_U,0)$ and $(T_V,0)$ are $\tau$-tilting pairs. We have $\W(T_U,0)=
J((T_U)_{\text{ns}})=J(U)$ by Lemma~\ref{l:Bongartzsplit}, and similarly
$\W(T_V,0)=J(V)$. So, by Theorem~\ref{t:dirrtbijection}, $U=V$.
\end{proof}

We now finish the proof of Theorem \ref{main-irr}.
\begin{proof}[Proof of Theorem \ref{main-irr}:]
By Lemma~\ref{l:irreducible} and Proposition~\ref{tau-finite}, we have
$\W' = J_{\W}(\U)$ where $\U$ is either an indecomposable
$\tau$-rigid module or $\U= P[1]$ for an indecomposable module $P$ which is projective in $\W$. The result now follows from Proposition~\ref{p:JUJV}
and the fact that $J_{\W}(P) =J_{\W}(P[1])$.
\end{proof}

\section{Morphisms in \texorpdfstring{$\mathfrak{W}_{\Lambda}$}{W Lambda} and signed \texorpdfstring{$\tau$}{tau}-exceptional sequences}
\label{s:factorization}
The notion of signed $\tau$-exceptional sequence was introduced in \cite{bm}. Such sequences can be interpreted as factorizations of morphisms in the category $\W_{\Lambda}$. Our aim in this section is to make a precise version of this statement. 

Recall from \cite{bm} that an object $M\amalg P[1]$ in
$\C(\Lambda)$ is said to be support \emph{$\tau$-rigid} if $M$ is
a $\tau$-rigid module in $\module\Lambda$, $P$ lies in
$\P(\Lambda)$ and $\Hom(P,M)=0$. Furthermore, a sequence 
\begin{equation}\label{tau-seq}
\SS = (\U_1, \U_2, \dots, \U_t)
\end{equation} 
of indecomposable objects in $\C(\Lambda)$ is said to be a {\em signed $\tau$-exceptional sequence} if
$\U_t$ is support $\tau$-rigid in $\C(\Lambda)$ and the subsequence
$(\U_1, \U_2, \dots, \U_{t-1})$ is a signed $\tau$-exceptional sequence in $J(\U_t)$. 

\begin{theorem} \cite[Thm.\ 5.4]{bm}\label{t:bmbijection}
For each $t\in \{1,\ldots ,n\}$ there is a bijection $\varphi_t$ from the set of signed $\tau$-exceptional sequences of length $t$ in $\C(\Lambda)$ to the set of ordered support $\tau$-rigid objects of length $t$ in $\C(\Lambda)$.
\end{theorem}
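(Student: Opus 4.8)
The plan is to build the bijection $\varphi_t$ recursively on $t$, mirroring the recursive definition of signed $\tau$-exceptional sequences, and to use the bijection $\E_{\U}$ (together with its additive extension in Theorem~\ref{rigid-sums} and Theorem~\ref{thm-bi}) as the engine that transports data between $\C(\Lambda)$ and $\C(J(\U))$. For $t=1$ there is nothing to do: a signed $\tau$-exceptional sequence of length $1$ is by definition a single indecomposable support $\tau$-rigid object, and an ordered support $\tau$-rigid object of length $1$ is the same thing, so $\varphi_1$ is the identity. For the inductive step, suppose $\varphi_{t-1}$ has been constructed (for \emph{every} wide subcategory equivalent to a module category, not just $\module\Lambda$ itself, so that the induction has enough room). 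Given a signed $\tau$-exceptional sequence $\SS = (\U_1,\dots,\U_t)$, the last term $\U_t$ is support $\tau$-rigid in $\C(\Lambda)$ and $(\U_1,\dots,\U_{t-1})$ is a signed $\tau$-exceptional sequence in $J(\U_t)$. Since $J(\U_t)$ is equivalent to a module category by Theorem~\ref{rig-fin}(a), the inductive bijection $\varphi_{t-1}$ (for $J(\U_t)$) sends $(\U_1,\dots,\U_{t-1})$ to an ordered support $\tau$-rigid object $(\overline{\X}_1,\dots,\overline{\X}_{t-1})$ in $\C(J(\U_t))$.

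The key move is then to pull this back along $\E_{\U_t}$. By Theorem~\ref{rigid-sums} (applied iteratively, or directly via Lemma~\ref{l:domain}), the inverse map $\F_{\U_t} = \E_{\U_t}^{-1}$ sends each $\overline{\X}_i$ to an indecomposable object $\X_i$ in $\C(\Lambda)$ such that $\X_1 \amalg \cdots \amalg \X_{t-1} \amalg \U_t$ is support $\tau$-rigid, with all summands distinct (this is exactly the content of Theorem~\ref{rigid-sums} on the level of additive collections, keeping track of the order). I would then define
$$\varphi_t(\SS) = (\X_1, \X_2, \dots, \X_{t-1}, \U_t),$$
which is an ordered support $\tau$-rigid object of length $t$ in $\C(\Lambda)$: its underlying object $\X_1 \amalg \cdots \amalg \X_{t-1}\amalg \U_t$ is support $\tau$-rigid because $\E_{\U_t}$ (hence $\F_{\U_t}$) carries support $\tau$-rigid objects to support $\tau$-rigid objects and respects the condition $\add(\text{-})\cap \add\U_t = 0$. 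The inverse $\psi_t$ goes the other way: given an ordered support $\tau$-rigid object $(\X_1,\dots,\X_t)$, take $\U_t := \X_t$ (which is support $\tau$-rigid in $\C(\Lambda)$), apply $\E_{\U_t}$ to the remaining summands to land in $\C(J(\U_t))$, and apply $\psi_{t-1}$ (for $J(\U_t)$) to recover the first $t-1$ entries of a signed $\tau$-exceptional sequence there; this is well-defined and produces a signed $\tau$-exceptional sequence of length $t$ in $\C(\Lambda)$ by the very definition of the latter.

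The verification that $\varphi_t$ and $\psi_t$ are mutually inverse is then a routine unwinding: it comes down to the fact that $\E_{\U_t}$ and $\F_{\U_t}$ are mutually inverse bijections between the two domains identified in Lemma~\ref{l:domain}, combined with the inductive hypothesis that $\varphi_{t-1}$ and $\psi_{t-1}$ are mutually inverse over $J(\U_t)$. Note that the definition of signed $\tau$-exceptional sequence requires the \emph{set} of last entries to match: under $\varphi_t$ the last entry $\U_t$ is untouched, and the first $t-1$ entries form a signed $\tau$-exceptional sequence in $\C(J(\U_t))$ precisely when their $\E_{\U_t}$-images do, so the recursion closes up cleanly. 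The main obstacle, and the point that needs the most care, is ensuring that the inductive hypothesis is stated strongly enough — namely that the bijection $\varphi_{t-1}$ exists \emph{functorially in the wide subcategory}, so that it can legitimately be invoked inside $\C(J(\U_t))$; this in turn relies on Proposition~\ref{tau-finite}(b), which guarantees that $J(\U_t)$ is again $\tau$-tilting finite and equivalent to a module category, so that the whole apparatus (the map $\E^{J(\U_t)}$, Theorem~\ref{rigid-sums}, etc.) is available there. Once that is in place, the remaining bookkeeping about indecomposability of the $\X_i$ and the no-common-summands condition is handled directly by Theorem~\ref{thm-bi} and Theorem~\ref{rigid-sums}.
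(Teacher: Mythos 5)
Your recursive construction is correct and is essentially the argument of the cited source: the paper itself gives no proof of this statement (it is quoted from [bm, Thm.\ 5.4]), but the bijection you build by peeling off $\U_t$ and pulling back along $\F_{\U_t}=\E_{\U_t}^{-1}$ is exactly the map recorded in Proposition~\ref{p:phit}, namely $(\U_1,\ldots,\U_t)\mapsto(\F_{\U_t}\cdots\F_{\U_2}(\U_1),\ldots,\F_{\U_t}(\U_{t-1}),\U_t)$. One minor remark: you do not need Proposition~\ref{tau-finite}(b) ($\tau$-tilting finiteness) for the induction to close up --- Theorem~\ref{rig-fin}(a) already guarantees that $J(\U_t)$ is equivalent to a module category for an arbitrary finite dimensional algebra, which is all the recursion requires, and indeed the theorem is stated without any finiteness hypothesis.
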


We have the following, noting that if $\Lambda$ is $\tau$-tilting finite
then every wide subcategory of $\module\Lambda$ is equivalent to a module
category, by Proposition \ref{tau-finite}.

\begin{corollary} \label{c:bmbijection}
Suppose that $\Lambda$ is $\tau$-tilting finite, and let
$\W$ be a wide subcategory of $\module\Lambda$.
Then for each $t\in \{1,\ldots ,n\}$ there is a bijection $\varphi^{\W}_t$ between the set of set of signed $\tau$-exceptional sequences of length $t$ in $\W$ and the set of ordered support $\tau$-rigid objects of length $t$ in $\C(\W)$.
\end{corollary}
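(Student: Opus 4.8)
The plan is to deduce Corollary~\ref{c:bmbijection} from Theorem~\ref{t:bmbijection} by applying the latter to the wide subcategory $\W$ in place of $\module\Lambda$. The point is that, by Proposition~\ref{tau-finite}(b), $\W$ is $\tau$-tilting finite, and by Theorem~\ref{rig-fin}(a) (or Proposition~\ref{tau-finite} combined with~\cite{jasso}) it is equivalent to the module category $\module\Gamma$ of a finite dimensional algebra $\Gamma$ with $r(\Gamma) = r(\W) =: m \leq n$. First I would fix such an equivalence $\Phi\colon \W \xrightarrow{\sim} \module\Gamma$; it induces an equivalence of bounded derived categories and hence a bijection between $\C(\W)$ and $\C(\Gamma)$ carrying support $\tau$-rigid objects to support $\tau$-rigid objects, and signed $\tau$-exceptional sequences to signed $\tau$-exceptional sequences (the notions in $\C(\W)$ are defined intrinsically, via $J_{\W}(-)$ and $\tau_{\W}$, exactly so as to transport correctly under such an equivalence; one checks that $\Phi$ takes $J_{\W}(\U)$ to $J(\Phi(\U))$ and $\tau_{\W}$ to $\tau_{\Gamma}$, so the recursive definition of a signed $\tau$-exceptional sequence is respected).

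Next I would apply Theorem~\ref{t:bmbijection} to $\Gamma$: for each $t \in \{1,\ldots,m\}$ there is a bijection $\varphi_t^{\Gamma}$ from signed $\tau$-exceptional sequences of length $t$ in $\C(\Gamma)$ to ordered support $\tau$-rigid objects of length $t$ in $\C(\Gamma)$. Composing with the bijections induced by $\Phi$ on both sides gives the desired bijection $\varphi_t^{\W}$ for $t \leq m$. For $t$ with $m < t \leq n$, both sides are empty: a signed $\tau$-exceptional sequence in $\W$ of length $t$ would, by the recursive definition and Proposition~\ref{prop-wide}(b), force a strictly decreasing chain of wide subcategories of length $t$ starting at $\W$, which is impossible since $r(\W) = m$; likewise an ordered support $\tau$-rigid object of length $t$ in $\C(\W)$ has $\delta = t$ summands and would, again by Proposition~\ref{prop-wide}(b) applied iteratively, exceed the available rank. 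So in that range $\varphi_t^{\W}$ is the (unique) bijection between empty sets.

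The only genuinely substantive point, and the one I would spell out carefully, is the compatibility of the intrinsic structure on $\C(\W)$ with the equivalence $\Phi$ — that is, that ``support $\tau$-rigid in $\C(\W)$'' and ``signed $\tau$-exceptional sequence in $\W$'' really are equivalence-invariant notions. This reduces to: (i) an equivalence of abelian categories preserves projectives, kernels, cokernels, and extensions, hence wide subcategories and the operation $\U \mapsto J(\U)$; and (ii) it commutes with the Auslander–Reiten translate, so $\tau_{\W}$ corresponds to $\tau_{\Gamma}$ and $\tau$-rigidity is preserved. Granting these standard facts, the recursive definitions of both ``signed $\tau$-exceptional sequence'' and ``ordered support $\tau$-rigid object'' transport verbatim, and the corollary follows. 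I expect this bookkeeping to be the main (though not deep) obstacle; everything else is a direct transport of Theorem~\ref{t:bmbijection} plus a rank count to handle the lengths $t > r(\W)$.
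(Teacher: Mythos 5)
Your proposal is correct and is essentially the paper's argument: the paper proves this corollary simply by remarking that, since $\Lambda$ is $\tau$-tilting finite, every wide subcategory $\W$ is equivalent to the module category of a finite dimensional algebra (Proposition~\ref{tau-finite} together with Theorem~\ref{rig-fin}), and then transporting Theorem~\ref{t:bmbijection} across that equivalence. Your extra care about equivalence-invariance of the intrinsic notions and about the empty cases $r(\W)<t\leq n$ is sound bookkeeping that the paper leaves implicit.
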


Recall now the following fact from \cite[Remark 5.12]{bm}.

\begin{proposition}\label{p:phit}\cite{bm}
Assume that $\Lambda$ is $\tau$-tilting finite.
Let $\W$ be a wide subcategory of $\module\Lambda$.
Then the bijection $\varphi_t^{\W}$ in Corollary~\ref{c:bmbijection} is
given by
$$(\U_1,\ldots ,\U_t)\mapsto 
(\F_{\U_t}^{\W_t}\cdots \F_{\U_2}^{\W_2}(\U_1),
\F_{\U_t}^{\W_t}\cdots \F_{\U_3}^{\W_3}(\U_2),
\dots ,\U_t)$$
where $\W_t=\W$ and $\W_i=J_{\W_{i+1}}(\U_{i+1})$ for all $i$.
\end{proposition}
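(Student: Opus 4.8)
\textbf{Proof proposal for Proposition~\ref{p:phit}.}

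The plan is to argue by induction on the length $t$ of the signed $\tau$-exceptional sequence, using Theorem~\ref{t:bmbijection} (or rather Corollary~\ref{c:bmbijection}) to identify $\varphi_t^{\W}$ as a bijection, and then verify that the explicit formula in the statement produces the correct ordered support $\tau$-rigid object. Throughout I write $\W_t = \W$ and $\W_i = J_{\W_{i+1}}(\U_{i+1})$, so that by definition of signed $\tau$-exceptional sequence the truncation $(\U_1,\dots,\U_{t-1})$ is a signed $\tau$-exceptional sequence in $\W_{t-1} = J_{\W}(\U_t)$ and $\U_t$ is support $\tau$-rigid in $\C(\W)$.

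For the base case $t=1$, a signed $\tau$-exceptional sequence of length $1$ in $\W$ is just an indecomposable support $\tau$-rigid object $\U_1$ in $\C(\W)$, and $\varphi_1^{\W}(\U_1) = \U_1$; the claimed formula reads $(\U_1)\mapsto(\U_1)$, which is correct. For the inductive step, assume the formula holds for all wide subcategories and all lengths strictly less than $t$. Given $(\U_1,\dots,\U_t)$, apply the inductive hypothesis inside the wide subcategory $\W_{t-1}=J_{\W}(\U_t)$ to the sequence $(\U_1,\dots,\U_{t-1})$: this gives that $\varphi_{t-1}^{\W_{t-1}}(\U_1,\dots,\U_{t-1})$ equals
$$\bigl(\F_{\U_{t-1}}^{\W_{t-1}}\cdots \F_{\U_2}^{\W_2}(\U_1),\ \dots,\ \U_{t-1}\bigr),$$
an ordered support $\tau$-rigid object of length $t-1$ in $\C(\W_{t-1})=\C(J_{\W}(\U_t))$. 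Now I need the recursive description of $\varphi_t^{\W}$ itself: by the construction of the bijection in~\cite[Thm.\ 5.4]{bm}, $\varphi_t^{\W}(\U_1,\dots,\U_t)$ is obtained from $\varphi_{t-1}^{J_{\W}(\U_t)}(\U_1,\dots,\U_{t-1})$ by applying $\F_{\U_t}^{\W}$ componentwise and appending $\U_t$ as the last entry. Applying $\F_{\U_t}^{\W}$ to each component of the displayed $(t-1)$-tuple and appending $\U_t$ yields exactly
$$\bigl(\F_{\U_t}^{\W}\F_{\U_{t-1}}^{\W_{t-1}}\cdots \F_{\U_2}^{\W_2}(\U_1),\ \dots,\ \F_{\U_t}^{\W}(\U_{t-1}),\ \U_t\bigr),$$
which is the asserted formula with $\W_t=\W$. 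The fact that this is a well-defined ordered support $\tau$-rigid object, and that $\F_{\U_t}^{\W}$ may legitimately be applied to each $\F_{\U_{t-1}}^{\W_{t-1}}\cdots(\U_i)$, follows from Corollary~\ref{c:domain} and Theorem~\ref{rigid-sums}: each partial image lands in the domain of the relevant $\F$-map because the combined object $\F_{\U_{t-1}}^{\W_{t-1}}\cdots(\U_1)\amalg\cdots\amalg\U_{t-1}$ is support $\tau$-rigid in $\C(J_{\W}(\U_t))$ with no summand in $\add\,0$, so it lies in the domain of $\E_0^{\ \cdot}$-type considerations, and the target of $\varphi_{t-1}$ is precisely the domain on which $\F_{\U_t}^{\W}=(\E_{\U_t}^{\W})^{-1}$ acts.

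The main obstacle is making precise the recursive description of $\varphi_t^{\W}$ that I invoked above — namely that passing from length $t-1$ to length $t$ is exactly ``apply $\F_{\U_t}^{\W}$ componentwise, then append $\U_t$.'' This is really the content of how the bijection in~\cite[Thm.\ 5.4]{bm} is built, together with the compatibility of $\E$-maps with wide subcategories recorded in Corollary~\ref{main-biW}; so the honest version of this proof is a careful unwinding of the definition of $\varphi_t$ in~\cite{bm} and of the fact that $\F_{\U_t}^{\W}$ is a bijection from ordered support $\tau$-rigid objects in $\C(J_{\W}(\U_t))$ onto ordered support $\tau$-rigid objects in $\C(\W)$ whose last entry, after appending $\U_t$, remains support $\tau$-rigid. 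Everything else is bookkeeping with the $\W_i$ and a routine induction. Since~\cite[Remark 5.12]{bm} already records this formula, the cleanest write-up simply cites that remark and notes that the $\tau$-tilting finite hypothesis is what guarantees (via Proposition~\ref{tau-finite}) that every $\W_i$ is equivalent to a module category, so that the maps $\F_{\U_i}^{\W_i}$ are defined in the first place.
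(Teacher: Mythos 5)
Your proposal is essentially correct and matches the paper's treatment: the paper gives no proof of this proposition at all, simply recalling it as a fact from \cite[Remark 5.12]{bm}, and your induction is precisely the unwinding of the recursive construction of $\varphi_t$ from \cite[Thm.\ 5.4]{bm} that that remark records. As you yourself observe, the only substantive input is the recursive description of $\varphi_t^{\W}$ (apply $\F_{\U_t}^{\W}$ componentwise and append $\U_t$), which must be taken from \cite{bm}; the remaining steps (base case, domains via Theorem~\ref{rigid-sums}, and the role of $\tau$-tilting finiteness via Proposition~\ref{tau-finite}) are correct bookkeeping.
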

%\begin{proof}
%We recall the description of the bijection
%from~\cite[Proof of Thm.\ 5.4]{bm}.
%For $t=1$, the bijection $\varphi_1^{\W}$
%is equal to the identity map, so the
%Proposition holds for this case.
%For $t>1$, the bijection $\varphi_t^{\W}$ is given by
%$$(\U_1,\ldots ,\U_t)\mapsto (\F^{\W_t}_{\U_t}(\U'_1),
%\ldots ,\F_{\U_t}^{\W_t}(U'_{t-1}),U_t),$$
%where
%$$(\U'_1,\ldots ,\U'_{t-1})=\varphi^{\W_{t-1}}_{t-1}(\U_1,\ldots ,\U_{t-1}).$$
%
%We prove the result by induction on $t$. For $t=1$, the result
%is clear, so suppose that the result holds for $t-1$.
%Then $\varphi^{\W}_{t}$ maps $(U_1,\ldots ,U_t)$ to
%\begin{equation}
%\begin{split}
%(\F^{\W_t}_{\U_t}(\U'_1),& \ldots ,\F_{\U_t}(\U'_{t-1}),\U_t)= \\
%&(\F^{\W_t}_{\U_t}\F^{\W_{t-1}}_{\U_{t-1}}\cdots \F^{\W_2}_{\U_2}(\U_1),
%\F^{\W_t}_{\U_t}\F^{\W_{t-1}}_{\U_{t-1}}\cdots \F^{\W_3}_{\U_3}(\U_2),
%\cdots ,\F^{\W_t}_{\U_t}(\U_{t-1}), \U_t),
%\end{split}
%\end{equation}
%so the result holds for $t$.
%\end{proof}

To prepare for our main results in this section, we now state and prove the following three lemmas.
\begin{lemma} \label{l:welldefined}
Let $\W$ be a wide subcategory of $\module\Lambda$, and let $\U_1,\ldots ,\U_t$ be
indecomposable objects in $\C(\W)$.
Then the following are equivalent.
\begin{itemize}
\item[(a)] The sequence $(\U_1,\ldots ,\U_t)$ is a signed
$\tau$-exceptional sequence in $\W$;
\item[(b)] There are wide subcategories $\W_1,\ldots ,\W_t$ of
$\module \Lambda$ with $\W_t=\W$, and maps $g_{\U_i}^{\W_i}$ for
$i=1,\ldots ,t$, such that the composition
$g_{\U_1}^{\W_1}\cdots g_{\U_t}^{\W_t}$
is well-defined in $\mathfrak{W}$.
\end{itemize}
\end{lemma}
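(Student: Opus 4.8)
The statement is an equivalence between being a signed $\tau$-exceptional sequence in $\W$ and being a composable chain of morphisms in $\mathfrak{W}_\Lambda$. The natural strategy is induction on $t$, using the recursive nature of both notions: a signed $\tau$-exceptional sequence $(\U_1,\ldots,\U_t)$ in $\W$ requires $\U_t$ to be support $\tau$-rigid in $\C(\W)$ and $(\U_1,\ldots,\U_{t-1})$ to be a signed $\tau$-exceptional sequence in $J_\W(\U_t)$; similarly, a composition $g_{\U_1}^{\W_1}\cdots g_{\U_t}^{\W_t}$ is well-defined exactly when each $g_{\U_i}^{\W_i}$ is a morphism (so $\U_i$ is support $\tau$-rigid in $\C(\W_i)$) and the targets and sources match, i.e. $\W_i = J_{\W_{i+1}}(\U_{i+1})$. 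Both conditions decompose in the same way, so the induction should go through cleanly once the base case and the inductive step are set up.

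First I would handle the base case $t=1$: here (a) says $\U_1$ is an indecomposable support $\tau$-rigid object in $\C(\W)$, and (b) says there is a wide subcategory $\W_1 = \W$ and a morphism $g_{\U_1}^{\W_1}$ in $\mathfrak{W}_\Lambda$; by the definition of $\mathfrak{W}_\Lambda$ (from Section~\ref{s:mainresult}), such a morphism exists precisely when $\U_1$ is a (basic) support $\tau$-rigid object in $\C(\W)$, which for an indecomposable object is exactly condition (a). For the inductive step, assume the equivalence for sequences of length $t-1$ in every wide subcategory. Given (a): $\U_t$ is support $\tau$-rigid in $\C(\W)$, so $g_{\U_t}^{\W}$ is a morphism $\W \to J_\W(\U_t)$; set $\W_t = \W$. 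Since $\Lambda$ is $\tau$-tilting finite, $J_\W(\U_t)$ is again equivalent to a module category (Theorem~\ref{rig-fin}(a), Proposition~\ref{tau-finite}), so the inductive hypothesis applies to the signed $\tau$-exceptional sequence $(\U_1,\ldots,\U_{t-1})$ in $J_\W(\U_t)$, yielding wide subcategories $\W_1,\ldots,\W_{t-1}$ with $\W_{t-1} = J_\W(\U_t)$ and a well-defined composition $g_{\U_1}^{\W_1}\cdots g_{\U_{t-1}}^{\W_{t-1}}$; appending $g_{\U_t}^{\W_t}$ on the right gives a well-defined composition in $\mathfrak{W}_\Lambda$ because $\W_{t-1} = J_{\W_t}(\U_t)$ is exactly the matching condition, establishing (b). Conversely, given (b): the morphism $g_{\U_t}^{\W_t}$ being defined forces $\U_t$ support $\tau$-rigid in $\C(\W_t) = \C(\W)$, and the composition being well-defined forces $\W_{t-1} = J_{\W_t}(\U_t) = J_\W(\U_t)$; then $(\U_1,\ldots,\U_{t-1})$ is a composable chain ending at $\W_{t-1}$, so by induction it is a signed $\tau$-exceptional sequence in $J_\W(\U_t)$, hence $(\U_1,\ldots,\U_t)$ is one in $\W$, giving (a).

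The only subtle point I expect is making sure that, in passing from a composable chain in $\mathfrak{W}_\Lambda$ to its recursive truncation, the "well-definedness" of the shorter composition is exactly what the inductive hypothesis needs, and conversely that appending a morphism on the right preserves well-definedness; this amounts to unwinding the definition of composition in $\mathfrak{W}_\Lambda$ and checking that the domain/codomain bookkeeping of the $g_{\U_i}^{\W_i}$ agrees with the recursive layering $\W_i = J_{\W_{i+1}}(\U_{i+1})$. This is essentially a matter of matching definitions rather than a substantive difficulty — there is no hard analytic or homological content here, since all the heavy lifting (that $J_\W(\U)$ is wide and equivalent to a module category, that composition is well-defined, and that $\Lambda$ $\tau$-tilting finite forces every wide subcategory to be $\tau$-tilting finite) has already been established in Theorems~\ref{rig-fin}, \ref{main-compo} and Proposition~\ref{tau-finite}. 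I would be careful only to state the base case explicitly and to invoke the inductive hypothesis in the correct wide subcategory $J_\W(\U_t)$, noting that it is again equivalent to a module category so that the recursion makes sense.
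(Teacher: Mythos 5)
Your proposal is correct and follows essentially the same route as the paper: induction on $t$, with the base case $t=1$ reducing to the definition of morphisms in $\mathfrak{W}_\Lambda$, and the inductive step matching the recursive definition of a signed $\tau$-exceptional sequence against the source/target bookkeeping $\W_{t-1}=J_{\W_t}(\U_t)$ for composability. The paper's proof is exactly this unwinding of definitions in both directions, so no further comment is needed.
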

\begin{proof}
We prove that (a) implies (b) by induction on $t$.
If $t=1$ then $\U_1$ is support $\tau$-rigid in $\C(\W)$, so
there is a corresponding map $g_{\U_1}^{\W_1}$, taking $\W_1=\W$,
and the result holds for this case. Suppose the result holds for
$t-1$, and let $(\U_1,\ldots ,\U_t)$ be a signed $\tau$-exceptional
sequence in $\W$ of length $t$. Then $(\U_1,\ldots ,\U_{t-1})$ is
a signed $\tau$-exceptional sequence of length $t-1$ in $J(\U_t)$.
By the induction hypothesis, there are wide subcategories
$\W_1,\ldots ,\W_{t-1}$ of $\module\Lambda$ with $\W_{t-1}=J_{\W}(\U_t)$,
and maps $g_{\U_i}^{\W_i}$ for $i=1,\ldots ,t-1$, such that
the composition
$g_{\U_1}^{\W_1}\cdots g_{\U_{t-1}}^{\W_{t-1}}$
is well-defined. Since $\U_t$ is support $\tau$-rigid in $\C(\W)$, there is a
map $g_{\U_t}^{\W}:\W\rightarrow J_{\W}(\U_t)$ in $\mathfrak{W}$.
The result follows, taking $\W_t=\W$.

We prove that (b) implies (a) by induction on $t$. For $t=1$
the result is clear, so suppose that the result holds for $t-1$,
and let $\W_i$ and $g_{\U_i}^{\W_i}$ be as in (b).
Since the composition
$g_{\U_1}^{\W_1}\cdots g_{\U_{t-1}}^{\W_{t-1}}$
is well-defined, $(\U_1,\ldots ,\U_{t-1})$ is a signed $\tau$-exceptional sequence in $\W_{t-1}$ by the induction hypothesis.
Since $g_{\U_t}^{\W_t}=g_{\U_t}^{\W}$ is a map, $\U_t$ is
support $\tau$-rigid in $\C(\W)$, and since the composition
$g_{\U_1}^{\W_1}\cdots g_{\U_t}^{\W_t}$
is well-defined, we have $J_{\W_t}(\U_t)=\W_{t-1}$, giving (a).
\end{proof}

Let $\W$ be a wide subcategory of $\module\Lambda$. For a
signed $\tau$-exceptional sequence $\U_1,\ldots ,\U_t$ in $\W$, we
denote by $\overline{\varphi}_t^{\W}(\U_1,\ldots ,\U_t)$ the direct sum of the entries in $\varphi_t^{\W}(\U_1,\ldots ,\U_t)$.

\begin{lemma} \label{l:composition}
Let $\W$ be a wide subcategory of $\module\Lambda$, and suppose
that the sequence $(\U_1,\ldots ,\U_t)$ is a signed
$\tau$-exceptional sequence in $\W$.
Set $\W_t=\W$ and $\W_i=J_{\W_{i+1}}(\U_{i+1})$ for all $i$.
Then
$$g_{\U_1}^{\W_1}\cdots g_{\U_t}^{\W_t}=
g^{\W_t}_{\overline{\varphi}^{\W_t}(\U_1,\ldots ,\U_t)}.$$
\end{lemma}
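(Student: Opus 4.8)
The plan is to prove the identity $g_{\U_1}^{\W_1}\cdots g_{\U_t}^{\W_t}= g^{\W_t}_{\overline{\varphi}^{\W_t}(\U_1,\ldots ,\U_t)}$ by induction on $t$, using the composition rule for $\mathfrak{W}_{\Lambda}$ together with Proposition~\ref{p:phit}. Note first that by Lemma~\ref{l:welldefined}, since $(\U_1,\ldots,\U_t)$ is a signed $\tau$-exceptional sequence in $\W$, the composition on the left is indeed well-defined, so the statement makes sense. The base case $t=1$ is immediate: $\varphi_1^{\W}(\U_1)=(\U_1)$, so $\overline{\varphi}_1^{\W}(\U_1)=\U_1$ and both sides equal $g_{\U_1}^{\W}$.

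For the inductive step, assume the result for sequences of length $t-1$. Given a signed $\tau$-exceptional sequence $(\U_1,\ldots,\U_t)$ in $\W=\W_t$, the subsequence $(\U_1,\ldots,\U_{t-1})$ is a signed $\tau$-exceptional sequence in $\W_{t-1}=J_{\W_t}(\U_t)$, and with the notation $\W_i=J_{\W_{i+1}}(\U_{i+1})$ this indexing is consistent with applying the induction hypothesis inside $\W_{t-1}$. Thus
\begin{equation*}
g_{\U_1}^{\W_1}\cdots g_{\U_{t-1}}^{\W_{t-1}}= g^{\W_{t-1}}_{\overline{\varphi}^{\W_{t-1}}_{t-1}(\U_1,\ldots,\U_{t-1})}.
\end{equation*}
Write $\overline{\V}=\overline{\varphi}^{\W_{t-1}}_{t-1}(\U_1,\ldots,\U_{t-1})$, a support $\tau$-rigid object in $\C(\W_{t-1})=\C(J_{\W_t}(\U_t))$. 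By the composition rule defined in Section~\ref{s:mainresult}, since $\W_{t-1}=J_{\W_t}(\U_t)$, we have
\begin{equation*}
g^{\W_{t-1}}_{\overline{\V}}\circ g_{\U_t}^{\W_t}= g^{\W_t}_{\U_t\amalg \F^{\W_t}_{\U_t}(\overline{\V})},
\end{equation*}
where $\F^{\W_t}_{\U_t}$ is the inverse of $\E^{\W_t}_{\U_t}$. So it remains to identify $\U_t\amalg \F^{\W_t}_{\U_t}(\overline{\V})$ with $\overline{\varphi}^{\W_t}_t(\U_1,\ldots,\U_t)$.

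This last identification is the crux, and I expect it to be the main (though not difficult) point. By Proposition~\ref{p:phit}, $\overline{\varphi}^{\W_{t-1}}_{t-1}(\U_1,\ldots,\U_{t-1})$ is the direct sum $\bigoplus_{i=1}^{t-1}\F^{\W_{t-1}}_{\U_{t-1}}\cdots \F^{\W_{i+1}}_{\U_{i+1}}(\U_i)$, computed with the same chain of subcategories $\W_{t-1}\supseteq\cdots\supseteq\W_1$. Applying $\F^{\W_t}_{\U_t}$ and using that $\F$ is defined additively on direct sums (so $\F^{\W_t}_{\U_t}$ distributes over $\amalg$), we get $\F^{\W_t}_{\U_t}(\overline{\V})=\bigoplus_{i=1}^{t-1}\F^{\W_t}_{\U_t}\F^{\W_{t-1}}_{\U_{t-1}}\cdots \F^{\W_{i+1}}_{\U_{i+1}}(\U_i)$. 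Adjoining the summand $\U_t$ (which is the last entry of $\varphi^{\W_t}_t(\U_1,\ldots,\U_t)$), this is exactly the direct sum of the entries of $\varphi^{\W_t}_t(\U_1,\ldots,\U_t)$ as given by Proposition~\ref{p:phit}, namely $\overline{\varphi}^{\W_t}_t(\U_1,\ldots,\U_t)$. Hence
\begin{equation*}
g_{\U_1}^{\W_1}\cdots g_{\U_t}^{\W_t}= g^{\W_t}_{\U_t\amalg\F^{\W_t}_{\U_t}(\overline{\V})}= g^{\W_t}_{\overline{\varphi}^{\W_t}_t(\U_1,\ldots,\U_t)},
\end{equation*}
completing the induction. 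The only subtlety to check carefully is that the chain of subcategories used in applying Proposition~\ref{p:phit} at level $t-1$ is the restriction of the chain at level $t$, which is immediate from the definition $\W_i=J_{\W_{i+1}}(\U_{i+1})$, together with the fact that $\E$ (hence $\F$) is extended additively over direct sums as set up in Section~\ref{bi}.
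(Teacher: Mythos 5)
Your proposal is correct and follows essentially the same route as the paper: induction on $t$, applying the induction hypothesis to the first $t-1$ factors inside $\W_{t-1}$, invoking the composition rule $g^{\W_{t-1}}_{\overline{\V}}\circ g^{\W_t}_{\U_t}=g^{\W_t}_{\U_t\amalg\F^{\W_t}_{\U_t}(\overline{\V})}$, and then identifying the index via Proposition~\ref{p:phit}. The only difference is that you spell out the final identification (additivity of $\F^{\W_t}_{\U_t}$ over the summands listed in Proposition~\ref{p:phit}) more explicitly than the paper does, which is fine.
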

\begin{proof}
We prove the result by induction on $t$. The result is clear for
$t=1$, so suppose that the result holds for $t-1$.
We have, using Proposition~\ref{p:phit}:
\begin{align*}
g_{\U_1}^{\W_1}\cdots g_{\U_t}^{\W_t} &=
(g_{\U_1}^{\W_1}\cdots g_{\U_{t-1}}^{\W_{t-1}}) g_{\U_t}^{\W_t} \\
&= g^{\W_{t-1}}_{\overline{\varphi}_{t-1}^{\W_{t-1}}(\U_1,\ldots ,\U_{t-1})} g_{\U_t}^{\W_t} \\
&=g^{\W_t}_{\F_{\U_t}(\overline{\varphi}_{t-1}^{\W_{t-1}}(\U_1,\ldots ,\U_{t-1})\amalg \U_t)} \\
&=g^{\W_t}_{\overline{\varphi}_t^{\W_t}(\U_1,\ldots ,\U_t)},
\end{align*}
as required.
\end{proof}

\begin{lemma} \label{l:limitedfactor}
Let $\W$ be a wide subcategory of $\module\Lambda$, and let $\V$ be a support $\tau$-rigid object in $\C(\W)$. If
$$g^{\W}_{\V}=g_{\U_1}^{\W_1}\cdots g_{\U_t}^{\W_t}$$
is a factorization of $g^{\W}_{\V}$ as a composition of $t$ irreducible
maps, then $t$ is the number of indecomposable direct summands of $\V$.
\end{lemma}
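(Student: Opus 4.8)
The plan is to prove Lemma~\ref{l:limitedfactor} by combining the factorization machinery already developed (Lemmas~\ref{l:welldefined} and~\ref{l:composition}) with the characterization of irreducible morphisms in Lemma~\ref{l:irreducible}. First I would observe that by Lemma~\ref{l:irreducible}, saying that each $g_{\U_i}^{\W_i}$ is irreducible is exactly the same as saying that each $\U_i$ is indecomposable in $\C(\W_i)$. So the hypothesis gives us a factorization of $g^{\W}_{\V}$ as a composition of maps indexed by indecomposable support $\tau$-rigid objects. By Lemma~\ref{l:welldefined}, such a well-defined composition means that $(\U_1,\ldots,\U_t)$ is a signed $\tau$-exceptional sequence in $\W$ (with $\W_t = \W$ and $\W_i = J_{\W_{i+1}}(\U_{i+1})$).

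Next I would invoke Lemma~\ref{l:composition}, which tells us that
$$g_{\U_1}^{\W_1}\cdots g_{\U_t}^{\W_t} = g^{\W_t}_{\overline{\varphi}_t^{\W_t}(\U_1,\ldots,\U_t)},$$
so that $g^{\W}_{\V} = g^{\W}_{\overline{\varphi}_t^{\W}(\U_1,\ldots,\U_t)}$. At this point I want to conclude that $\V = \overline{\varphi}_t^{\W}(\U_1,\ldots,\U_t)$ as objects of $\C(\W)$, i.e.\ that the index of a morphism $g^{\W}_{(-)}$ determines the object up to isomorphism. This requires knowing that if $g^{\W}_{\V} = g^{\W}_{\V'}$ then $\V \simeq \V'$; this is essentially the content of Proposition~\ref{p:JUJV} (extended from indecomposable $\tau$-rigid modules to arbitrary support $\tau$-rigid objects via the bijection of Theorem~\ref{t:bmbijection} / Corollary~\ref{c:bmbijection}, or directly since the morphisms in $\mathfrak{W}_\Lambda$ are by construction formal symbols indexed by the pairs $(\W,T)$ and distinct $T$ give distinct symbols). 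Granting this, $\V \simeq \overline{\varphi}_t^{\W}(\U_1,\ldots,\U_t)$.

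Finally, the number of indecomposable direct summands of $\overline{\varphi}_t^{\W}(\U_1,\ldots,\U_t)$ is $t$: by Proposition~\ref{p:phit}, $\varphi_t^{\W}(\U_1,\ldots,\U_t)$ is an ordered tuple of length $t$ whose entries are $\F_{\U_t}^{\W_t}\cdots\F_{\U_{i+1}}^{\W_{i+1}}(\U_i)$, and since each $\U_i$ is indecomposable and the maps $\F$ (inverses of the $\E$ bijections of Theorem~\ref{thm-bi}) send indecomposables to indecomposables and have zero intersection of $\add$'s along the way (this is exactly the additive bijection content of Theorem~\ref{rigid-sums} / Lemma~\ref{l:domain}), the tuple $\varphi_t^{\W}(\U_1,\ldots,\U_t)$ consists of $t$ indecomposable objects with no repetitions, i.e.\ $\delta\bigl(\overline{\varphi}_t^{\W}(\U_1,\ldots,\U_t)\bigr) = t$. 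Hence $\delta(\V) = t$, which is the claim.

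The main obstacle I anticipate is the step asserting that $g^{\W}_{\V} = g^{\W}_{\V'}$ forces $\V \simeq \V'$, since a priori the formal symbols could coincide for different objects; one must either build this into the definition of $\mathfrak{W}_\Lambda$ (the symbols $g^{\W}_T$ are genuinely indexed by isomorphism classes of $T$) or deduce it from Theorem~\ref{t:bmbijection}, which gives a bijection between ordered support $\tau$-rigid objects and signed $\tau$-exceptional sequences — in particular support $\tau$-rigid objects of a fixed wide subcategory are distinguished by their associated data. A secondary point to be careful about is confirming that the $\W_i$ appearing in the given factorization are forced to be $J_{\W_{i+1}}(\U_{i+1})$, which follows because composability of $g_{\U_i}^{\W_i}$ with $g_{\U_{i+1}}^{\W_{i+1}}$ requires the source of the former to equal the target $J_{\W_{i+1}}(\U_{i+1})$ of the latter.
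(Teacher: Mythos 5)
Your proposal is correct and follows essentially the same route as the paper: invoke Lemma~\ref{l:irreducible} to get indecomposability of the $\U_i$, then Lemma~\ref{l:composition} to identify $g^{\W}_{\V}$ with $g^{\W}_{\overline{\varphi}_t^{\W}(\U_1,\ldots,\U_t)}$, hence $\V=\overline{\varphi}_t^{\W}(\U_1,\ldots,\U_t)$, which has $t$ indecomposable summands. The two points you flag as potential obstacles (that the formal symbols $g^{\W}_T$ are indexed by the isomorphism class of $T$, and that $\delta(\overline{\varphi}_t^{\W}(\U_1,\ldots,\U_t))=t$) are left implicit in the paper's proof but are resolved exactly as you suggest.
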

\begin{proof}
By Lemma~\ref{l:irreducible}, the $\U_i$ are indecomposable objects of
$\C(\W)$. By Lemma~\ref{l:composition}, we have
$$g^{\W}_{\V}=g^{\W_t}_{\overline{\varphi}^{\W_t}(\U_1,\ldots ,\U_t)},$$
so
$$\V=\overline{\varphi}^{\W_t}(\U_1,\ldots ,\U_t),$$
and the result follows.
\end{proof}

We now prove our first main result of this section.

\begin{proposition} \label{p:factorbijection1}
Let $\W$ be a wide subcategory of $\module\Lambda$, and let
$\V$ be a support $\tau$-rigid object in $\C(\W)$ with $t$ indecomposable direct summands. Then there is a bijection between:
\begin{itemize}
\item[(a)] The set of $\tau$-exceptional sequences $(\U_1,\ldots ,\U_t)$ in $\W$ such that $\overline{\varphi_t}(\U_1,\ldots ,\U_t)=\V$;
\item[(b)] The set of factorizations of $g^{\W}_{\V}$ into
compositions of irreducible maps in $\mathfrak{W}$.
\end{itemize}
\end{proposition}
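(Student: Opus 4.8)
The plan is to make the bijection completely explicit, using Lemmas~\ref{l:welldefined}, \ref{l:composition}, \ref{l:limitedfactor} and~\ref{l:irreducible}. First I would record two elementary observations. One: since $g^{\W}_{T}$ is by construction a formal symbol associated to the pair $(\W,T)$, we have $g^{\W}_{T}=g^{\W}_{T'}$ only if $T=T'$. Two: by Lemma~\ref{l:limitedfactor}, every factorization of $g^{\W}_{\V}$ into irreducible maps has length exactly $t$, the number of indecomposable summands of $\V$, so the set in (b) really is a set of factorizations all of length $t$, matching the length of the sequences in (a).

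Next I would define the map from (a) to (b). Given a signed $\tau$-exceptional sequence $(\U_1,\ldots ,\U_t)$ in $\W$ with $\overline{\varphi}_t^{\W}(\U_1,\ldots ,\U_t)=\V$, put $\W_t=\W$ and $\W_i=J_{\W_{i+1}}(\U_{i+1})$ for $i<t$. By Lemma~\ref{l:welldefined} the composite $g^{\W_1}_{\U_1}\cdots g^{\W_t}_{\U_t}$ is well-defined in $\mathfrak{W}_{\Lambda}$, by Lemma~\ref{l:composition} it equals $g^{\W}_{\overline{\varphi}_t^{\W}(\U_1,\ldots ,\U_t)}=g^{\W}_{\V}$, and by Lemma~\ref{l:irreducible} each factor $g^{\W_i}_{\U_i}$ is irreducible since $\U_i$ is indecomposable. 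Thus $(\U_1,\ldots ,\U_t)$ is sent to a genuine factorization of $g^{\W}_{\V}$ into irreducibles.

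For the inverse, starting from a factorization $g^{\W}_{\V}=g^{\W_1}_{\U_1}\cdots g^{\W_t}_{\U_t}$ into irreducible maps, Lemma~\ref{l:irreducible} gives that each $\U_i$ is indecomposable, and composability of the chain together with the fact that the total composite has source $\W$ forces $\W_t=\W$ and $\W_i=J_{\W_{i+1}}(\U_{i+1})$ for $i<t$; hence the factorization is determined by, and amounts to, the sequence $(\U_1,\ldots ,\U_t)$. By Lemma~\ref{l:welldefined} this is a signed $\tau$-exceptional sequence in $\W$, and by Lemma~\ref{l:composition} it satisfies $g^{\W}_{\overline{\varphi}_t^{\W}(\U_1,\ldots ,\U_t)}=g^{\W}_{\V}$, so $\overline{\varphi}_t^{\W}(\U_1,\ldots ,\U_t)=\V$ by the first observation. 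I would send the factorization to $(\U_1,\ldots ,\U_t)$; the two assignments are then visibly mutually inverse, giving the stated bijection.

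I do not expect a serious obstacle here, as everything is forced once the preparatory lemmas are available. The one point that deserves care, and which I would spell out, is that a factorization into irreducible maps carries no more data than the underlying sequence of indecomposable objects — the intermediate wide subcategories $\W_i$ are uniquely recovered from composability together with the requirement $\W_t=\W$ — and that Lemma~\ref{l:limitedfactor} is what guarantees that only length-$t$ factorizations occur, so that the two sides of the claimed bijection are indexed by objects of the same length.
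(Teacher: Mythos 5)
Your proposal is correct and follows essentially the same route as the paper: the same map in each direction, justified by Lemmas~\ref{l:welldefined}, \ref{l:composition}, \ref{l:irreducible} and~\ref{l:limitedfactor}. The only difference is that you spell out explicitly why the two constructions are mutually inverse (the formal symbol $g^{\W}_{T}$ determines $T$, and the intermediate subcategories are forced by composability), which the paper leaves as "clear".
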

\begin{proof}
Given a sequence $(\U_1,\ldots ,\U_t)$ as in (a), set
$\W_t=\W$ and $\W_i=J_{\W_{i+1}}(\U_{i+1})$ for all $i$.
Then the composition
$g_{\U_1}^{\W_1}\cdots g_{\U_t}^{\W_t}$ is well-defined by
Lemma~\ref{l:welldefined} and equals $g^{\W}_{\V}$ by the
assumption in (a) and Lemma~\ref{l:composition}.
By Lemma~\ref{l:irreducible}, each map $g_{\U_i}^{\W_i}$ is
irreducible in $\mathfrak{W}$.

Any factorization as in (b) must have $t$ factors by Lemma~\ref{l:limitedfactor},so must have form $g_{\U_1}^{\W_1}\cdots g_{\U_t}^{\W_t}=g^{\W}_{\V}$. Given such a factorization, each $\U_i$ is indecomposable by Lemma~\ref{l:irreducible} and
$\V=\overline{\varphi_t}(\U_1,\ldots ,\U_t)$ by Lemma~\ref{l:composition}.
Furthermore, $(\U_1,\ldots ,\U_t)$ is a $\tau$-exceptional
sequence by Lemma~\ref{l:welldefined}.

It is clear that these two constructions are inverses of each other,
and hence give bijections between the sets in (a) and (b) as required.
\end{proof}

Recall, from \cite{bm}, that an {\em ordered support $\tau$-tilting object} in
$\C(\Lambda)$ is a sequence $$(\T_1, \dots, \T_n)$$ of indecomposable
support $\tau$-rigid objects in $\C(\Lambda)$ with the property that $\amalg_i \T_i$
is a support tilting object. 

\begin{theorem} \label{t:factorbijection2}
Let $\W$ be a wide subcategory of $\module\Lambda$ and $\V$ a
support $\tau$-rigid object in $\C(\W)$ with $t$ indecomposable
direct summands. Then the bijection $\varphi_t$ induces a
bijection between the following sets:
\begin{itemize}
\item[(a)] Factorisations of $g_{\V}^{\W}$ into compositions
of irreducible maps in $\mathfrak{W}$;
\item[(b)] Ordered decompositions of $\V$ into direct sums
of indecomposable objects in $\C(\W)$.
\end{itemize}
\end{theorem}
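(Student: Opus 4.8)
The plan is to exhibit an explicit bijection between factorizations of $g_{\V}^{\W}$ into irreducibles and ordered decompositions of $\V$, using the previous results of the section as the main input. First I would observe that Proposition~\ref{p:factorbijection1} already identifies the set in (a) with the set of $\tau$-exceptional sequences $(\U_1,\ldots ,\U_t)$ in $\W$ satisfying $\overline{\varphi_t}^{\W}(\U_1,\ldots ,\U_t)=\V$. Meanwhile, $\varphi_t^{\W}$ (Corollary~\ref{c:bmbijection}) is a bijection from signed $\tau$-exceptional sequences of length $t$ in $\W$ to \emph{ordered} support $\tau$-rigid objects of length $t$ in $\C(\W)$, i.e.\ to ordered decompositions $(\V_1,\ldots ,\V_t)$ of such an object into indecomposables. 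So the plan is to restrict $\varphi_t^{\W}$ to those sequences whose associated direct sum (i.e.\ $\overline{\varphi_t}^{\W}$, the unordered image) is $\V$, and to check that this restriction lands exactly in the ordered decompositions of $\V$ and is a bijection onto them.

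Concretely, the steps are: (1) Recall that an ordered decomposition of $\V$ into indecomposables is the same as an ordered support $\tau$-rigid object $(\V_1,\ldots ,\V_t)$ in $\C(\W)$ with $\amalg_i \V_i = \V$ (each $\V_i$ indecomposable, no repeated summands, $\amalg_i\V_i$ support $\tau$-rigid), since $\V$ is already given to be support $\tau$-rigid with $t$ indecomposable summands. (2) Note that $\varphi_t^{\W}(\U_1,\ldots ,\U_t)=(\V_1,\ldots,\V_t)$ forces $\amalg_i \V_i = \overline{\varphi_t}^{\W}(\U_1,\ldots ,\U_t)$; hence the condition $\overline{\varphi_t}^{\W}(\U_1,\ldots ,\U_t)=\V$ holds if and only if $\varphi_t^{\W}$ sends the sequence to an ordered decomposition of $\V$. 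Thus $\varphi_t^{\W}$ restricts to a bijection between the $\tau$-exceptional sequences appearing in Proposition~\ref{p:factorbijection1}(a) and the ordered decompositions of $\V$. (3) Compose this with the bijection of Proposition~\ref{p:factorbijection1} to obtain the desired bijection between the sets (a) and (b) of the theorem. I would also spell out what the composite map does on elements: a factorization $g_{\U_1}^{\W_1}\cdots g_{\U_t}^{\W_t}$ of $g_{\V}^{\W}$ corresponds to the ordered decomposition $\varphi_t^{\W}(\U_1,\ldots ,\U_t) = (\F_{\U_t}^{\W_t}\cdots \F_{\U_2}^{\W_2}(\U_1),\, \F_{\U_t}^{\W_t}\cdots \F_{\U_3}^{\W_3}(\U_2),\, \ldots,\, \U_t)$ (using Proposition~\ref{p:phit}), where $\W_t=\W$ and $\W_i=J_{\W_{i+1}}(\U_{i+1})$.

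The only subtlety I anticipate is making sure the word "induces" in the statement is honoured: one should verify that the bijection of Proposition~\ref{p:factorbijection1} followed by $\varphi_t^{\W}$ is indeed the map "apply $\varphi_t$", rather than some other bijection — but this is immediate from the way Proposition~\ref{p:factorbijection1} is built (it identifies a factorization with the underlying $\tau$-exceptional sequence), so there is really nothing extra to prove beyond the bookkeeping. I therefore expect no genuine obstacle; the proof is a short assembly of Proposition~\ref{p:factorbijection1}, Corollary~\ref{c:bmbijection} and Proposition~\ref{p:phit}. A clean way to write it:

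\begin{proof}
By Proposition~\ref{p:factorbijection1}, the set in (a) is in bijection with the set of $\tau$-exceptional sequences $(\U_1,\ldots ,\U_t)$ in $\W$ with $\overline{\varphi_t}^{\W}(\U_1,\ldots ,\U_t)=\V$; under this bijection a factorization $g_{\U_1}^{\W_1}\cdots g_{\U_t}^{\W_t}=g_{\V}^{\W}$ (with $\W_t=\W$ and $\W_i=J_{\W_{i+1}}(\U_{i+1})$) corresponds to the sequence $(\U_1,\ldots ,\U_t)$.

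By Corollary~\ref{c:bmbijection}, $\varphi_t^{\W}$ is a bijection from the set of signed $\tau$-exceptional sequences of length $t$ in $\W$ to the set of ordered support $\tau$-rigid objects of length $t$ in $\C(\W)$, that is, ordered decompositions of support $\tau$-rigid objects of $\C(\W)$ into $t$ indecomposable direct summands. If $\varphi_t^{\W}(\U_1,\ldots ,\U_t)=(\V_1,\ldots ,\V_t)$ then, by definition of $\overline{\varphi_t}^{\W}$, we have $\amalg_{i=1}^t \V_i=\overline{\varphi_t}^{\W}(\U_1,\ldots ,\U_t)$. Hence $\overline{\varphi_t}^{\W}(\U_1,\ldots ,\U_t)=\V$ holds if and only if $(\V_1,\ldots ,\V_t)$ is an ordered decomposition of $\V$ into indecomposable objects of $\C(\W)$; note that since $\V$ is support $\tau$-rigid with $t$ indecomposable direct summands, such ordered decompositions are exactly the sequences in (b). Therefore $\varphi_t^{\W}$ restricts to a bijection between the $\tau$-exceptional sequences appearing in Proposition~\ref{p:factorbijection1}(a) for this $\V$ and the set (b).

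Composing the two bijections yields the required bijection between (a) and (b). Explicitly, using Proposition~\ref{p:phit}, a factorization $g_{\U_1}^{\W_1}\cdots g_{\U_t}^{\W_t}=g_{\V}^{\W}$ with $\W_t=\W$ and $\W_i=J_{\W_{i+1}}(\U_{i+1})$ is sent to the ordered decomposition
$$\left(\F_{\U_t}^{\W_t}\cdots \F_{\U_2}^{\W_2}(\U_1),\ \F_{\U_t}^{\W_t}\cdots \F_{\U_3}^{\W_3}(\U_2),\ \ldots,\ \U_t\right)$$
of $\V$.
\end{proof}
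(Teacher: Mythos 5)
Your proposal is correct and follows essentially the same route as the paper: identify the set of factorizations with the set of $\tau$-exceptional sequences whose image under $\overline{\varphi_t}^{\W}$ is $\V$ via Proposition~\ref{p:factorbijection1}, then apply the bijection $\varphi_t^{\W}$ onto ordered decompositions. Your write-up just spells out the restriction step (and the explicit formula from Proposition~\ref{p:phit}) in more detail than the paper does.
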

\begin{proof}
By Proposition~\ref{p:factorbijection1}, there is a bijection between
the set in (a) and the set of $\tau$-exceptional sequences
$(\U_1,\ldots, \U_t)$ in $\W$ such that
$\overline{\varphi_t}^{\W}(\U_1,\ldots ,\U_t)=\V$.
The result now follows from Theorem~\ref{t:bmbijection}.
\end{proof}

\section{Example}\label{examp}

In this section we consider the following example.
Let $Q$ be the quiver 
$$\xymatrix@=5mm{
& 2  \ar^{\beta}[dr] & \\
1  \ar^{\alpha}[ur]  \ar_{\gamma}[rr]& & 3
} 
$$
and consider the algebra $\Lambda = kQ/I$ where $I$ is the ideal generated by the path 
$\beta \alpha$. The AR-quiver of $\module \Lambda$ is
$$\xymatrix@=4mm{
2  \ar[dr] & &  {\begin{smallmatrix} 1 \\ 3 \end{smallmatrix}}  \ar[dr]& &  2 \\
&  {\begin{smallmatrix} & 1 &  \\ 2 & & 3 \end{smallmatrix}} \ar[ur] \ar[dr]& &  {\begin{smallmatrix} 1&  &2  \\  & 3&  \end{smallmatrix}}\ar[ur] \ar[dr] & \\
 3 \ar[ur] \ar[dr] & & {\begin{smallmatrix} & 1 && 2  \\  2 && 3 & \end{smallmatrix}} \ar[ur] \ar[dr] & &  1\\
&  {\begin{smallmatrix} 2 \\ 3 \end{smallmatrix}} \ar[ur] & &  {\begin{smallmatrix}  1   \\ 2  \end{smallmatrix}} \ar[ur]& 
}
$$
where the notation indicates which simple modules occur in the radical layers of the module, so $N = {\begin{smallmatrix} & 1 && 2  \\  2 && 3 & \end{smallmatrix}}$ is a module of length 4, of radical length 2, and with top isomorphic to the direct sum of the simple modules corresponding to vertices 1 and 2.
 
Figure~\ref{f:wlambda} gives an illustration of the category $\mathfrak{\W}_{\Lambda}$.
The vertices are the sets of indecomposable objects in each wide subcategory.
A non-identity morphism $g_{\T}^{\W} :\W \to \W'$ (so that $\T$ is an indecomposable
support $\tau$-rigid object in $\C(\W)$ and $J_{\W}(T)=\W'$) is
shown as an arrow between $\W$ and $\W'$ labelled by $T$.
When $P$ is projective in $\W$ we have $J_{\W}(P) = J_{\W}(P[1])$, and
there are two corresponding maps, $g_{P}^{\W}$ and $g_{P[1]}^{\W}$ from
$\W$ to $J_{\W}(P)$; in this case we draw a doubled arrow labelled only by $P$.
Wide subcategories of rank $1$ have generally been shown more than once in the figure, and the corresponding vertices should be identified.

\begin{sidewaysfigure}
\begin{center}
\xymatrix@C=0.7cm@R=1.4cm{
& & & & & & & & \\
& & & & & & & & \\
& & & & & & & & \\
& & & & & & & & \\
& & & & & & & & \\
& & & & & & & & \\
& & & & & & & & \\
\{ {\begin{smallmatrix} 2 \\ 3 \end{smallmatrix}} \} & &   \{ {\begin{smallmatrix} 1 \\ 3 \end{smallmatrix}} \} & \{ {\begin{smallmatrix} & 1 &  \\ 2 & & 3 \end{smallmatrix}} \} & & \{ {\begin{smallmatrix} 1 & & 2 \\ & 3 & \end{smallmatrix}} \} & \{   {{\begin{smallmatrix} 2 \end{smallmatrix}}} \} &
 &  \{ {\begin{smallmatrix} 1 \\ 3 \end{smallmatrix}} \} \\
\{ {\begin{smallmatrix} 1 \\ 3 \end{smallmatrix}} \} &   & \{ {\begin{smallmatrix} 1 \\ 3 \end{smallmatrix}} , {\begin{smallmatrix} 2 \\ 3 \end{smallmatrix}} \}
\ar@{=>}|(.5){\circlesign{\begin{smallmatrix} 2 \\ 3 \end{smallmatrix}}}[ll] 
\ar@{=>}|(.5){\circlesign{\begin{smallmatrix} 1 \\ 3 \end{smallmatrix}}}[ull]
 &   & \{ {\begin{smallmatrix} & 1&\\ 2 & & 3 \end{smallmatrix}} , {\begin{smallmatrix} 1 \\ 3 \end{smallmatrix}}, {\begin{smallmatrix} 1 & & 2 \\ & 3 & \end{smallmatrix}}, {\begin{smallmatrix} & 1 && 2  \\  2 && 3 & \end{smallmatrix}}, {\begin{smallmatrix} 2 \end{smallmatrix}}  \}
   \ar@{=>}|(.5){\circlesign{\begin{smallmatrix} & 1 & \\ 2 & & 3 \end{smallmatrix}}}[urr] \ar|(.5){\circlesign{\begin{smallmatrix} 2  \end{smallmatrix}}}[ur] \ar|(.5){\circlesign{\begin{smallmatrix}  1  \\ 3 \end{smallmatrix}}}[ul] \ar@{=>}|(.78){\circlesign{\begin{smallmatrix} & 1 & & 2 \\ 2 & & 3 & \end{smallmatrix}}}[ull] &  & \{ {\begin{smallmatrix} 1 \\ 2 \end{smallmatrix}}, {\begin{smallmatrix} 1 \\ 3 \end{smallmatrix}} \}  
   \ar@{=>}|(.5){\circlesign{\begin{smallmatrix} 1 \\ 3 \end{smallmatrix}}}[rr]  \ar@{=>}|(.5){\circlesign{\begin{smallmatrix} 1 \\ 2 \end{smallmatrix}}}[urr] & &  \{ {\begin{smallmatrix} 1 \\ 2 \end{smallmatrix}} \}\\
\{ {\begin{smallmatrix} 2 \\ 3 \end{smallmatrix}} \} & & & & & & & & \{ {\begin{smallmatrix} 3 \end{smallmatrix}} \} \\ 
\{ {\begin{smallmatrix} 1 & & 2 \\ & 3 & \end{smallmatrix}} \} & & \{ {\begin{smallmatrix} 2 \\ 3 \end{smallmatrix}} , {\begin{smallmatrix} 1 & & 2 \\ & 3  &\end{smallmatrix}}, {\begin{smallmatrix} 1  \end{smallmatrix}} \}
 \ar|(.5){\circlesign{\begin{smallmatrix} 1 \end{smallmatrix}}}[ll] 
 \ar@{=>}|(.5){\circlesign{\begin{smallmatrix}  2 \\ 3   \end{smallmatrix}}}[dll] 
 \ar@{=>}|(.5){\circlesign{\begin{smallmatrix} 1 &  & 2\\ & 3 & \end{smallmatrix}}}[ull]
 &  & \ind \Lambda  
 \ar|(.5){\circlesign{\begin{smallmatrix} 1 \\ 2 \end{smallmatrix}}}[uu]  \ar|(.5){\circlesign{\begin{smallmatrix}  & 1 & & 2  \\  2 && 3 &  \end{smallmatrix}}}[uull]  \ar|{\circlesign{\begin{smallmatrix} 1 \end{smallmatrix}}}[uurr]  \ar|{\circlesign{\begin{smallmatrix} 1 \\ 3 \end{smallmatrix}}}[rr] 
 \ar|(.5){\circlesign{\begin{smallmatrix} 2 \end{smallmatrix}}}[ll]  
 \ar@{=>}|(.5){\circlesign{\begin{smallmatrix} 3 \end{smallmatrix}}}[ddrr]  
 \ar@{=>}|(.5){\circlesign{\begin{smallmatrix}  2  \\ 3 \end{smallmatrix}}}[dd]  \ar@{=>}|(.5){\circlesign{\begin{smallmatrix} & 1 & \\ 2 & & 3\end{smallmatrix}}}[ddll]
& & \{ {\begin{smallmatrix} 3 \end{smallmatrix}} , {\begin{smallmatrix}  & 1 &  \\ 2 & & 3  \end{smallmatrix}}, {\begin{smallmatrix} 1  \\ 2\end{smallmatrix}} \} 
\ar@{=>}|(.5){\circlesign{\begin{smallmatrix} & 1 & \\ 2 & & 3 \end{smallmatrix}}}[urr] \ar@{=>}|(.5){\circlesign{\begin{smallmatrix} 3 \end{smallmatrix}}}[drr] 
\ar|(.5){\circlesign{\begin{smallmatrix} 1  \\ 2 \end{smallmatrix}}}[rr] & & \{ {\begin{smallmatrix} & 1 & \\2 & & 3 \end{smallmatrix}} \}  \\
\{ {\begin{smallmatrix} 1 \end{smallmatrix}} \} & & & & & & & & \{ {\begin{smallmatrix}  1  \\ 2  \end{smallmatrix}} \} \\ 
 \{ {\begin{smallmatrix} 3  \end{smallmatrix}} \}
&  & \{ {\begin{smallmatrix}  3 \end{smallmatrix}} , {\begin{smallmatrix} 2 \\ 3 \end{smallmatrix}},
 {\begin{smallmatrix}  2 \end{smallmatrix}} \} 
 \ar@{=>}|(.5){\circlesign{\begin{smallmatrix} 2 \\ 3 \end{smallmatrix}}}[ll] 
 \ar@{=>}|(.5){\circlesign{\begin{smallmatrix}  3 \end{smallmatrix}}}[d] 
 \ar|(.5){\circlesign{\begin{smallmatrix} 2  \end{smallmatrix}}}[dll]&  & 
 \{ {\begin{smallmatrix} 3 \end{smallmatrix}} , {\begin{smallmatrix} 1 \\ 3 \end{smallmatrix}}, {\begin{smallmatrix} 1  \end{smallmatrix}} \} 
 \ar@{=>}|(.5){\circlesign{\begin{smallmatrix} 1 \\ 3 \end{smallmatrix}}}[dl] 
 \ar|(.5){\circlesign{\begin{smallmatrix} 1 \end{smallmatrix}}}[d]
 \ar@{=>}|(.5){\circlesign{\begin{smallmatrix}  3 \end{smallmatrix}}}[dr]
   &  & \{ {\begin{smallmatrix} 2 \end{smallmatrix}}, {\begin{smallmatrix} 1 \\ 2 \end{smallmatrix}}, {\begin{smallmatrix} 1 \end{smallmatrix}} \} 
   \ar|(.5){\circlesign{\begin{smallmatrix} 1 \end{smallmatrix}}}[drr] 
   \ar@{=>}|(.5){\circlesign{\begin{smallmatrix} 2 \end{smallmatrix}}}[rr]
   \ar@{=>}|(.5){\circlesign{\begin{smallmatrix} 1 \\2 \end{smallmatrix}}}[d]  & &
   \{ {\begin{smallmatrix}   1  \end{smallmatrix}} \}  & \\
\{{\begin{smallmatrix}  2\\  3  \end{smallmatrix}} \}  &  &  \{ {\begin{smallmatrix}  2    \end{smallmatrix}} \} & \{ {\begin{smallmatrix}  3  \end{smallmatrix}} \} & 
\{{\begin{smallmatrix}  1 \\ 3 \end{smallmatrix}} \} &  
 \{ {\begin{smallmatrix}  1    \end{smallmatrix}} \}  &  \{ {\begin{smallmatrix}   2  \end{smallmatrix}} \} & & \{ {\begin{smallmatrix}   1\\ 2  \end{smallmatrix}} \} 
}
\caption{Illustration of the category $\mathfrak{\W}_{\Lambda}$
}\label{f:wlambda}
\end{center}
\end{sidewaysfigure}
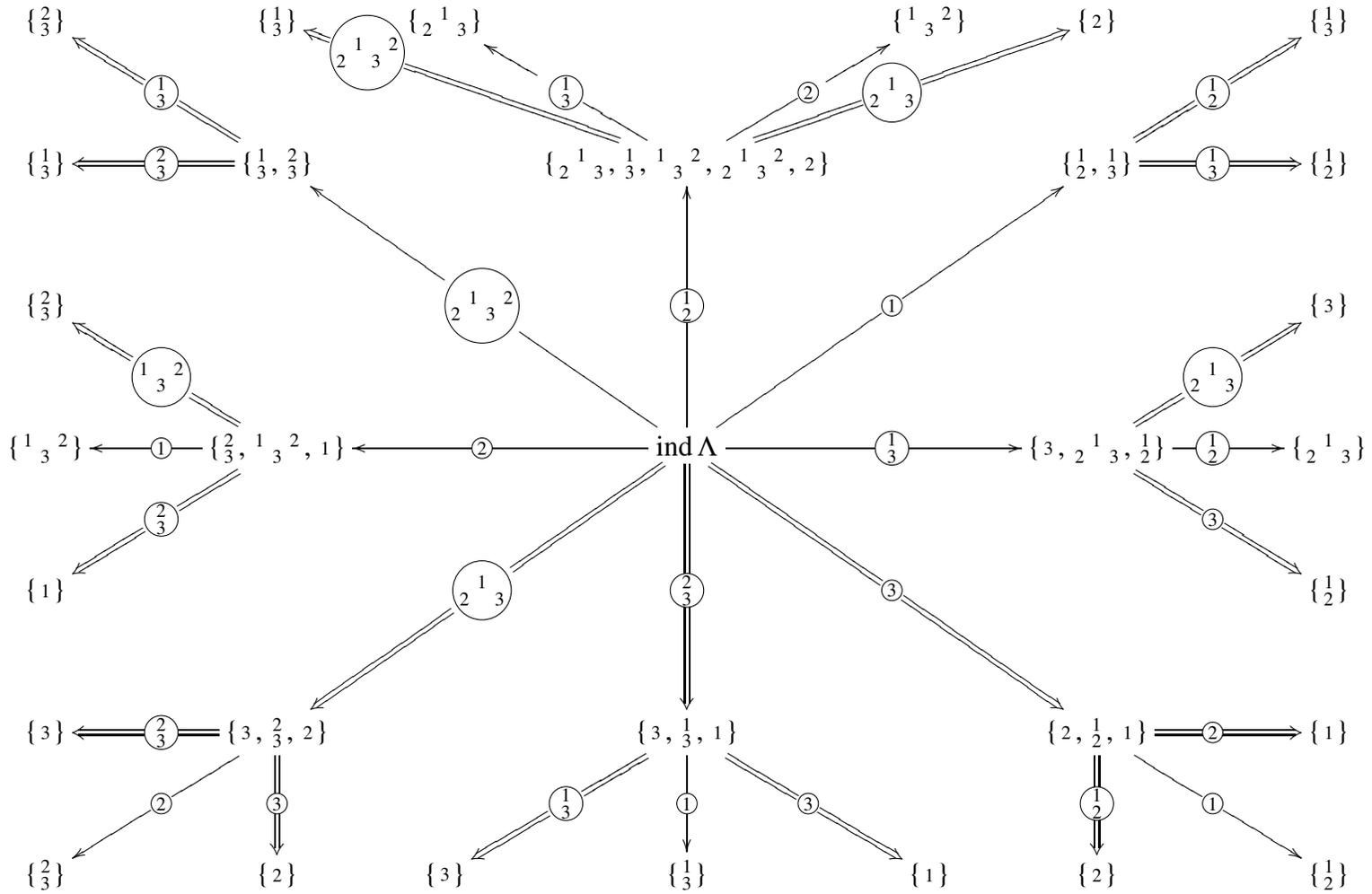

% i.e.
%$$F_{\U_t}^{\W_t}\cdots F_{\U_2}^{\W_2}(\U_1) \amalg
%F_{\U_t}^{\W_t}\cdots F_{\U_3}^{\W_3}(\U_2) \amalg
%\cdots \amalg \U_t = \V.$$

%Associated to $\SS$ is the wide subcategory $\W_{\SS}$ which can be obtained as follows:
%First let $\W_{\SS}^t = J(\U_t)$, and then for $s= 1, \dots, t-1$
%let $\W_{\SS}^{t-s} = J_{\W_{\SS}^{t-s+1}}(\U_{t-s})$.
%Then  $\W_{\SS} = \W_{\SS}^1$. 

%Recall that for a $\tau$-rigid object $\U$ in $\C(\Lambda)$, we proved in Section \ref{bi} that there are bijections  
%$$\{X \in \ind(\C(\Lambda)) \mid X \amalg \U \text{ } \tau\text{-rigid}\} 
%\setminus \ind \U$$ 

%$$\E_{\U} \downarrow \text{    } \uparrow \F_{\U}$$ 

% $$\{X \in \ind(\C(J(\U)) \mid X \text{ } \tau\text{-rigid} \}$$ 
 
%Note that by repeatedly applying Theorem \ref{main-comp} and these bijections we have 
%that $$\W_{\SS} = J(\U')$$ where
%$$\U' = \U_t \amalg \F_{\U_t} (\U_{t-1}) \amalg \dots \amalg  \F_{\U_t}  \F_{\U_{t-1}} \dots  \F_{\U_2} (\U_1))$$

%\begin{lemma}
%With notation as above, the map 
%\end{lemma}

%A signed $\tau$-rigid sequence $\SS$ can hence be interpreted as giving 
%a factorization of the map $g = g$ 

%\begin{proposition}
%\begin{itemize}
%\item[(a)] A map $f_T \colon \W_1 \to W_2= J(T)$ in $\mathbb{W}_T$ is irreducible
%if and only if $T$ is an indecomposable $\tau$-rigid object in $\C(\W_1)$.
%\item[(b)] Let $f_T \colon  \W_1 \to W_2= J(T)$ be an arbitrary map in $\C(\W_1)$.
%Then $f_T$ is a composition of $t$ irreducible maps, where $r= r(\W_1) - r(\W_2)$.
%\end{itemize}
%\end{proposition}

%\begin{proof}
%\end{proof}

\end{document}